\title[Weihrauch goes Brouwerian]
{Weihrauch goes Brouwerian}
\author[V.\ Brattka]{Vasco Brattka}
\address{Faculty of Computer Science, Universit\"at der Bundeswehr M\"unchen, Germany and 
             Department of Mathematics \& Applied Mathematics, University of Cape Town, South Africa\footnote{Vasco Brattka has received funding from the National Research Foundation of South Africa}} 
\email{Vasco.Brattka@cca-net.de}
\author[G.\ Gherardi]{Guido Gherardi}
\address{Dipartimento di Filosofia e Comunicazione, Universit\`{a} di Bologna, Italy}
\email{Guido.Gherardi@unibo.it}
\def\CC{{\mathcal C}}
\def\LL{{\mathcal L}}
\def\MM{{\mathcal M}}
\def\WW{{\mathcal W}}
\def\XX{{\mathcal X}}
\def\IN{{\mathbb{N}}}
\def\IR{{\mathbb{R}}}
\def\TO{\Longrightarrow}
\def\In{\subseteq}
\def\prefix{\sqsubseteq}
\def\mto{\rightrightarrows}
\def\id{{\rm id}}
\def\dom{{\rm dom}}
\def\range{{\rm range}}
\def\Form{{\rm Form}}
\def\t{\mathrm t}
\def\r{\mathrm r}
\newcommand{\SO}[1]{{{\bf\Sigma}^0_{#1}}}
\def\LPO{\text{\rm\sffamily LPO}}
\def\LLPO{\text{\rm\sffamily LLPO}}
\def\WKL{\text{\rm\sffamily WKL}}
\def\IVT{\text{\rm\sffamily IVT}}
\def\BWT{\text{\rm\sffamily BWT}}
\def\C{\mbox{\rm\sffamily C}}
\def\LPO{\mbox{\rm\sffamily LPO}}
\def\LLPO{\mbox{\rm\sffamily LLPO}}
\def\WBWT{\text{\rm\sffamily WBWT}}
\def\K{\text{\rm\sffamily K}}
\def\KL{\text{\rm\sffamily KL}}
\def\J{\text{\rm\sffamily J}}
\def\NON{\text{\rm\sffamily NON}}
\def\DNC{\text{\rm\sffamily DNC}}
\def\ACC{\text{\rm\sffamily ACC}}
\def\MLR{\text{\rm\sffamily MLR}}
\def\WWKL{\text{\rm\sffamily WWKL}}
\def\GEN{\text{\rm\sffamily GEN}}
\def\COH{\text{\rm\sffamily COH}}
\def\PA{\text{\rm\sffamily PA}}
\def\SORT{\text{\rm\sffamily SORT}}
\def\leqM{\mathop{\leq_{\mathrm{M}}}}
\def\leqT{\mathop{\leq_{\mathrm{T}}}}
\def\equivT{\mathop{\equiv_{\mathrm{T}}}}
\def\leqW{\mathop{\leq_{\mathrm{W}}}}
\def\leqTW{\mathop{\leq_{\mathrm{tW}}}}
\def\equivW{\mathop{\equiv_{\mathrm{W}}}}
\def\equivTW{\mathop{\equiv_{\mathrm{tW}}}}
\def\nequivW{\mathop{\not\equiv_{\mathrm{W}}}}
\def\leqSW{\mathop{\leq_{\mathrm{sW}}}}
\def\leqSTW{\mathop{\leq_{\mathrm{stW}}}}
\def\equivSW{\mathop{\equiv_{\mathrm{sW}}}}
\def\equivSTW{\mathop{\equiv_{\mathrm{stW}}}}
\def\nleqW{\mathop{\not\leq_{\mathrm{W}}}}
\def\nleqTW{\mathop{\not\leq_{\mathrm{tW}}}}
\def\nleqSW{\mathop{\not\leq_{\mathrm{sW}}}}
\def\nleqSTW{\mathop{\not\leq_{\mathrm{stW}}}}
\def\lW{\mathop{<_{\mathrm{W}}}}
\newcommand{\RT}[2]{\text{\rm\sffamily RT}^{#1}_{#2}}
\def\bigtimes{\mathop{\mathsf{X}}}
\newcommand{\dash}{\mbox{-}}
\date{\today}
\newtheorem{theorem}{Theorem}[section]
\newtheorem{proposition}[theorem]{Proposition}
\newtheorem{lemma}[theorem]{Lemma}
\newtheorem{fact}[theorem]{Fact}
\newtheorem{corollary}[theorem]{Corollary}
\theoremstyle{definition}
\newtheorem{definition}[theorem]{Definition}
\newtheorem{example}[theorem]{Example}
\begin{document}

\begin{abstract}
We prove that the Weihrauch lattice can be transformed into a Brouwer algebra by the consecutive application 
of two closure operators in the appropriate order: first completion and then parallelization.
The closure operator of completion is a new closure operator that we introduce. It transforms
any problem into a total problem on the completion of the respective types, where we allow any
value outside of the original domain of the problem. 
This closure operator is of interest by itself, as it generates a total version of Weihrauch reducibility
that is defined like the usual version of Weihrauch reducibility, but in terms of total realizers.
From a logical perspective completion can be seen as a way to make problems independent of their premises.
Alongside with the completion operator and total Weihrauch reducibility we need to study precomplete representations
that are required to describe these concepts. 
In order to show that the parallelized total Weihrauch lattice forms a Brouwer algebra, 
we introduce a new multiplicative version of an implication.
While the parallelized total Weihrauch lattice forms a Brouwer algebra with this implication,
the total Weihrauch lattice fails to be a model of intuitionistic linear logic in two different ways. In order to pinpoint the algebraic reasons
for this failure, we introduce the concept of a Weihrauch algebra that allows us to formulate
the failure in precise and neat terms. Finally, we show that the Medvedev Brouwer algebra
can be embedded into our Brouwer algebra, which also implies that the theory of our Brouwer algebra is Jankov logic.
\  \bigskip \\
{\bf Keywords:} Weihrauch complexity, computable analysis, Brouwer algebra, intuitionistic and linear logic. \\
{\bf MSC classifications:} 03B30, 03D30, 03D78, 03F52, 03F60, 06D20.
\end{abstract}

\maketitle

\setcounter{tocdepth}{1}
\tableofcontents

\pagebreak

\section{Introduction}
\label{sec:introduction}

Over the previous ten years Weihrauch complexity has been developed as a 
computability theoretic approach to classify the uniform computational content of theorems.
A survey article that summarizes some of the current research directions in Weihrauch complexity
can be found in \cite{BGP18}.\footnote{A comprehensive up-to-date bibliography
is maintained at the following web page: {\tt http://cca-net.de/publications/weibib.php}}
The advantage of this approach is that it provides a direct computability theoretic way
to classify problems, while heuristic observation shows that the approach can
be seen as a uniform version of reverse mathematics in the sense of Friedman and Simpson~\cite{Sim09}.

Weihrauch complexity is based on Weihrauch reducibility $\leqW$ that induces a 
lattice structure. Beyond the lattice operations the Weihrauch lattice is equipped with a number of additional algebraic operations. 
Early on it was noticed that the semantics of these operations has the flavor of 
linear logic. Table~\ref{tab:linear-logic} provides
a dictionary that shows how the usual symbols for operations on problems in the Weihrauch
lattice are translated into operations of linear logic.

\begin{table}[htb]
\begin{center}
\begin{tabular}{ll}
{logical operation in linear logic\ } & {algebraic operation on problems}\\\hline
$\otimes$ multiplicative conjunction  & $\times$ product \\
$\&$ additive conjunction                & $\sqcup$ coproduct\\
$\oplus$ additive disjunction           & $\sqcap$ infimum\\
\rotatebox[origin=c]{180}{$\&$} multiplicative disjunction         & $+$ sum\\
$!$ bang		                                    & $\widehat{\ }$ parallelization
\end{tabular}
\end{center}
\caption{Linear logic versus the algebra of problems}
\label{tab:linear-logic}
\end{table}

However, so far no satisfactory interpretation of the Weihrauch lattice as
a model of (intuitionistic) linear logic has been found.
This is partially due to the lack of an internal implication operation
that corresponds to the linear implication $\multimap$.
Such an implication would have to fulfill 
\[(g\multimap f)\leqW h\iff f\leqW g\times h\]
and it can be proved that such an implication does not exist, given $\leqW$ and $\times$~\cite[Proposition~37]{BP18}.
However, Weihrauch reducibility $f\leqW g$ can be seen at least as an
external implication operation $f\Longleftarrow g$.
 
The Weihrauch lattice has also additional algebraic operations such 
as the compositional product $\star$, which can be seen as a non-commutative
version of conjunction. Here $f\star g$ captures what can be computed by
first using the problem $g$ and then the problem $f$, possibly with some
intermediate computation. There is an implication operation $g\to f$ in the Weihrauch 
lattice that is a right co-residual operation of $\star$ \cite{BP18}, i.e., we have
\[(g\to f)\leqW h\iff f\leqW g\star h.\]
However, this setting does not provide a model for classical linear logic, 
since the operation $\star$ is not commutative.\footnote{Girard also proposed a less known non-commutative 
version of linear logic, but also this logic does not seem to fit to our model~\cite{Yet90}.}

While the connections to linear logic might not be as tight as one wishes,
there is still hope that there is a close connection to intuitionistic logic.
In linear logic intuitionistic implication is represented by $!A\multimap B$.
Hence, it is to be expected that the parallelized Weihrauch reducibility $f\leqW\widehat{g}$ gives us an
external form of intuitionistic implication. This could theoretically be substantiated
by showing that the resulting structure is a Brouwer algebra, since Brouwer algebras are models
for intermediate propositional logics in between classical and intuitionistic logic.
However, also this hope did not materialize as Higuchi and Pauly proved
that the parallelized Weihrauch lattice is not a Brouwer algebra~\cite{HP13}.

In this article we prove that one does obtain a Brouwer algebra if one
combines two closure operators in the Weihrauch lattice in the appropriate
order: first completion $f\mapsto\overline{f}$ and then parallelization $f\mapsto\widehat{f}$.
While parallelization is a well understood operation \cite{BG11} that corresponds
somewhat to the usage of countable choice in constructive mathematics,
completion is a new operation that we introduce in this article.
Formally, the completion $\overline{f}:\overline{X}\mto\overline{Y}$
of a problem $f:\In X\mto Y$ is defined by
\[\overline{f}(x):=\left\{\begin{array}{ll}
   f(x) & \mbox{if $x\in\dom(f)$}\\
   \overline{Y} & \mbox{otherwise}
\end{array}\right.,\]
i.e., by a totalization of $f$ on the completions $\overline{X},\overline{Y}$ of the corresponding types.\footnote{We were inspired 
to continue the study of completions by recent work of Dzhafarov who used them to show that strong Weihrauch reducibility
induces a lattice structure~\cite{Dzh19}.}

Logically, completion can be seen as a way to make problems independent
of their premises. In general, a logical statement of the form
\[(\forall x\in X)(x\in D\TO(\exists y\in Y)\;P(x,y))\]
is translated into a problem $f:\In X\mto Y$ in the Weihrauch lattice
by setting $\dom(f)=D$ and $f(x):=\{y\in Y:P(x,y)\}$ for all $x\in\dom(f)$.
Now the transition to the completion $\overline{f}$ corresponds to
the statement
\[(\forall x\in\overline{X})(\exists y\in\overline{Y})(x\in D\TO P(x,y)),\]
where the existence is required independent of the premise $x\in D$.
The completion of the data types is relevant here, as it guarantees the existence of total
representations of the underlying types.

The completion operation $f\mapsto\overline{f}$ is of interest by itself as it is a closure
operator that yields a total version of Weihrauch reducibility $\leqTW$ by $f\leqTW g\iff f\leqW\overline{g}$.
Total Weihrauch reducibility $\leqTW$ can also be
defined directly almost as the usual reducibility $\leqW$, but in terms of total realizers instead of partial realizers.
In this case the completion of the types features again,
since one needs to consider so-called precomplete representations
for the underlying types. 

Among other things we prove that total Weihrauch reducibility induces
a lattice structure with operations induced by the original operations of the Weihrauch lattice.
The lattice structure of the total Weihrauch lattice is somewhat different from the original Weihrauch lattice,
but it does not change all too dramatically as many problems are actually complete, 
i.e., Weihrauch equivalent to their own completion.
We list some examples of complete and incomplete problems:

\begin{itemize}
\item Complete problems: $\LPO,\LLPO,\lim,\J,\WKL,\SORT,\IVT,\PA,\MLR,\DNC_n$.
\item Incomplete problems: $\C_\IN,\C_{\IN^\IN},\WWKL$.
\end{itemize}

The reader who does not know these problems will find relevant definitions of some of them
later. The topic of completion of choice problems is subject of an entirely separate article~\cite{BG19}.

When we move to the total Weihrauch lattice $\WW_{\rm tW}$ of total Weihrauch reducibility $\leqTW$,
then we can introduce a new implication $f\twoheadrightarrow g$ that
can almost be seen as a multiplicative co-residual of $\times$. 
However, also in this case we fail to obtain a model for intuitionistic linear logic.

\begin{figure}[htb]
\begin{center}
\begin{tikzpicture}
\footnotesize
\draw[rotate=30]  (0.2098,2.7634) ellipse (1.5 and 3);
\draw[rotate=-30]  (-0.5502,2.5634) ellipse (1.5 and 3);
\draw  (-0.2,3.2) ellipse (4 and 3.5);
\node at (-0.2,5.8) {Weihrauch algebras};
\node at (-2,4.4) {commutative};
\node at (1.6,4.4) {deductive};
\draw  (-0.2,0.8) ellipse (0.9 and 0.9);
\node at (-0.2,0.8) {\begin{minipage}{1.3cm}\begin{center}Brouwer\\algebras\end{center}\end{minipage}};
\node at (-0.2,2.2) {\begin{minipage}{1.3cm}\begin{center}Troelstra\\algebras\end{center}\end{minipage}};
\node at (-2,3.8) {$\times,\twoheadrightarrow$};
\node at (1.6,3.8) {$\star,\to$};
\node at (5.2,2.6) {\begin{minipage}{2cm}\begin{center}models of inuitionistic linear logics\end{center}\end{minipage}};
\node at (5.2,0.8) {\begin{minipage}{2cm}\begin{center}models of inuitionistic logics\end{center}\end{minipage}};

\node (ILL) at (4.2,2.6) {};
\node (IL) at (4.2,0.8) {};
\node (T) at (0.35,2.2) {};
\node (B) at (0.4,0.8) {};
\draw [->,looseness=1] (ILL) to [out=170,in=30] (T);
\draw [->,looseness=1] (IL) to [out=190,in=-30] (B);

\end{tikzpicture}
\end{center}
\ \\[-0.3cm]
\caption{Different types of algebras as models of logic}
\label{fig:algebras}
\end{figure}
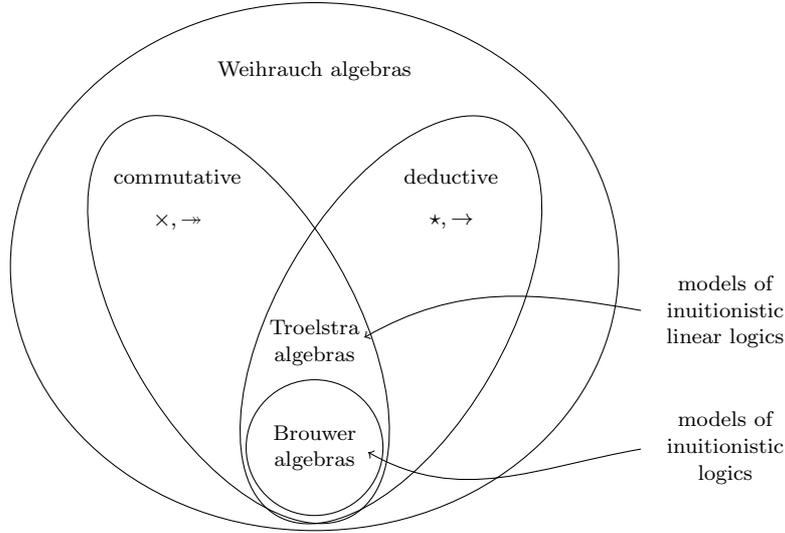

In order to make the spectacular twofold failure of obtaining a model
of intuitionistic linear logic more understandable, we introduce the concept
of a Weihrauch algebra in the following section~\ref{sec:order}.
These are lattice-ordered monoids with some additional implication operation.
The total Weihrauch lattice $\WW_{\rm tW}$ is a commutative Weihrauch algebra
with respect to $\times,\twoheadrightarrow$ and a deductive Weihrauch algebra with respect to $\star,\to$.
However, none of these Weihrauch algebras is commutative and deductive simultaneously,
which is what is required in order to obtain, in our terms, a Troelstra algebra\footnote{This is the dual 
structure of what Troelstra called an {\em intuitionistic linear algebra}~\cite{Tro92}.}, i.e., a model
of some form of intuitionistic linear logic. See the diagram in Figure~\ref{fig:algebras} for an illustration
of the situation.

When we apply parallelization after completion, then we obtain the parallelized
total Weihrauch lattice $\WW_{\rm ptW}$ which then leads to a Brouwer algebra,
i.e., a Troelstra algebra where the monoid structure is merged with the
lattice structure (in our terms $\times$ and $\sqcup$ are merged).
In section~\ref{sec:Brouwer} we prove that one can embed the Medvedev Brouwer algebra~\cite{Sor96} into
our Brouwer algebra. Like in the case of the Medvedev Brouwer algebra we
obtain Jankov logic as the theory of our algebra.

In the following section~\ref{sec:order} we provide some very basic lattice theoretic
results regarding closure operators that are helpful for our study, and we define
Weihrauch and Troelstra algebras alongside with Brouwer algebras.
In section~\ref{sec:precomplete} we study precomplete representations
and the data type of completion that is needed to introduce the
closure operator of completion and total Weihrauch reducibility.
In section~\ref{sec:total} we introduce total Weihrauch reducibility
and we prove some basic properties of it.
In section~\ref{sec:completion} we introduce and study the closure operator
of completion. Section~\ref{sec:algebra} provides results that show
how the algebraic operations of the Weihrauch lattice interact with completion.
In particular, we prove that total Weihrauch reducibility actually yields a 
lattice structure. 
In section~\ref{sec:residual} we review the operations $\star$ and $\to$
and study their interaction with completion and we also introduce
the new implication operation $\twoheadrightarrow$.
Finally, in section~\ref{sec:Brouwer} we prove that the 
parallelized total Weihrauch lattice $\WW_{\rm ptW}$ is a Brouwer
algebra with the implication derived from $\twoheadrightarrow$.
We also discuss the embedding of the Medvedev lattice.
We close this article with a brief survey on the classification
of concrete problems in the parallelized total Weihrauch lattice.

\section{Closure Operators and Weihrauch Algebras}
\label{sec:order}

In this section we prepare some basic order theoretic concepts that we are going to use frequently.
We recall that a {\em preorder} $\leq$ on a set $X$ is a binary relation on $X$ that is reflexive and transitive.
We also speak of a {\em preordered space} $(X,\leq)$ in this context.
An {\em equivalence relation} $\equiv$ on a set $X$ is a binary relation on $X$ that is reflexive, symmetric and transitive.
In the following we will have to deal with several closure operators.

\begin{definition}[Closure operator]
\label{def:closure-operator}
Let $(X,\leq)$ be a preordered space together with a map $c:X\to X$. 
Then $c$ is called a {\em closure operator}, if
\begin{enumerate}
\item $x\leq c(x)$, 
\item $cc(x)\leq c(x)$ and 
\item $x\leq y\TO c(x)\leq c(y)$ 
\end{enumerate}
hold for all $x,y\in X$.
We say that $x\in X$ is {\em closed} if $c(x)\leq x$. 
\end{definition}

We call a map $c:X\to X$ {\em monotone}, if $x\leq y\TO c(x)\leq c(y)$ holds
and {\em antitone}, if $x\leq y\TO c(y)\leq c(x)$ holds. 
We use the same terminology for binary maps $\Box:X\times X\to X$ with respect to individual arguments.
We use the usual concepts of a {\em suprema} (also called a least upper bound) and an {\em infima} (also called a greatest lower bound) for
preordered sets in the usual way, and we note that on a preordered space they are only uniquely determined up to equivalence in the case of existence.
If one has a preordered space $(X,\leq)$ and one identifies all equivalent elements with each other, then one obtains a quotient structure $(X/\!\!\equiv,\leq)$,
which is a {\em partially ordered space}, i.e., the resulting order is a preorder that is additionally anti-symmetric. 
A {\em lattice} $(X,\leq,\wedge,\vee)$ is a partially ordered set together with a supremum operation $\vee$ and an infimum operation $\wedge$.
If $\leq_c$ is a preorder on $X$ and  $c:X\to X$ a map, then we say that $c$ {\em generates} $\leq_c$ on $(X,\leq)$ if 
$x\leq_c y\iff x\leq c(y)$ holds for all $x,y\in X$.
The following result is straightforward to prove. It shows how closure operators act on lattices
and preordered spaces.

\begin{proposition}[Closure operators]
\label{prop:closure-operators}
Let $(X,\leq)$ be a preordered space with two closure operators $c,c':X\to X$
and binary operations $\Box,\vee,\wedge:X\times X\to X$. Then
\begin{enumerate}
\item $x\leq_c y:\iff x\leq c(y)\iff c(x)\leq c(y)$ defines a preorder that satisfies $x\leq y\TO x\leq_c y$ for all $x,y\in X$,
\item $x\equiv_c y:\iff (x\leq_c y$ and $y\leq_c x)$ defines an equivalence relation,
\item $\Box_c:X\times X\to X,(x,y)\mapsto c(x)\Box c(y)$ shares corresponding monotonicity properties as $\Box$, more precisely:
\begin{enumerate}
\item if $\Box$ is monotone (antitone) in one argument with respect to $\leq$, then so is $\Box_c$ in the same argument with respect to $\leq_c$,
\item if $\wedge$ is an infimum with respect to $\leq$, then so is $\wedge_c$ with respect to $\leq_c$,
\item if $\vee$ is a supremum with respect to $\leq$, then so is $\vee_c$ with respect to $\leq_c$. 
\end{enumerate}
\item If $(X,\leq,\wedge,\vee)$ is a lattice, then so is $(X/\!\!\equiv_c,\leq_c,\wedge_c,\vee_c)$.
\item $c'\circ c:X\to X$ is monotone with respect to $\leq$ and $\leq_c$.
\end{enumerate}
\end{proposition}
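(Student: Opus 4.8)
The plan is to dispatch the five parts in the stated order, using throughout the single observation that $c(x)\equiv_c x$ for every $x\in X$: unfolding the definition of $\leq_c$, the inequality $x\leq_c c(x)$ amounts to $c(x)\leq cc(x)$, which is condition~(1) of Definition~\ref{def:closure-operator}, while $c(x)\leq_c x$ amounts to $cc(x)\leq c(x)$, which is condition~(2). Thus $\leq_c$ depends only on the $c$-images of its arguments, and replacing an argument by its $c$-image never affects a statement phrased purely in terms of $\leq_c$. For part~(1), I would first check the equivalence $x\leq c(y)\iff c(x)\leq c(y)$ built into the claim: from left to right, monotonicity of $c$ (condition~(3)) followed by condition~(2) gives $c(x)\leq cc(y)\leq c(y)$; conversely, condition~(1) gives $x\leq c(x)$ and transitivity of $\leq$ finishes. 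Reflexivity of $\leq_c$ is then condition~(1), transitivity of $\leq_c$ is transitivity of $\leq$ applied to $c$-images, and $x\leq y\TO x\leq_c y$ follows from monotonicity of $c$ together with $y\leq c(y)$. Part~(2) is immediate, since the symmetrization of any preorder is an equivalence relation.

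For part~(3) I would argue uniformly. If $\Box$ is monotone (respectively antitone) in, say, the first argument with respect to $\leq$ and $x\leq_c x'$, i.e.\ $c(x)\leq c(x')$, then $c(x)\Box c(y)\leq c(x')\Box c(y)$ (respectively with the inequality reversed), hence $\Box_c(x,y)\leq\Box_c(x',y)$ and therefore $\Box_c(x,y)\leq_c\Box_c(x',y)$ by part~(1); this is~(3a). For~(3b), if $\wedge$ is an infimum for $\leq$, then $\wedge_c(x,y)=c(x)\wedge c(y)\leq c(x)$ yields $\wedge_c(x,y)\leq_c c(x)\equiv_c x$, so $\wedge_c(x,y)$ is a $\leq_c$-lower bound of $x$ and $y$; and if $z\leq_c x$ and $z\leq_c y$, i.e.\ $c(z)\leq c(x)$ and $c(z)\leq c(y)$, the infimum property of $\wedge$ gives $c(z)\leq c(x)\wedge c(y)=\wedge_c(x,y)$, hence $z\leq_c\wedge_c(x,y)$. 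Part~(3c) is the order dual.

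For parts~(4) and~(5): since $\wedge$ and $\vee$ are monotone in each argument, (3a) makes $\wedge_c$ and $\vee_c$ monotone in each argument with respect to $\leq_c$, hence compatible with $\equiv_c$, so they descend to well-defined operations on the quotient $X/\!\!\equiv_c$, on which $\leq_c$ is a genuine partial order; their infimum and supremum properties are exactly~(3b) and~(3c), so $(X/\!\!\equiv_c,\leq_c,\wedge_c,\vee_c)$ is a lattice. For~(5), monotonicity of $c'\circ c$ with respect to $\leq$ is two successive applications of condition~(3), for $c$ and then for $c'$; and if $x\leq_c y$, then $c(x)\leq c(y)$, so $c'c(x)\leq c'c(y)$ by the case just settled, whence $c'c(x)\leq_c c'c(y)$ by part~(1). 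I do not anticipate any real obstacle here, as the statement is deliberately elementary; the only two places that call for a moment's care — the equivalence $x\leq c(y)\iff c(x)\leq c(y)$ and the well-definedness of $\wedge_c,\vee_c$ on the quotient — both reduce to the observation $c(x)\equiv_c x$ isolated at the outset.
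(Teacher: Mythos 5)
Your proposal is correct and follows essentially the same elementary verification as the paper: unwind the definition of $\leq_c$ using the three closure-operator axioms, check the monotonicity and infimum/supremum claims directly, and derive (4) and (5) from the earlier parts. If anything, your write-up is slightly more careful than the paper's in (3b), where you explicitly verify that $\wedge_c(x,y)$ is itself a $\leq_c$-lower bound rather than only that it lies above every lower bound.
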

\begin{proof}
(1) Reflexivity of $\leq_c$ follows from $x\leq c(x)$, transitivity from monotonicity of $c$ together with $cc(x)\leq c(x)$.
It is also clear that $x\leq c(y)\iff c(x)\leq c(y)$ holds. 
Finally, $x\leq y\TO x\leq_c y$ holds as $c$ is monotone.\\
(2) Is obvious. \\
(3) (a) Suppose $\Box$ is  antitone in the first argument and $x_1,x_2,y\in X$ with $x_1\leq_c x_2$.
Then $c(x_1)\leq c(x_2)$ and hence
$c(x_2)\Box c(y)\leq c(x_1)\Box c(y)$, since $\Box$ is antitone in the first argument. 
Hence $x_2\Box_c y\leq x_1\Box_c y\leq c(x_1\Box_c y)$, which means $x_2\Box_c y\leq_c x_1\Box_c y$, i.e., $\Box_c$ is antitone
in the first argument with respect to $\leq_c$. The other cases are treated analogously.\\
(b), (c) If $\wedge$ is an infimum with respect to $\leq$ and $x,y\in X$, then $x\wedge y\leq x$ and $x\wedge y\leq y$ and hence $x\wedge y\leq_c x$ and $x\wedge y\leq_c y$.
Hence $x \wedge y$ is a lower bound of $x$ and $y$ with respect $\leq_c$.
Let now $z\in X$ be such that $z\leq_c x$ and $z\leq_c y$. 
Then $z\leq c(x)$ and $z\leq c(y)$, which implies $z\leq c(x)\wedge c(y)$ and hence $z\leq_c x\wedge_c y$. 
This means that $x\wedge_c y$ is above every lower bound
of $x$ and $y$ with respect to $\leq_c$ and hence it is an infimum with respect to $\leq_c$.
The statement for suprema can be proved analogously.\\
(4) This follows from (1)--(3).\\
(5) If $x\leq y$, then $c'c(x)\leq c'c(y)$ follows.
If $x\leq_c y$, then $cc'c(x)\leq cc'c(y)$ follows and hence $c'c(x)\leq_c c'c(y)$.
\end{proof}

We also need to deal with situations where a closure operator respects certain underlying algebraic operations
or other closure operators. Hence, we use the following terminology.

\begin{definition}[Preservation]
\label{def:preservation}
Let $(X,\leq)$ be a preordered space with closure operators $c,c':X\to X$
and a binary operation $\Box:X\times X\to X$. 
\begin{enumerate}
\item We say that $c$ {\em is preserved by} $\Box$ if $c(x\Box y)\leq c(x)\Box c(y)$ for all $x,y\in X$.
\item We say that $c$ {\em is co-preserved by} $\Box$ if $c(x)\Box c(y)\leq c(x\Box y)$ for all $x,y\in X$.
\item We say that $c$ {\em is preserved by} $c'$ if $c\circ c'(x)\leq c'\circ c(x)$ for all $x\in X$.
\end{enumerate}
\end{definition} 

Whenever a closure operator is preserved by a certain operation, then we can draw certain conclusions.
The proof of the following result is straightforward.

\begin{proposition}[Preservation]
\label{prop:preservation}
Let $(X,\leq)$ be a preordered space with closure operators $c,c':X\to X$ and a binary monotone operation $\Box:X\times X\to X$.
\begin{enumerate}
\item If $c$ is preserved by $\Box$, then for all $x,y\in X$
\[c(x\Box y)\leq c(x)\Box c(y)\equiv c(c(x)\Box c(y)).\]
In particular, $x\Box y$ is closed if $x$ and $y$ are.
\item If $c$ is co-preserved by $\Box$, then for all $x,y\in X$
\[c(x)\Box c(y)\leq c(x\Box y)\leq c(c(x)\Box c(y)).\]
\item If $c$ is preserved by $c'$, then for all $x\in X$
\[cc'(x)\leq c'c(x)\equiv cc'c(x).\]
In particular, $c'c$ is a closure operator with respect to $\leq_c$ and $\leq$, and $c'(x)$ is closed with respect to $c$ if $x$ is so.
\end{enumerate}
\end{proposition}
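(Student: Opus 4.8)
The statement to prove is Proposition~\ref{prop:preservation}, which collects three consequences of the preservation notions introduced in Definition~\ref{def:preservation}. The plan is to verify each of the three items by a short chain of inequalities in the preorder $\leq$, using only the closure axioms from Definition~\ref{def:closure-operator}, monotonicity of $\Box$, and the relevant preservation hypothesis; no use of Proposition~\ref{prop:closure-operators} is needed, though item~(3) will re-derive a closure operator, which is consistent with part~(5) there.

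For item~(1), assume $c$ is preserved by $\Box$, i.e.\ $c(x\Box y)\leq c(x)\Box c(y)$. The only nontrivial point is the equivalence $c(x)\Box c(y)\equiv c(c(x)\Box c(y))$. The direction $c(x)\Box c(y)\leq c(c(x)\Box c(y))$ is just the first closure axiom. For the converse, apply the preservation hypothesis with $x$ replaced by $c(x)$ and $y$ by $c(y)$ to get $c(c(x)\Box c(y))\leq c(c(x))\Box c(c(y))$, and then use axiom~(2) of the closure operator ($cc(x)\leq c(x)$, $cc(y)\leq c(y)$) together with monotonicity of $\Box$ to conclude $c(c(x))\Box c(c(y))\leq c(x)\Box c(y)$. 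Finally, ``$x\Box y$ is closed if $x$ and $y$ are'' follows: if $c(x)\leq x$ and $c(y)\leq y$ then by monotonicity of $\Box$ and the preservation hypothesis $c(x\Box y)\leq c(x)\Box c(y)\leq x\Box y$.

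For item~(2), assume $c$ is co-preserved by $\Box$, i.e.\ $c(x)\Box c(y)\leq c(x\Box y)$. The first inequality is the hypothesis itself; for the second, $c(x\Box y)\leq c(c(x)\Box c(y))$, apply $c$ (which is monotone) to the hypothesis to get $c(c(x)\Box c(y))\leq cc(x\Box y)$ — wait, that goes the wrong way, so instead I argue directly: from $x\leq c(x)$, $y\leq c(y)$ and monotonicity of $\Box$ we get $x\Box y\leq c(x)\Box c(y)$, and then applying the monotone map $c$ yields $c(x\Box y)\leq c(c(x)\Box c(y))$, as desired.

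For item~(3), assume $c$ is preserved by $c'$, i.e.\ $cc'(x)\leq c'c(x)$. The inequality $cc'(x)\leq c'c(x)$ is the hypothesis; for the equivalence $c'c(x)\equiv cc'c(x)$, the direction $c'c(x)\leq cc'c(x)$ is axiom~(1) applied to $c'c(x)$, and the direction $cc'c(x)\leq c'c(x)$ comes from instantiating the hypothesis at $c(x)$, giving $cc'(c(x))\leq c'c(c(x))=c'cc(x)$, and then using $cc(x)\leq c(x)$ with monotonicity of $c'$ to get $c'cc(x)\leq c'c(x)$. To see that $c'c$ is a closure operator with respect to $\leq$: extensivity is $x\leq c(x)\leq c'c(x)$; monotonicity is composition of monotone maps; idempotency with respect to $\leq$ is $(c'c)(c'c)(x)=c'cc'c(x)\leq c'c'cc(x)\leq c'c'c(x)\leq c'c(x)$, using the hypothesis (applied at $c(x)$) for the first step, $cc(x)\leq c(x)$ for the second, and $c'c'(z)\leq c'(z)$ for the last. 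That $c'c$ is also a closure operator with respect to $\leq_c$ is then Proposition~\ref{prop:closure-operators}(5) combined with the idempotency just shown (or re-checked directly the same way). Finally, ``$c'(x)$ is closed with respect to $c$ if $x$ is'': if $c(x)\leq x$ then by monotonicity of $c'$ we have $c'c(x)\leq c'(x)$, and combining with the hypothesis gives $cc'(x)\leq c'c(x)\leq c'(x)$. None of this is hard; the only mild subtlety — and thus the ``main obstacle'', such as it is — is being careful about which argument the preservation hypothesis is instantiated at (the closed elements $c(x)$, $c'(x)$ rather than $x$ itself), so that the closure axioms can then be applied in the right direction.
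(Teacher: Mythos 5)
Your proof is correct and takes essentially the same route as the paper's: each claim is verified directly from the closure axioms, monotonicity of $\Box$ (resp.\ of $c'$), and the preservation hypothesis instantiated at the closed elements $c(x)$, $c(y)$, $c(x)$, including the chain $c'cc'c(x)\leq c'c'c(x)\leq c'c(x)$ for idempotency of $c'c$. The only difference is cosmetic: you spell out the steps (and the ``in particular'' claims) that the paper dispatches tersely with an appeal to the closure axioms and Proposition~\ref{prop:closure-operators}, and the brief false start in item~(2) is correctly repaired in your final argument.
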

\begin{proof}
The equivalences in (1) and (3) are consequences of the respective first relations and the fact that $c$ is a closure operator.
For the second relation in (2) we just use that $c$ is a closure operator.
It is clear that $c'(x)$ is closed if $x$ is closed. That $c'c$ is monotone with respect to $\leq_c$ follows from Proposition~\ref{prop:closure-operators}.
Clearly, also $x\leq_c c'c(x)$ holds. Finally, $c'cc'c(x)\leq c'c'c(x)\leq c'c(x)$ and hence $c'cc'c(x)\leq_cc'c(x)$. 
\end{proof}

If the binary operation is a supremum or an infimum operation, then it is always
preserved in certain ways.

\begin{proposition}[Preservation of suprema and infima]
\label{prop:preservation-suprema-infima}
Let $(X,\leq)$ be a preordered space with a closure operator $c:X\to X$,
and binary operations $\vee,\wedge:X\times X\to X$.
\begin{enumerate}
\item If $\vee$ is a supremum operation, then it co-preserves $c$.
\item If $\wedge$ is an infimum operation, then it preserves $c$.
\end{enumerate}
In particular, $x\vee y\equiv_c x\vee_c y$, if $\vee$ is a supremum.
\end{proposition}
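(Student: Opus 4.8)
The plan is to derive all three claims directly from the universal (extremal) properties of suprema and infima together with the monotonicity of the closure operator $c$; no auxiliary construction is needed.

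First I would prove part (1). Since $\vee$ is a supremum operation, $x\leq x\vee y$ and $y\leq x\vee y$ hold for all $x,y\in X$. Applying the monotone map $c$ gives $c(x)\leq c(x\vee y)$ and $c(y)\leq c(x\vee y)$, so $c(x\vee y)$ is an upper bound of $c(x)$ and $c(y)$. As $c(x)\vee c(y)$ is a least upper bound of $c(x)$ and $c(y)$, this yields $c(x)\vee c(y)\leq c(x\vee y)$, which is exactly the assertion that $\vee$ co-preserves $c$ in the sense of Definition~\ref{def:preservation}. Part (2) is the order dual: from $x\wedge y\leq x$ and $x\wedge y\leq y$ and monotonicity of $c$ we get that $c(x\wedge y)$ is a lower bound of $c(x)$ and $c(y)$, hence lies below the greatest lower bound $c(x)\wedge c(y)$; thus $c(x\wedge y)\leq c(x)\wedge c(y)$, i.e. $\wedge$ preserves $c$.

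For the final claim, recall that $x\vee_c y=c(x)\vee c(y)$. For one direction I would use $x\leq c(x)$ and $y\leq c(y)$ together with monotonicity of the supremum operation (which is automatic: any upper bound of $c(x),c(y)$ is one of $x,y$), giving $x\vee y\leq c(x)\vee c(y)=x\vee_c y$, and hence $x\vee y\leq_c x\vee_c y$ because $\leq$ refines $\leq_c$ by Proposition~\ref{prop:closure-operators}(1). For the reverse direction I would invoke part (1): $x\vee_c y=c(x)\vee c(y)\leq c(x\vee y)$, which is precisely $x\vee_c y\leq_c x\vee y$. Combining the two gives $x\vee y\equiv_c x\vee_c y$. (Alternatively this clause follows at once from Proposition~\ref{prop:preservation}(2) applied with $\Box=\vee$.)

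I do not anticipate a real obstacle here; the statement is essentially a formal consequence of the extremal characterizations. The only point requiring a little care is that in a preordered — as opposed to partially ordered — space, suprema and infima are determined only up to $\equiv$, so phrases like ``$c(x)\vee c(y)$ is a least upper bound'' must be read as ``for any choice of least upper bound'', and all (in)equalities between such expressions are understood up to equivalence; with that convention the argument goes through verbatim.
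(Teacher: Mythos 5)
Your proof is correct and follows essentially the same route as the paper: derive co-preservation (resp.\ preservation) from the extremal property of $\vee$ (resp.\ $\wedge$) plus monotonicity of $c$, and obtain $x\vee y\equiv_c x\vee_c y$ by combining part (1) with $x\vee y\leq c(x)\vee c(y)$. Your remark about suprema in a preorder being determined only up to equivalence is a fair (and correct) precaution that the paper leaves implicit.
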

\begin{proof}
Since $x\vee y$ is a supremum, we obtain $c(x)\leq c(x\vee y)$ and $c(y)\leq c(x\vee y)$ due to monotonicity
of $c$. Hence $c(x)\vee c(y)\leq c(x\vee y)$, which means that $\vee$ co-preserves $c$.
The statement for $\wedge$ can be proved analogously. That $\vee$ co-preserves $c$ means $x\vee_c y\leq_c x\vee y$.
We also have $x\vee y\leq c(x)\vee c(y)$, i.e., $x\vee y\leq_c x\vee_c y$.
\end{proof}

We note that this result implies that the we can replace $\vee_c$ by $\vee$ in Proposition~\ref{prop:closure-operators}.

In the following we will have to deal with lattices that have some additional algebraic operations
and we propose the following concept that encapsulates a structure that we will see in different variations.

\begin{definition}[Weihrauch algebra]
\label{def:Weihrauch-algebra}
We call $(X,\leq,\wedge,\vee,\cdot,\to,1,\bot,\top)$ a {\em Weihrauch algebra} if the following hold:
\begin{enumerate}
\item $(X,\leq,\wedge,\vee)$ is a bounded lattice with bottom $\bot$ and top $\top$.\hfill (Lattice)
\item $(X,\cdot,1)$ is a monoid with neutral element $1$. \hfill (Monoid)
\item $\cdot:X\times X\to X$ is monotone in both components.\hfill (Monotonicity)
\item $\to:X\times X\to X$ is monotone in the second component, antitone in the first component. \hfill (Monotonicity)
\item $x\leq y\cdot z\TO (y\to x)\leq z$ holds for all $x,y,z\in X$. \hfill (Implication)
\end{enumerate}
A Weihrauch algebra is called {\em commutative}, if $\cdot$ is commutative, and it is called
{\em deductive}, if ``$\iff$'' holds instead of ``$\TO$'' in (5).
\end{definition}

One could add additional distributivity requirements to this definition.
Structures that satisfy (1), (2) and  (3) have also been called {\em lattice-ordered monoids}.
Using these building blocks, we can define structures that have been already considered
for other purposes.

\begin{definition}[Algebras]
\label{def:algebras}
Let $\XX=(X,\leq,\wedge,\vee,\cdot,\to,1,\bot,\top)$ be a Weihrauch algebra.
We call $\XX$ a {\em Troelstra algebra} if it is commutative and deductive.
If, additionally, $\cdot=\vee$ and $1=\bot$, then $\XX$ is called a {\em Brouwer algebra}.
\end{definition}

If we denote a Brouwer algebra as a tuple, then we omit the double occurrence of $\cdot=\vee$ and $1=\bot$, respectively.
What we call a Troelstra algebra  is exactly what Troelstra~\cite{Tro92a} called an {\em intuitionistic linear algebra},
except that the order is reversed. 
A bottom element in our sense is not required in Troelstra's axioms, but it always exists by \cite[Lemma~8.3]{Tro92a}. 
The relevance of Troelstra algebras is that they form sound and complete models of intuitionistic linear logic~\cite[Theorem 8.15]{Tro92a}.
In an analogous sense Brouwer algebras (that are just defined dually to Heyting algebras\footnote{The term Brouwer algebra is used in different versions in different references, we mean by a {\em Brouwer algebra} just the dual
concept of a Heyting algebra, as usual in computability theory~\cite{Sor96}.}) are known as models of intermediate logics,
i.e., predicate logics between classical logic and intuitionistic logic~\cite{GJKO07}.

A {\em Brouwer algebra embedding} is an injective map from one Brouwer algebra to another one that 
is monotone in both directions, preserves suprema, infima, implications and the bottom and top elements.

In the case of a deductive Weihrauch algebra the condition (5) can be seen as a law of (co-)residuation.
We need to add the prefix ``co-'' as residuation is normally considered in the opposite order~\cite{GJKO07}. 

\begin{definition}[Co-residuation]
Let $(X,\leq)$ be a preordered set with a binary operation $\cdot:X\times X\to X$.
Then we call $\cdot$ {\em right co-residuated}, if there is a binary operation
$\to:X\times X\to X$ such that
\[x\leq y\cdot z\iff(y\to x)\leq z\]
holds for all $x,y,z\in X$. Analogously, we call $\cdot$ {\em left co-residuated}, if an
analogous condition holds with $z\cdot y$ in place of $y\cdot z$.
Finally, $\cdot$ is called {\em co-residuated} if and only if it is left and right co-residuated.
\end{definition}

Hence, a deductive Weihrauch algebra is a right co-residuated lattice-ordered monoid
and a Troelstra algebra is a co-residuated lattice-ordered monoid.

\section{Precomplete Representations}
\label{sec:precomplete}

We will need some pairing functions in the following. Firstly, we define a pairing function $\pi:\IN^\IN\times\IN^\IN\to\IN^\IN,(p,q)\mapsto\langle p,q\rangle$
by $\langle p,q\rangle(2n):=p(n)$ and $\langle p,q\rangle(2n+1):=q(n)$ for $p,q\in\IN^\IN$ and $n\in\IN$.
We define a pairing function of type $\langle,\rangle:(\IN^\IN)^\IN\to\IN^\IN$ by
$\langle p_0,p_1,p_2,...\rangle\langle n,k\rangle:=p_n(k)$ for all $p_i\in\IN^\IN$ and $n,k\in\IN$,
where $\langle n,k\rangle$ is the standard Cantor pairing defined by $\langle n,k\rangle:=\frac{1}{2}(n+k+1)(n+k)+k$. 
Finally, we note that by $np$ we denote the concatenation of a number $n\in\IN$ with a sequence $p\in\IN^\IN$.
By $\pi_i:\IN^\IN\to\IN^\IN,\langle p_0,p_1,p_2,...\rangle\mapsto p_i$ we denote the projection on the $i$--th
component of a tuple and we also use the binary tupling functions $\pi_1\langle p,q\rangle=p$ and $\pi_2\langle p,q\rangle=q$.
It will always be clear from the context whether we apply these functions in a countable or binary setting.

We recall that a {\em represented space} $(X,\delta)$ is a set $X$ together with
a surjective (partial) map $\delta:\In\IN^\IN\to X$, called the {\em representation} of $X$.
For the purposes of our topic so-called {\em precomplete representations}
are important. They were introduced by Kreitz and Weihrauch \cite{KW85} following
the concept of a precomplete numbering, that was originally introduced by Er{\v{s}}ov~\cite{Ers99}.

\begin{definition}[Precompleteness]\rm
A representation $\delta:\In\IN^\IN\to X$ is called {\em precomplete},
if for any computable function $F:\In\IN^\IN\to\IN^\IN$ there exists
a total computable function $G:\IN^\IN\to\IN^\IN$ such that
$\delta F(p)=\delta G(p)$ for all $p\in\dom(F)$.
\end{definition}

In this situation we also say that the represented space $(X,\delta)$ is {\em precomplete}.
We point out that we demand that the equation in the definition holds for all $p\in\dom(F)$, not
only for $p\in\dom(\delta F)$.
The precomplete representations are exactly those that satisfy
a certain version of the recursion theorem~\cite{KW85}.
For us they are relevant since we are going to work with
total functions. It is clear that not all represented spaces are precomplete.
By ${\id:\IN^\IN\to\IN^\IN}$ we denote the {\em identity} of Baire space. 
For other sets $X$ we usually add an index $X$ and write the {\em identity} as $\id_X:X\to X$.
By $\widehat{n}:=nnn...\in\IN^\IN$ we denote the constant sequence with value $n\in\IN$.

\begin{example}
\label{ex:precompleteness}
There are partial computable functions $F:\In\IN^\IN\to\IN^\IN$ without total computable
extension, such as the function defined by $F(p)=\widehat{n}:\iff p$ starts with exactly $n$ digits $0$,
where $\dom(F)=\{0^np:n\in\IN,p\in\IN^\IN,p(0)\not=0\}$.
This shows that the represented space $(\IN^\IN,\id)$ is not precomplete.
\end{example}

However, it is not too hard to see that in every equivalence class of representations there is a precomplete representation.\footnote{This result is due to Matthias Schr\"oder (personal communication 2009), see the construction
in \cite[Lemmas~4.2.10, 4.2.11, Section~4.2.5]{Sch02c}.}
We recall that for two representations $\delta_1,\delta_2$ of the same set $X$
we say that $\delta_1$ is {\em computably reducible} to $\delta_2$, in symbols $\delta_1\leq\delta_2$,
if and only if there is a computable $F:\In\IN^\IN\to\IN^\IN$ such that $\delta_1=\delta_2F$.
We denote the corresponding equivalence by $\equiv$. 
For $p\in\IN^\IN$ we denote by $p-1\in\IN^\IN\cup\IN^*$ the sequence or word that is formed as concatenation of 
$p(0)-1$, $p(1)-1$, $p(2)-1$,... with the understanding that $-1=\varepsilon$ is the empty word.

\begin{definition}[Precompletion]
\label{def:precompletion}
Let $(X,\delta_X)$ be a represented space. Then the {\em precompletion} $\delta_X^\wp$ of
$\delta_X$ is defined by $\delta_X^\wp(p):=\delta_X(p-1)$ for all $p\in\IN^\IN$ such that
$p-1\in\dom(\delta_X)$.
\end{definition}

We note that the identity $\id:\IN^\IN\to\IN^\IN$, considered as a representation of $\IN^\IN$,
has the precompletion $\id^\wp:\In\IN^\IN\to\IN^\IN$ with $\id^\wp(p):=p-1$ and
in general $\delta_X^\wp=\delta_X\circ\id^\wp$.
Now we can prove the following result.

\begin{proposition}[Precompleteness]
\label{prop:precompleteness}
Let $(X,\delta_X)$ be a represented space. The precompletion $\delta_X^\wp$ of $\delta_X$
is precomplete and satisfies $\delta_X^\wp\equiv\delta_X$.
\end{proposition}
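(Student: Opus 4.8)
The plan is to prove the two assertions of Proposition~\ref{prop:precompleteness} separately: first that $\delta_X^\wp\equiv\delta_X$, and then that $\delta_X^\wp$ is precomplete.

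For the equivalence $\delta_X^\wp\equiv\delta_X$, I would exhibit computable translations in both directions. For $\delta_X^\wp\leq\delta_X$ we use $\delta_X^\wp=\delta_X\circ\id^\wp$, and $\id^\wp:\In\IN^\IN\to\IN^\IN,p\mapsto p-1$ is computable (it reads each $p(n)$ and outputs $p(n)-1$ copies of... actually outputs the concatenation of the words $p(n)-1$), so this direction is immediate. For the converse $\delta_X\leq\delta_X^\wp$ we need a computable $F$ with $\delta_X=\delta_X^\wp F$; here we take $F(p):=p+1$, the sequence obtained by adding $1$ to every component of $p$, so that $F(p)-1=p$ and hence $\delta_X^\wp(F(p))=\delta_X((F(p))-1)=\delta_X(p)$ whenever $p\in\dom(\delta_X)$. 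Since both maps are computable this gives $\delta_X^\wp\equiv\delta_X$.

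For precompleteness, let $F:\In\IN^\IN\to\IN^\IN$ be computable; we must produce a total computable $G:\IN^\IN\to\IN^\IN$ with $\delta_X^\wp F(p)=\delta_X^\wp G(p)$ for all $p\in\dom(F)$. The idea is that the representation $\delta_X^\wp$ is designed so that $-1$ of a value ignores the positions where $p(n)=0$; equivalently, $\delta_X^\wp(q)=\delta_X^\wp(q')$ whenever $q'$ arises from $q$ by inserting or deleting $0$'s. So the natural $G$ computes $F(p)$ bit by bit, but whenever $F$ has not yet produced its next output symbol after reading finitely many symbols of $p$, $G$ outputs a $0$ (a ``stalling'' symbol) and keeps trying; and once $F$ produces a symbol $m$, $G$ outputs $m+1$. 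This $G$ is total and computable: at every stage it either finds the next symbol of $F(p)$ or safely emits a $0$. If $p\in\dom(F)$, then $G(p)$ is $F(p)$ with every symbol incremented by one and with finitely many $0$'s interspersed (before each genuine symbol), so $G(p)-1=F(p)$, giving $\delta_X^\wp G(p)=\delta_X((G(p))-1)=\delta_X(F(p))$; and also $\delta_X^\wp F(p)=\delta_X(F(p)-1)$. Wait — here I must double-check: $\delta_X^\wp F(p)=\delta_X((F(p))-1)$, not $\delta_X(F(p))$. So I should instead have $G$ output, for each symbol $m$ of $F(p)$, the symbol $m$ itself (not $m+1$) when $m\neq 0$ is produced, and stall with... no: the cleanest fix is that $G$ should produce exactly the symbols of $F(p)$, but replace any ``waiting'' step by emitting a symbol $0$, and when $F$ emits a genuine symbol $m$ emit $m+1$ only if we want a value that $-1$-decodes correctly. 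Let me re-examine: we want $(G(p))-1 = (F(p))-1$. Since $(F(p))-1$ is the concatenation of $F(p)(0)-1, F(p)(1)-1,\dots$ and $-1$ of the symbol $0$ is the empty word (and $-1$ of symbol $k+1$ is $k$), it suffices that $G(p)$, after deleting all its $0$-symbols, equals $F(p)$ after deleting all its $0$-symbols. So: $G$ simulates $F$ on $p$; as long as $F$ has not yet produced its next symbol, $G$ outputs $0$; when $F$ outputs a symbol $m$, $G$ outputs $m$ (so $0$-symbols of $F$ and of $G$ both contribute $\varepsilon$ to the $-1$-decoding). Then for $p\in\dom(F)$, deleting $0$'s from $G(p)$ recovers the same thing as deleting $0$'s from $F(p)$ restricted to its nonzero symbols — indeed the nonzero symbols of $G(p)$ are exactly the nonzero symbols of $F(p)$ in the same order — hence $(G(p))-1=(F(p))-1$, so $\delta_X^\wp G(p)=\delta_X((G(p))-1)=\delta_X((F(p))-1)=\delta_X^\wp F(p)$, as required. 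And $G$ is total: at each step it outputs either a $0$ (when still waiting) or the next symbol of $F(p)$.

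The main obstacle is getting the bookkeeping of $G$ exactly right so that it is genuinely total and computable while the $-1$-decoding of its output agrees with that of $F(p)$; the subtlety, as the recomputation above shows, is to be careful about whether one works with $\delta_X(F(p))$ or $\delta_X((F(p))-1)$ and to ensure $G$ never ``commits'' to an output it cannot justify. Once the construction of $G$ is pinned down, verifying $\delta_X^\wp G(p)=\delta_X^\wp F(p)$ on $\dom(F)$ and totality of $G$ are routine.
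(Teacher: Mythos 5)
Your proof is correct and follows essentially the same route as the paper: the equivalence via the translations $p\mapsto p+1$ and $p\mapsto p-1$, and precompleteness by padding the output of the machine computing $F$ with extra $0$-symbols (which the $-1$-decoding erases), so that the resulting total $G$ satisfies $(G(p))-1=(F(p))-1$ and hence $\delta_X^\wp G(p)=\delta_X^\wp F(p)$ on $\dom(F)$. The paper phrases the padding as emitting a $0$ after every fixed number of computation steps rather than while waiting for the next symbol, but this is the same idea.
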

\begin{proof}
The computable function $F:\IN^\IN\to\IN^\IN,p\mapsto p+1$ satisfies $\delta_X(p)=\delta_X^\wp F(p)$ and hence it witnesses $\delta_X\leq\delta_X^\wp$.
The computable function $G:\In\IN^\IN\to\IN^\IN,p\mapsto p-1$ satisfies $\delta_X^\wp(p)=\delta_XG(p)$ and hence it witnesses $\delta_X^\wp\leq\delta_X$.
Altogether $\delta_X^\wp\equiv\delta_X$. We need to prove that $\delta_X^\wp$ is precomplete. Let $F:\In\IN^\IN\to\IN^\IN$ be computable and let
$M$ be a Turing machine that computes $F$. We modify this machine such that it never halts and after every $n$ steps for some suitable fixed number $n\in\IN$
the machine writes a $0$ on the output tape, irrespective of the input. Otherwise the machine is left unchanged. Then the modified machine computes a total
function $G:\IN^\IN\to\IN^\IN$ with $F(p)-1=G(p)-1$ and hence $\delta_X^\wp F(p)=\delta_X^\wp G(p)$ for all $p\in\dom(F)$.
\end{proof}

%

We will also need the fact that other classes of functions can be extended to total
ones under precomplete representations. Hence we introduce the following concept.

\begin{definition}[Respect for precompleteness]
We say that a set $P$ of functions $F:\In\IN^\IN\to\IN^\IN$ {\em respects precompleteness},
if for every precomplete representation $\delta$ and any function $F\in P$ there
exists a total function $G\in P$ such that $\delta F(p)=\delta G(p)$ for all $p\in\dom(F)$.
\end{definition}

It is clear that the set of computable functions respects precompleteness by definition.
However, also other classes of functions do. 
Some of them, simply because they can already be extended to total functions in the same class irrespectively of the representation.
We provide a number of examples.
We call a function {\em non-uniformly computable} if it maps all computable inputs in its domain
to computable outputs. By $\J:\IN^\IN\to\IN^\IN,p\mapsto p'$ we denote the
{\em Turing jump operator} and by $U:\In\IN^\IN\to\IN^\IN$ a {\em universal computable function}
such that for every continuous function $F:\In\IN^\IN\to\IN^\IN$ there is a $q\in\IN^\IN$
with $F(p)=U\langle q,p\rangle$ for all $p\in\dom(F)$~\cite[Theorem~2.3.8]{Wei00}.

\begin{proposition}[Respect for precompleteness]
\label{prop:respect-precompleteness}
The following classes of partial functions $F:\In\IN^\IN\to\IN^\IN$ respect precompleteness:
computable, continuous, limit computable, Borel measurable and non-uniformly computable.
\end{proposition}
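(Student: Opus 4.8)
The plan is to prove Proposition~\ref{prop:respect-precompleteness} class by class, splitting the five cases according to two different mechanisms. For the classes that are \emph{closed under extension to total functions already at the level of the ambient space} --- that is, every partial function in the class admits a total extension in the same class, regardless of representation --- the statement is immediate: if $F\In\IN^\IN\to\IN^\IN$ extends to a total $G$ in the class with $G|_{\dom(F)}=F$, then a fortiori $\delta F(p)=\delta G(p)$ for all $p\in\dom(F)$ and every representation $\delta$, precomplete or not. I would point out that this handles the \emph{continuous}, \emph{Borel measurable} and \emph{non-uniformly computable} cases. For continuity one uses that a partial continuous function $F$ on a subset of Baire space can be written as $F(p)=U\langle q,p\rangle$ for a fixed $q$, and then $p\mapsto U\langle q,p\rangle$ is a partial continuous function whose domain one can enlarge to all of $\IN^\IN$ by the standard trick (e.g.\ using the universal function together with a total continuous selector, or directly: a continuous partial function extends to a continuous total function on Baire space since $\IN^\IN$ is zero-dimensional and the domain of a continuous partial function is a $G_\delta$, or more simply by composing with a retraction onto the domain's closure and patching). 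For Borel measurability, any partial Borel function extends to a total Borel function by assigning, say, a fixed value on the complement of the domain; the graph stays Borel. For non-uniformly computable functions the same constant-extension trick works, since extending by a fixed computable value (say $\widehat 0$) off the domain keeps computable inputs going to computable outputs.

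The genuinely representation-dependent cases are \emph{computable} and \emph{limit computable}, and here one must actually use precompleteness. For \emph{computable} $F$, I would essentially re-run the argument already given in the proof of Proposition~\ref{prop:precompleteness}: take a Turing machine $M$ computing $F$, and for precompleteness of $\delta$ we are allowed to invoke the defining property of precomplete representations directly --- that is the \emph{definition} of precompleteness, so for computable $F$ there is by definition a total computable $G$ with $\delta F(p)=\delta G(p)$ on $\dom(F)$. So the computable case is trivial from the definition of ``precomplete''. For \emph{limit computable} $F$ (i.e.\ $F=\lim\circ H$ for a computable $H$, equivalently $F$ is computable relative to the halting problem, equivalently $J\circ F$ is, in the appropriate sense $F\leqW\lim$), the idea is to reduce to the computable case: write $F=\Phi^{\emptyset'}$ in the sense that there is a computable $\widetilde F\In\IN^\IN\to\IN^\IN$ with $F(p)=\widetilde F\langle p,\emptyset'\rangle$, or more intrinsically use that $\delta$ being precomplete implies $\delta$ is also precomplete in a relativized sense; then apply the recursion-theoretic extension property of precomplete representations relative to $\emptyset'$. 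Concretely I expect the cleanest route is: $F$ limit computable means there is a computable $H\In\IN^\IN\to\IN^\IN$ with $F(p)=\lim_n H(p)$ (limit taken componentwise, where it exists). One builds a \emph{total} computable $H'$ agreeing with $H$ on $\dom(F)$ up to the $\id^\wp$-decoding --- using exactly the never-halting-with-periodic-zeros modification from the proof of Proposition~\ref{prop:precompleteness} applied to the code stream --- so that $\lim_n H'(p)$ exists for $p\in\dom(F)$ and decodes (via $p\mapsto p-1$ on the relevant coordinate) to the same $\delta$-point; then $G:=\lim\circ H'$ is total limit computable with $\delta F(p)=\delta G(p)$ on $\dom(F)$.

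The main obstacle I anticipate is the \emph{limit computable} case, precisely because there the extension genuinely cannot be done inside the class at the level of the raw space (a partial limit computable function need not have a total limit computable extension with the same values --- the same counterexample flavor as Example~\ref{ex:precompleteness}), so one is forced to exploit that $\delta$ is precomplete, and one has to be careful that the modification producing a total $H'$ does not disturb the \emph{existence} of the limit on $\dom(F)$ while simultaneously arranging that wherever the original limit fails to exist (or lands outside $\dom(\delta)$) the padding still yields \emph{some} $\delta$-name, or at least a name whose $\delta$-value we do not care about. The trick is that we only need agreement on $\dom(F)$, so on the complement $G$ may output anything (even something divergent as a sequence, as long as $G$ itself is total as a function $\IN^\IN\to\IN^\IN$ --- and $\lim\circ H'$ is total on $\IN^\IN$ only if $H'$ converges everywhere, which it need not). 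I would resolve this by phrasing limit computability as $F\leqW\lim$ via a \emph{total} computable outer reduction into a precomplete copy, i.e.\ factoring through the precompletion $\id^\wp$: since $\delta=\delta_X^\wp{}'$ up to equivalence for a precomplete $\delta$, one lifts the known extension property for $\lim$ under precomplete representations (which holds because $\lim$ itself, as a problem, is ``complete'' in the sense discussed in the introduction) to conclude. The remaining cases are routine and I would dispatch them in one or two sentences each as above.
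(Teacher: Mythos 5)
Your two-bucket strategy misclassifies the continuous case, and the justification you give for it is false. A partial continuous $F:\In\IN^\IN\to\IN^\IN$ need \emph{not} have any total continuous extension: the function of Example~\ref{ex:precompleteness} itself, $F(p)=\widehat{n}$ iff $p$ starts with exactly $n$ zeros, is continuous on its (open) domain, yet no total extension can be continuous at $\widehat{0}$, since the values $\widehat{n}$ do not converge in $\IN^\IN$. Moreover the domain of a partial continuous function is an arbitrary subset (continuity is relative to the domain), so neither the ``$G_\delta$'' claim nor a retraction-onto-the-closure patching argument is available. Hence the continuous case genuinely requires precompleteness; the paper's proof writes $F(p)=U\langle q,p\rangle$, applies precompleteness of $\delta$ to the \emph{computable} universal function $U$ to obtain a total computable $V$ with $\delta V(p)=\delta U(p)$ on $\dom(U)$, and sets $G(p):=V\langle q,p\rangle$, which is total continuous. (Your Borel case lands in the right bucket, but ``assign a fixed value off the domain, the graph stays Borel'' presupposes that the domain is a Borel set, which is not given; this is why the paper invokes Kuratowski's extension theorem.)

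The limit computable case is also not closed by your proposal. You rightly observe that modifying the inner computable function in $F=\lim_nH$ fails because $\lim\circ H'$ need not be total, but the offered repairs do not constitute a proof: precompleteness is defined only with respect to computable functions, so a ``relativized'' precompleteness is not available from the hypothesis, and $F(p)=\widetilde F\langle p,\emptyset'\rangle$ is not the right normal form for functions on Baire space (the jump must be taken relative to the input). The missing idea is to factor on the \emph{other} side of the composition: every limit computable $F$ can be written as $F=H\circ\J$ with $H$ computable and $\J$ the total Turing jump operator. Then precompleteness applied to the outer computable $H$ gives a total computable $I$ with $\delta I(p)=\delta H(p)$ for all $p\in\dom(H)$, and $G:=I\circ\J$ is a total limit computable function with $\delta G(p)=\delta F(p)$ for all $p\in\dom(F)$. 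This is exactly how the paper argues, and without it (or an equivalent device) your treatment of the limit computable case is a genuine gap. Your computable and non-uniformly computable cases are fine as stated.
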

\begin{proof}
The statement for computable functions is a consequence of the definition of precompleteness.
Let $\delta$ be a precomplete representation and let $F:\In\IN^\IN\to\IN^\IN$ be a continuous function.
Then there is a $q\in\IN^\IN$ such that $F(p)=U\langle q,p\rangle$ for all $p\in\dom(F)$.
By precompleteness of $\delta$, there is a total function $V:\IN^\IN\to\IN^\IN$
with $\delta V(p)=\delta U(p)$ for all $p\in\dom(U)$. 
Hence $G(p):=V\langle q,p\rangle$ defines a total continuous function with
$\delta G(p)=\delta F(p)$ for all $p\in\dom(F)$. 
This shows that the class of continuous functions respects precompleteness.
For every limit computable function $F:\In\IN^\IN\to\IN^\IN$
there exists a computable function $H:\In\IN^\IN\to\IN^\IN$ such that
$F=H\circ\J$~\cite[Theorem~14]{Bra18}.
By precompleteness of $\delta$ there exists a total computable function $I:\IN^\IN\to\IN^\IN$
such that $\delta I(p)=\delta H(p)$ for all $p\in\dom(H)$. 
Hence $G:=I\circ\J$ is a total function that is limit computable
and satisfies $\delta F(p)=\delta G(p)$ for all $p\in\dom(F)$. 
Hence the class of limit computable functions respects precompleteness.
Every partial Borel measurable function $F:\In\IN^\IN\to\IN^\IN$ can be extended
to a total Borel measurable function by a theorem of Kuratowski (see \cite[Theorem~2.2]{Kec95}).
The class of non-uniformly computable functions respects precompleteness
since every non-uniformly computable function $F:\In\IN^\IN\to\IN^\IN$ can
simply be extended to a total non-uniformly computable function
$G:\IN^\IN\to\IN^\IN$ by defining $G(p)=\widehat{0}=000...$ for all $p\in\IN^\IN\setminus\dom(F)$.
\end{proof}

The proof for limit computable functions (which are exactly the effectively $\SO{2}$--computable functions) can easily be extended to any finite level of the Borel hierarchy.
We prove in \cite[Corollaries~8.4, 9.3]{BG19} that functions that are computable with finitely many mind changes
and low functions do not respect precompleteness.

We also need to study how certain algebraic constructions on represented spaces
behave with respect to precompleteness. For any sets $X$ and $Y$ we denote by 
$X\times Y$ and $X^\IN$ the usual {\em products}, by $X\sqcup Y:=(\{0\}\times X)\cup(\{1\}\times Y)$
the {\em disjoint union} of $X$ and $Y$, by $X^*:=\bigcup_{i=0}^\infty(\{i\}\times X^i)$ the {\em set of words} over $X$,
where $X^i$ denotes the $i$--fold product of $X$ with itself, and $X^0:=\{0\}$.
By $\overline{X}:=X\cup\{\bot\}$ we denote the {\em completion} $X$, 
where we assume that $\bot\not\in X$.

\begin{definition}[Constructions on representations]
\label{def:operations-representations}
Let $(X,\delta_X)$ and $(Y,\delta_Y)$ be represented spaces. We define
\begin{enumerate}
\item $\delta_{X\times Y}:\In\IN^\IN\to X\times Y$, $\delta_{X\times Y}\langle p,q\rangle:=(\delta_X(p),\delta_Y(q))$
\item $\delta_{X\sqcup Y}:\In\IN^\IN\to X\sqcup Y$, $\delta_{X\sqcup Y}(0p):=(0,\delta_X(p))$ and $\delta_{X\sqcup Y}(1p):=(1,\delta_Y(p))$
\item $\delta_{X^*}:\In\IN^\IN\to X^*$, $\delta_{X^*}(n\langle p_1,p_2,...,p_n\rangle):=(n,(\delta_X(p_1),\delta_X(p_2),...,\delta_X(p_n)))$
\item $\delta_{X^\IN}:\In\IN^\IN\to X^\IN$, $\delta_{X^\IN}\langle p_0,p_1,p_2,...\rangle:=(\delta_X(p_n))_{n\in\IN}$ 
\item $\delta_{\overline{X}}:\IN^\IN\to\overline{X}$, $\delta_{\overline{X}}(p):=\delta_X^\wp(p)$ if $p\in\dom(\delta_X^\wp)$ and $\delta_{\overline{X}}(p):=\bot$ otherwise.
\end{enumerate}
\end{definition}

We warn the reader that all these constructions on represented spaces preserve equivalence of representations, except the last one
for the completion. In other words, the equivalence class of $\delta_{\overline{X}}$ does not only depend on the equivalence class of
$\delta_X$, but on the concrete representative $\delta_X$ itself. For our applications this does not cause any problems
(see the remark after Corollary~\ref{cor:monotonicity-completion}; the problem could also be circumvented by moving to multi-valued representations
\cite[Lemma~4.2.11]{Sch02c}).

The next observation is that finite and countable products preserve precompleteness.

\begin{proposition}[Products and precompleteness]
\label{prop:products-precompleteness}
Let $(X,\delta_X)$ and $(Y,\delta_Y)$ be precomplete represented spaces.
Then so are $(X\times Y,\delta_{X\times Y})$ and $(X^\IN,\delta_{X^\IN})$.
\end{proposition}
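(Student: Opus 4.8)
The plan is to reduce the claim to the statement about countable products, since a finite product $X \times Y$ can be treated as a special case of a countable product (or handled directly by an analogous but simpler argument). Concretely, I would first prove that $(X^\IN, \delta_{X^\IN})$ is precomplete, and then indicate how the same idea, restricted to two coordinates, yields precompleteness of $(X \times Y, \delta_{X \times Y})$.

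\textbf{Countable products.} Let $F : \In \IN^\IN \to \IN^\IN$ be computable; we must find a total computable $G$ with $\delta_{X^\IN} F(p) = \delta_{X^\IN} G(p)$ for all $p \in \dom(F)$. The key observation is that $\delta_{X^\IN}\langle p_0, p_1, \ldots \rangle = (\delta_X(p_n))_{n \in \IN}$, so the value $\delta_{X^\IN}(q)$ depends coordinatewise on $\pi_n(q)$. The idea is therefore to postcompose $F$ with the coordinate projections: for each $n$, the map $p \mapsto \pi_n F(p)$ is computable, so by precompleteness of $(X, \delta_X)$ there is a total computable $G_n : \IN^\IN \to \IN^\IN$ with $\delta_X G_n(p) = \delta_X \pi_n F(p)$ for all $p \in \dom(\pi_n F) = \dom(F)$. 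The point I must then check is that these can be combined uniformly: one wants a single total computable $G$ with $\pi_n G(p) = G_n(p)$ for all $n$ and $p$. This requires that the index for $G_n$ depends computably on $n$, which follows from an effective (index-uniform) version of precompleteness. Since precompleteness is witnessed by a total recursive operator applied uniformly — in fact for the canonical precomplete representation $\delta_X^\wp$ constructed via $p \mapsto p-1$, the extension of any $F$ is literally given by the ``write a $0$ every $n$ steps'' construction from the proof of Proposition~\ref{prop:precompleteness}, which is index-uniform — this uniformity is available. I would state this explicitly: from a machine for $F$ one reads off, uniformly, a machine for each $G_n$ (run $F$, extract the $n$-th coordinate, pad), and then $G(p) := \langle G_0(p), G_1(p), \ldots \rangle$ is total computable and satisfies $\delta_{X^\IN} G(p) = (\delta_X G_n(p))_n = (\delta_X \pi_n F(p))_n = \delta_{X^\IN} F(p)$ on $\dom(F)$.

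\textbf{Finite products.} For $X \times Y$, given computable $F : \In \IN^\IN \to \IN^\IN$, apply precompleteness of $X$ to $p \mapsto \pi_1 F(p)$ to get total computable $G_1$ with $\delta_X G_1 = \delta_X \pi_1 F$ on $\dom(F)$, and precompleteness of $Y$ to $p \mapsto \pi_2 F(p)$ to get total computable $G_2$ with $\delta_Y G_2 = \delta_Y \pi_2 F$ on $\dom(F)$; set $G(p) := \langle G_1(p), G_2(p)\rangle$. Then $\delta_{X \times Y} G(p) = (\delta_X G_1(p), \delta_Y G_2(p)) = (\delta_X \pi_1 F(p), \delta_Y \pi_2 F(p)) = \delta_{X \times Y} F(p)$ for all $p \in \dom(F)$, and $G$ is total computable. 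No uniformity issue arises here since only two applications are needed.

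\textbf{Main obstacle.} The only genuinely delicate point is the uniformity required for the countable product: precompleteness as stated is a ``for each $F$, there exists $G$'' statement, and to assemble $G = \langle G_0, G_1, \ldots\rangle$ I need the passage $F \mapsto G$ to be effective in an index for $F$, hence the family $(G_n)_n$ to be given by a single computable procedure. I expect to dispatch this by invoking the concrete construction behind Proposition~\ref{prop:precompleteness} (padding output with $0$'s), which is manifestly index-uniform, rather than by appealing to precompleteness as a black box; alternatively one notes that without loss of generality $\delta_X = \delta_X^\wp$ for this purpose since precompleteness depends only on the represented space up to the relevant equivalence. Everything else is routine verification that the resulting $G$ is total and produces the same $\delta_{X^\IN}$- (resp.\ $\delta_{X\times Y}$-) value as $F$ on $\dom(F)$.
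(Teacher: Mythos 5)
Your treatment of $X\times Y$ is exactly the paper's argument and is correct. For $X^\IN$, however, there is a genuine gap at precisely the point you flag. Precompleteness is by definition a non-uniform ``for every $F$ there exists $G$'' statement, and neither of your proposed repairs delivers the required uniformity in the coordinate index $n$. The appeal to the padding construction from the proof of Proposition~\ref{prop:precompleteness} only shows that representations of the special form $\delta^\wp$ admit an index-uniform totalization; the hypothesis here is merely that $\delta_X$ is precomplete, and such a representation need not be of that form. The fallback ``without loss of generality $\delta_X=\delta_X^\wp$'' is invalid: precompleteness is not invariant under computable equivalence of representations (that is the point of Example~\ref{ex:precompleteness} together with Proposition~\ref{prop:precompleteness}: $\id\equiv\id^\wp$, yet only the latter is precomplete), and, more to the point, the proposition asserts precompleteness of the specific representation $\delta_{X^\IN}$ built from the given $\delta_X$; replacing $\delta_X$ by $\delta_X^\wp$ changes $\delta_{X^\IN}$ into a merely equivalent representation, whose precompleteness says nothing about the original one. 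So as written the countable-product half does not go through; to follow your route you would need a separate lemma that every precomplete representation admits an index-uniform totalization operator, which you neither prove nor cite.

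The paper sidesteps the issue with a small but decisive twist that you could adopt verbatim: instead of applying precompleteness once per coordinate, apply it a single time to the one computable function $H\langle i,p\rangle:=\pi_i\circ F(p)$, obtaining one total computable $I:\IN^\IN\to\IN^\IN$ with $\delta_X I(q)=\delta_X H(q)$ for all $q\in\dom(H)$, and then set $G(p):=\langle I\langle 0,p\rangle,I\langle 1,p\rangle,I\langle 2,p\rangle,\dots\rangle$. Folding the coordinate index into the argument of a single function makes the uniformity automatic and reduces the countable case to one application of the hypothesis, exactly parallel to your (correct) finite case.
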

\begin{proof}
If $F:\In\IN^\IN\to\IN^\IN$ is computable, then so are the projections
$F_i=\pi_i\circ F$ for $i\in\{1,2\}$ with $\pi_1\langle p,q\rangle=p$ and $\pi_2\langle p,q\rangle=q$.
Hence, by precompleteness there are total computable functions $G_i:\IN^\IN\to\IN^\IN$
with $\delta_X F_1(p)=\delta_X G_1(p)$ for all $p\in\dom(F_1)$ and with an analogous statement
for $\delta_Y,F_2$ and $G_2$. Let $G(p)=\langle G_1(p),G_2(p)\rangle$ for all $p\in\IN^\IN$.
Then $G:\IN^\IN\to\IN^\IN$ is computable and total, and we obtain $\delta_{X\times Y}F(p)=\delta_{X\times Y}G(p)$ for all $p\in\dom(F)$. 
Hence $\delta_{X\times Y}$ is precomplete.
If $F:\In\IN^\IN\to\IN^\IN$ is computable, then so is the function
$H:\In\IN^\IN\to\IN^\IN,\langle i,p\rangle\mapsto \pi_i\circ F(p)$,
where $\pi_i:\IN^\IN\to\IN^\IN,\langle p_0,p_1,p_2,...\rangle\mapsto p_i$ denotes
the $i$--th projection. Due to precompleteness of $\delta_X$ there is a total computable
function $I:\IN^\IN\to\IN^\IN$ with $\delta_X H(p)=\delta_X I(p)$ for all $p\in\dom(H)$.
Then also the function $G:\IN^\IN\to\IN^\IN,p\mapsto\langle I\langle 0,p\rangle,I\langle 1,p\rangle,I\langle 2,p\rangle,...\rangle$
is computable and total and satisfies $\delta_{X^\IN}F(p)=\delta_{X^\IN}G(p)$ for all $p\in\dom(F)$.
This shows that $\delta_{X^\IN}$ is precomplete.
\end{proof}

The coproduct constructions for $X\sqcup Y$ and $X^*$ are less nicely behaved with respect to precompleteness.
One problem is that also the natural number component that selects the argument 
has to be handled in a precomplete manner. One can modify the definition of
$\delta_{X\sqcup Y}$ and $\delta_{X^*}$ to take this into account.
However, even then it is not clear why the construction should preserve precompleteness.
We just obtain that if $\delta_X$ and $\delta_Y$ are the precompletions 
according to Proposition~\ref{prop:precompleteness}, then $\delta_{X\sqcup Y}$ and
$\delta_{X^*}$ are precomplete in the modified definition. We formulate this more formally.
We use the total representation $\delta_\IN$ of $\IN$ given by $\delta_\IN(p):=p(0)$.

\begin{proposition}[Coproducts and precompleteness]
\label{prop:coproducts-precompleteness}
Let $(X,\delta_X)$ and $(X_i,\delta_{X_i})$ be represented spaces for $i\in\{0,1\}$.
\begin{enumerate}
\item We define a representation $\delta$ of $X_0\sqcup X_1$ by
\[\delta\langle q,p\rangle:=(\delta_\IN^\wp(q),\delta_{X_i}^\wp(p))\]
for all $q,p\in\IN^\IN$ such that $\delta_\IN^\wp(q)=i\in\{0,1\}$ and $p\in\dom(\delta_{X_i}^\wp)$.
Then $\delta$ is precomplete and $\delta\equiv\delta_{X_0\sqcup X_1}$.
\item We define a representation $\delta$ of $X^*$ by
\[\delta\langle q,\langle p_1,...,p_n\rangle\rangle:=(\delta_\IN^\wp(q),(\delta_X^\wp(p_1),...,\delta_X^\wp(p_n)))\]
for all $q,p_1,...,p_n\in\IN^\IN$ such that $\delta_\IN^\wp(q)=n$ and $p_i\in\dom(\delta_X^\wp)$ for 
$i=1,...,n$.
Then $\delta$ is precomplete and $\delta\equiv\delta_{X^*}$.
\end{enumerate}
\end{proposition}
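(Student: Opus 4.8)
The plan is to prove part (1) in full and to obtain part (2) by the same argument with the obvious bookkeeping for the finitely many, respectively countably many, argument components. For (1) there are two things to check: that $\delta\equiv\delta_{X_0\sqcup X_1}$ and that $\delta$ is precomplete. The equivalence I would establish by exhibiting explicit computable translations in both directions; the point behind it is simply that $\delta_\IN^\wp\equiv\delta_\IN$ and $\delta_{X_i}^\wp\equiv\delta_{X_i}$ by Proposition~\ref{prop:precompleteness}, applied componentwise, but one has to be slightly careful to produce infinite names.

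For $\delta\leq\delta_{X_0\sqcup X_1}$: given $\langle q,p\rangle\in\dom(\delta)$ one searches $q$ for its first nonzero entry $q(j)$ — the search halts since $\delta_\IN^\wp(q)=\delta_\IN(q-1)$ is defined, so $q-1\neq\varepsilon$ — outputs $i:=q(j)-1=\delta_\IN^\wp(q)$, and then outputs $p-1$. As $p\in\dom(\delta_{X_i}^\wp)$ we have $p-1\in\dom(\delta_{X_i})\In\IN^\IN$, so $i(p-1)$ is a genuine $\delta_{X_0\sqcup X_1}$-name of $(i,\delta_{X_i}(p-1))=\delta\langle q,p\rangle$. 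For the converse reduction, from $ir\in\dom(\delta_{X_0\sqcup X_1})$ one outputs $\langle(i+1)\widehat{1},\,r+1\rangle$; here $((i+1)\widehat{1})-1=i\widehat{0}$ is infinite, so $\delta_\IN^\wp((i+1)\widehat{1})=i$, and $(r+1)-1=r$, so $\delta_{X_i}^\wp(r+1)=\delta_{X_i}(r)$. Padding the tag track with $\widehat{1}$ rather than $\widehat{0}$ is precisely what keeps the tag name infinite, which is needed for $\delta_\IN^\wp$ to be defined on it. Both translations are total computable, the domains match up, and outside the relevant domains the defining equations hold vacuously, using that finite words are not in $\dom(\delta_{X_i})$.

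For precompleteness of $\delta$, let $F:\In\IN^\IN\to\IN^\IN$ be computable and write $F(s)=\langle F_1(s),F_2(s)\rangle$ with $F_1:=\pi_1\circ F$ and $F_2:=\pi_2\circ F$, both computable and defined on $\dom(F)$. Applying the machine modification from the proof of Proposition~\ref{prop:precompleteness} to $F_1$ and to $F_2$ separately — run the machine, never halt, and emit an extra $0$ every fixed number of steps — produces total computable $G_1,G_2$ with $G_j(s)-1=F_j(s)-1$ for all $s\in\dom(F)$, since the inserted zeros vanish under $\cdot-1$. Put $G(s):=\langle G_1(s),G_2(s)\rangle$. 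Then $G$ is total and computable, and for $s\in\dom(F)$ we get $\delta_\IN^\wp(G_1(s))=\delta_\IN(G_1(s)-1)=\delta_\IN(F_1(s)-1)=\delta_\IN^\wp(F_1(s))$ and likewise $\delta_{X_i}^\wp(G_2(s))=\delta_{X_i}^\wp(F_2(s))$ for the relevant $i$, so $\delta(G(s))=\delta(F(s))$ with both sides simultaneously defined or undefined. Hence $\delta$ is precomplete. Part (2) goes through in the same way: decompose $F$ into its length track and its countably many argument tracks via the iterated projections $\pi_i$, apply the same modification to all tracks at once (e.g.\ emit a $0$ on each of the first $n$ tracks at step $n$), and recombine; the equivalence $\delta\equiv\delta_{X^*}$ is obtained exactly as above, reading the length off the first nonzero entry of the length track in one direction and padding with $\widehat{1}$ in the other.

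I do not anticipate a real obstacle; the statement sits squarely in the style of Propositions~\ref{prop:precompleteness} and~\ref{prop:products-precompleteness}. The points needing care are (a) genuine totality of $G$, ensured by the periodic emission of zeros, and (b) the requirement that $G(s)\notin\dom(\delta)$ whenever $F(s)\notin\dom(\delta)$, which is automatic because $G_j(s)-1=F_j(s)-1$ holds exactly; and, in the equivalence, producing an infinite name for the $\IN$-valued tag without knowing in advance how long the input stalls, handled by reading off the first nonzero entry on one side and by padding with $\widehat{1}$ on the other.
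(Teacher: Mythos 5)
Your proposal is correct and follows essentially the same route as the paper, with the roles of the two parts swapped: the paper works out the $X^*$ case in detail (first-nonzero search in one direction, $+1$-padding in the other, then a one-line appeal to the machine modification of Proposition~\ref{prop:precompleteness}) and leaves the coproduct case to the reader, whereas you detail the coproduct and gesture at $X^*$. Your handling of precompleteness by splitting $F$ into the two tracks, totalizing each with the exact property $G_j(s)-1=F_j(s)-1$, and re-pairing (in the style of Proposition~\ref{prop:products-precompleteness}) is if anything more careful than the paper's one-liner, since inserting zeros directly into the interleaved joint output stream would shift symbols between the even and odd tracks, and the exact ``$-1$''-equality is also exactly what handles the input-dependent tag $i$ and the preservation of undefinedness. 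The one point to tighten in part (2) is that the inner tupling $\langle p_1,\dots,p_n\rangle$ has input-dependent arity, so you cannot split into ``countably many argument tracks'' in advance; instead the totalizing machine should emit only (harmless) zeros on the argument part until the simulated length track reveals $n$, and from then on route the simulated entries into the now-known $n$-track layout with zero-padding --- a repair that uses precisely the two ideas already present in your write-up (first-nonzero search and the neutrality of zeros under the precompletions), and a detail the paper's own one-sentence argument glosses over as well.
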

\begin{proof}
The proof is similar to the proof of Proposition~\ref{prop:precompleteness}.
We only consider the case of $X^*$ and leave the case $X_0\sqcup X_1$ to the reader.
Given a $\delta_{X^*}$--name $\langle n,\langle p_1,...,p_n\rangle\rangle$
of $x\in X^*$
we compute $q:=\widehat{n}=nnn...$ and then
$\langle q+1,\langle p_1+1,...,p_n+1\rangle\rangle$ is a $\delta$--name of the same
point $x$. Since $r\mapsto r+1$ is computable, we obtain $\delta_{X^*}\leq\delta$.
Given a $\delta$--name $\langle q,\langle p_1,...,p_n\rangle\rangle$
of a point $x\in X^*$, we can search for the first non-zero value $k\in\IN$ in $q$,
in which case we know that $n=k-1$, and then we can compute $\langle n,\langle p_1-1,...,p_n-1\rangle\rangle$,
which is a $\delta_{X^*}$--name of the same point $x$. Since $r\mapsto r-1$ is computable on
sequences such that $r-1\in\IN^\IN$, we obtain $\delta\leq\delta_{X^*}$.
Any machine that computes a function $F:\In\IN^\IN\to\IN^\IN$ can be modified
as in the proof of Proposition~\ref{prop:precompleteness} such that it computes a
total function $G:\IN^\IN\to\IN^\IN$, potentially with extra zeros on the output side
and such that $\delta F(p)=\delta G(p)$ for all $p\in\dom(F)$.
\end{proof}

We mention that the completion $(\overline{X},\delta_{\overline{X}})$ of a represented
space is always precomplete.
This follows like in the proof of Proposition~\ref{prop:precompleteness}.
The only additional observation required in the proof is that if $\delta_{\overline{X}}F(p)=\bot$, then also $\delta_{\overline{X}}G(p)=\bot$.
We recall that a {\em computable embedding} $f:X\to Y$ is a computable function that is injective
and whose partial inverse $f^{-1}:\In Y\to X$ is computable too.

\begin{corollary}[Completion]
\label{cor:completion}
$(\overline{X},\delta_{\overline{X}})$ is a precomplete represented space for every represented space $(X,\delta_X)$
and $\iota:X\to\overline{X},x\mapsto x$ is a computable embedding.
\end{corollary}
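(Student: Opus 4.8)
The plan is to mimic the proof of Proposition~\ref{prop:precompleteness} for the precompleteness claim, adding the observation about the extra value $\bot$ that was flagged in the paragraph preceding the corollary, and then to verify the embedding property using the two computable maps $p\mapsto p+1$ and $p\mapsto p-1$ that already witnessed $\delta_X^\wp\equiv\delta_X$ in Proposition~\ref{prop:precompleteness}.

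\textbf{Precompleteness.} Recall that $\delta_{\overline{X}}$ is total by Definition~\ref{def:operations-representations}(5). Let $F:\In\IN^\IN\to\IN^\IN$ be computable, computed by a Turing machine $M$. As in the proof of Proposition~\ref{prop:precompleteness}, I would modify $M$ so that it never halts and additionally writes a $0$ on the output tape after every $n$ steps, for a suitably fixed $n\in\IN$, leaving it otherwise unchanged. The modified machine computes a total function $G:\IN^\IN\to\IN^\IN$ whose output on $p$ is $F(p)$ with additional zeros interspersed, so that $G(p)-1=F(p)-1$ (as elements of $\IN^\IN\cup\IN^*$) for every $p\in\dom(F)$. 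Now fix $p\in\dom(F)$. If $F(p)-1\in\dom(\delta_X)$, then $G(p)-1=F(p)-1\in\dom(\delta_X)$ as well, so $\delta_{\overline{X}}G(p)=\delta_X(G(p)-1)=\delta_X(F(p)-1)=\delta_{\overline{X}}F(p)$. Otherwise $F(p)\notin\dom(\delta_X^\wp)$, hence $\delta_{\overline{X}}F(p)=\bot$, and since $G(p)-1=F(p)-1\notin\dom(\delta_X)$ we also have $G(p)\notin\dom(\delta_X^\wp)$ and therefore $\delta_{\overline{X}}G(p)=\bot$. In either case $\delta_{\overline{X}}F(p)=\delta_{\overline{X}}G(p)$, so $\delta_{\overline{X}}$ is precomplete.

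\textbf{Embedding.} The map $\iota$ is injective since it is the identity on $X$. For computability of $\iota$, note that $\delta_{\overline{X}}(q)=\delta_X^\wp(q)=\delta_X(q-1)$ whenever $q-1\in\dom(\delta_X)$, so the total computable function $p\mapsto p+1$ satisfies $\delta_{\overline{X}}(p+1)=\delta_X(p)=\iota(\delta_X(p))$ for all $p\in\dom(\delta_X)$ and thus realizes $\iota$. For computability of $\iota^{-1}:\In\overline{X}\to X$, observe that $\dom(\iota^{-1})=X$ and that a name $p$ of a point of $\overline{X}$ names a point of $X$ precisely when $p\in\dom(\delta_X^\wp)$, i.e.\ when $p-1\in\dom(\delta_X)$; on such names the partial computable map $p\mapsto p-1$ satisfies $\delta_X(p-1)=\delta_{\overline{X}}(p)=\iota^{-1}(\delta_{\overline{X}}(p))$ and thus realizes $\iota^{-1}$. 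Hence $\iota$ is a computable embedding.

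I do not expect a serious obstacle, as this is essentially a corollary of Proposition~\ref{prop:precompleteness} together with the construction in Definition~\ref{def:operations-representations}. The only points requiring care are the $\bot$-branch of $\delta_{\overline{X}}$ in the precompleteness argument --- which is exactly why the modified machine must never halt rather than eventually halt --- and keeping track of whether $p-1$ lands in $\IN^\IN$ or in $\IN^*$ when deciding membership in $\dom(\delta_X^\wp)$.
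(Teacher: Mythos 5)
Your proof is correct and follows essentially the same route as the paper: the paper's own justification is precisely the remark that precompleteness ``follows like in the proof of Proposition~\ref{prop:precompleteness}'' with the extra observation that $\delta_{\overline{X}}F(p)=\bot$ forces $\delta_{\overline{X}}G(p)=\bot$, which is exactly your case analysis, and the embedding via $p\mapsto p+1$ and $p\mapsto p-1$ is the implicit argument the paper relies on. Nothing is missing; you have merely written out the details the paper leaves as a sketch.
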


\section{Total Weihrauch Reducibility}
\label{sec:total}

In this section we are going to introduce a total variant of Weihrauch reducibility
that behaves very similarly to the usual reducibility from a practical perspective,
but that has different algebraic properties.

By a {\em problem} $f:\In X\mto Y$ we mean a partial multi-valued map $f:\In X\mto Y$
on represented spaces $(X,\delta_X)$ and $(Y,\delta_Y)$. 
We recall that {\em composition} of problems $f:\In X\mto Y$ and $g:\In Y\mto Z$
is defined by 
\[g\circ f(x):=\{z\in Z:(\exists y\in f(x))\; z\in g(y)\}\]
for all $x\in\dom(g\circ f):=\{x\in\dom(f):f(x)\In\dom(g)\}$.
For two problems $f:\In X\mto Y$ and $g:\In X\mto Z$ with identical source space $X$ we define
the {\em juxtaposition} $(f,g):\In X\mto Y\times Z$ by $(f,g)(x):=f(x)\times g(x)$ and $\dom(f,g):=\dom(f)\cap\dom(g)$.
If $f,g:\In \IN^\IN\mto\IN^\IN$ are problems on Baire space, then we also call $\langle f,g\rangle:=\langle\;\rangle\circ(f,g)$ the
{\em juxtaposition} of $f$ and $g$ and $\langle f\times g\rangle$ defined by $\langle f\times g\rangle\langle p,q\rangle:=\langle f(p),g(q)\rangle$ for all $p,q\in\IN^\IN$ the {\em product} of $f$ and $g$.

We say that a function $F:\In\IN^\IN\to\IN^\IN$ is a {\em realizer} of $f$,
if $\delta_YF(p)\in f\delta_X(p)$ for all $p\in\dom(f\delta_X)$.
We denote this by $F\vdash f$. 
We say that $f$ is {\em computable} if it has a computable realizer.
Other notions, such as continuity, Borel measurability and so forth that are
well-defined for functions $F:\In\IN^\IN\to\IN^\IN$ are transferred in an
analogous manner to problems $f:\In X\mto Y$.

We write $F\vdash_\t f$, if $F$ is a total realizer of $f$.
We now recall the definition of ordinary and strong Weihrauch reducibility on problems $f,g$,
which is denoted by $f\leqW g$ and $f\leqSW g$, respectively, and we introduce
two new concepts of {\em total Weihrauch reducibility} and {\em strong total Weihrauch reducibility},
which are denoted by $f\leqTW g$ and $f\leqSTW g$, respectively.

\begin{definition}[Weihrauch reducibility]
\label{def:Weihrauch-reducibility}
Let $f:\In X\mto Y$ and $g:\In U\mto V$ be problems. We define:
\begin{enumerate}
\item $f\leqW g:\!\iff\!(\exists$ computable $H,K:\In\IN^\IN\to\IN^\IN)(\forall G\vdash g)\; H\langle \id,GK\rangle\vdash f$.
\item $f\leqSW g:\!\iff\!(\exists$ computable $H,K:\In\IN^\IN\to\IN^\IN)(\forall G\vdash g)\;HGK\vdash f$.
\item $f\leqTW g:\!\iff \!\!(\exists$ computable $H,K:\In\IN^\IN\to\IN^\IN)(\forall G\vdash_\t g)\; H\langle \id,GK\rangle\vdash_\t f$.
\item $f\leqSTW g:\!\iff (\exists$ computable $H,K:\In\IN^\IN\to\IN^\IN)(\forall G\vdash_\t g)\;HGK\vdash_\t f$.
\end{enumerate}
For (3) and (4) we assume that we replace each of the given representations of $X,Y,U$ and $V$ by a computably equivalent precomplete representation
of the corresponding set. 
\end{definition}

We call the reducibilities $\leqW$ and $\leqSW$ {\em partial}\footnote{This is not related to the preorders being partial or total in an order theoretic sense; they are both partial in that sense.} 
in order to 
distinguish them from their total counterparts $\leqTW$ and $\leqSTW$.
We note that precompleteness is not required or relevant in the partial case,
but it can be assumed without loss of generality since the concept of partial
(strong) Weihrauch reducibility is invariant under computably equivalent
representations~\cite[Lemma~2.11]{BG11}. In the total cases (3) and (4), 
however, precompleteness is essential,
since otherwise these definitions would not be invariant under computably
equivalent representations. By Proposition~\ref{prop:precompleteness}
we can always choose precomplete representations that are computably equivalent
to the given representations of the spaces $X,Y,U$ and $V$.
But we still need to show that the definition of $\leqTW$ and $\leqSTW$
does not depend on this choice.

We will prove a slightly more general result that highlights the places where precompleteness is actually needed.
For this purpose we introduce the following terminology: we say that 
$f\leqTW g$ {\em holds with respect to} $(\delta_X,\delta_Y,\delta_U,\delta_V)$,
if Definition~\ref{def:Weihrauch-reducibility} (3) holds as it stands but exactly
for the given representations of $X, Y, U$ and $V$, respectively, and these representations are
not required to be precomplete. Hence the statement defined here is weaker than $f\leqTW g$ in the sense defined above.
We use a corresponding terminology for $\leqSTW$.
Now we obtain the following result.

\begin{lemma}[Invariance under representations]
\label{lem:invariance}
Let $f:\In X\mto Y$, $g:\In U\mto V$ be problems on represented
spaces $(X,\delta_X)$, $(Y,\delta_Y)$, $(U,\delta_U)$ and $(V,\delta_V)$.
Let $\delta_X',\delta_Y',\delta_U'$ and $\delta_V'$ be further representations
of the given sets, respectively, such that
\begin{enumerate}
\item $\delta_X'\leq\delta_X$, $\delta_Y\leq\delta_Y'$, $\delta_U\leq\delta_U'$ and     
       $\delta_V'\leq\delta_V$,
\item $\delta_V,\delta_Y'$ and $\delta_U'$ are precomplete.
\end{enumerate}
If $f\leqTW g$ holds with respect to $(\delta_X,\delta_Y,\delta_U,\delta_V)$,
then it also holds with respect to $(\delta_X',\delta_Y',\delta_U',\delta_V')$.
An analogous statement holds for $f\leqSTW g$.
\end{lemma}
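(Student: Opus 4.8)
The plan is to chase through Definition~\ref{def:Weihrauch-reducibility}(3), replacing the ``fast'' representations by the ``slow'' ones one at a time, and to see exactly where precompleteness is needed to restore totality of the realizers. Suppose $f\leqTW g$ holds with respect to $(\delta_X,\delta_Y,\delta_U,\delta_V)$, witnessed by computable $H,K$. We are given computable reductions $a\vdash(\id,\id)$ of type $\delta_X'\leq\delta_X$, $b$ of type $\delta_Y\leq\delta_Y'$, $c$ of type $\delta_U\leq\delta_U'$, $d$ of type $\delta_V'\leq\delta_V$; that is, computable functions $A,B,C,D:\In\IN^\IN\to\IN^\IN$ with $\delta_X'=\delta_XA$, $\delta_Y'B'=\delta_Y$ wait --- more carefully: $\delta_Y\leq\delta_Y'$ means there is computable $B$ with $\delta_Y=\delta_Y'B$, and $\delta_V'\leq\delta_V$ means there is computable $D$ with $\delta_V'=\delta_VD$. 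First I would define the candidate new reduction functions. The input side: a $\delta_X'$--name $p$ of $x\in X$ is turned by $A$ into a $\delta_X$--name $Ap$ of $x$; so we should precompose $K$ with $A$, i.e.\ use $K':=K\circ A$ on the ``functional input'' side and $A$ itself on the identity side, folding this into a single computable $H',K'$ in the obvious way. The output side: the old $H$ produces a $\delta_Y$--name, and we must produce a $\delta_Y'$--name; but $\delta_Y\leq\delta_Y'$ only gives us a conversion in the \emph{wrong} direction. This is the first place precompleteness enters.

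The key point is the standard fact (see \cite[Lemma~2.11]{BG11} for the partial analogue, and the argument adapts) that $\delta_Y\leq\delta_Y'$ together with surjectivity gives, \emph{on the level of multi-valued maps}, that the identity $\id:Y\to Y$ is $(\delta_Y,\delta_Y')$--realized by \emph{some} partial computable function $E$ (namely any computable right inverse behavior: $\delta_Y'E(q)=\delta_Y(q)$ for $q\in\dom(\delta_Y)$). This is automatic because $\delta_Y'$ is a representation of the same set; concretely one may take $E$ computable with $\delta_Y'Eq=\delta_Yq$ for all $q\in\dom(\delta_Y)$ --- such $E$ exists precisely because $\delta_Y\leq\delta_Y'$... no, that gives the reverse. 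Let me restate: from $\delta_Y\leq\delta_Y'$ we get computable $B$ with $\delta_Y=\delta_Y'B$, so $B$ itself does the job: it converts a $\delta_Y$--name into a $\delta_Y'$--name of the same point. So on the output side we postcompose $H$ with $B$; since $\delta_Y'$ is precomplete, by definition of ``respects precompleteness'' for computable functions (the defining clause of precompleteness), $B\circ H$ has a \emph{total} computable extension $\widetilde{BH}$ agreeing with it modulo $\delta_Y'$ on $\dom(BH)$. Similarly, the external oracle $G'\vdash_\t g$ is now given with respect to $\delta_U',\delta_V'$; we must feed $K'$'s output, which is a $\delta_U$--name, as a $\delta_U'$--name: here $\delta_U\leq\delta_U'$ gives computable $C$ with $\delta_U=\delta_U'C$, so we precompose the oracle call with $C$; and we must convert $G'$'s output, a $\delta_V'$--name, back to a $\delta_V$--name using $D$ (from $\delta_V'\leq\delta_V$), before handing it to $H$. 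The composite $G:=D\circ G'\circ C\circ K$ is then a realizer of $g$ with respect to $(\delta_U,\delta_V)$ --- but is it \emph{total}? $C,K$ are total only after we note $K$ may be partial; that is handled by precomposing with the precompletion trick on the $U$--side, which is why $\delta_U'$ precomplete is assumed: it lets us replace the partial $C\circ K$ (or rather the relevant computable functions) by total ones without changing the $\delta_U'$--value. And $D$ is total? $D$ witnesses $\delta_V'\leq\delta_V$ so it is computable but a priori partial; however, since $\delta_V$ is precomplete we may replace $D$ by a total computable $\widetilde D$ with $\delta_V\widetilde D=\delta_V'$ on $\dom(\delta_V')$, which suffices because $G'$ is total with range in $\dom(\delta_V')$.

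Putting this together: define $K^{\mathrm{new}}$ to compute, from input $p$, the pair needed so that the old machinery sees $\langle\id, \cdot\rangle$ applied to $\delta_X$--names, with the oracle interposition $r\mapsto \widetilde D\,G'\,(\widetilde{C}r)$; and define $H^{\mathrm{new}}:=\widetilde{B\circ H}\circ(\text{reorganization})\circ(\text{precompose }\id\text{-side with }A)$. Then for every $G'\vdash_\t g$ w.r.t.\ $(\delta_U',\delta_V')$, the interposed function is a total realizer $G\vdash_\t g$ w.r.t.\ $(\delta_U,\delta_V)$, so by hypothesis $H\langle\id,GK\rangle\vdash_\t f$ w.r.t.\ $(\delta_X,\delta_Y)$, and postcomposing with $B$ and precomposing with $A$ turns this into a realizer w.r.t.\ $(\delta_X',\delta_Y')$; totality survives because $\widetilde{BH}$ is a total extension and $A$ is total (or made total the same way, using... actually $A$ only needs to be defined on $\dom(\delta_X')$, and $\delta_X'$ need not be precomplete --- that is why $\delta_X'$ is \emph{absent} from hypothesis (2): on the pure input side partiality is harmless, as the realizer only needs to act correctly on names, cf.\ the remark that $\delta_X'\leq\delta_X$ suffices). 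The strong case $\leqSTW$ is identical but without the $\langle\id,-\rangle$ bookkeeping, so the same substitutions work verbatim.

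The main obstacle I expect is purely bookkeeping: correctly threading the four conversions $A,B,C,D$ through the two computable transformations $H,K$ and the oracle slot while keeping track of \emph{which} of them must be totalized (namely $B\circ H$, and the $\delta_U'$-, $\delta_V$-side conversions) versus which may stay partial (the $\delta_X'$-side $A$), and verifying that the totalizations, which only agree with the originals \emph{modulo the representation} on the relevant domain, still yield \emph{genuine} (total) realizers --- this is exactly where the three precompleteness hypotheses $\delta_V,\delta_Y',\delta_U'$ are consumed, one each, and where $\delta_X'$ correctly does \emph{not} need it. No single step is deep; the care is in the indexing.
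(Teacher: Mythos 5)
Your overall route is the one the paper takes: pick witnesses $A,B,C,D$ of the four reductions, interpose the $U$- and $V$-side conversions around a given total realizer $G'$ of $g$ to obtain a total realizer $G$ of $g$ with respect to $(\delta_U,\delta_V)$ (totalizing the $\delta_U$-to-$\delta_U'$ conversion via precompleteness of $\delta_U'$ and the $\delta_V'$-to-$\delta_V$ conversion via precompleteness of $\delta_V$), apply the hypothesis to get a total realizer of $f$ with respect to $(\delta_X,\delta_Y)$, and then pre-/post-process with $A$ and $B$, consuming precompleteness of $\delta_Y'$ and $\delta_U'$ to restore totality of the new reduction functions. This is exactly the paper's argument, which phrases it as: run the construction of \cite[Lemma~2.11]{BG11} and make first the conversion functions $S,T$ and then the resulting $H',K'$ total.

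There is, however, one step that fails as written: your treatment of $A$ (the witness of $\delta_X'\leq\delta_X$). You claim $A$ may stay partial because it "only needs to be defined on $\dom(\delta_X')$" and "on the pure input side partiality is harmless". It is not: $\vdash_\t$ demands that the final function $H'\langle\id,G'K'\rangle$ be total on all of $\IN^\IN$, and $A$ occurs inside both $H'$ and $K'$, so leaving it partial makes the composite diverge at every $p\notin\dom(A)$ (in particular at points outside $\dom(\delta_X')$), hence it is not a total realizer. Nor can $A$ be totalized on its own: that would require precompleteness of $\delta_X$, which is not assumed, and partial computable functions need not have total computable extensions at all (Example~\ref{ex:precompleteness}). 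The correct repair --- and the actual reason $\delta_X'$ is absent from hypothesis (2) --- is that $A$ never needs to be totalized in isolation: it only occurs inside the composites $K'$ (target $\delta_U'$) and $H'$ (target $\delta_Y'$), and one totalizes these composites as wholes using precompleteness of $\delta_U'$ and $\delta_Y'$, exactly as you already do on the $K$-side and as the paper does for its $H',K'$; your totalization of only $B\circ H$ is not enough. With that change (and noting that the oracle fed into the hypothesis should be the totalized $D\circ G'\circ C$, not $D\circ G'\circ C\circ K$ as you wrote at one point), your argument goes through and coincides with the paper's.
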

\begin{proof}
We follow the construction as outlined in the proof of~\cite[Lemma~2.11]{BG11}.
Since $\delta_U'$ and $\delta_V'$ are precomplete according to (2), we can additionally assume that the computable functions
$S, T:\IN^\IN\to\IN^\IN$ in that proof are total. 
In that proof it is shown that whenever $G'\vdash g$ holds with respect to $(\delta_U',\delta_V')$,
then $G:=TG'S\vdash g$ holds with respect to $(\delta_U,\delta_V)$. 
Due to totality of $T, S$, the same holds true if we replace $\vdash$ by $\vdash_\t$ in both
occurrences. If we assume that $H\langle\id, GK\rangle\vdash_\t f$ holds with respect
to $(\delta_X,\delta_Y)$, then we obtain as in the proof mentioned above that
$H'\langle\id,G'K'\rangle\vdash f$ holds with respect to $(\delta_X',\delta_Y')$.
Due to precompleteness of $\delta_U'$ and $\delta_Y'$ according to (2), we can
always assume that $H',K'$ are even total computable functions. 
Hence, we even obtain $H'\langle\id,G'K'\rangle\vdash_\t f$,
which completes the proof. The proof for $\leqSTW$ is analogous.
\end{proof}

If $f\leqTW g$ holds with respect to $(\delta_X,\delta_Y,\delta_U,\delta_V)$ and at least $\delta_V$ is precomplete
among these representations, then according to Lemma~\ref{lem:invariance} 
we can always replace the non-precomplete representations by equivalent
precomplete ones and $f\leqTW g$ holds with respect to these precomplete representations and
hence $f\leqTW g$ holds in terms of Definition~\ref{def:Weihrauch-reducibility}.

For the moment Lemma~\ref{lem:invariance} is useful as it implies that
$\leqTW$ and $\leqSTW$ are well-defined and invariant under computably equivalent representations.

\begin{corollary}[Invariance under equivalent representations]
\label{cor:invariance}
Let $f:\In X\mto Y$ and $g:\In U\mto V$ be problems.
The relations $f\leqW g$, $f\leqSW g$, $f\leqTW g$ and $f\leqSTW g$ 
remain unchanged if we replace the representations of $X, Y, U$ and $V$
by computably equivalent ones.
\end{corollary}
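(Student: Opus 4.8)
The plan is to derive all four equivalences from results already in place, with the genuine content concentrated in the total reducibilities. For the partial reducibilities $\leqW$ and $\leqSW$ invariance under computably equivalent representations is classical and is recorded in \cite[Lemma~2.11]{BG11}; precompleteness plays no role there, so nothing further is needed. The substance of the corollary therefore concerns $\leqTW$ and $\leqSTW$, and here everything will follow from Lemma~\ref{lem:invariance}.

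First I would unwind Definition~\ref{def:Weihrauch-reducibility}: by convention $f\leqTW g$ is computed only after each of the representations of $X,Y,U,V$ has been replaced by a computably equivalent precomplete one, which exists by Proposition~\ref{prop:precompleteness}. So it suffices to take two such choices, say precomplete representations $\delta_X,\delta_Y,\delta_U,\delta_V$ and $\delta_X',\delta_Y',\delta_U',\delta_V'$ of the same four sets, with $\delta_X\equiv\delta_X'$, $\delta_Y\equiv\delta_Y'$, $\delta_U\equiv\delta_U'$ and $\delta_V\equiv\delta_V'$ (by transitivity of $\equiv$ any two choices may be compared directly), and show that $f\leqTW g$ holds with respect to the unprimed tuple if and only if it holds with respect to the primed one. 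For the direction from the unprimed to the primed tuple I would apply Lemma~\ref{lem:invariance} with the primed representations in the roles of $\delta_X',\delta_Y',\delta_U',\delta_V'$ there: hypothesis~(1) of that lemma asks for $\delta_X'\leq\delta_X$, $\delta_Y\leq\delta_Y'$, $\delta_U\leq\delta_U'$ and $\delta_V'\leq\delta_V$, all of which hold since the corresponding pairs are computably equivalent, and hypothesis~(2) asks that $\delta_V,\delta_Y'$ and $\delta_U'$ be precomplete, which holds by assumption. For the converse direction I would invoke Lemma~\ref{lem:invariance} once more with the roles of primed and unprimed exchanged; hypothesis~(1) then becomes $\delta_X\leq\delta_X'$, $\delta_Y'\leq\delta_Y$, $\delta_U'\leq\delta_U$, $\delta_V\leq\delta_V'$, again immediate, and hypothesis~(2) requires $\delta_V',\delta_Y$ and $\delta_U$ to be precomplete, which is again the case. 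The identical argument, read for the strong versions throughout, settles $\leqSTW$.

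The point worth stressing is that there is essentially no obstacle here: Lemma~\ref{lem:invariance} was tailored to exactly this application, and all that needs checking is that its hypotheses are robust enough to be applied in both directions. This robustness rests on the observation that once all eight representations in play are precomplete, every one-sided reducibility demanded in hypotheses~(1) and~(2) is available no matter which of the two tuples we treat as the source and which as the target. I expect no computation beyond this bookkeeping.
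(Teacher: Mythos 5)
Your proposal is correct and follows the paper's own route: the partial cases are delegated to \cite[Lemma~2.11]{BG11}, and the total cases are obtained by applying Lemma~\ref{lem:invariance} in both directions to two choices of precomplete representations, whose hypotheses (1) and (2) are automatically satisfied once all representations in play are precomplete and pairwise computably equivalent. The paper merely states this consequence without spelling out the two applications of the lemma, so your write-up is just a more explicit version of the same argument.
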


We note that the statement for $\leqW$ and $\leqSW$ was proved in~\cite[Lemma~2.11]{BG11}.
The following example shows that precompleteness in Definition~\ref{def:Weihrauch-reducibility} 
cannot be omitted if one wants to achieve invariance under equivalent representations.

\begin{example}
Every computable function $F:\In\IN^\IN\to\IN^\IN$ without total computable extension (see Example~\ref{ex:precompleteness})
has a total computable realizer with respect to $(\id,\id^\wp)$, but not with respect to $(\id,\id)$.
Hence $F\leqTW F$ does not hold with respect to $(\id,\id,\id,\id^\wp)$.
Clearly, $F\leqTW F$ holds with respect to $(\id^\wp,\id^\wp,\id^\wp,\id^\wp)$ and hence $F\leqTW F$ holds
in terms of Definition~\ref{def:Weihrauch-reducibility}.
\end{example}

The argument used in the proof of Lemma~\ref{lem:invariance} concerning $H'$ and $K'$
also allows us to slightly rephrase Definition~\ref{def:Weihrauch-reducibility}.
Due to precompleteness we can demand total $H,K$ (and replace $\vdash_\t$ by $\vdash$ on the right-hand side.)

\begin{lemma}[Weihrauch reducibility]
\label{lem:total}
Let $f:\In X\mto Y$ and $g:\In U\mto V$ be problems. 
We choose precomplete representations
that are computably equivalent to the given representations of $X,Y,U$ and $V$.
Then:
\begin{enumerate}
\item $f\leqW g\iff (\exists$ computable $H,K:\IN^\IN\to\IN^\IN)(\forall G\vdash g)\; H\langle \id,GK\rangle\vdash f$.
\item $f\leqSW g\iff (\exists$ computable $H,K:\IN^\IN\to\IN^\IN)(\forall G\vdash g)\;HGK\vdash f$.
\item $f\leqTW g\iff (\exists$ computable $H,K:\IN^\IN\to\IN^\IN)(\forall G\vdash_\t g)\; H\langle \id,GK\rangle\vdash f$.
\item $f\leqSTW g\iff (\exists$ computable $H,K:\IN^\IN\to\IN^\IN)(\forall G\vdash_\t g)\; HGK\vdash f$.
\end{enumerate}
\end{lemma}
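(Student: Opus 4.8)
The plan is to prove the four equivalences by showing, for each item, that the right-hand side trivially implies the version in Definition~\ref{def:Weihrauch-reducibility} (since a total realizer is in particular a realizer, and a pair of total computable functions is in particular a pair of computable functions), and conversely that the Definition~\ref{def:Weihrauch-reducibility} version implies the right-hand side by replacing the partial computable outer reductions $H,K$ with total computable ones and by observing that total realizers of $f$ are automatically obtained on the relevant precomplete representations. Items (1) and (2) are the partial reducibilities, and here the content is only that the witnesses $H,K$ may be taken total: this is exactly the kind of argument already used in the proof of Lemma~\ref{lem:invariance}, invoking Proposition~\ref{prop:precompleteness} (or Proposition~\ref{prop:respect-precompleteness} for the class of computable functions) to extend a partial computable $H$ to a total computable $G$ with $\delta F(p)=\delta G(p)$ on $\dom(F)$, where $\delta$ is the chosen precomplete representation of $Y$.

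First I would treat (3) in detail, since (4) is entirely analogous (omit the $\langle\id,\cdot\rangle$ coupling) and (1), (2) are the degenerate cases where ``$\vdash_\t$'' on the hypothesis side is weakened to ``$\vdash$''. For the direction ``$\Leftarrow$'': if there are total computable $H,K$ with $H\langle\id,GK\rangle\vdash f$ for every total realizer $G$ of $g$, then since every total realizer is a realizer and every total computable function is computable, and since moreover $H\langle\id,GK\rangle$ is total (as $H$ and $K$ are total and $G$ is total, and $\langle\cdot,\cdot\rangle$, composition preserve totality), we get $H\langle\id,GK\rangle\vdash_\t f$, which is Definition~\ref{def:Weihrauch-reducibility}(3). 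For ``$\Rightarrow$'': assume $f\leqTW g$, witnessed by computable (possibly partial) $H,K$ with $H\langle\id,GK\rangle\vdash_\t f$ for all $G\vdash_\t g$ on the precomplete representations. By Proposition~\ref{prop:respect-precompleteness} applied to the precomplete representations of $U$ and $Y$ (the source of $K$ carries the precomplete representation of $U$, the target of $H$ the precomplete representation of $Y$), there are total computable $K'$ and $H'$ with $\delta_U K'(p)=\delta_U K(p)$ on $\dom K$ and $\delta_Y H'(r)=\delta_Y H(r)$ on $\dom H$; one then checks that $H'\langle\id,GK'\rangle$ represents the same point as $H\langle\id,GK\rangle$ whenever the latter is defined, hence $H'\langle\id,GK'\rangle\vdash f$ for all $G\vdash_\t g$, and since $H',K',G$ are all total this realizer is total, so in fact $H'\langle\id,GK'\rangle\vdash_\t f$; dropping the subscript ${}_\t$ on the right only weakens the claim. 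This gives the right-hand side of (3).

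The main obstacle — really the only point requiring care — is the bookkeeping about which precomplete representation sits on which tape when one extends $H$ and $K$ to total functions: one must ensure that $K'$ is extended ``respecting precompleteness'' of the representation of $U$ (so that $GK'$ still produces a $\delta_V$-name of a $g$-solution when $G\vdash_\t g$, using that $\delta_V$ is precomplete and hence absorbs the finite discrepancy between $K'(p)$ and $K(p)$ via the totality of $G$ on a precomplete source), and that $H'$ is extended respecting precompleteness of the representation of $Y$ so that the final output is still a $\delta_Y$-name in $f\delta_X(p)$. This is exactly the mechanism isolated in Lemma~\ref{lem:invariance}, so I would simply cite that lemma's construction rather than repeat it, remarking that the totality of the extended $S,T,H',K'$ there is precisely what upgrades ``$\vdash$'' to ``$\vdash_\t$'' and lets us demand total $H,K$ here. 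The remaining three items then follow by the same argument with the obvious notational simplifications, and the equivalence in each case is immediate once both directions are in hand.
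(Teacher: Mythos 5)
Your overall route is the paper's own: the backward directions are immediate (totality of $H\langle\id,GK\rangle$ follows from totality of $H,K,G$), and the forward directions rest on precompleteness of the representations of $U$ and $Y$. However, the one step you yourself single out as the only point requiring care is justified by a claim that is false. After replacing $K$ by a total computable $K'$ with $\delta_U K'(p)=\delta_U K(p)$ on $\dom(K)$, you assert that ``$H'\langle\id,GK'\rangle$ represents the same point as $H\langle\id,GK\rangle$ whenever the latter is defined.'' In general it does not: $K'(p)$ and $K(p)$ are \emph{different names} of the same point of $U$, so $GK'(p)$ and $GK(p)$ may be names of different elements of $g(\delta_U K(p))$ (and different names even when $g$ is single-valued), and hence $H$ need not even be defined at $\langle p,GK'(p)\rangle$, let alone produce a $\delta_Y$-equivalent output. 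Your proposed mechanism for this step is also off target: precompleteness of $\delta_V$ plays no role here, and there is no ``finite discrepancy'' between $K'(p)$ and $K(p)$ that it could absorb; what does the work is precompleteness of $\delta_U$ (to get $K'$ at all) and of $\delta_Y$ (to totalize $H$), exactly as the paper states, plus one further argument that is missing from your write-up.

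The missing argument is that the change of names $K(p)\rightsquigarrow K'(p)$ must be absorbed into a change of \emph{realizer}, using the universal quantification over $G$. Fix $G\vdash g$ (respectively $G\vdash_\t g$) and $p\in\dom(f\delta_X)$, and let $\tilde G$ agree with $G$ everywhere except at $K(p)$, where $\tilde G(K(p)):=G(K'(p))$. Since $\delta_U K'(p)=\delta_U K(p)$, the function $\tilde G$ is again a realizer of $g$, and it is total if $G$ is (realizers need not be computable, so this modification costs nothing). Applying the hypothesis to $\tilde G$ shows that $H\langle p,GK'(p)\rangle=H\langle p,\tilde GK(p)\rangle$ is defined and is a $\delta_Y$-name of a point of $f\delta_X(p)$; only after this may one pass from $H$ to its total extension $H'$, where the ``same input, $\delta_Y$-equal output'' reasoning is legitimate because $H$ and $H'$ are evaluated at the same arguments. (For items (3) and (4) one can even sidestep the $K$-issue: the requirement $H\langle\id,GK\rangle\vdash_\t f$ in Definition~\ref{def:Weihrauch-reducibility} already forces $K$ to be total, so only $H$ needs extending. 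But for items (1) and (2), where $K$ is genuinely partial, the realizer-modification argument — or, equivalently, the composition-with-translators construction behind Lemma~\ref{lem:invariance}, which absorbs the name change into $G$ in the same way — is indispensable, and as written your proof does not contain it.)
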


The proof of the backward direction is immediate and the forward direction follows
from precompleteness of the representations of $U$ and $Y$, respectively. 

In~\cite[Lemma~2.4]{BG11} we have proved that $\leqW$ and $\leqSW$ are {\em preorders},
i.e., they are reflexive and transitive. The associated equivalences are denoted by $\equivW$ and
$\equivSW$, respectively.
Using Lemma~\ref{lem:total} we can now easily
transfer these proofs to the case of the total reducibilities.

\begin{proposition}[Preorders]
\label{prop:preorders}
The relations $\leqTW$ and $\leqSTW$ are preorders on the class of problems.
\end{proposition}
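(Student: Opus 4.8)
The plan is to mirror the proof of~\cite[Lemma~2.4]{BG11} for ordinary (strong) Weihrauch reducibility, using the characterization provided by Lemma~\ref{lem:total} so that all realizers $H,K$ involved can be assumed to be \emph{total} computable functions, and observing that the only new ingredient — the totality requirement $\vdash_\t$ on the realizers of the problems — is preserved by the relevant compositions because compositions of total functions are total.

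First I would verify reflexivity. Given a problem $f:\In X\mto Y$, work with precomplete representations computably equivalent to the given ones, and take $H:=K:=\id$. For any total realizer $G\vdash_\t f$ we then have $H\langle\id,GK\rangle=\langle\id,G\rangle$; composing with the projection onto the second component gives back (a realizer equivalent to) $G$, so $H\langle\id,GK\rangle$ is a total realizer of $f$. More precisely, one picks $H$ to be the computable map extracting the second component, so $H\langle\id,G\,\id\rangle$ equals $G$ on the relevant inputs, hence $f\leqTW f$. The strong case $f\leqSTW f$ is even simpler: take $H=K=\id$, so $HGK=G\vdash_\t f$ directly. Totality of $H\langle\id,GK\rangle$ and of $HGK$ is immediate since $H,K$ are total computable and $G$ is total.

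Next I would handle transitivity. Suppose $f\leqTW g$ and $g\leqTW h$. Using Lemma~\ref{lem:total}(3), fix \emph{total} computable $H_1,K_1$ witnessing $f\leqTW g$ and \emph{total} computable $H_2,K_2$ witnessing $g\leqTW h$. Given any total realizer $G\vdash_\t h$, the hypothesis on $h$ yields that $G':=H_2\langle\id,GK_2\rangle$ is a realizer of $g$; since $H_2,K_2,G$ are all total, $G'$ is total, i.e.\ $G'\vdash_\t g$. Applying the hypothesis on $g$, the function $H_1\langle\id,G'K_1\rangle$ is a (total) realizer of $f$. It remains to rewrite $H_1\langle\id,G'K_1\rangle$ as $H\langle\id,GK\rangle$ for suitable total computable $H,K$ built uniformly from $H_1,K_1,H_2,K_2$ by the standard bookkeeping with the pairing function $\langle\,,\rangle$ (threading the original input $\id$ through both stages); this is exactly the manipulation carried out in~\cite[Lemma~2.4]{BG11}, and since all the building blocks are total computable, so is the resulting composite. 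For $\leqSTW$ the same argument applies with $H\langle\id,GK\rangle$ replaced throughout by $HGK$, and the bookkeeping collapses to $H:=H_1H_2$ and $K:=K_2K_1$.

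The main point to get right — and the only place where this differs from the partial case — is the observation that totality propagates through the two-stage construction: the intermediate realizer $G'$ of $g$ must itself be \emph{total} in order to feed it into the reduction witnessing $f\leqTW g$, and this is guaranteed precisely because Lemma~\ref{lem:total} lets us take $H_2,K_2$ total. I do not expect any genuine obstacle here; the bulk of the work is the routine reindexing already present in the literature for $\leqW$, and the verification is essentially ``diagram chasing plus: total $\circ$ total $=$ total.''
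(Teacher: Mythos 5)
Your proof is correct and takes essentially the same route as the paper: use Lemma~\ref{lem:total} to assume all witnessing functions $H,K$ are total, note that reflexivity is then immediate, and for transitivity observe that the composite witnesses constructed as in \cite[Lemma~2.4]{BG11} remain total, so the intermediate total realizer of $g$ can be fed into the first reduction. The explicit bookkeeping you sketch (including $H:=H_1\circ H_2$, $K:=K_2\circ K_1$ in the strong case) is exactly what the paper delegates to the cited construction.
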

\begin{proof}
We follow the proof of~\cite[Lemma~2.4]{BG11} and the notations used therein.
Reflexivity is obvious as the corresponding functions $H,K$ are total.
For the transitivity proof, we assume that the reductions $f\leqTW g$ and $g\leqTW h$
are given by total $H,K,H',K'$. Then the corresponding functions $H''$ and $K''$
constructed in the proof of~\cite[Lemma~2.4]{BG11} are also total and
hence the claim follows from Lemma~\ref{lem:total}.
\end{proof}

By $\equivTW$ and $\equivSTW$ we denote the equivalence relations
that are associated with $\leqTW$ and $\leqSTW$, respectively.
If the different versions of Weihrauch reducibility are expressed as in Lemma~\ref{lem:total}, 
then it is immediately clear that a partial
reduction implies the corresponding total reduction.
Using Lemma~\ref{lem:total}, Corollary~\ref{cor:invariance} and Proposition~\ref{prop:precompleteness} obtain the following corollary.

\begin{corollary}[Partial and total Weihrauch reducibility]
\label{cor:partial-total}
Let $f$ and $g$ be problems. Then $f\leqW g\TO f\leqTW g$ and $f\leqSW g\TO f\leqSTW g$.
\end{corollary}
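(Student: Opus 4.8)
The plan is to prove Corollary~\ref{cor:partial-total} by direct appeal to Lemma~\ref{lem:total}, which is precisely the reformulation of the four reducibilities in terms of \emph{total} computable ``outer'' maps $H,K$ after passing to precomplete representations. The key observation is that the only difference between the partial and total versions in Lemma~\ref{lem:total}(1) versus (3), and (2) versus (4), is that the universal quantifier ranges over \emph{all} realizers $G\vdash g$ in the partial case and only over the \emph{total} realizers $G\vdash_\t g$ in the total case; the witnessing functions $H,K$ and the conclusion $H\langle\id,GK\rangle\vdash f$ (respectively $HGK\vdash f$) are literally the same.

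First I would fix problems $f:\In X\mto Y$ and $g:\In U\mto V$ and, invoking Proposition~\ref{prop:precompleteness}, replace the given representations of $X,Y,U,V$ by computably equivalent precomplete ones; by Corollary~\ref{cor:invariance} this changes none of the four reducibility relations, so it suffices to argue with these precomplete representations and the characterizations in Lemma~\ref{lem:total}. Suppose $f\leqW g$. By Lemma~\ref{lem:total}(1) there are total computable $H,K:\IN^\IN\to\IN^\IN$ such that $H\langle\id,GK\rangle\vdash f$ for \emph{every} $G\vdash g$. In particular this holds for every total $G\vdash_\t g$, since every total realizer is a realizer. Hence the same $H,K$ witness the condition in Lemma~\ref{lem:total}(3), i.e.\ $f\leqTW g$. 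The argument for $f\leqSW g\TO f\leqSTW g$ is identical, using parts (2) and (4) of Lemma~\ref{lem:total} and the conclusion $HGK\vdash f$ in place of $H\langle\id,GK\rangle\vdash f$.

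There is essentially no obstacle here: the content has already been extracted into Lemma~\ref{lem:total}, whose proof in turn rests on Lemma~\ref{lem:invariance} (where precompleteness of the representations of $U$ and $Y$ is used to upgrade the outer functions to total ones). The one subtlety worth stating explicitly in the write-up is \emph{why} we are allowed to assume the representations are precomplete to begin with --- namely Proposition~\ref{prop:precompleteness} guarantees such representations exist in each computable equivalence class, and Corollary~\ref{cor:invariance} guarantees all four relations are insensitive to this choice. Once that is in place, the implication is just the monotonicity of a universally quantified statement under shrinking the range of the quantifier (from all $G\vdash g$ to the total $G\vdash_\t g$). I would therefore keep the proof to two or three sentences, citing Lemma~\ref{lem:total}, Corollary~\ref{cor:invariance}, and Proposition~\ref{prop:precompleteness} exactly as the corollary's statement already advertises.
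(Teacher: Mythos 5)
Your proof is correct and is essentially the paper's own argument: the paper likewise derives the corollary immediately from Lemma~\ref{lem:total} (together with Proposition~\ref{prop:precompleteness} and Corollary~\ref{cor:invariance}), observing that restricting the universal quantifier from all realizers $G\vdash g$ to the total realizers $G\vdash_\t g$ preserves the reduction with the same total $H,K$. Nothing is missing.
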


This means that all positive results that hold for a partial version of Weihrauch
reducibility can be transferred to the corresponding total variant.
Together with the obvious other implications we obtain the diagram 
for the logical relations between different versions of Weihrauch reducibility that is displayed in Figure~\ref{fig:reducibility}. The diagram is complete up to transitivity (see Example~\ref{ex:reducibility}).
The diagram also shows the generating closure operators of cylindrification and completion that we discuss later.

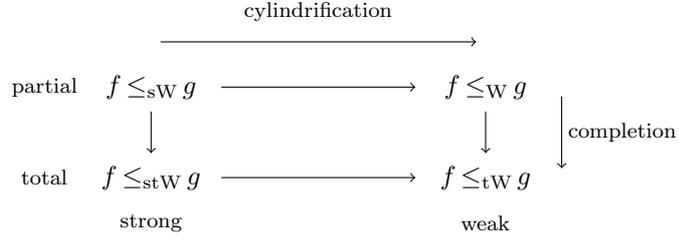
\begin{figure}[htb]
\begin{center}
\begin{tikzpicture}[scale=.4,auto=left]
\useasboundingbox  rectangle (27.5,8);

\node at (9.5,5) {$f\leqSW g$};
\node at (20.5,5) {$f\leqW g$};
\node at (20.5,2) {$f\leqTW g$};
\node at (9.5,2) {$f\leqSTW g$};

\node (v1) at (9.5,4.5) {};
\node (v2) at (18.5,5) {};
\node (v3) at (20.5,4.5) {};
\node (v5) at (11.5,2) {};
\node (v6) at (20.5,2.5) {};
\node (v7) at (9.5,2.5) {};
\draw [->] (v1) edge (v7);
\draw [->] (v3) edge (v6);

\node (v12) at (6,5) {\footnotesize partial};
\node at (9.5,0.5) {\footnotesize strong};
\node (v11) at (6,2) {\footnotesize total};
\node at (20.5,0.5) {\footnotesize weak};
\node (v4) at (23,5) {};
\node (v8) at (23,2) {};
\node (v9) at (9.5,6.5) {};
\node (v10) at (20.5,6.5) {};
\draw [double,->] (v4) edge (v8);
\draw [->] (v9) edge (v10);
\node at (15,7.5) {\footnotesize cylindrification};
\node at (25,3.5) {\footnotesize completion};
\node (v13) at (18.5,2) {};
\draw [->] (v5) edge (v13);
\node (v14) at (11.5,5) {};
\draw [->] (v14) edge (v2);
\end{tikzpicture}
\end{center}
\ \\[-0.3cm]
\caption{Implications between notions of reducibility}
\label{fig:reducibility}
\end{figure}

\begin{example}
\label{ex:reducibility}
Let $f:\IN^\IN\to\IN^\IN$ denote a constant function with computable value.
Then $\id\equivW f$, but $\id\nleqSTW f$. Let $0:\In\IN^\IN\to\IN^\IN$ denote
the nowhere defined function. Then $\id\equivTW 0$, but $\id\nleqW 0$.
Let $\id|_{\{p\}}:\In\IN^\IN\to\IN^\IN$ be the identity restricted to a non-computable $p\in\IN^\IN$.
Then $\id\equivSTW \id|_{\{p\}}\times\id$, but $\id\nleqW \id|_{\{p\}}\times\id$.
\end{example}

We note that the reducibilities $\leqTW$ and $\leqSTW$ share similar
properties as $\leqW$ and $\leqSW$ when it comes to the preservation of 
computability or other properties.
We say that a class $\CC$ of problems is {\em preserved downwards} by a reducibility $\leq$ for problems
if $f\leq g$ and $g\in\CC$ imply $f\in\CC$.

\begin{proposition}[Downwards preservation]
\label{prop:downwards-preservation}
Computability, continuity, limit computability, Borel measurability and non-uniform computability are preserved
downwards by $\leqTW$.
\end{proposition}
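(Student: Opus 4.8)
The plan is to verify the reduction directly using the total reformulation of Weihrauch reducibility from Lemma~\ref{lem:total}. Suppose $f \leqTW g$ via total computable $H, K : \IN^\IN \to \IN^\IN$, so that $H\langle \id, GK \rangle \vdash f$ for every total realizer $G$ of $g$ (with respect to fixed precomplete representations of the spaces involved). The key observation is that each of the five classes in question is closed under composition with computable functions, under forming tuples $\langle \cdot, \cdot \rangle$, and contains $\id$; moreover, by Proposition~\ref{prop:respect-precompleteness}, each of these classes respects precompleteness, so if $g$ is (say) limit computable then it has a \emph{total} realizer $G$ in the same class. The reduction witness $p \mapsto H\langle p, GK(p)\rangle$ is then a composition of $K$ (computable), $G$ (in class $\CC$), the pairing function (computable), and $H$ (computable), hence lies in $\CC$; and it is a total realizer of $f$, so $f \in \CC$.

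First I would fix notation: let $\CC$ be one of the five classes, and recall from Proposition~\ref{prop:respect-precompleteness} that $\CC$ respects precompleteness, and that $\CC$ is closed under composition with computable functions on either side and under the binary tupling $\langle\cdot,\cdot\rangle$ (this last point follows because $\langle F_1, F_2\rangle = \langle\;\rangle \circ (F_1, F_2)$ and these classes are closed under pairing — for computable, continuous, limit computable and Borel this is standard, and for non-uniformly computable it is immediate since a pair of computable-input-to-computable-output maps yields such a map). Then I would invoke Lemma~\ref{lem:total}(3): since $g \in \CC$ and representations may be taken precomplete, $g$ has a total realizer $G \in \CC$. Applying the reduction to this particular $G$ gives that $H\langle \id, GK\rangle$ is a total realizer of $f$. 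I would then note that $H\langle \id, GK\rangle = H \circ \langle\;\rangle \circ (\id, G \circ K)$ is built from computable maps and the single map $G \in \CC$ by composition and tupling, hence is itself in $\CC$; since it realizes $f$, we conclude $f \in \CC$.

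There is one subtlety worth spelling out, which I regard as the main point requiring care rather than a genuine obstacle: the definition of "$f$ is in class $\CC$" is in terms of realizers on the \emph{given} representations, whereas Lemma~\ref{lem:total} works with \emph{precomplete} equivalent representations. One must check that having a realizer in $\CC$ with respect to the precomplete representations yields one with respect to the original representations. This follows by composing with the computable translations between the equivalent representations (which exist in both directions by Corollary~\ref{cor:invariance} and its underlying argument), again using closure of $\CC$ under composition with computable functions; for the classes that respect precompleteness this is exactly the kind of manipulation already performed in the proof of Lemma~\ref{lem:invariance}. Once this is observed, the argument is uniform across all five classes and the proof is a short composition-closure check; I do not expect any genuinely hard step.
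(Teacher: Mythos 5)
Your proposal is correct and follows essentially the same route as the paper's proof: apply Lemma~\ref{lem:total} to get total computable $H,K$, use Proposition~\ref{prop:respect-precompleteness} to obtain a total realizer $G$ of $g$ in the class $\CC$, and conclude that $H\langle\id,GK\rangle$ is a realizer of $f$ in $\CC$. The representation-invariance point you spell out at the end is implicitly handled in the paper via closure of $\CC$ under composition with computable functions (exactly the conditions listed in the remark following the proposition), so it adds care but no new idea.
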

\begin{proof}
Let $\CC$ be the class of computable, continuous, limit computable, Borel measurable or non-uniformly computable problems.
We choose precomplete representations and total computable $H,K$
that witness $f\leqTW g$ according to Lemma~\ref{lem:total}.
If $g\in\CC$, then it has a realizer $G:\In\IN^\IN\to\IN^\IN$ that is in $\CC$.
Since the target space of $g$ is represented with a precomplete representation,
we can assume without loss of generality that $G$ is total by Proposition~\ref{prop:respect-precompleteness}. 
Hence $H\langle\id,GK\rangle$ is a (even total) realizer of $f$ that is also in the class $\CC$.
This proves that $f\in\CC$.
\end{proof}

Any class $\CC$ of functions $F:\In\IN^\IN\to\IN^\IN$ constitutes a property of problems that is preserved 
downwards by total Weihrauch reducibility if the following conditions are satisfied:
$\CC$ contains the identity, is closed under composition with computable functions, is closed under juxtaposition with
the identity and $\CC$ respects precompleteness. In \cite[Corollaries~6.2, 7.4, 8.3]{BG19} we prove that finite mind change computability
and Las Vegas computability is not preserved downwards by $\leqTW$, whereas non-deterministic computability is preserved.

It is known that the class of the nowhere defined problems (often denoted by ${\mathbf 0}$)
forms the bottom element of the Weihrauch lattice~\cite[Lemma~2.7]{BG11}, while the Weihrauch
equivalence class of $\id$ (often denoted by ${\mathbf 1}$) is the class of all computable 
problems with at least one computable point in the domain~\cite[Lemma~2.8]{BG11}.
Moreover, a problem $f$ is computable if and only if $f\leqW\id$.
The statement about the nowhere defined function $0:\In\IN^\IN\to\IN^\IN$ in Example~\ref{ex:reducibility}, namely that $\id\equivTW 0$,
together with Proposition~\ref{prop:downwards-preservation}
show that 
the minimal equivalence class with respect to total Weihrauch reducibility is the class of all computable problems.

\begin{corollary}[Minimal total degree]
\label{cor:minimal}
The equivalence class of all computable problems forms the minimal element
with respect to total Weihrauch reducibility.
\end{corollary}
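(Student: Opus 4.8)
The plan is to show that the equivalence class of computable problems is (a) a lower bound for every problem under $\leqTW$ and (b) itself a single $\equivTW$-class, hence minimal. For (a), let $f$ be an arbitrary problem and let $e$ denote any computable problem, say $\id:\IN^\IN\to\IN^\IN$. I want to prove $e\leqTW f$. By Lemma~\ref{lem:total} it suffices to exhibit total computable $H,K:\IN^\IN\to\IN^\IN$ such that $H\langle\id,GK\rangle\vdash e$ for every total realizer $G\vdash_\t g$ — wait, more precisely we need $H\langle\id,GK\rangle$ to be a realizer of $e=\id$. The key observation is that the right-hand side of a total reduction always has access to a total realizer $G$ of $f$, but we are free to ignore it entirely: we may take $K$ arbitrary (say $K=\id$) and let $H$ be the constant-output map that discards its input and writes the $\delta$-name prescribed by a fixed computable point. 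More efficiently, since $\id$ is realized by the identity, and since we are working with the precomplete representation $\id^\wp$ (or whatever precomplete representation is equivalent to $\id$) on both source and target, I simply need a total computable $H$ that, on input $\langle p, GK(p)\rangle$, outputs \emph{some} name of \emph{some} computable element — this works because the codomain carries a precomplete representation, so a total computable realizer of a computable problem exists by Proposition~\ref{prop:respect-precompleteness}. Thus $e\leqTW f$, and since $e$ was an arbitrary computable problem, all computable problems lie below $f$.

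For (b), I need every computable problem to be $\equivTW$ to every other, in particular $\equivTW\id$. One direction, $f\leqTW\id$ for computable $f$, is immediate: a computable $f$ has a computable, hence (after passing to precomplete representations, by Proposition~\ref{prop:respect-precompleteness}) total computable realizer, which together with $K=\id$ and a suitable total computable $H$ projecting onto the relevant component witnesses the reduction. The converse $\id\leqTW f$ is the instance of part (a) just proved. Hence all computable problems form one $\equivTW$-class. Combining (a) and (b): this class lies below every problem and is nonempty, so it is the minimum. Alternatively — and this is the slicker route the excerpt hints at — one can cite the already-recorded facts: Example~\ref{ex:reducibility} gives $\id\equivTW 0$ where $0$ is the nowhere defined function, and $0$ satisfies $0\leqW f$ for every $f$ by \cite[Lemma~2.7]{BG11}, hence $0\leqTW f$ by Corollary~\ref{cor:partial-total}; transitivity of $\leqTW$ (Proposition~\ref{prop:preorders}) then yields $\id\leqTW f$ for all $f$, while Proposition~\ref{prop:downwards-preservation} applied with $\CC$ the class of computable problems shows that anything below $\id$ (equivalently, below any computable problem) is computable.

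I expect the main subtlety — not really an obstacle, but the point demanding care — to be the role of precompleteness. In the \emph{partial} setting $\id\nleqW 0$ precisely because there is no realizer of the nowhere defined function to feed into the reduction, so the bottom element is genuinely $0$ and not $\id$. In the total setting the vacuous quantifier ``$\forall G\vdash_\t 0$'' makes the reduction $\id\leqTW 0$ hold trivially, but to even state this we must have passed to precomplete representations so that the definitions are well-posed (this is exactly what Lemma~\ref{lem:invariance} and Corollary~\ref{cor:invariance} secure). The other place precompleteness enters is in guaranteeing that ``computable'' on the realizer level can be taken to mean ``total computable'': this is Proposition~\ref{prop:respect-precompleteness}, and it is what makes Proposition~\ref{prop:downwards-preservation} (downward preservation of computability by $\leqTW$) available, which is the ingredient that upgrades ``lower bound'' to ``the computable problems are exactly the minimal degree.'' So the proof is essentially three citations stitched by transitivity, and the only thing to get right is the order: first $\id\equivTW 0$ (Example~\ref{ex:reducibility}), then $0\leqTW f$ for all $f$ (via \cite[Lemma~2.7]{BG11} and Corollary~\ref{cor:partial-total}), then Proposition~\ref{prop:downwards-preservation} to see the class is not strictly larger than it should be.
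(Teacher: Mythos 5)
Your ``slicker route'' is exactly the paper's proof: the paper derives the corollary from $\id\equivTW 0$ (Example~\ref{ex:reducibility}), from $0$ being the bottom of the partial Weihrauch lattice together with Corollary~\ref{cor:partial-total}, from the fact that $f$ is computable iff $f\leqW\id$, and from Proposition~\ref{prop:downwards-preservation}; so that part of your proposal is correct and needs no further comment.

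The hand-rolled construction in your part (a), however, is wrong as written. To witness $\id\leqTW f$ you need $H\langle\id,GK\rangle$ to be a \emph{realizer of $\id$}, i.e.\ on a name $p$ of $x$ it must output a name of that same $x$; an $H$ that ``discards its input'' and writes a name of a fixed computable point, or that outputs ``some name of some computable element,'' realizes a constant problem, not $\id$. The correct direct witness is the trivial one: $K=\id$ and $H=\pi_1$ (return the first component), so that $H\langle p,GK(p)\rangle=p$. The point that makes this work in the total setting is not precompleteness of the codomain but the totality of $G$: since every $G\vdash_\t f$ is total, $GK(p)$ is defined for all $p$, hence $H\langle\id,GK\rangle$ is a total realizer of $\id$ (and when $f$ has no total realizers the condition is vacuous, as you note for $0$). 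Precompleteness enters only where you say it does elsewhere: in making $\leqTW$ well defined (Lemma~\ref{lem:invariance}) and in upgrading computable realizers to total ones for Proposition~\ref{prop:downwards-preservation}. Similarly, in your part (b) the reduction $f\leqTW\id$ for computable $f$ is witnessed by $K=\id$ and $H=F\circ\pi_1$ for a total computable realizer $F$ of $f$, i.e.\ $H$ must apply $F$ to the first component rather than merely ``project.'' With these repairs the direct argument also goes through, but the citation-based argument you give is the paper's and already suffices.
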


This already shows that the algebraic structure induced by total Weihrauch reducibility
is significantly different from the structure induced by partial Weihrauch reducibility.
In between ${\mathbf 0}$ and ${\mathbf 1}$ one obtains a complicated structure
for partial Weihrauch reducibility, and among other results one can show that
one can embed the entire Medvedev lattice (and hence the Turing semi-lattice)
in an order-reversing way into the Weihrauch lattice between ${\mathbf 0}$ and ${\mathbf 1}$~\cite[Lemma~5.6]{HP13}.
In contrast to this the two degrees ${\mathbf 0}$ and ${\mathbf 1}$ fall together
with respect to total Weihrauch reducibility. 

Strictly speaking, the class of problems is not a set, but we can always consider
representatives of problems on Baire space to obtain a set as underlying structure.
This is known for $\leqW$ and $\leqSW$ (see~\cite[Lemma~3.8]{BGP18})
and holds correspondingly for $\leqTW$ and $\leqSTW$.

\begin{corollary}[Realizer version]
\label{cor:realizer-version}
Let $f:\In X\mto Y$ be a problem on represented spaces $(X,\delta_X)$ and $(Y,\delta_Y)$.
Then $f^\r:=\delta_Y^{-1}\circ f\circ\delta_X:\In\IN^\IN\mto\IN^\IN$ satisfies $f^\r\equivSTW f$.
\end{corollary}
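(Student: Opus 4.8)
The plan is to show the two reductions $f^\r \equivSTW f$ directly, by unpacking the definitions via the representations. First I would fix precomplete representations $\delta_X, \delta_Y$ for $X$ and $Y$; note that $f^\r = \delta_Y^{-1}\circ f\circ\delta_X$ is a problem on $\IN^\IN$, which we equip with a precomplete representation of $\IN^\IN$, for instance $\id^\wp$ (whose existence and equivalence to $\id$ is Proposition~\ref{prop:precompleteness}). The key observation is that a realizer of $f$ is, essentially by definition, the same thing as a realizer of $f^\r$: since $\delta_Y F(p)\in f\delta_X(p)$ says exactly $F(p)\in \delta_Y^{-1} f \delta_X(p) = f^\r(p)$ when we read $f^\r$ against the identity representation of $\IN^\IN$, and one has to translate between $\id$ and $\id^\wp$ using the computable maps $p\mapsto p+1$ and $p\mapsto p-1$. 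The point is that both directions of the equivalence can be witnessed by the identity reduction $H = K = \id$ (up to these precompletion bookkeeping maps), since strong reducibility only allows pre- and post-composition with computable functions and here no genuine computation is needed.

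In more detail, for $f^\r \leqSTW f$ I would take $K$ to be the computable map that translates an $\id^\wp$-name $p$ of a point of $\IN^\IN$ (i.e.\ a name with $p-1\in\IN^\IN$) into a $\delta_X$-name of $\delta_X(p-1)$; by Corollary~\ref{cor:invariance}, or rather by the bookkeeping in Lemma~\ref{lem:invariance}, the relevant translations $S,T$ between $\delta_X$, its precompletion, $\id$ and $\id^\wp$ are computable and can be taken total, so $K$ is total computable. Then given a total realizer $G\vdash_\t f$, the composite realizes $f^\r$ after post-composition with the total computable map $T$ turning a $\delta_Y$-name back into an $\id^\wp$-name; that post-composition is our $H$. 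One checks $HGK\vdash_\t f^\r$ by chasing the defining inclusion through the commuting square of representations. For the converse $f\leqSTW f^\r$, one argues symmetrically: a total realizer of $f^\r$ against $\id^\wp$ is turned into a total realizer of $f$ by pre-composing with the computable map sending a $\delta_X$-name to an $\id^\wp$-name of the corresponding point, and post-composing with the map sending an $\id^\wp$-name to a $\delta_Y$-name.

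The main point that requires care — and really the only obstacle — is precompleteness and totality bookkeeping: we must be sure that all the translation maps between the various representations involved ($\delta_X$, $\delta_Y$, $\id$, $\id^\wp$, and their precompletions) can be chosen \emph{total} computable, so that totality of $G$ is inherited by the composite. This is exactly what Lemma~\ref{lem:invariance} was set up to handle (its proof notes that the auxiliary functions $S,T$ can be taken total when the target representations are precomplete), together with Proposition~\ref{prop:precompleteness} guaranteeing precomplete representatives exist, and Corollary~\ref{cor:partial-total} telling us the partial version $f\equivSW f^\r$ (which is classical, cf.\ \cite[Lemma~3.8]{BGP18}) already gives the reductions; the extra work is only to verify the realizers produced are total, which follows since every map in sight is total on the relevant precomplete names. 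Everything else is a routine diagram chase, so I would present it compactly, perhaps even just invoking Lemma~\ref{lem:invariance} and the known realizer equivalence for $\leqSW$ and observing that the witnessing functions are total.
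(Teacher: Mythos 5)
Your argument is correct in substance, but it takes a more laborious route than the paper, which disposes of the statement in two lines: $f^\r$ and $f$ share exactly the same realizers, so $f^\r\equivSW f$ (this is \cite[Lemma~3.8]{BGP18} and is immediate from the definitions, with no representation translations needed, since the original $\delta_X,\delta_Y$ need not be precomplete for the partial reducibilities); then Corollary~\ref{cor:partial-total} converts $\equivSW$ into $\equivSTW$ at once. You instead rebuild the reduction by hand, constructing total translation maps between $\delta_X,\delta_Y$, their precomplete representatives and $\id^\wp$, and using precompleteness of the target representations to totalize them --- this works and is essentially a re-proof of the machinery already packaged in Lemma~\ref{lem:invariance}/Lemma~\ref{lem:total}, so it buys nothing beyond making the bookkeeping explicit. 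Two small points of confusion in your closing paragraph are worth fixing: Corollary~\ref{cor:partial-total} does not assert $f\equivSW f^\r$ (that is the cited fact from \cite{BGP18}); it asserts precisely the implication $\leqSW{}\TO{}\leqSTW$, and its content (via Lemma~\ref{lem:total} and Proposition~\ref{prop:precompleteness}) already absorbs all totality concerns, so the ``extra work to verify the realizers produced are total'' that you anticipate is not needed once you invoke it. Also, in your explicit construction the post-composition map should turn a name under the chosen precomplete representation of $Y$ into an $\id^\wp$-name \emph{of a $\delta_Y$-name} (since $f^\r(p)$ is a set of $\delta_Y$-names), a distinction you blur but which your precompleteness argument does handle.
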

\begin{proof}
By $f^\r\equivSW f$ holds according to \cite[Lemma~3.8]{BGP18} (and is easy to see, since
$f^\r$ and $f$ share exactly the same realizers). 
Hence $f^\r\equivSTW f$ follows by Corollary~\ref{cor:partial-total}.
\end{proof}

We note that we do not need to assume that $\delta_X$ and $\delta_Y$ are
precomplete in this result. However, for $f^\r:\In\IN^\IN\mto\IN^\IN$
we need to use precomplete representations of $\IN^\IN$ for the
total versions of Weihrauch reducibility.

\section{Completion}
\label{sec:completion}

In this section we discuss the closure operation of completion $f\mapsto\overline{f}$ that generates $\leqTW$ on $\leqW$ and $\leqSTW$ on $\leqSW$.
For the definition of the completion $\overline{f}$ we use the completion $\overline{X}$ of a represented space according
to Definition~\ref{def:operations-representations}.

\begin{definition}[Completion]
\label{def:completion}
Let $f:\In X\mto Y$ be a problem. We define the {\em completion} of $f$ by
\[\overline{f}:\overline{X}\mto\overline{Y},x\mapsto\left\{\begin{array}{ll}
   f(x) & \mbox{if $x\in\dom(f)$}\\
   \overline{Y} & \mbox{otherwise}
\end{array}\right.\]
\end{definition}

We note that the completion $\overline{f}$ is always {\em pointed}, i.e., it has a computable
point in its domain. This is because $\bot\in\overline{X}$ is always computable (as it has the constant zero sequence as a name).

Sometimes it is useful to think of $\overline{f}$ in terms of
its realizer version ${\overline{f}\,}^\r:\IN^\IN\mto\IN^\IN$, which is given by
\[{\overline{f}\,}^\r(p)=\delta_{\overline{Y}}^{-1}\circ\overline{f}\circ\delta_{\overline{X}}(p)
=\left\{\begin{array}{ll}
  (\delta_Y^\wp)^{-1}\circ f\circ\delta_X^\wp(p) & \mbox{if $p\in\dom(f\circ\delta_X^\wp)$}\\
  \IN^\IN & \mbox{otherwise}
\end{array}\right..\]
Since $\overline{f}$ has exactly the same realizers as ${\overline{f}\,}^\r$, one
can deduce from this formula that the realizers of $\overline{f}$ are exactly the total realizers of $f$
with respect to $\delta_X^\wp$ and $\delta_Y^\wp$, which immediately yields
the following conclusion with the help of Lemma~\ref{lem:total}.

\begin{lemma}[Completion and total Weihrauch reducibility]
\label{lem:completion-total}
For all problems $f,g$:
$f\leqW\overline{g}\iff\overline{f}\leqW\overline{g}\iff f\leqTW g$ 
and $f\leqSW\overline{g}\iff\overline{f}\leqSW\overline{g}\iff f\leqSTW g$.
\end{lemma}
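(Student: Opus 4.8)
The plan is to establish the central equivalence $f\leqW\overline g\iff f\leqTW g$ directly from the realizer characterization of $\overline g$, and then derive the remaining equivalences as easy corollaries, with the strong versions handled by a verbatim adaptation. First I would record the key observation, already hinted at in the text preceding the lemma: the realizers of $\overline g$ coincide with the realizers of ${\overline g\,}^\r$, and from the displayed formula for ${\overline g\,}^\r$ one sees that $G\vdash\overline g$ holds if and only if $G$ is a \emph{total} function on $\IN^\IN$ satisfying $\delta_V^\wp G(p)\in g\,\delta_U^\wp(p)$ for all $p\in\dom(g\,\delta_U^\wp)$, i.e.\ exactly if $G\vdash_\t g$ with respect to the precompletions $\delta_U^\wp,\delta_V^\wp$. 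The only subtle point is that a realizer of $\overline g$ must produce \emph{some} output outside $\dom(g\,\delta_U^\wp)$, and since $\delta_{\overline V}$ is total (indeed ${\overline g\,}^\r(p)=\IN^\IN$ there), any total $G$ works on that part; conversely a realizer of $\overline g$ is forced to be total because $\dom(\overline g)=\overline X$ and $\delta_{\overline X}$ is surjective onto $\overline X$.

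Given this, I would argue: $f\leqW\overline g$ means, by Lemma~\ref{lem:total}(1) applied with precomplete representations, that there are computable $H,K$ with $H\langle\id,GK\rangle\vdash f$ for every $G\vdash\overline g$; by the observation this is the same as demanding $H\langle\id,GK\rangle\vdash f$ for every $G\vdash_\t g$, which is exactly $f\leqTW g$ by Lemma~\ref{lem:total}(3). For the chain through $\overline f\leqW\overline g$: one direction is immediate since $f\leqW\overline f$ always (the identity reduction, noting $f$ is a restriction of $\overline f$ along the computable embedding $\iota$ of Corollary~\ref{cor:completion}), so $\overline f\leqW\overline g\TO f\leqW\overline g$. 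The reverse direction $f\leqW\overline g\TO\overline f\leqW\overline g$ uses that $\overline f\leqTW f$ trivially (again $f\leqW\overline f$ composed appropriately, or directly: a total realizer of $f$ extended by the precompletion becomes a realizer of $\overline f$), so $\overline f\leqW\overline g\iff\overline f\leqTW g\iff f\leqTW g\iff f\leqW\overline g$; here I am using that $\leqTW$ is a preorder (Proposition~\ref{prop:preorders}) together with $f\equivTW\overline f$, which itself follows from $f\leqW\overline f$ and the already-proved equivalence. To keep the argument non-circular I would actually prove $f\leqW\overline g\iff f\leqTW g$ \emph{first}, then deduce $f\equivTW\overline f$ by instantiating $g:=f$ and $g:=\overline f$ and using that $f\leqW\overline f\leqW\overline{\overline f}$ with $\overline{\overline f}\equivW\overline f$ (the completion of a pointed total problem on a completed space is equivalent to itself — this needs a one-line check that $\overline X$ is already its own completion up to the relevant equivalence, or one simply observes $\overline f\leqSW\overline{\overline f}\leqSW\overline f$ directly from the definitions), and only then close the loop to $\overline f\leqW\overline g$.

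The strong case $f\leqSW\overline g\iff\overline f\leqSW\overline g\iff f\leqSTW g$ is handled by the identical argument with $HGK$ in place of $H\langle\id,GK\rangle$ throughout, invoking Lemma~\ref{lem:total}(2) and (4) instead of (1) and (3); nothing changes in the reasoning about realizers since the realizer-level observation about $\overline g$ is independent of the ambient reduction format.

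The step I expect to be the main obstacle is pinning down precisely that the realizers of $\overline g$ are \emph{exactly} the total realizers of $g$ with respect to the precompleted representations — in particular being careful that (i) the ``otherwise'' clause ${\overline g\,}^\r(p)=\IN^\IN$ imposes no constraint and merely forces totality, (ii) totality of $\delta_{\overline X},\delta_{\overline Y}$ (Corollary~\ref{cor:completion}) is what makes $\vdash$ for $\overline g$ automatically a $\vdash_\t$ condition, and (iii) the switch from the given representations of $U,V$ to $\delta_U^\wp,\delta_V^\wp$ inside Definition~\ref{def:Weihrauch-reducibility}(3)--(4) matches exactly the $\delta_U^\wp,\delta_V^\wp$ appearing in the formula for ${\overline g\,}^\r$, with Lemma~\ref{lem:invariance} or Corollary~\ref{cor:invariance} guaranteeing this choice is harmless. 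Once that correspondence is stated cleanly, the rest is bookkeeping with Lemma~\ref{lem:total} and transitivity.
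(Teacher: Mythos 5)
Your proposal is correct and follows essentially the same route as the paper: the paper's own (very brief) argument is precisely your key observation that the realizers of $\overline{g}$ are exactly the total realizers of $g$ with respect to the precompletions $\delta_U^\wp,\delta_V^\wp$, read off from the displayed formula for ${\overline{g}\,}^\r$, combined with Lemma~\ref{lem:total} and the invariance results. The only cosmetic difference is that the paper obtains the middle statement $\overline{f}\leqW\overline{g}$ at once by applying the same realizer identification to $\overline{f}$ as well (so all three conditions become literally the same), whereas you detour through $f\equivTW\overline{f}$ and transitivity of $\leqTW$; both work.
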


Thus, we could define total Weihrauch reducibility also using the completion operation and partial Weihrauch reducibility.
Lemma~\ref{lem:completion-total} also shows that the total Weihrauch degrees can be order theoretically embedded into the pointed partial Weihrauch degrees.
Together with Corollary~\ref{cor:partial-total}
we obtain that completion is monotone.

\begin{corollary}[Monotonicity of completion]
\label{cor:monotonicity-completion}
Let $f$ and $g$ be problems. Then
\begin{enumerate}
\item $f\leqW g\TO \overline{f}\leqW\overline{g}$,
\item $f\leqSW g\TO \overline{f}\leqSW\overline{g}$.
\end{enumerate}
\end{corollary}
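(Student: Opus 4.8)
The plan is to derive Corollary~\ref{cor:monotonicity-completion} directly from Lemma~\ref{lem:completion-total} and Corollary~\ref{cor:partial-total}, which together already package everything we need. The two statements (1) and (2) are completely parallel, so I would prove (1) in detail and remark that (2) follows by replacing every occurrence of $\leqW$ and $\leqTW$ with $\leqSW$ and $\leqSTW$ throughout.

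For (1): assume $f\leqW g$. By Corollary~\ref{cor:partial-total} this gives $f\leqTW g$. Now apply the chain of equivalences in Lemma~\ref{lem:completion-total}, specifically the equivalence $f\leqTW g\iff\overline{f}\leqW\overline{g}$ (the lemma states $f\leqW\overline{g}\iff\overline{f}\leqW\overline{g}\iff f\leqTW g$, so reading off the first and third terms gives exactly this). Hence $\overline{f}\leqW\overline{g}$, which is what we wanted. The strong case (2) runs identically through the second half of Lemma~\ref{lem:completion-total}: $f\leqSW g$ implies $f\leqSTW g$ by Corollary~\ref{cor:partial-total}, and $f\leqSTW g\iff\overline{f}\leqSW\overline{g}$ by the lemma.

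There is essentially no obstacle here; the corollary is a two-line bookkeeping consequence of results already in place, and the only thing to be careful about is citing the correct half of Lemma~\ref{lem:completion-total} in each case and not accidentally swapping $\leqW$ with $\leqSW$. I would write it as a short \texttt{proof} environment:

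\begin{proof}
If $f\leqW g$, then $f\leqTW g$ by Corollary~\ref{cor:partial-total}, and hence $\overline{f}\leqW\overline{g}$ by Lemma~\ref{lem:completion-total}. Likewise, if $f\leqSW g$, then $f\leqSTW g$ by Corollary~\ref{cor:partial-total}, and hence $\overline{f}\leqSW\overline{g}$ by Lemma~\ref{lem:completion-total}.
\end{proof}

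One could alternatively give a hands-on proof that exhibits the reduction witnesses for $\overline{f}\leqW\overline{g}$ directly from those for $f\leqW g$ — using the observation recorded just before Lemma~\ref{lem:completion-total} that realizers of $\overline{g}$ are exactly the total realizers of $g$ with respect to the precompleted representations, together with the fact that the precompletion map $\id^\wp$ is computable — but this would merely re-prove the pieces that Lemma~\ref{lem:completion-total} and Corollary~\ref{cor:partial-total} already isolate, so the one-line argument above is the natural choice.
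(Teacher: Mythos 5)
Your proof is correct and is essentially the paper's own argument: the paper derives this corollary exactly by combining Corollary~\ref{cor:partial-total} with the equivalences of Lemma~\ref{lem:completion-total} (the only nitpick is that the equivalence $\overline{f}\leqW\overline{g}\iff f\leqTW g$ is between the second and third terms of the lemma's chain, not the first and third, but since the chain consists of equivalences this is immaterial).
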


We note that this result also implies that completion
is a well-defined operation on (strong) Weihrauch degrees: if $f_1,f_2$ are identical problems with possibly
different but computably equivalent representations on the input and output side, respectively, then
$f_1\equivSW f_2$ and hence $\overline{f_1}\equivSW\overline{f_2}$ follows. This is so, even so
the representations of the corresponding completions of the spaces on the input and output side
are not necessarily computably equivalent
(see the remark after Definition~\ref{def:operations-representations}). Now we can see that completion is a closure operator. 

\begin{proposition}[Completion as closure operator]
\label{prop:completion-closure}
Completion $f\mapsto\overline{f}$ is a closure operator on $\leqW$ and $\leqSW$.
\end{proposition}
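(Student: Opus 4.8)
The plan is to verify the three defining properties of a closure operator from Definition~\ref{def:closure-operator} for the map $f\mapsto\overline{f}$, with respect to each of the preorders $\leqW$ and $\leqSW$. The key observation that makes everything routine is Lemma~\ref{lem:completion-total}, which tells us that $f\leqW\overline{g}\iff\overline{f}\leqW\overline{g}\iff f\leqTW g$ (and the analogous chain for the strong versions). So all three axioms can be read off by translating statements about $\overline{f}$ into statements about $\leqTW$.

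First I would check the extensivity axiom $f\leqW\overline{f}$: by Lemma~\ref{lem:completion-total} this is equivalent to $f\leqTW f$, which holds since $\leqTW$ is reflexive by Proposition~\ref{prop:preorders}. Next, idempotency $\overline{\overline{f}}\leqW\overline{f}$: again by Lemma~\ref{lem:completion-total} (applied with $\overline{f}$ in place of $f$) this is equivalent to $\overline{f}\leqW\overline{f}$, i.e. to $\overline{\overline{f}}\equivW\overline{f}$ in the more uniform formulation, which is immediate from reflexivity; alternatively one notes directly that $\overline{f}$ is already total on $\overline{X}$ so completing it again only changes the underlying types by a computable equivalence, giving $\overline{\overline{f}}\equivSW\overline{f}$. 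Finally, monotonicity $f\leqW g\TO\overline{f}\leqW\overline{g}$ is exactly Corollary~\ref{cor:monotonicity-completion}(1), and the strong version is Corollary~\ref{cor:monotonicity-completion}(2). The same three steps work verbatim for $\leqSW$ using the strong halves of Lemma~\ref{lem:completion-total}, Proposition~\ref{prop:preorders}, and Corollary~\ref{cor:monotonicity-completion}.

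Since all the genuine work has already been done in the preceding lemmas and corollaries, there is no real obstacle here — the proposition is essentially a bookkeeping consequence of Lemma~\ref{lem:completion-total} together with the fact that $\leqTW$ and $\leqSTW$ are preorders. If I wanted to make the argument self-contained rather than citing $\leqTW$, the only point requiring a moment's care is idempotency: one should observe that the completion $\overline{X}$ of an already-completed space $\overline{X}$ carries a representation computably equivalent to $\delta_{\overline{X}}$ itself (the new bottom point is simply never used, since $\overline{f}$ is total), so that $\overline{\overline{f}}$ and $\overline{f}$ have, up to computable equivalence of representations, the same realizers, whence $\overline{\overline{f}}\equivSW\overline{f}$ and in particular $\overline{\overline{f}}\leqW\overline{f}$. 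This is the step where the precise definition of $\delta_{\overline{X}}$ from Definition~\ref{def:operations-representations}(5) and the fact that completions are precomplete (Corollary~\ref{cor:completion}) get used.
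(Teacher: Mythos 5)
Your proposal is correct and follows essentially the same route as the paper: extensivity via Lemma~\ref{lem:completion-total} and reflexivity of $\leqTW$/$\leqSTW$, idempotency via the same lemma applied to $\overline{f}$ together with reflexivity of $\leqW$/$\leqSW$, and monotonicity quoted from Corollary~\ref{cor:monotonicity-completion}. Your optional direct argument for idempotency is a harmless extra; the main line matches the paper's proof.
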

\begin{proof}
By Lemma~\ref{lem:completion-total} $f\leqSW\overline{f}$ is equivalent to $f\leqSTW f$, which holds since $\leqSTW$ is reflexive by Proposition~\ref{prop:preorders}.
By Lemma~\ref{lem:completion-total} $\overline{\overline{f}}\leqSW\overline{f}$ is equivalent to $\overline{f}\leqSW\overline{f}$, which holds since $\leqSW$ is reflexive.
Completion is monotone with respect to $\leqW$ and $\leqSW$ according to Corollary~\ref{cor:monotonicity-completion}.
Altogether completion is a closure operator with respect to $\leqW$ and $\leqSW$.
\end{proof}

We have used properties of $\leqTW$ and $\leqSTW$ in order to obtain properties of completion.
Vice versa Proposition~\ref{prop:completion-closure} and Lemma~\ref{lem:completion-total} also imply Proposition~\ref{prop:preorders} and Corollary~\ref{cor:partial-total}. Hence, these concepts yield different perspectives on the same properties.

It is clear that every $f$ is strongly totally equivalent to its completion by Lemma~\ref{lem:completion-total} and Proposition~\ref{prop:completion-closure}.

\begin{corollary} 
\label{cor:completion-total}
$f\equivSTW\overline{f}$ for every problem $f$.
\end{corollary}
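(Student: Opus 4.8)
The plan is to prove the two reductions $f\leqSTW\overline{f}$ and $\overline{f}\leqSTW f$ separately, in each case collapsing the claim to a reflexivity statement by passing through Lemma~\ref{lem:completion-total}, which already translates between the partial and total versions of the reducibilities and completion.

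For the forward direction, I would first note that by Proposition~\ref{prop:completion-closure} completion is a closure operator with respect to $\leqSW$, so in particular $f\leqSW\overline{f}$ (this is property~(1) of Definition~\ref{def:closure-operator}). Applying Corollary~\ref{cor:partial-total}, which says a partial reduction implies the corresponding total one, this upgrades to $f\leqSTW\overline{f}$. (One could equally read this off Lemma~\ref{lem:completion-total} with $g=f$: the equivalence $f\leqSW\overline{f}\iff f\leqSTW f$ together with reflexivity of $\leqSTW$ from Proposition~\ref{prop:preorders} yields $f\leqSW\overline{f}$, and then Corollary~\ref{cor:partial-total} finishes.) For the backward direction, I would invoke Lemma~\ref{lem:completion-total} with the problem $\overline{f}$ in the first slot and $f$ in the second, obtaining the chain $\overline{f}\leqSW\overline{f}\iff\overline{\overline{f}}\leqSW\overline{f}\iff\overline{f}\leqSTW f$. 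Since $\leqSW$ is reflexive, the leftmost statement holds, hence so does $\overline{f}\leqSTW f$; equivalently, this is just the closure-operator idempotency $\overline{\overline{f}}\leqSW\overline{f}$ from Proposition~\ref{prop:completion-closure} read through Lemma~\ref{lem:completion-total}. Combining the two directions gives $f\equivSTW\overline{f}$.

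There is essentially no obstacle here: all the genuine work has been absorbed into Lemma~\ref{lem:completion-total} and Proposition~\ref{prop:completion-closure}, and the corollary amounts to transporting reflexivity of $\leqSW$ and $\leqSTW$ through those equivalences. The only point requiring a moment's care is to substitute $\overline{f}$ (rather than $f$) into Lemma~\ref{lem:completion-total} for the nontrivial inequality, so that the trivially true $\overline{f}\leqSW\overline{f}$ is what gets used.
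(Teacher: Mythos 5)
Your argument is correct and matches the paper's: the corollary is stated there as an immediate consequence of Lemma~\ref{lem:completion-total} and Proposition~\ref{prop:completion-closure}, which is exactly how you obtain both reductions (the forward one from $f\leqSW\overline{f}$ plus Corollary~\ref{cor:partial-total}, the backward one by substituting $\overline{f}$ and $f$ into Lemma~\ref{lem:completion-total} and using reflexivity of $\leqSW$). Nothing is missing.
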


In the study of total Weihrauch reducibility the degrees that have identical cones
with respect to partial and total Weihrauch reducibility play an important role.
Hence, we introduce a name for such degrees.

\begin{definition}[Complete problems]
A problem $f$ is called {\em complete} if $f\equivW\overline{f}$ and
{\em strongly complete} if $f\equivSW\overline{f}$.
\end{definition}

Now we obtain the following straightforward characterization of completeness.

\begin{theorem}[Completeness]
\label{thm:completeness}
Let $g$ be a problem. Then the following hold:
\begin{enumerate}
\item $g$ complete $\iff (\forall$ problems $f)(f\leqW g\iff f\leqTW g)$.
\item $g$ strongly complete $\iff (\forall$ problems $f)(f\leqSW g\iff f\leqSTW g)$.
\end{enumerate}
\end{theorem}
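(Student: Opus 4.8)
The plan is to establish both equivalences by essentially unwinding Lemma~\ref{lem:completion-total}, which already provides the bridge $f\leqW\overline{g}\iff f\leqTW g$ (and its strong counterpart). For direction~(1), the forward implication is the easy one: if $g$ is complete, then $g\equivW\overline{g}$, so for any problem $f$ we have $f\leqW g\iff f\leqW\overline{g}$, and by Lemma~\ref{lem:completion-total} the right-hand side is equivalent to $f\leqTW g$. For the backward implication, suppose $(\forall f)(f\leqW g\iff f\leqTW g)$. I would instantiate this with $f:=\overline{g}$. Since $\overline{g}\leqTW g$ holds by Corollary~\ref{cor:completion-total} (which gives $g\equivTW\overline{g}$, or even more directly because $\overline{g}\leqW\overline{g}$ together with Lemma~\ref{lem:completion-total} yields $\overline{g}\leqTW g$), the hypothesis gives $\overline{g}\leqW g$. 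Combined with $g\leqW\overline{g}$ from Proposition~\ref{prop:completion-closure}, this is exactly $g\equivW\overline{g}$, i.e.\ $g$ is complete.

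For direction~(2), I would run the identical argument with $\leqSW$ and $\leqSTW$ in place of $\leqW$ and $\leqTW$ throughout: the forward direction uses $g\equivSW\overline{g}$ and the strong half of Lemma~\ref{lem:completion-total}; the backward direction instantiates the hypothesis at $f:=\overline{g}$, uses $\overline{g}\leqSTW g$ (from $\overline{g}\leqSW\overline{g}$ and Lemma~\ref{lem:completion-total}, or from Corollary~\ref{cor:completion-total}), and concludes $\overline{g}\leqSW g$, whence strong completeness via Proposition~\ref{prop:completion-closure}.

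I do not anticipate a genuine obstacle here: the theorem is a bookkeeping consequence of Lemma~\ref{lem:completion-total} plus the fact that completion is a closure operator. The only point requiring a moment's care is making sure that when instantiating the universal hypothesis with $f=\overline g$ one is entitled to apply Lemma~\ref{lem:completion-total} to the pair $(\overline g, g)$ — but that lemma is stated for all problems, and $\overline g$ is a problem, so $\overline g\leqW\overline g\iff\overline g\leqTW g$ is immediate. Thus the whole argument is a short chain of equivalences in each direction, and I would present it in four or five lines per item.
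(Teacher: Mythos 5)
Your proposal is correct and follows essentially the same route as the paper: the forward direction unwinds Lemma~\ref{lem:completion-total} using $g\equivW\overline{g}$, and the backward direction instantiates the hypothesis at $f=\overline{g}$, using Corollary~\ref{cor:completion-total} to get $\overline{g}\leqTW g$ (hence $\overline{g}\leqW g$) and the closure-operator property for $g\leqW\overline{g}$. The paper's proof is just a terser statement of exactly this argument, with the strong case handled analogously as you do.
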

\begin{proof}
If $g$ is (strongly) complete, then the respective given equivalence 
holds by Lemma~\ref{lem:completion-total}.
On the other hand, if $f\leqW g\iff f\leqTW g$ holds for all $f$,
then $\overline{g}\equivW g$ follows from Corollary~\ref{cor:completion-total}.
The case of strong completeness can be handled analogously.
\end{proof}

Examples of complete problems are abundant. We study a number of landmarks in the Weihrauch lattice,
among them the Turing jump operator $\J$ and and the binary sorting problem $\SORT$ that
was introduced and studied by Neumann and Pauly~\cite{NP18}.
The problems $\WBWT_2,\ACC_X,\PA$ and $\MLR$ were studied for instance in \cite{BHK17a}.
We identify $X\in\IN$ with the set $X=\{0,1,...,X-1\}$.
Many further completeness questions regarding choice are studied in \cite{BG19}.

\begin{proposition}[Complete problems]
\label{prop:complete-problems}
The following problems are all strongly complete:
\begin{enumerate}
\item $\id:\IN^\IN\to\IN^\IN,p\mapsto p$,
\item $\J:\IN^\IN\to\IN^\IN,p\mapsto p'$,
\item $\lim:\In\IN^\IN\to\IN^\IN,\langle p_0,p_1,p_2,...\rangle\mapsto\lim_{n\to\infty} p_n$,
\item $\LPO:\IN^\IN\to\{0,1\},\LPO(p)=0:\iff(\exists n\in\IN)\;p(n)=0$,
\item $\SORT:2^\IN\to2^\IN$ with
   \[\SORT(p):=\left\{\begin{array}{ll}
    0^k\widehat{1} & \mbox{if $p$ contains exactly $k\in\IN$ zeros}\\
    \widehat{0}       & \mbox{if $p$ contains infinitely many zeros}
\end{array}\right..\]
\item $\WBWT_2:2^\IN\mto2^\IN,p\mapsto\{q\in2^\IN:\lim_{n\to\infty}q(n)$ is a cluster point of $p\}$.
\item $\ACC_{X}:\In\IN^\IN\mto\IN,p\mapsto\{n\in X:n+1\not\in\range(p)\}$, where $X\geq2$ or $X=\IN$
         and $\dom(\ACC_X):=\{p\in\IN^\IN:\range(p)\In\{0,n+1\}$ for some $n\in X\}$.
\item $\PA:2^\IN\mto2^\IN,p\mapsto\{q\in 2^\IN:q$ is a PA-degree relative to $p\}$.
\item $\MLR:2^\IN\mto2^\IN,p\mapsto\{q\in 2^\IN:q$ Martin-L\"of random relative to $p\}$.
\end{enumerate}
\end{proposition}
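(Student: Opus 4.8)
The plan is to establish, for each problem $g$ in the list, the reduction $\overline{g}\leqSW g$; the converse $g\leqSW\overline{g}$ holds for every problem because completion is a closure operator on $\leqSW$ (Proposition~\ref{prop:completion-closure}), so $g\equivSW\overline{g}$, i.e.\ strong completeness, follows. By Corollary~\ref{cor:realizer-version} we may pass to realizer versions, and by Corollary~\ref{cor:invariance} we are free to choose any computably equivalent representations; I will use the standard representations of $\IN^\IN,2^\IN,\{0,1\},\IN$ for $g$ and their precompletions for $\overline{g}$, so that a name $q$ of a point $x\in X$ is one with $q-1$ a standard name of $x$, and a name of the adjoined point $\bot$ is any $q$ with $q-1$ outside the standard domain.

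All the reductions $\overline{g}\leqSW g$ are built from a computable preprocessing $K$ and a computable postprocessing $H$. Given a name $q$ of $x\in\overline{X}$, the map $K$ outputs a standard name of a point $x'\in\dom(g)$ --- and it is essential that $x'\in\dom(g)$ for \emph{every} $q$, for then $GK(q)$ is defined for every realizer $G$ of $g$ and $HGK$ is total, which is needed because $\overline{g}$ and $\delta_{\overline{X}}$ are total. The map $H$ then converts the output of $G$ into a $\delta_{\overline{Y}}$--name of a point of $\overline{g}(x)$; this is automatic when $x=\bot$ (then $\overline{g}(x)=\overline{Y}$) and must be arranged correctly when $x\in X$, where one aims for $g(x')\In\overline{g}(x)$ (for multi-valued $g$: each element of $g(x')$ lies in $\overline{g}(x)$). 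For the problems with total domain this goes smoothly: for $\id$ take $H=K=\id$, the ``$-1$'' being performed by the output decoding $\delta_{\overline{\IN^\IN}}$; for $\LPO$ let $K(q)(n):=0$ when $q(n)=1$ and $K(q)(n):=1$ otherwise (so $\LPO(K(q))=\LPO(q-1)$) and let $H(r):=r+1$ re-encode the answer; for $\SORT$ use the same $K$, which has the same number of zeros as $q-1$; for $\WBWT_2$ let $K(q)\in2^\IN$ echo the bit $q(i)-1$ whenever $q(i)\in\{1,2\}$ and otherwise repeat the previously emitted bit, so that $K(q)$ has the same cluster points as $q-1$ when $q$ is a valid name; for $\PA$ and $\MLR$ let $K(q)\in2^\IN$ be a standard encoding of $q$, so that $q-1\leqT K(q)\equivT q$ and hence $g(K(q))\In g(q-1)$, using that a set of PA degree, resp.\ Martin-L\"of random, relative to $A$ is also one relative to every $B$ with $B\leqT A$. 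For $\J$ take $K=\id$ but $H(r):=\Psi(r)+1$, where $\Psi$ is the fixed Turing functional with $\Psi^{q'}=(q-1)'$ whenever $q-1\in\IN^\IN$, which exists because $q\mapsto q-1$ is a fixed functional on its domain and hence $(q-1)'$ is uniformly Turing-reducible to $q'$. In all these cases $K(q)$ lands in the domain for every $q$ (the domain is the whole space, or, for $\LPO$ and $\SORT$, $K(q)$ is manifestly a $2^\IN$--point).

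The problem $\ACC_X$ has a proper domain, and keeping $K(q)$ inside it on all inputs is the first real difficulty, since neither the naive decoding $q\mapsto q-1$ (not total) nor a pointwise clamp of $q$ works. The remedy: reading $q$, let $v$ be the first value of $q$ outside $\{0,1\}$, if any, put $m:=\min(v-1,X)$ (just $v-1$ when $X=\IN$), and set $K(q)(i):=0$ if $q(i)\in\{0,1\}$ and $K(q)(i):=m$ otherwise. Then $\range(K(q))\In\{0,m\}$ with $m-1\in X$, so $K(q)\in\dom(\ACC_X)$ for every $q$; and if $q-1\in\dom(\ACC_X)$ then its first nonzero value equals $v-1\leq X$, so no clamping occurs and $\range(q-1)\In\{0,v-1\}=\range(K(q))$, whence $\ACC_X(K(q))\In\ACC_X(q-1)$. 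Re-encode via $H(r):=r+1$.

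For $\lim$ I would not give a separate reduction: it is well known that $\lim\equivSW\J$, so from the strong completeness of $\J$ (hence of $\overline{\J}$) and monotonicity of completion (Corollary~\ref{cor:monotonicity-completion}) one gets $\overline{\lim}\equivSW\overline{\J}\equivSW\J\equivSW\lim$. I expect the partial-domain cases $\ACC_X$ and $\lim$ to be the main obstacle: under $\leqSW$ the preprocessed name must stay inside the domain on \emph{every} input, including names of $\bot$, which forces the clamping above for $\ACC_X$ and makes a direct reduction for $\lim$ awkward (a delayed copy of the input sequence need not converge), so passing through $\lim\equivSW\J$ is the clean route.
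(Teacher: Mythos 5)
Your proposal is correct and follows essentially the same route as the paper: reduce each case to $\overline{g}\leqSW g$ via explicit total preprocessing into $\dom(g)$ plus re-encoding $r\mapsto r+1$, handle $\J$ by computing $(q-1)'$ uniformly from $q'$, obtain $\lim$ from $\lim\equivSW\J$ and monotonicity of completion, and treat $\PA$, $\MLR$ via a Turing-equivalent binary encoding and antitonicity. The only deviations are cosmetic: for $\WBWT_2$ you fill gaps by repeating the last emitted bit rather than the paper's majority bit, and for $\ACC_X$ you clamp the first out-of-range value rather than suppressing it, both of which work just as well.
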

\begin{proof}
(1) Follows since $\overline{\id}\leqSW\id_{\overline{\IN^\IN}}\leqSW\id$.\\
(2) There is a total computable function $H:\IN^\IN\to\IN^\IN$
such that $H\circ\J(p)=\J(p-1)+1$ for all $p$ with $p-1\in\IN^\IN$.
This can be proved using the smn-Theorem.
Together with the identity $K$ this function $H$ witnesses the reduction
${\overline{\J}\,}\leqSW\J$.\\
(3) Follows by Corollary~\ref{cor:monotonicity-completion} since $\lim\equivSW\J$ holds (see \cite[Theorem~6.7]{BGP18}).\\
(4) Given a name $p\in\IN^\IN$ of a point in $\overline{\IN^\IN}$ with respect to $\delta_{\overline{\IN^\IN}}$,
we can compute $K(p)$ as follows: $K(p)(n)=0:\iff p(n)=1$ and $K(p)(n):=1$ otherwise. 
If $\delta_{\overline{\IN^\IN}}(p)=q\in\IN^\IN$, then $\LPO\circ K(p) =\LPO(q)$. Hence, together with $H(r):=r+1$ the functions
$H,K$ witness $\overline{\LPO}\leqSW\LPO$.\\
(5) As always we assume that $2^\IN$ is represented by $\delta_{2^\IN}:\In\IN^\IN\to2^\IN,p\mapsto p$ with $\dom(\delta_{2^\IN})=2^\IN$.
Given a name $p\in\IN^\IN$ of some $q\in\overline{2^\IN}$, i.e., $\delta_{\overline{2^\IN}}(p)=q$ we can compute $K(p)$ as follows:
$K(p)(n)=0:\iff p(n)=1$ and $K(p)(n):=1$ otherwise. Then $\SORT\circ K(p)=\SORT(q)$ if $q\in2^\IN$.
Hence, $H(r):=r+1$ and $K$ witness $\overline{\SORT}\leqSW\SORT$. \\
(6) We represent $2^\IN$ as above. Given a name $p\in\IN^\IN$ of some $q\in\overline{2^\IN}$ we can compute $K(p)$
as follows, we let $K(p)(n):=p(n)-1$ if $p(n)\not=0$ and we let $K(p)(n)=i$ for the number $i\in\{0,1\}$ such that $i+1$ appears
a maximal number of times within $p(0),...,p(n)$ (and we choose $i=0$ if $1$ and $2$ appear equally often).
This construction guarantees that we do not generate any additional cluster points, i.e., $\WBWT_2K(p)=\WBWT_2(q)$ for $q\in2^\IN$.
Similarly as in the other cases above, this proves $\overline{\WBWT_2}\leqSW\WBWT_2$.\\
(7) Given some name $p\in\IN^\IN$ of a point $q\in\overline{\IN^\IN}$ we compute $K(p)$ as follows:
we let $K(p)(n):=k+1$ if $k+2=p(n)$ is the first number larger than $1$ among $p(0),...,p(n)$ and $k\in X$.
Otherwise, we let $K(p)(n):=0$. This guarantees that $\ACC_XK(p)=\ACC_X(q)$, if $q\in\dom(\ACC_X)$. 
Similarly as in the other cases above, this proves $\overline{\ACC_X}\leqSW\ACC_X$.\\
(8), (9) 
We use $K:\IN^\IN\to2^\IN,p\mapsto0^{p(0)+1}10^{p(1)+1}10^{p(2)+1}...$, which is total computable.
It is straightforward to see that every problem $F:2^\IN\mto2^\IN$ that is antitone in the sense
that $p\leqT q$ implies $F(q)\In F(p)$ is strongly complete. This is because $p-1\leqT p\equivT K(p)$ if $p\in\IN^\IN$ is such that $p-1\in2^\IN$, and hence $FK(p)\In F(p-1)$.
This proves $\overline{F}\leqSW F$.
This applies in particular to $\PA$ and $\MLR$.
\end{proof}

These results show that the cones below the given problems are identical in the total and partial Weihrauch lattices.
It is known, for instance, that $f$ is limit computable if and only if $f\leqW\lim$ \cite{BGP18}. 
Hence, an analogous statement holds for $\leqTW$.

\section{Algebraic Operations}
\label{sec:algebra}

In this section we want to discuss properties of certain algebraic operations and
we want to prove that the total versions of Weihrauch reducibility yield
lattice structures. We start recalling the usual algebraic operations on 
the Weihrauch lattice~\cite{BGP18}.

\begin{definition}[Algebraic operations]
\label{def:algebraic-operations}
Let $f:\In X\mto Y$ and $g:\In U\mto V$  
be multi-valued functions. We define
the following operations:
\begin{enumerate}
\item $f\times g:\In X\times U\mto Y\times V, (f\times g)(x,u):=f(x)\times g(u)$ and\\
$\dom(f\times g):=\dom(f)\times\dom(g)$ \hfill (product)
\item $f\sqcup g:\In X\sqcup U\mto Y\sqcup V$, $(f\sqcup g)(0,x):=\{0\}\times f(x)$, $(f\sqcup g)(1,u):=\{1\}\times g(u)$ and
$\dom(f\sqcup g):=\dom(f)\sqcup\dom(g)$\hfill (coproduct)
\item $f\boxplus g:\In X\sqcup U\mto\overline{Y}\times\overline{V}$, $(f\boxplus g)(0,x):=f(x)\times\overline{V}$, $(f\boxplus g)(1,u):=\overline{Y}\times g(u)$ and 
$\dom(f\boxplus g):=\dom(f)\sqcup\dom(g)$ \hfill (box sum)
\item $f\sqcap g:\In X\times U\mto Y\sqcup V, (f\sqcap g)(x,u):=f(x)\sqcup g(u)$ and\\
$\dom(f\sqcap g):=\dom(f)\times\dom(g)$ \hfill (meet)
\item $f+g:\In X\times U\mto \overline{Y}\times \overline{V}, (f+g)(x,u):=(f(x)\times\overline{V})\cup(\overline{Y}\times g(u))$ and\\
$\dom(f+g):=\dom(f)\times\dom(g)$ \hfill (sum)
\item $f^*:\In X^*\mto Y^*,f^*(i,x):=\{i\}\times f^i(x)$ and\\
$\dom(f^*):=\dom(f)^*$ \hfill (finite parallelization)
\item $\widehat{f}:\In X^\IN\mto Y^\IN,\widehat{f}(x_n)_{n\in\IN}:=\bigtimes_{i\in\IN} f(x_i)$ and\\
$\dom(\widehat{f}):=\dom(f)^\IN$ \hfill (parallelization)
\end{enumerate}
\end{definition}

For every operation $\Box\in\{\times,\sqcup,\boxplus,\sqcap,+\}$ we define its {\em completion} $\overline{\Box}$ by $f\overline{\Box} g:=\overline{f}\Box\overline{g}$.
It follows from Lemma~\ref{lem:completion-total} that these operations are monotone with respect to total Weihrauch reducibility, since the underlying operations $\Box$ 
are monotone with respect to partial Weihrauch reducibility by \cite[Proposition~3.6]{BGP18}.

\begin{corollary}[Monotonicity]
\label{cor:monotonicity}
$(f,g)\mapsto f\overline{\Box}g$ for $\Box\in\{\times,\sqcup,\boxplus,\sqcap,+\}$, $f\mapsto\widehat{\overline{f}}$ and $f\mapsto{\overline{f}\,}^*$
 are monotone with respect to $\leqTW$ and $\leqSTW$.
\end{corollary}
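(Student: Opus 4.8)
The plan is to reduce everything to the already-established monotonicity of the partial operations together with Lemma~\ref{lem:completion-total}. Recall that $f\leqTW g$ means exactly $f\leqW\overline{g}$, and $f\leqSTW g$ means $f\leqSW\overline{g}$. So, to show that $(f,g)\mapsto f\overline{\Box}g$ is monotone with respect to $\leqTW$ in, say, the first argument, I would assume $f_1\leqTW f_2$, i.e.\ $f_1\leqW\overline{f_2}$, and aim to prove $f_1\overline{\Box}g\leqTW f_2\overline{\Box}g$, i.e.\ $\overline{f_1}\Box\overline{g}\leqW\overline{\overline{f_2}\Box\overline{g}}$. By monotonicity of $\Box$ with respect to $\leqW$ (this is \cite[Proposition~3.6]{BGP18}), from $f_1\leqW\overline{f_2}$ we get $\overline{f_1}\Box\overline{g}\leqW\overline{\overline{f_2}}\Box\overline{g}$; wait --- more cleanly, I should first note $\overline{f_1}\leqW\overline{\overline{f_2}}\equivW\overline{f_2}$, since completion is a closure operator (Proposition~\ref{prop:completion-closure}), so in fact $\overline{f_1}\leqW\overline{f_2}$ already. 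Then monotonicity of $\Box$ gives $\overline{f_1}\Box\overline{g}\leqW\overline{f_2}\Box\overline{g}$, and since $\leqW\;\Rightarrow\;\leqTW$ (Corollary~\ref{cor:partial-total}) we get $\overline{f_1}\Box\overline{g}\leqTW\overline{f_2}\Box\overline{g}$, which unwinds to $f_1\overline{\Box}g\leqTW f_2\overline{\Box}g$ after observing $f\overline{\Box}g=\overline{f}\Box\overline{g}\equivTW\overline{\overline{f}\Box\overline{g}}=\overline{f}\;\overline{\Box}\;\overline{g}$. Actually the cleanest packaging uses Proposition~\ref{prop:closure-operators}(3)(a) directly: completion is a closure operator on $(\WW,\leqW)$, the operation $\Box$ is monotone with respect to $\leqW$, so its $c$-version $\Box_c:(f,g)\mapsto\overline{f}\Box\overline{g}$ is monotone with respect to $\leqW_c\;=\;\leqTW$; and $\Box_c$ is literally $\overline{\Box}$ by definition.

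So the structure I would write is: first invoke that completion $c(f)=\overline{f}$ is a closure operator on both $(\WW,\leqW)$ and $(\WW,\leqSW)$ (Proposition~\ref{prop:completion-closure}), and that $\leqW_c=\leqTW$, $\leqSW_c=\leqSTW$ by Lemma~\ref{lem:completion-total}. Then cite \cite[Proposition~3.6]{BGP18} for the fact that $\times,\sqcup,\boxplus,\sqcap,+$ are monotone (in each argument) with respect to $\leqW$ and $\leqSW$, and that $\widehat{\ }$ and $(\ )^*$ are likewise monotone. Then apply Proposition~\ref{prop:closure-operators}(3)(a) (and its obvious unary analogue for the unary operations) to conclude that the $c$-versions $\overline{\Box}=\Box_c$, $f\mapsto\widehat{\overline{f}}$, $f\mapsto\overline{f}^*$ are monotone with respect to $\leqTW$ and $\leqSTW$. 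One small point worth spelling out: the unary case is not literally covered by the statement of Proposition~\ref{prop:closure-operators}(3)(a) as written (which is phrased for binary $\Box$), so I would either note that the same one-line argument works verbatim, or apply the binary statement to the operation $(f,g)\mapsto\widehat{f}$ that ignores its second argument.

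The only genuine subtlety --- and it is more a bookkeeping issue than a real obstacle --- is the warning recorded after Definition~\ref{def:operations-representations}: the representation $\delta_{\overline{X}}$ depends on the chosen representative $\delta_X$, not just on its equivalence class, so one has to be sure that $\overline{f}$ is well-defined on degrees. But this was already dealt with in the remark after Corollary~\ref{cor:monotonicity-completion}: if $f_1\equivSW f_2$ are the same problem with computably equivalent representations, then $\overline{f_1}\equivSW\overline{f_2}$, and the same then propagates through $\Box$. So I would simply remark that Corollary~\ref{cor:monotonicity} is an immediate consequence of Proposition~\ref{prop:closure-operators}(3)(a), Proposition~\ref{prop:completion-closure}, Lemma~\ref{lem:completion-total}, and the monotonicity of the underlying operations from \cite[Proposition~3.6]{BGP18}, with well-definedness on degrees handled exactly as for completion itself. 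I do not expect any computation here at all; the result is a formal corollary and the ``proof'' is essentially a chain of citations, which is presumably why the authors phrased it as a corollary in the first place.
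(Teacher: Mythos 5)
Your proposal is correct and follows essentially the same route as the paper: the paper's proof is exactly the chain you describe, namely that completion generates $\leqTW$ on $\leqW$ and $\leqSTW$ on $\leqSW$ (Lemma~\ref{lem:completion-total}), that the underlying operations are monotone for $\leqW$ and $\leqSW$ by \cite[Proposition~3.6]{BGP18}, and then an application of Proposition~\ref{prop:closure-operators}. Your additional remarks on the unary operations and on well-definedness on degrees are harmless elaborations of what the paper leaves implicit.
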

\begin{proof}
By Lemma~\ref{lem:completion-total} completion generates $\leqTW$ on $\leqW$ (and $\leqSTW$ on $\leqSW$). 
By \cite[Proposition~3.6]{BGP18} the given operations $\Box$ are monotone with respect to $\leqW$ and $\leqSW$, respectively. 
The claim now follows with Proposition~\ref{prop:closure-operators}.
\end{proof}

Now we prove that the algebraic operations preserve completeness in the sense of Definition~\ref{def:preservation}.
It is clear by Proposition~\ref{prop:preservation-suprema-infima} that we also get co-preservation for suprema (see Proposition~\ref{prop:infima-suprema}).
Later we will show that this also holds for $+$ (see Proposition~\ref{prop:sums}).

\begin{proposition}[Completion and algebraic operations]
\label{prop:completion-algebraic}
Let $f$ and $g$ be problems. We obtain 
\begin{enumerate}
\item $\overline{f\Box g}\leqSW\overline{f}\Box\overline{g}\equivSW\overline{\overline{f}\Box\overline{g}}$ for $\Box\in\{\times,\sqcup,\boxplus,\sqcap,+\}$,
\item $\overline{\widehat{f}}\leqSW\widehat{\overline{f}}\equivSW\overline{\widehat{\overline{f}}}$,
\item $\overline{f^*}\leqSW{\overline{f}\,}^*\equivSW\overline{{\overline{f}\,}^*}$.
\end{enumerate}
In particular, if $f$ and $g$ are (strongly) complete, then so are $f\times g$, $f\sqcup g$, $f\boxplus g$, $f\sqcap g$, $f+g$, $\widehat{f}$ and $f^*$.
\end{proposition}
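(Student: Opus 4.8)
The plan is to prove each of the three displayed inequalities by exhibiting explicit computable functions $H,K$ witnessing a strong Weihrauch reduction $\overline{f\Box g}\leqSW\overline{f}\Box\overline{g}$ (and similarly for $\widehat{\ }$ and $(\cdot)^*$), after which the chain of equivalences is immediate: the right-hand equivalence $\overline{f}\Box\overline{g}\equivSW\overline{\overline{f}\Box\overline{g}}$ follows from Proposition~\ref{prop:completion-closure} together with Corollary~\ref{cor:completion-total} and the fact (Proposition~\ref{prop:preservation}(1)) that once $c$ is preserved by $\Box$ we get $c(x\Box y)\leq c(x)\Box c(y)\equiv c(c(x)\Box c(y))$. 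So the whole statement reduces to checking that completion is preserved (in the sense of Definition~\ref{def:preservation}(1)) by each of $\times,\sqcup,\boxplus,\sqcap,+$ and by $\widehat{\ }$ and $(\cdot)^*$; the ``in particular'' clause about (strong) completeness is then exactly the ``$x\Box y$ is closed if $x$ and $y$ are'' part of Proposition~\ref{prop:preservation}(1).

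For the product, I would argue at the level of realizers. A realizer of $\overline{f\times g}$ is a total realizer of $f\times g$ with respect to the precompletions $\delta_{X\times U}^\wp$, $\delta_{Y\times V}^\wp$; a realizer of $\overline{f}\times\overline{g}$ works with respect to $\delta_{\overline{X}\times\overline{U}}=\delta_{\overline{X}}\times\delta_{\overline{U}}$ and similarly on the output. The key point is the purely syntactic relationship between $(-)-1$ and the pairing $\langle\cdot,\cdot\rangle$: there are computable functions translating a $\delta_{X\times U}^\wp$-name into a pair of a $\delta_X^\wp$-name and a $\delta_U^\wp$-name (and back on the output side), since stripping one unit from a pairing of two sequences can be simulated, up to the equivalence that $\delta_X^\wp\equiv\delta_X$, by pairing the two stripped sequences. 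Concretely I expect $K$ to be: given a name in $\overline{X\times U}$, produce a name in $\overline{X}\times\overline{U}$ by unpairing and then re-encoding each half under $\delta_{\overline{X}}$, $\delta_{\overline{U}}$; and $H$ does the reverse on the output, with the crucial observation that if either half lands on $\bot$ then so does the whole pair, which matches the definition of $\overline{f\times g}$ assigning $\overline{Y\times V}$ whenever $(x,u)\notin\dom(f\times g)=\dom(f)\times\dom(g)$. For $\sqcup$ I would use the modified precomplete representation from Proposition~\ref{prop:coproducts-precompleteness}(1) for the domain side, and the point is again that the tag bit and the payload can each be handled in a precomplete manner; for $\boxplus,\sqcap,+$ the arguments are structurally the same, with the extra completions already built into the codomains of $\boxplus$ and $+$ making the $\bot$-bookkeeping essentially automatic. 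For $\widehat{f}$ and $f^*$ the pattern repeats using the countable pairing $\langle p_0,p_1,\ldots\rangle$ and, for $f^*$, the representation from Proposition~\ref{prop:coproducts-precompleteness}(2); here Proposition~\ref{prop:products-precompleteness} (countable products preserve precompleteness) is the relevant technical input.

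The main obstacle, and the one place I would be careful, is not the combinatorial construction of $H$ and $K$ but the interaction between completion and the non-canonical nature of $\delta_{\overline{X}}$ flagged in the remark after Definition~\ref{def:operations-representations}: $\delta_{\overline{X\times U}}$ is defined via $\delta_{X\times U}^\wp=(\delta_X\times\delta_U)^\wp$, which is \emph{not} the same representation as $\delta_{\overline{X}}\times\delta_{\overline{U}}=\delta_X^\wp\times\delta_U^\wp$, so the two sides of each claimed reduction genuinely live on different (though computably related) represented spaces, and one must check the translations are computable in both directions. This is exactly where the identity $\delta_X^\wp=\delta_X\circ\id^\wp$ and the computability of $p\mapsto p+1$, $p\mapsto p-1$ (on its domain) do the work, just as in the proof of Proposition~\ref{prop:precompleteness}; I would make this explicit once for $\times$ and then say ``analogously'' for the others. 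A secondary subtlety is that for $\sqcup$ and $f^*$ one cannot use the naive $\delta_{X\sqcup U}$ but must invoke the modified precomplete representations, so the statement should be read (and I would note this) as using, throughout, precomplete representations equivalent to the given ones — which is harmless by Corollary~\ref{cor:invariance}. Beyond these representation-theoretic points the verification is routine bookkeeping of the $\bot$-cases against the defining clauses in Definition~\ref{def:algebraic-operations}, and I would not grind through all five operations in full.
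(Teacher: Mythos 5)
Your overall architecture matches the paper's: establish the single reduction $\overline{f\Box g}\leqSW\overline{f}\Box\overline{g}$ (preservation in the sense of Definition~\ref{def:preservation}) and then obtain the stated equivalences and the ``in particular'' clause from Proposition~\ref{prop:preservation} together with Proposition~\ref{prop:completion-closure} and monotonicity of the operations. For $\times$, $\sqcap$, $+$ and for parallelization your explicit $H,K$ (interleaving names, padding with zeros, and using that an output whose relevant part stays finite just denotes $\bot$, which is harmless because $\overline{f\Box g}$ is unconstrained outside $\dom(f\Box g)$) is a sound, if more hands-on, version of what the paper does. The paper itself argues more abstractly: it notes $\dom(f\Box g)\In\dom(\overline{f}\Box\overline{g})$ with inclusion of values, that the representations of $\overline{X}\times\overline{U}$ and $\overline{X}\sqcup\overline{U}$ may be taken total so that every realizer of $\overline{f}\Box\overline{g}$ is total, that $\iota\colon Z\to\overline{Z}$ is computable (Corollary~\ref{cor:completion}), and that the double completions in the codomains of $\boxplus$ and $+$ cost nothing because $\delta_{\overline{V}}$ is total, so every name of a point of $\overline{\overline{V}}$ is already a name of a point of $\overline{V}$ --- this last point is the one place where your ``essentially automatic'' should be spelled out.

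The genuine gap is in your treatment of the coproduct-type cases $\sqcup$, $\boxplus$ and $f^*$. You delegate them to the modified precomplete representations of Proposition~\ref{prop:coproducts-precompleteness}, but precompleteness of the target representation addresses the wrong side of the reduction. The difficulty sits with $K$: it must be defined on every $\delta_{\overline{X\sqcup U}}$-name, including names such as $\widehat{0}$ that never reveal a tag, and names of $(0,x)$ or $(1,u)$ whose tag appears only after an arbitrarily long block of zeros; yet every point of $\overline{X}\sqcup\overline{U}$ carries a definite tag, and on names of points of $\dom(f\sqcup g)$ the query $K$ produces must name the correctly tagged point. The representations of Proposition~\ref{prop:coproducts-precompleteness} are precomplete but not total, so realizers of $\overline{f}\sqcup\overline{g}$ need not be total, and $K$ is then forced to always output genuine names --- which makes it commit a tag (for $f^*$: a length) after reading only finitely many zeros, a commitment the remainder of the input can betray. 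The paper does not use Proposition~\ref{prop:coproducts-precompleteness} here; it normalizes instead to total representations of $\overline{X}\sqcup\overline{U}$ so that arbitrary outputs of $K$ are at least harmless, and your sketch does not reproduce that step, so ``handled in a precomplete manner'' is not yet a proof. To see how delicate these cases are, test your intended $K,H$ on $f=c_{p_0,q_0}$ and $g=c_{p_1,q_1}$ with $p_0,p_1,q_0,q_1$ mutually Turing incomparable: continuity fixes the tag of $K(\widehat{0})$ on a finite prefix $0^m$, this tag persists on the inputs $0^m2(p_1+1)$ and $0^m1(p_0+1)$, and a realizer of $\overline{f}\sqcup\overline{g}$ that answers wrongly tagged queries with computable junk leaves $H$ without access to $q_1$ (respectively $q_0$). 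Any complete write-up of the $\sqcup$, $\boxplus$ and $f^*$ cases must explain how the reduction survives this tag-commitment phenomenon; ``structurally the same'' as the product case is not enough, since the product case has no such discrete commitment.
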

\begin{proof}
We consider problems $f:\In X\mto Y$ and $g:\In U\mto V$ and $\Box\in\{\times,\sqcup,\boxplus,\sqcap,+\}$.
Since $X\sqcup U\In\overline{X}\sqcup\overline{U}$ and $X\times U\In\overline{X}\times\overline{U}$,
it follows that $\dom(f\Box g)\In\dom(\overline{f}\Box\overline{g})$, and restricted to $x\in\dom(f\Box g)$
we have $\overline{f\Box g}(x)=(f\Box g)(x)\In(\overline{f}\Box\overline{g})(x)$.
The ``$\In$'' is even an equality in the cases $\Box\in\{\times,\sqcup,\sqcap\}$. In the other cases it is not an equality
simply because $\overline{V}\subsetneqq\overline{\overline{V}}$ and $\overline{Y}\subsetneqq\overline{\overline{Y}}$.
We can also assume that the representations of $\overline{X}\times\overline{U}$ and $\overline{X}\sqcup\overline{U}$
are total (since the representations of $\overline{X}$ and $\overline{U}$ are so). Hence every realizer of $\overline{f}\Box\overline{g}$ is total.
By Corollary~\ref{cor:completion} $\iota:Z\to\overline{Z},z\mapsto z$ is computable for every represented space $Z$, hence
it follows that  $\overline{f\Box g}\leqSW\overline{f}\Box\overline{g}$, since a realizer for $\overline{f\Box g}$ can choose any value outside of $\dom(f\Box g)$.
This also holds in the cases where we only have ``$\In$'' above, since the representation of $\overline{V}$ is total, every name of a point in $\overline{\overline{V}}$ 
is also a name of some point in $\overline{V}$
and an analogous statement holds for $Y$. 
The proofs for the unary operations are analogous. We have $X^\IN\In\overline{X}^\IN$ and $X^*\In\overline{X}^*$ and hence
$\overline{\widehat{f}}\leqSW\widehat{\overline{f}}$ and $\overline{f^*}\leqSW{\overline{f}\,}^*$.
The remaining claims follow by Proposition~\ref{prop:preservation} as completion is a closure operator by Proposition~\ref{prop:completion-closure}.
\end{proof}

The closure properties of complete problems are very useful.
For instance, it is known that $\lim\equivSW\widehat{\LPO}$ \cite{BGP18} and hence the statement on $\lim$ in Proposition~\ref{prop:complete-problems}
could also be derived from the statement on $\LPO$.
Likewise, we obtain a number of further complete problems in this way.
We refrain from giving exact definitions of the listed problems, but we rather
point the reader to \cite{BHK17a} were all stated equivalences have been proved \cite[Theorem~5.2, Corollary~5.3, Proposition~14.10]{BHK17a}.
For the purpose of this article, the equivalences can be read as definitions.

\begin{corollary}[Complete problems]
\label{cor:complete-problems}
$\WKL\equivSW\C_{2^\IN}\equivSW\widehat{\ACC_2}$, $\DNC_X\equivSW\widehat{\ACC_X}$ for $X\in\IN$ with $X\geq2$ or $X=\IN$ are strongly complete,
and $\COH\equivW\widehat{\WBWT_2}$ is complete.
\end{corollary}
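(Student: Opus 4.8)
The plan is to obtain all of these statements as immediate consequences of the completeness results already available for $\ACC_X$ and $\WBWT_2$, combined with the fact that parallelization preserves (strong) completeness and that (strong) completeness is invariant under (strong) Weihrauch equivalence.

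First I would record that (strong) completeness is a property of (strong) Weihrauch degrees. Indeed, if $f\equivSW g$ and $g$ is strongly complete, then $\overline{f}\equivSW\overline{g}$ by Corollary~\ref{cor:monotonicity-completion}(2), and since $\overline{g}\equivSW g\equivSW f$ we obtain $\overline{f}\equivSW f$, i.e., $f$ is strongly complete. The analogous statement for $\equivW$ and completeness follows in the same way using Corollary~\ref{cor:monotonicity-completion}(1) (alternatively this is immediate from Lemma~\ref{lem:completion-total}).

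Next, by Proposition~\ref{prop:complete-problems}(7) the problems $\ACC_X$ are strongly complete for all $X\in\IN$ with $X\geq2$ and for $X=\IN$, and by Proposition~\ref{prop:completion-algebraic} parallelization preserves strong completeness, so $\widehat{\ACC_X}$ is strongly complete as well. Since $\WKL\equivSW\C_{2^\IN}\equivSW\widehat{\ACC_2}$ and $\DNC_X\equivSW\widehat{\ACC_X}$ (which for the purpose of this article may be read as definitions, cf.\ \cite{BHK17a}), the degree-invariance noted above shows that $\WKL$, $\C_{2^\IN}$, $\widehat{\ACC_2}$ and all the $\DNC_X$ are strongly complete.

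For $\COH$ the same pattern applies, but only at the level of ordinary Weihrauch reducibility: by Proposition~\ref{prop:complete-problems}(6) the problem $\WBWT_2$ is strongly complete, hence complete, so by Proposition~\ref{prop:completion-algebraic} also $\widehat{\WBWT_2}$ is complete, and since $\COH\equivW\widehat{\WBWT_2}$ the invariance of completeness under $\equivW$ yields that $\COH$ is complete. I do not expect any real obstacle here, as the statement is a direct corollary of the earlier results; the only point that deserves attention is precisely why $\COH$ is asserted to be merely complete rather than strongly complete, namely that the equivalence $\COH\equivW\widehat{\WBWT_2}$ is only available at the level of $\leqW$ and not of $\leqSW$.
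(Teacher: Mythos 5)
Your proposal is correct and follows essentially the same route as the paper, which derives the corollary from the strong completeness of $\ACC_X$ and $\WBWT_2$ (Proposition~\ref{prop:complete-problems}), the preservation of (strong) completeness under parallelization (Proposition~\ref{prop:completion-algebraic}), and the cited equivalences read as definitions. The only addition is that you make explicit the invariance of (strong) completeness under $\equivW$ (resp.\ $\equivSW$) via Corollary~\ref{cor:monotonicity-completion}, which the paper leaves implicit; this is a correct and harmless elaboration.
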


In~\cite[Proposition~3.11]{BG11} we proved that $\sqcap$ is the infimum operation with respect to $\leqSW$ and $\leqW$.
That $\sqcup$ is the supremum operation with respect to $\leqW$ was first proved by Pauly~\cite[Theorem~4.5]{Pau10a} (see also \cite[Theorem~3.9]{BGP18}).
Dzhafarov proved that $\boxplus$ is a supremum operation for $\leqSW$~\cite{Dzh19} and he also showed $f\boxplus g\equivW f\sqcup g$.
Using Propositions~\ref{prop:closure-operators} and \ref{prop:preservation-suprema-infima} we can transfer these
results to the total versions of Weihrauch reducibility.

\begin{proposition}[Infima and suprema]
\label{prop:infima-suprema}
Let $f,g$ be problems. Then
\begin{enumerate}
\item $\overline{f}\sqcap\overline{g}$ is an infimum of $f$ and $g$ with respect to $\leqTW$ and $\leqSTW$.
\item $f\sqcup g$ is a supremum of $f$ and $g$ with respect to $\leqTW$.
\item $f\boxplus g$ is a supremum of $f$ and $g$ with respect to $\leqSTW$.
\item $\overline{f\sqcup g}\equivW\overline{f}\sqcup\overline{g}$ and hence $f\sqcup g\equivTW\overline{f}\sqcup\overline{g}\equivTW f\boxplus g$.
\item $\overline{f\boxplus g}\equivSW\overline{f}\boxplus\overline{g}$ and hence $f\boxplus g\equivSTW\overline{f}\boxplus\overline{g}$.
\end{enumerate}
\end{proposition}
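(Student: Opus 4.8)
The plan is to reduce everything to the abstract machinery of Section~\ref{sec:order}. The key observation is that, by Lemma~\ref{lem:completion-total}, total Weihrauch reducibility $\leqTW$ is precisely the preorder $\leq_c$ that the closure operator $c\colon f\mapsto\overline{f}$ generates on $\leqW$, and $\leqSTW$ is the preorder that the same operator generates on $\leqSW$; completion is indeed a closure operator on both $\leqW$ and $\leqSW$ by Proposition~\ref{prop:completion-closure}. (As usual one first passes to the realizer versions of problems, as in Corollary~\ref{cor:realizer-version}, so that one works with a genuine set and Propositions~\ref{prop:closure-operators} and~\ref{prop:preservation-suprema-infima} apply literally.) With this in hand, parts (1)--(3) are immediate: since $\sqcap$ is an infimum for $\leqW$ and $\leqSW$ by \cite[Proposition~3.11]{BG11}, Proposition~\ref{prop:closure-operators}(3)(b) gives that $(f,g)\mapsto\overline{f}\sqcap\overline{g}$ is an infimum for $\leqTW$ and for $\leqSTW$; and since $\sqcup$ is a supremum for $\leqW$ \cite{Pau10a} and $\boxplus$ a supremum for $\leqSW$ \cite{Dzh19}, Proposition~\ref{prop:closure-operators}(3)(c) gives suprema $\overline{f}\sqcup\overline{g}$ for $\leqTW$ and $\overline{f}\boxplus\overline{g}$ for $\leqSTW$, while the final clause of Proposition~\ref{prop:preservation-suprema-infima} gives $f\sqcup g\equivTW\overline{f}\sqcup\overline{g}$ and $f\boxplus g\equivSTW\overline{f}\boxplus\overline{g}$, so that $f\sqcup g$ and $f\boxplus g$ themselves are suprema for the respective total reducibilities.

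For parts (4) and (5) I would establish the stated partial-level equivalences first and then transfer them. The ``$\leqSW$'' halves are already supplied by Proposition~\ref{prop:completion-algebraic}(1): $\overline{f\sqcup g}\leqSW\overline{f}\sqcup\overline{g}$ and $\overline{f\boxplus g}\leqSW\overline{f}\boxplus\overline{g}$. For the reverse directions I would invoke co-preservation of suprema (Proposition~\ref{prop:preservation-suprema-infima}(1)): applied with $\sqcup$ as the supremum on $\leqW$ it yields $\overline{f}\sqcup\overline{g}\leqW\overline{f\sqcup g}$, and applied with $\boxplus$ as the supremum on $\leqSW$ it yields $\overline{f}\boxplus\overline{g}\leqSW\overline{f\boxplus g}$. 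Combining the two halves gives $\overline{f\sqcup g}\equivW\overline{f}\sqcup\overline{g}$ and $\overline{f\boxplus g}\equivSW\overline{f}\boxplus\overline{g}$. The remaining assertions are then bookkeeping: $f\sqcup g\equivSTW\overline{f\sqcup g}$ and $f\boxplus g\equivSTW\overline{f\boxplus g}$ by Corollary~\ref{cor:completion-total}; the partial equivalences just obtained transfer to $\equivTW$, resp.\ $\equivSTW$, via Corollary~\ref{cor:partial-total}; and $f\boxplus g\equivW f\sqcup g$ \cite{Dzh19} closes the chain $f\sqcup g\equivTW\overline{f}\sqcup\overline{g}\equivTW f\boxplus g$ in~(4).

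I do not anticipate a genuine obstacle; the content lives entirely in Section~\ref{sec:order}. The two points that need care are (a) keeping the partial supremum operation matched to the correct reducibility --- $\sqcup$ for $\leqW$ but $\boxplus$ for $\leqSW$ --- so that in (4) one only claims $\equivW$ and not $\equivSW$, which would be false; and (b) the passage to realizer versions that makes the abstract lattice-theoretic propositions directly applicable.
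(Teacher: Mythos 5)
Your proposal is correct and follows essentially the same route as the paper, which states this proposition without a separate proof precisely because it is obtained by transferring the known facts about $\sqcap$, $\sqcup$ and $\boxplus$ via Propositions~\ref{prop:closure-operators} and~\ref{prop:preservation-suprema-infima}, with the $\leqSW$-halves of (4) and (5) supplied by Proposition~\ref{prop:completion-algebraic} and the passage to $\equivTW$/$\equivSTW$ by Corollaries~\ref{cor:completion-total} and~\ref{cor:partial-total}. Your cautionary remarks (matching $\sqcup$ to $\leqW$ and $\boxplus$ to $\leqSW$, and working with realizer versions) are exactly the right points of care.
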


In Lemma~\ref{lem:counter-monotonicity} we will see that the equivalences in (4) cannot be strengthened
to strong equivalences.

By a {\em (strong) total Weihrauch degree} we mean an equivalence class with respect to $\leqTW$
(or with respect to $\leqSTW$ in the strong case). We denote the corresponding classes
by $\WW_{\mathrm{tW}}$ and $\WW_{\mathrm{stW}}$. 
Strictly speaking, these are not sets, but every equivalence class has a representative on Baire space
according to Corollary~\ref{cor:realizer-version}, and if desired, we can turn the classes
$\WW_{\mathrm{tW}}$ and $\WW_{\mathrm{stW}}$ into sets of such representatives.
The same applies to further classes of degrees that we consider in the following.
We can extend the reducibilities $\leqTW$ and $\leqSTW$ to the corresponding
degrees and any monotone algebraic operation too.
By Proposition~\ref{prop:infima-suprema}
$(\WW_{\mathrm{tW}},\leqTW,\overline{\sqcap},\sqcup)$ yields a lattice structure.

It was first proved by Pauly~\cite[Theorem~4.22]{Pau10a} that the Weihrauch lattice is distributive. 
In fact, he proved that it is a distributive join semi-lattice, which implies distributivity as a lattice.
That is, we have $f\sqcup(g\sqcap h)\equivW(f\sqcup g)\sqcap(f\sqcup h)$ and
$f\sqcap(g\sqcup h)\equivW(f\sqcap g)\sqcup(f\sqcap h)$~\cite[Theorem~31]{BP18}.
Also the total Weihrauch degrees form a distributive lattice.

\begin{theorem}[Total Weihrauch lattice]
\label{thm:total-lattice}
$(\WW_{\mathrm{tW}},\leqTW,\overline{\sqcap},\sqcup)$ is a distributive lattice.
\end{theorem}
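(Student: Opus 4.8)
The plan is to reduce the two parts of the statement—that $(\WW_{\mathrm{tW}},\leqTW,\overline{\sqcap},\sqcup)$ is a lattice, and that it is distributive—to the already-established facts about the partial Weihrauch lattice by means of the closure-operator machinery from Section~\ref{sec:order}. By Lemma~\ref{lem:completion-total}, completion $c\colon f\mapsto\overline{f}$ generates $\leqTW$ on $\leqW$, and by Proposition~\ref{prop:completion-closure} it is a closure operator on $\leqW$. Moreover, $\WW$ (the partial Weihrauch lattice) is a distributive lattice with infimum $\sqcap$ and supremum $\sqcup$. So the natural strategy is: quotient by $\equiv_c$, transport the lattice operations along the closure operator via Proposition~\ref{prop:closure-operators}(4), and transport distributivity as well.

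The first part (that we have a lattice) is essentially Proposition~\ref{prop:infima-suprema} together with Proposition~\ref{prop:closure-operators}(4): $\overline{f}\sqcap\overline{g}=\sqcap_c(f,g)$ is an infimum for $\leqTW$, and $f\sqcup g\equivTW\overline{f}\sqcup\overline{g}=\sqcup_c(f,g)$ is a supremum for $\leqTW$ (the replacement of $\sqcup_c$ by $\sqcup$ being licensed by Proposition~\ref{prop:preservation-suprema-infima}, since $\sqcup$ is a supremum and hence co-preserves $c$). So the substance is distributivity, and the approach there is: first establish the abstract lemma that if $(X,\leq,\wedge,\vee)$ is a distributive lattice and $c$ is a closure operator on it such that $c$ is preserved by $\wedge$ (i.e.\ $c(x\wedge y)\leq c(x)\wedge c(y)$, which always holds by Proposition~\ref{prop:preservation-suprema-infima}(2)), then the quotient lattice $(X/\!\!\equiv_c,\leq_c,\wedge_c,\vee_c)$ is again distributive. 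Then apply it with $X=\WW$, $\wedge=\sqcap$, $\vee=\sqcup$, $c={}$ completion.

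For the abstract lemma, I would argue as follows. We must show $x\wedge_c(y\vee_c z)\equiv_c(x\wedge_c y)\vee_c(x\wedge_c z)$, i.e.\ after unfolding, $c(x)\wedge c\bigl(c(y)\vee c(z)\bigr)\equiv_c c\bigl(c(x)\wedge c(y)\bigr)\vee c\bigl(c(x)\wedge c(z)\bigr)$. By Proposition~\ref{prop:preservation-suprema-infima} we have $c(u)\wedge c(v)\equiv_c u\wedge_c v\equiv_c c(u\wedge v)$ whenever convenient, and $c(u)\vee c(v)\equiv_c u\vee v$; using these collapses one can rewrite both sides, modulo $\equiv_c$, as $c$ applied to the corresponding sides of the distributive law in $X$, namely $c\bigl(x\wedge(y\vee z)\bigr)$ versus $c\bigl((x\wedge y)\vee(x\wedge z)\bigr)$, which are equal in $X/\!\!\equiv$ by distributivity of $X$ and hence $\equiv_c$-equivalent. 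Concretely: $x\wedge_c(y\vee_c z)\equiv_c x\wedge_c(y\vee z)\equiv_c c(x)\wedge c(y\vee z)$; one direction, $(x\wedge y)\vee(x\wedge z)\leq x\wedge(y\vee z)\leq_c x\wedge_c(y\vee_c z)$, is immediate from monotonicity; for the reverse, $x\wedge_c(y\vee_c z)\leq_c c(x\wedge(y\vee z))=c((x\wedge y)\vee(x\wedge z))\equiv_c (x\wedge y)\vee(x\wedge z)\equiv_c(x\wedge_c y)\vee_c(x\wedge_c z)$, where the crucial middle step uses distributivity in $X$ and the outer steps use that $c$ co-preserves $\vee$ and that $c(x\wedge y)\leq_c x\wedge_c y$ always.

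I expect the main obstacle to be bookkeeping rather than conceptual: keeping straight which of $\vee$, $\vee_c$, $\sqcup$, $\overline{\sqcup}$, etc.\ one is working with at each step, and invoking Proposition~\ref{prop:preservation-suprema-infima} at exactly the right places so that the $c$'s can be pushed inside and the distributive law of $\WW$ applied. One subtle point worth a sentence in the write-up is that distributivity as semilattices versus as lattices must be handled consistently; since $\WW$ is already known to be a distributive lattice (both directions of the distributive law hold, as recorded just before the theorem), it suffices to transport the ``$\sqcap$ over $\sqcup$'' identity and derive the dual, or simply transport both. I do not anticipate needing any new facts about realizers or representations—everything reduces to Lemma~\ref{lem:completion-total}, Proposition~\ref{prop:infima-suprema}, Proposition~\ref{prop:preservation-suprema-infima}, Proposition~\ref{prop:closure-operators}, and distributivity of the partial Weihrauch lattice.
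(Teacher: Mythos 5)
Your reduction of the lattice structure itself to Propositions~\ref{prop:infima-suprema} and \ref{prop:closure-operators} is fine, but the abstract transfer lemma you base distributivity on is false, and the place where your argument breaks is identifiable. The lemma ``$(X,\leq,\wedge,\vee)$ distributive and $c$ a closure operator (with $c(x\wedge y)\leq c(x)\wedge c(y)$, which is automatic) implies $(X/\!\!\equiv_c,\leq_c,\wedge_c,\vee_c)$ distributive'' cannot hold: the quotient of a distributive lattice by a closure operator can be an arbitrary complete lattice (every complete lattice, e.g.\ the non-distributive $M_3$, arises as the lattice of closed elements of a closure operator on a powerset). Concretely, in your ``reverse'' chain the first step $x\wedge_c(y\vee_c z)\leq_c c\bigl(x\wedge(y\vee z)\bigr)$ unfolds (after absorbing $c(c(y)\vee c(z))\leq c(y\vee z)$) to $c(x)\wedge c(y\vee z)\leq c\bigl(x\wedge(y\vee z)\bigr)$, i.e.\ to \emph{co-preservation} of completion by the infimum. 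The fact you cite in its support, $c(x\wedge y)\leq_c x\wedge_c y$, is the inequality in the opposite direction, and co-preservation genuinely fails in the Weihrauch lattice: Lemma~\ref{lem:counter-monotonicity}~(2) exhibits $f,g$ with $\overline{f}\sqcap\overline{g}\nleqW\overline{f\sqcap g}$, so $f\sqcap g$ and $f\overline{\sqcap}g$ are not even $\equivTW$-equivalent in general. (The same confusion is latent in your ``easy direction'', where you silently pass between $x\wedge y$ and $x\wedge_c y$; there it is harmless, here it is fatal.)

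What the paper's proof uses, and what your write-up never invokes, is the \emph{concrete} fact $\overline{f\sqcup g}\equivW\overline{f}\sqcup\overline{g}$ from Proposition~\ref{prop:infima-suprema}~(4) --- commutation of completion with $\sqcup$ up to \emph{partial} Weihrauch equivalence, not merely the abstract $\equiv_c$-level statement $f\sqcup g\equivTW\overline{f}\sqcup\overline{g}$ of Proposition~\ref{prop:preservation-suprema-infima}. Its nontrivial half rests on the representation-theoretic argument of Proposition~\ref{prop:completion-algebraic} ($\overline{f\sqcup g}\leqSW\overline{f}\sqcup\overline{g}$) and is not a consequence of closure-operator generalities. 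With it, the paper simply rewrites $f\overline{\sqcap}(g\sqcup h)=\overline{f}\sqcap\overline{g\sqcup h}\equivW\overline{f}\sqcap(\overline{g}\sqcup\overline{h})$, applies distributivity of the partial lattice to the completed problems, obtains $(f\overline{\sqcap}g)\sqcup(f\overline{\sqcap}h)$, and transfers via Corollary~\ref{cor:partial-total}; the dual law is handled the same way, additionally using $f\equivSTW\overline{f}$ (Corollary~\ref{cor:completion-total}) to replace $f$ by $\overline{f}$ in the outer $\sqcup$. So the route is ``replace every argument by its completion using the special commutation fact, then use distributivity of $\WW$'', not a transfer of distributivity along an arbitrary closure operator; to repair your proof you must import exactly this ingredient.
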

\begin{proof}
By Proposition~\ref{prop:infima-suprema} we obtain
\[
f\overline{\sqcap}(g\sqcup h)=\overline{f}\sqcap\overline{(g\sqcup h)}\equivW\overline{f}\sqcap(\overline{g}\sqcup\overline{h})
\equivW(\overline{f}\sqcap\overline{g})\sqcup(\overline{f}\sqcap\overline{h})=(f\overline{\sqcap}g)\sqcup(f\overline{\sqcap}h)
\]
and hence $f\overline{\sqcap}(g\sqcup h)\equivTW(f\overline{\sqcap}g)\sqcup(f\overline{\sqcap}h)$ by Corollary~\ref{cor:partial-total}.
With Proposition~\ref{prop:infima-suprema} and Corollary~\ref{cor:completion-total} we obtain similarly as above
\begin{eqnarray*}
f\sqcup(g\overline{\sqcap} h)
&\equivTW&\overline{f}\sqcup{(g\overline{\sqcap}h)}=\overline{f}\sqcup{(\overline{g}\sqcap\overline{h})}\equivW(\overline{f}\sqcup\overline{g})\sqcap(\overline{f}\sqcup\overline{h})\\
&\equivW&\overline{(f\sqcup g)}\sqcap\overline{(f\sqcup h)}=(f\sqcup g)\overline{\sqcap}(f\sqcup h)
\end{eqnarray*}
and hence $f\sqcup(g\overline{\sqcap} h)\equivTW(f\sqcup g)\overline{\sqcap}(f\sqcup h)$. 
Altogether, this shows that the total Weihrauch lattice is distributive.
\end{proof}

Proposition~\ref{prop:infima-suprema} implies that
$\WW_{\mathrm{stW}}$ is a lattice. 
Dzhafarov proved that the lattice $\WW_{\mathrm{sW}}$
is not distributive~\cite[Theorem~4.4]{Dzh19}. 
We can transfer his proof to $\WW_{\mathrm{stW}}$.

\begin{theorem}[Strong total Weihrauch lattice]
\label{thm:strong-total-lattice}
$(\WW_{\mathrm{stW}},\leqSTW,\overline{\sqcap},\boxplus)$ is a lattice, which is not distributive.
\end{theorem}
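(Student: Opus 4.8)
I would first dispose of the lattice structure, which is essentially free: Proposition~\ref{prop:infima-suprema} already tells us that $\overline f\sqcap\overline g=f\,\overline{\sqcap}\,g$ is an infimum and $f\boxplus g$ a supremum with respect to $\leqSTW$, so $(\WW_{\mathrm{stW}},\leqSTW,\overline{\sqcap},\boxplus)$ is a lattice. The real content is the failure of distributivity, and the plan is to reduce it to a statement purely about strongly complete problems, where $\leqSTW$ behaves like $\leqSW$.

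The key observation is that the strongly complete degrees form a sublattice of $\WW_{\mathrm{stW}}$ that is isomorphic to the corresponding sublattice of $\WW_{\mathrm{sW}}$. Indeed, if $f,g$ are strongly complete, then $f\leqSTW g\iff f\leqSW\overline g\equivSW g\iff f\leqSW g$ by Lemma~\ref{lem:completion-total}, so $\equivSTW$ and $\equivSW$ coincide on such problems; moreover $f\,\overline{\sqcap}\,g=\overline f\sqcap\overline g\equivSW f\sqcap g$, and by Proposition~\ref{prop:completion-algebraic} both $f\sqcap g$ and $f\boxplus g$ are again strongly complete. Since $f\sqcap g$ is the $\leqSW$-infimum \cite[Proposition~3.11]{BG11} and $f\boxplus g$ the $\leqSW$-supremum \cite{Dzh19}, and these are also the $\leqSTW$-infimum and $\leqSTW$-supremum by Proposition~\ref{prop:infima-suprema}, the inclusion of the strongly complete degrees into $\WW_{\mathrm{stW}}$ preserves order in both directions, meets, and joins. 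Hence it suffices to exhibit strongly complete problems $f,g,h$ generating a non-distributive sublattice under $\sqcap$ and $\boxplus$.

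To produce such problems I would revisit Dzhafarov's proof that $\WW_{\mathrm{sW}}$ is not distributive~\cite[Theorem~4.4]{Dzh19} and check that the problems occurring there are, up to strong equivalence, strongly complete; the landmark examples of Section~\ref{sec:completion} (Proposition~\ref{prop:complete-problems}, Corollary~\ref{cor:complete-problems}) together with the fact that the operations $\times,\sqcup,\boxplus,\sqcap$, parallelization and finite parallelization all preserve strong completeness (Proposition~\ref{prop:completion-algebraic}) should turn this into a routine verification. Once it is in place, every reduction and, crucially, every non-reduction that Dzhafarov establishes between the relevant $\sqcap$- and $\boxplus$-combinations is a statement about strongly complete problems, and hence holds verbatim with $\leqSTW$ in place of $\leqSW$; the same five elements then witness the failure of a distributive law in $\WW_{\mathrm{stW}}$. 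As an alternative that sidesteps the completeness bookkeeping, one can re-run Dzhafarov's argument directly over precomplete representations in the style of Lemma~\ref{lem:total}: the positive reductions survive because the witnessing $H,K$ may be taken total, and his diagonalization against a hypothetical reduction still produces a total counter-realizer.

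The main obstacle is precisely this verification. Because completion is a closure operator it can collapse degrees, so a priori the pentagon (or diamond) in Dzhafarov's construction could degenerate after completion; what must be confirmed is that the particular separating non-reduction he uses is genuinely a $\leqSTW$-non-reduction, i.e., that its witnesses are complete, or can be replaced by complete ones without destroying the configuration. Should some of his problems fail to be complete and their completions collapse the configuration, the fallback would be to assemble a fresh non-distributive five-element sublattice from problems that are visibly strongly complete — for instance suitable variants of $\id$, $\LPO$, $\LLPO$, $\WKL$ and their $\sqcap$- and $\boxplus$-combinations — and that is where the real work would then lie.
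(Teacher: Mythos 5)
The lattice part of your argument is fine and coincides with the paper's (it is exactly Proposition~\ref{prop:infima-suprema}). The problem is your main route to non-distributivity. Dzhafarov's witnesses in \cite[Theorem~4.4]{Dzh19} are constant problems $c_{p,q}:\In\IN^\IN\to\IN^\IN$ with singleton domain $\{p\}$ for non-computable $p$, and these are \emph{not} strongly complete (nor even complete): a reduction $\overline{c_{p,q}}\leqSW c_{p,q}$ would need the inner reduction function to map a name of $\bot$ into the domain of an arbitrary realizer of $c_{p,q}$, i.e.\ to compute a name of the non-computable point $p$, which is impossible. So the "routine verification" you defer to cannot succeed, the strongly-complete-sublattice transfer does not apply, and the non-reductions do not carry over verbatim; indeed, exactly these constant problems are used in Lemma~\ref{lem:counter-monotonicity} to show how badly completion interacts with $\times$, $\sqcap$, $\sqcup$, which is a warning sign that completion genuinely changes their reduction behaviour. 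Your named fallback ($\id$, $\LPO$, $\LLPO$, $\WKL$ and combinations) is unsubstantiated and there is no indication these would generate a non-distributive configuration under $\sqcap$ and $\boxplus$.

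Your one-sentence alternative ("re-run Dzhafarov's argument directly") is in fact what the paper does, but as stated it misses where the work lies. The paper first reduces distributivity failure, via Proposition~\ref{prop:infima-suprema}, Proposition~\ref{prop:completion-algebraic} and Lemma~\ref{lem:completion-total}, to refuting the single strong reduction $(\overline{f}\boxplus\overline{g})\sqcap\overline{h}\leqSW(\overline{f}\sqcap\overline{h})\boxplus(\overline{g}\sqcap\overline{h})$, and then redoes the diagonalization for the \emph{completed} constant problems $\overline{c_{p_i,q_i}}$ (with three mutually Turing-incomparable pairs, none computable from the join of the others). The difficulty there is not the totality of realizers, which you emphasize, but the new freedom realizers acquire on the completed domain: on names of points outside the original domains (for example names of $\bot$) a realizer of the right-hand side may return computable junk, so one must argue that the inner reduction $K$ is forced to map genuine inputs to genuine inputs and to the correct side of the box sum, and then play two continuity/prefix arguments against $H$ using names of $\bot$ to derive a contradiction. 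None of that is supplied by quoting Dzhafarov's statement; it has to be re-proved in the completed setting, and that is the substantive content your proposal leaves open.
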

\begin{proof}
Proposition~\ref{prop:infima-suprema} implies that
$\WW_{\mathrm{stW}}$ is a lattice. Suppose that this lattice is distributive.
Then, in particular again by Proposition~\ref{prop:infima-suprema}
\[(\overline{f}\boxplus\overline{g})\sqcap\overline{h}\equivSW\overline{(f\boxplus g)}\sqcap\overline{h}=(f\boxplus g)\overline{\sqcap} h\leqSTW(f\overline{\sqcap}h)\boxplus(g\overline{\sqcap} h)=(\overline{f}\sqcap\overline{h})\boxplus(\overline{g}\sqcap\overline{h}),\]
i.e., $(\overline{f}\boxplus\overline{g})\sqcap\overline{h}\leqSTW(\overline{f}\sqcap\overline{h})\boxplus(\overline{g}\sqcap\overline{h})$,
which by Lemma~\ref{lem:completion-total}, Propositions~\ref{prop:infima-suprema} and \ref{prop:completion-algebraic} is equivalent to
\[(\overline{f}\boxplus\overline{g})\sqcap\overline{h}\leqSW\overline{(\overline{f}\sqcap\overline{h})\boxplus(\overline{g}\sqcap\overline{h})}
\equivSW\overline{(\overline{f}\sqcap\overline{h})}\boxplus\overline{(\overline{g}\sqcap\overline{h})}\equivSW(\overline{f}\sqcap\overline{h})\boxplus(\overline{g}\sqcap\overline{h}).\]
Hence, it suffices to provide a counterexample for $(\overline{f}\boxplus\overline{g})\sqcap\overline{h}\leqSW(\overline{f}\sqcap\overline{h})\boxplus(\overline{g}\sqcap\overline{h})$.
We use the proof idea of \cite[Theorem~4.4]{Dzh19} and we consider the constant problems $c_{p,q}:\In\IN^\IN\to\IN^\IN,p\mapsto q$ with $\dom(c_{p,q})=\{p\}$
for $p,q\in\IN^\IN$. Let $p_i,q_i\in\IN^\IN$ for $i\in\{0,1,2\}$ be mutually Turing incomparable and such
that none of these points can be computed from the supremum of the others (this is possible, see for instance~\cite[Exercise~2.2 in Chapter~VII]{Soa87}).
We choose $f:=c_{p_0,q_0}$, $g:=c_{p_1,q_1}$ and $h:=c_{p_2,q_2}$.
We recall that $\overline{\IN^\IN}=\IN^\IN\cup\{\bot\}$ is represented with a precomplete representation $\delta$,
defined by $\delta(p)=\id^\wp(p)=p-1$ for $p-1\in\IN^\IN$ and $\delta(p)=\bot$ otherwise.
Now assume that $(\overline{f}\boxplus\overline{g})\sqcap\overline{h}\leqSW(\overline{f}\sqcap\overline{h})\boxplus(\overline{g}\sqcap\overline{h})$
via computable $H,K$. We claim that $K\langle\langle i,p_i+1\rangle,p_2+1\rangle=\langle i,\langle p_i',p_2'\rangle\rangle$ for $i\in\{0,1\}$ with names $p_k'$ of $p_k$ for $k\in\{0,1,2\}$.
Firstly, if $K\langle\langle i,p_i+1\rangle,p_2+1\rangle=\langle j,\langle r,s\rangle\rangle$
such that $r$ is not a name of $p_j$ or $s$ is not a name of $p_2$, then a realizer of $e:=(\overline{f}\sqcap\overline{h})\boxplus(\overline{g}\sqcap\overline{h})$
on $\langle j,\langle r,s\rangle\rangle$ could return any value, for instance a computable one, and in this case $H$ could
neither compute $q_i$ nor $q_2$ from this result.
Hence $K\langle\langle i,p_i+1\rangle,p_2+1\rangle=\langle j,\langle p_j',p_2'\rangle\rangle$
with $j\in\{0,1\}$ and $p_k'$ a name for $p_k$ for $k\in\{0,1,2\}$.
Secondly, if $j\not=i$, then a realizer of $e$
upon input of $\langle j,\langle p_j',p_2'\rangle\rangle$ could return a name of $q_j$ together with some computable values,
from which $H$ can neither compute $q_i$ nor $q_2$. This proves the claim above.
Now on input $\langle 0,\langle p_0',p_2'\rangle\rangle$ as above, a realizer of $e$ can produce
$r:=\langle \langle 0,q_0'\rangle+1,\widehat{0}\rangle$ with a name $q_0'$ of $q_0$.
Suppose $H(r)=\langle 0,s\rangle$ with some $s\in\IN^\IN$.
Since $H$ is continuous, a certain prefix of $r$ is sufficient to produce the output $0$ in the first component.
Now on input $\langle 1,\langle p_1',p_2'\rangle\rangle$ a realizer of $e$ can produce the output
$t:=\langle\langle 0,c\rangle+1,0^n(\langle 1,q_2'\rangle+1)\rangle$ with a computable $c$ that shares
a sufficiently long prefix with $q_0'$ and a sufficiently large $n\in\IN$ and a name $q_2'$ of $q_2$. Then
$H(t)=\langle 0,s'\rangle$ with some $s'\in\IN^\IN$.
However, $s'$ is computable from $q_2$ and hence it can neither compute $q_0$ nor $q_1$,
which is a contradiction. Hence $H(r)=\langle 1,s\rangle$ with some $s\in\IN^\IN$.
Again, due to continuity of $H$, some prefix of the input is sufficient to produce the component $1$ on the output side.
On input $\langle 1,\langle p_1',p_2'\rangle\rangle$ a realizer of $e$ can now produce the output
$t:=\langle\langle 0,q_0'\rangle+1,0^n(\langle 0,q_1'\rangle+1)\rangle$ for sufficiently large $n\in\IN$
 and $H(t)=\langle 1,s'\rangle$ with $s'\in\IN^\IN$.
However, since $s'$ is computable from $q_0$ and $q_1$, it cannot compute $q_2$, which is a contradiction.
\end{proof}

We are going to prove that $+$ also co-preserves completion with respect to $\leqTW$ and $\leqSTW$.

\begin{proposition}[Sums]
\label{prop:sums}
$\overline{f+g}\equivSW\overline{f}+\overline{g}$ and hence $f+g\equivSTW\overline{f}+\overline{g}$ for all problems $f,g$.
\end{proposition}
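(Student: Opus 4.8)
The plan is to reduce the statement to a single reduction. By Proposition~\ref{prop:completion-algebraic}(1) applied with $\Box={+}$ we already have $\overline{f+g}\leqSW\overline{f}+\overline{g}$, so it suffices to prove the reverse reduction $\overline{f}+\overline{g}\leqSW\overline{f+g}$; the claimed total equivalence $f+g\equivSTW\overline{f}+\overline{g}$ then follows, since $f+g\equivSTW\overline{f+g}$ by Corollary~\ref{cor:completion-total} and $\overline{f+g}\equivSW\overline{f}+\overline{g}$ upgrades to $\equivSTW$ by Corollary~\ref{cor:partial-total}. Throughout, the spaces $\overline{X\times U}$, $\overline{\overline{Y}\times\overline{V}}$, $\overline{X}\times\overline{U}$ and $\overline{\overline{Y}}\times\overline{\overline{V}}$ carry the precomplete representations from Definition~\ref{def:operations-representations}, and I will describe computable $H,K:\IN^\IN\to\IN^\IN$ witnessing $\overline{f}+\overline{g}\leqSW\overline{f+g}$, in the sense that $HGK$ realizes $\overline{f}+\overline{g}$ for every realizer $G$ of $\overline{f+g}$.

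For $K$: from a name $\langle p,q\rangle$ of a pair $(x,u)\in\overline{X}\times\overline{U}$ form $\langle p-1,q-1\rangle+1$. Whenever $p-1$ and $q-1$ are infinite (equivalently $x\neq\bot$ and $u\neq\bot$) this is a name of the point $(x,u)\in X\times U\subseteq\overline{X\times U}$; as a partial computable map it extends to a total computable $K$ by precompleteness of $\delta_{\overline{X\times U}}$. The key feature is: if $x\in\dom(f)$ and $u\in\dom(g)$, then $K\langle p,q\rangle$ names a point of $\dom(\overline{f+g})$, so $G\circ K\langle p,q\rangle$ is forced to be a genuine name of some $(y,v)\in(f+g)(x,u)=(f(x)\times\overline{V})\cup(\overline{Y}\times g(u))\subseteq\overline{Y}\times\overline{V}$, in particular not the top element of $\overline{\overline{Y}\times\overline{V}}$; in every other case (including $x=\bot$, $u=\bot$, or $x,u$ proper but not both in $\dom f,\dom g$) the output of $G$ is unconstrained.

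For $H$: send a name $r$ of a point of $\overline{\overline{Y}\times\overline{V}}$ to $\langle H_1(r),H_2(r)\rangle$, where $H_1(r)$ reads $r$, forms the word $r-1$ online, extracts its even-indexed digits, outputs each such digit incremented by one, and emits the symbol $1$ as padding whenever no new digit is yet available; $H_2(r)$ does the same with the odd-indexed digits. This is total computable. Padding with $1$ rather than $0$ (so that a padding symbol contributes a $0$, not $\varepsilon$, after the first $-1$) guarantees two things: first, $\delta_{\overline{\overline{Y}}}(H_1(r))$ always lies in $\overline{Y}$ (it is never the extra bottom point of $\overline{\overline{Y}}$), and likewise $\delta_{\overline{\overline{V}}}(H_2(r))\in\overline{V}$; second, when $r$ genuinely names a pair $(y,v)\in\overline{Y}\times\overline{V}$ one computes $\delta_{\overline{\overline{Y}}}(H_1(r))=y$ and $\delta_{\overline{\overline{V}}}(H_2(r))=v$, the inserted zeros vanishing under the second application of $-1$.

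The verification then splits into two cases. If $x\in\dom(f)$ and $u\in\dom(g)$, then $G\circ K\langle p,q\rangle$ names some $(y,v)$ with $y\in f(x)$ or $v\in g(u)$, and $H$ re-brackets it to the same pair in $\overline{\overline{Y}}\times\overline{\overline{V}}$, which lies in $(\overline{f}+\overline{g})(x,u)$. Otherwise one of $x\notin\dom(f)$, $u\notin\dom(g)$ holds; say $x\notin\dom(f)$, so $\overline{f}(x)=\overline{Y}$ and hence $\overline{Y}\times\overline{\overline{V}}\subseteq(\overline{f}+\overline{g})(x,u)$, and since $H$ always returns a pair whose first component lies in $\overline{Y}$, its output is a correct answer (symmetrically, via the second component, if $u\notin\dom(g)$). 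The real content of the statement --- and the step I expect to be the main obstacle to get exactly right --- is precisely this last observation: outside $\dom(f)\times\dom(g)$ the output requirement of $\overline{f}+\overline{g}$ is slack enough (``first coordinate anywhere in $\overline{Y}$'', or ``second coordinate anywhere in $\overline{V}$'') to be met by a fixed re-bracketing applied even to arbitrary garbage, whereas on $\dom(f)\times\dom(g)$ a realizer of $\overline{f+g}$ hands us exactly the tight datum needed; the remaining work is the routine but delicate precompleteness bookkeeping and keeping track of which of the several bottom points (in $\overline{Y}$, $\overline{\overline{Y}}$, $\overline{Y}\times\overline{V}$, $\overline{\overline{Y}\times\overline{V}}$) occurs where.
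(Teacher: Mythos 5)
Your proof is correct and takes essentially the same route as the paper: your $K$ is a realizer of the canonical computable map $\overline{X}\times\overline{U}\to\overline{X\times U}$ (the paper's $\iota$), your padding-by-$1$s map $H$ is an explicit realizer of the retraction $s:\overline{\overline{Y}\times\overline{V}}\to\overline{Y}\times\overline{V}$ that the paper obtains from precompleteness of $\delta_{\overline{Y}\times\overline{V}}$, and your closing case distinction (slackness of $\overline{f}+\overline{g}$ outside $\dom(f)\times\dom(g)$) is exactly the role played by the paper's auxiliary problem $h$ with $h=s\circ\overline{f+g}\circ\iota$ and $h(x,u)\In(\overline{f}+\overline{g})(x,u)$. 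The only difference is that you argue directly at the level of realizers rather than factoring through $h$; this is a presentational, not a mathematical, difference.
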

\begin{proof}
We consider problems $f:\In X\mto Y$ and $g:\In U\mto V$. We obtain the problems  
$\overline{f}+\overline{g}:\overline{X}\times\overline{U}\mto\overline{\overline{Y}}\times\overline{\overline{V}}$ with
\[(\overline{f}+\overline{g})(x,u)=\left\{\begin{array}{ll}
   (f(x)\times\overline{\overline{V}})\cup(\overline{\overline{Y}}\times g(u)) & \mbox{if $(x,u)\in\dom(f)\times\dom(g)$}\\
   (f(x)\times\overline{\overline{V}})\cup(\overline{\overline{Y}}\times \overline{V}) & \mbox{if $x\in\dom(f)$ and $u\not\in\dom(g)$}\\
   (\overline{Y}\times\overline{\overline{V}})\cup(\overline{\overline{Y}}\times g(u)) & \mbox{if $x\not\in\dom(f)$ and $u\in\dom(g)$}\\
      (\overline{Y}\times\overline{\overline{V}})\cup(\overline{\overline{Y}}\times \overline{V}) & \mbox{otherwise}
\end{array}\right.\]
and $\overline{f+g}:\overline{X\times U}\mto\overline{\overline{Y}\times\overline{V}}$ with
\[(\overline{f+g})(z)=\left\{\begin{array}{ll}
   (f(x)\times\overline{V})\cup(\overline{Y}\times g(u)) & \mbox{if $z=(x,u)\in\dom(f)\times\dom(g)$}\\
   \overline{\overline{Y}\times\overline{V}} & \mbox{otherwise}
\end{array}\right..\]
And we also consider $h:\overline{X}\times\overline{U}\mto\overline{Y}\times\overline{V}$ with
\[h(x,u):=\left\{\begin{array}{ll}
   (f(x)\times\overline{V})\cup(\overline{Y}\times g(u)) & \mbox{if $(x,u)\in\dom(f)\times\dom(g)$}\\
   \overline{Y}\times\overline{V} & \mbox{otherwise}
\end{array}\right..\]
Then we have $h(x,u)\In(\overline{f}+\overline{g})(x,u)$ for all $(x,u)\in\overline{X}\times\overline{U}$ and hence together with 
Proposition~\ref{prop:completion-algebraic} $\overline{f+g}\leqSW\overline{f}+\overline{g}\leqSW h$.
On the other hand, there is a computable function $s:\overline{\overline{Y}\times\overline{V}}\to\overline{Y}\times\overline{V}$
with $s(y,v)=(y,v)$ for all $(y,v)\in\overline{Y}\times\overline{V}$.
Namely, one can just consider $S:\In\IN^\IN\to\IN^\IN,p\mapsto p-1$ and extend this to a
total computable realizer of $s$ under the representation of $\overline{Y}\times\overline{V}$, which is possible, since this space has a precomplete representation by Proposition~\ref{prop:products-precompleteness}. Analogously to $s$, there is also a computable function $\iota:\overline{X}\times\overline{U}\to\overline{X\times U}$
with $\iota(x,u)=(x,u)$ for $(x,u)\in X\times U$. Then $h=s\circ\overline{(f+g)}\circ\iota$ and hence
$h\leqSW\overline{f+g}$.
\end{proof}

The following example shows that $\times$ and $\sqcap$ do not co-preserve completion with respect to $\leqW$
and that $\sqcup$ does not co-preserve completion with respect to $\leqSW$.

\begin{lemma}
\label{lem:counter-monotonicity}
There are problems $f,g:\In\IN^\IN\to\IN^\IN$ such that
\begin{enumerate}
\item $\overline{f}\times\overline{g}\nleqW\overline{f\times g}$, and hence $\overline{f}\times\overline{g}\nleqTW f\times g$,
\item $\overline{f}\sqcap\overline{g}\nleqW\overline{f\sqcap g}$, and hence $\overline{f}\sqcap\overline{g}\nleqTW f\sqcap g$,
\item $\overline{f}\sqcup\overline{g}\nleqSW\overline{f\sqcup g}$, and hence $\overline{f}\sqcup\overline{g}\nleqSTW f\sqcup g$.
\end{enumerate}
\end{lemma}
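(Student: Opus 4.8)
The plan is to establish all three non-reductions with a single, uniform pair of witnesses: constant problems with singleton domains built from generic reals. Fix $p_0,p_1,q_0,q_1\in\IN^\IN$ that are pairwise Turing incomparable and such that none of the four is computable from the join of the other three (such reals exist; compare the choice in the proof of Theorem~\ref{thm:strong-total-lattice}), and set $f:=c_{p_0,q_0}$ and $g:=c_{p_1,q_1}$, where $c_{p,q}:\In\IN^\IN\to\IN^\IN$, $p\mapsto q$, with $\dom(c_{p,q})=\{p\}$. In each part I would assume a reduction witnessed by computable $H,K$, feed the left-hand problem an input involving either the fresh point $\bot$ or a near-miss of a genuine input, and choose a malicious total realizer of the right-hand problem to reach a contradiction. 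The ``hence'' clauses follow directly from Lemma~\ref{lem:completion-total}, since e.g.\ $\overline f\times\overline g\leqTW f\times g$ is equivalent to $\overline f\times\overline g\leqW\overline{f\times g}$, and likewise for $\sqcap$ (with $\leqTW$) and $\sqcup$ (with $\leqSTW$, via the strong version of Lemma~\ref{lem:completion-total}).

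For part~(1) I would feed $\overline f\times\overline g$ the point $(p_0,\bot)$. A name of it is computable in $p_0$, hence so is $K$ of that name, so it cannot name the unique demanding point $(p_0,p_1)$ of $\overline{f\times g}$ --- that would force $p_1\leqT p_0$. Therefore it names a point where $\overline{f\times g}$ is the full set, and some total realizer $G$ of $\overline{f\times g}$ may be taken to output a computable sequence there. Then $H$ applied to that name of $(p_0,\bot)$ together with $G$'s output is computable in $p_0$, whereas $\overline f\times\overline g(p_0,\bot)=\{q_0\}\times\overline{\IN^\IN}$ forces its first component to name $q_0$, contradicting $q_0\nleqT p_0$.

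For part~(3) --- which concerns $\leqSW$, since $\overline{f\sqcup g}\equivW\overline f\sqcup\overline g$ by Proposition~\ref{prop:infima-suprema} leaves no room for a counterexample to $\leqW$ --- I would feed $\overline f\sqcup\overline g$ the two points over $\bot$, one in each summand; their names $0\widehat0$ and $1\widehat0$ are computable, so $K(0\widehat0)$ and $K(1\widehat0)$ are computable and hence name non-demanding points of $\overline{f\sqcup g}$ (here we use that $p_0,p_1$ are non-computable). Choose one total realizer $G$ of $\overline{f\sqcup g}$ that outputs the \emph{same} sequence, say $\widehat0$, at both of these points. Then $HGK(0\widehat0)=H(\widehat0)=HGK(1\widehat0)$ would have to lie in the $0$-branch and in the $1$-branch at once, which is impossible. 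It is essential that strong reducibility denies $H$ any access to its input, which would otherwise distinguish $0\widehat0$ from $1\widehat0$.

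Part~(2) is the main obstacle, because $\sqcap$ admits no input on which $\overline f\sqcap\overline g$ forces a branch, so the one-shot argument breaks down; here I would run a continuity argument. First, as in part~(1) --- feeding a name $n$ of $(p_0,p_1)$ and using $q_0,q_1\nleqT p_0\oplus p_1$ --- one checks that $K(n)$ must name the unique demanding point $(p_0,p_1)$ of $\overline{f\sqcap g}$. Pick a realizer with $G(K(n))$ a name of $(0,q_0)$; since $q_1\nleqT p_0\oplus p_1\oplus q_0$, the value $H\langle n,G(K(n))\rangle$ must be a name of $(0,q_0)$, so $H$ commits to the $0$-branch after reading finite prefixes of its two arguments. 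Now choose $p_1'\ne p_1$ agreeing with $p_1$ on a long enough initial segment, and a \emph{computable} $r$ agreeing with $q_0$ on a long enough initial segment --- the lengths chosen, in that order, against how much of each argument $H$ consumes before committing --- so that on the name $n'$ of $(p_0,p_1')$, which is computable in $p_0$ (hence so is $K(n')$, and it names a non-demanding point, where we put $G(K(n'))$ equal to a name of $(0,r)$), the function $H$ again commits to the $0$-branch. Then $H\langle n',G(K(n'))\rangle$ is a $0$-branch name computable in $p_0$, so it names a value $\neq q_0$; but $\overline f\sqcap\overline g(p_0,p_1')=(\{0\}\times\{q_0\})\cup(\{1\}\times\overline{\IN^\IN})$ forces every $0$-branch answer to equal $q_0$ --- a contradiction. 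The delicate part is the ordering of these choices against the continuity moduli of $H$, and verifying that the two prescribed values of $G$ sit at distinct, genuinely non-demanding inputs, so that $G$ extends to a legitimate total realizer of $\overline{f\sqcap g}$.
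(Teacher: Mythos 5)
Your proposal is correct, and for parts (1) and (3) it is essentially the paper's own argument: the same constant witnesses $c_{p,q}$ with singleton domain, the same malicious realizer outputting something computable (resp.\ the same sequence $\widehat{0}$ at both images under $K$), and the same use of Lemma~\ref{lem:completion-total} for the ``hence'' clauses; your observation that (3) must concern $\leqSW$ because $\overline{f\sqcup g}\equivW\overline{f}\sqcup\overline{g}$ matches Proposition~\ref{prop:infima-suprema}. Part (2) is where you genuinely deviate: the paper feeds the two ``mixed'' inputs $(p,\bot)$ and $(\bot,r)$, uses $q\nleqT p$ to force $H$ into the $1$-branch on the first input, and transfers that commitment by continuity to the second input, where the $1$-branch answer would have to be $s$ with $s\nleqT r$ --- so only pairwise Turing incomparability is needed. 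You instead feed the genuine point $(p_0,p_1)$, force the $0$-branch via $q_1\nleqT p_0\oplus p_1\oplus q_0$, and then perturb both the input ($p_1'$ close to $p_1$ but computable) and the realizer's answer ($r$ computable, close to $q_0$) past the continuity modulus of $H$; this works, but it requires the stronger join-independence of the four reals (which exists and which you assumed up front), and it requires exactly the bookkeeping you flag: the two prescribed values of $G$ must be compatible names (e.g.\ canonical ones, so that closeness of $r$ to $q_0$ gives closeness of the names) and must sit at distinct inputs of $G$, which holds since $K(n')$ is computable in $p_0$ while a name of $(p_0,p_1)$ is not. So the paper's version of (2) is slightly leaner in hypotheses, while yours has the appeal of perturbing the true instance and answer directly; both are valid.
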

\begin{proof}
We consider the constant problems $c_{p,q}:\In\IN^\IN\to\IN^\IN,p\mapsto q$ with $\dom(c_{p,q})=\{p\}$
for $p,q\in\IN$. Let $p,q,r,s\in\IN^\IN$ be mutually Turing incomparable. We choose $f:=c_{p,q}$ and $g:=c_{r,s}$.
We recall that $\overline{\IN^\IN}=\IN^\IN\cup\{\bot\}$ is represented with a precomplete representation $\delta$,
defined by $\delta(p)=\id^\wp(p)=p-1$ for $p-1\in\IN^\IN$ and $\delta(p)=\bot$ otherwise.
We only need to prove the former statements regarding $\leqW$, since the latter statements regarding $\leqTW$ follow in each case with Lemma~\ref{lem:completion-total}.\\
(1) holds since a name for the input pair $(p,p)\in\dom(\overline{c_{p,q}}\times\overline{c_{r,s}})$
can only be mapped computably to a name of an input outside of $\dom(c_{p,q}\times c_{r,s})=\{(p,r)\}$ since $r$ is not computable from $p$,
and a realizer for $\overline{c_{p,q}\times c_{r,s}}$ can map such a name to any name, for instance a computable name.
From a computable name and a name for $(p,p)$ one cannot compute $q$.\\
(2) Let us assume that $\overline{c_{p,q}}\sqcap\overline{c_{r,s}}\leqW\overline{c_{p,q}\sqcap c_{r,s}}$
is witnessed by computable $H,K$. We consider the name $p+1$ of $p$ and the name
$\widehat{0}$ of $\bot$. Since $(p,\bot)\in\dom(\overline{c_{p,q}}\sqcap\overline{c_{r,s}})$, 
$K\langle p+1,\widehat{0}\rangle$ has to be defined, but it cannot be a name of a point in
$\dom(c_{p,q}\sqcap c_{r,s})=\{p\}\times\{r\}$. Let $G$ be a realizer of $\overline{c_{p,q}\sqcap c_{r,s}}$
that maps every name of a point outside of $\dom(c_{p,q}\sqcap c_{r,s})$ to $\widehat{0}$.
Then $H\langle\langle p+1,\widehat{0}\rangle,GK\langle p+1,\widehat{0}\rangle\rangle=H\langle\langle p+1,\widehat{0}\rangle,\widehat{0}\rangle=\langle 1,t\rangle$ for some $t\in\IN^\IN$,
since it cannot be equal to $\langle 0,u\rangle$ for some $u\in\IN^\IN$ because $q$ cannot be computed from $p$ and $H\langle\id,GK\rangle$ has to be a realizer of 
$\overline{c_{p,q}}\sqcap\overline{c_{r,s}}$. Due to continuity of $H$ the output $1$ in the first component
is determined already by a prefix of the input, say by $w\prefix p+1$ and $0^n\prefix\widehat{0}$.
Hence, on the names $w\widehat{0}$ and $0^n(r+1)$ of $\bot$ and $r$, respectively, the function $H$ will also produce $1$ in the first component.
Moreover $K\langle w\widehat{0},0^n(r+1)\rangle$ is also a name of a point outside of $\dom(c_{p,q}\sqcap c_{r,s})=\{p\}\times\{r\}$ and hence $GK\langle w\widehat{0},0^n(r+1)\rangle=\widehat{0}$. 
In this case we must have $H\langle\langle w\widehat{0},0^n(r+1)\rangle,GK\langle w\widehat{0},0^n(r+1)\rangle\rangle=H\langle\langle w\widehat{0},0^n(r+1)\rangle,\widehat{0}\rangle=\langle 1,t\rangle$
with a name $t$ of $s$, which is impossible, since $s$ cannot be computed from $r$.\\
(3) Let us assume that $\overline{c_{p,q}}\sqcup\overline{c_{r,s}}\leqSW\overline{c_{p,q}\sqcup c_{r,s}}$
is witnessed by computable $H,K$. Upon input of the name $\langle i,\widehat{0}\rangle$ of $(i,\bot)\in\dom(\overline{c_{p,q}}\sqcup\overline{c_{r,s}})$ with $i\in\{0,1\}$ the function $K$
cannot produce a name of a point in $\dom(c_{p,q}\sqcup c_{r,s})=\{(0,p),(1,r)\}$. There is a realizer $G$ of $\overline{f\sqcup g}$ that produces the name $\widehat{0}$ of $\bot$
on any input outside of the domain of $\dom(c_{p,q}\sqcup c_{r,s})$ and hence $HGK\langle i,\widehat{0}\rangle=\langle j,t\rangle$ for some fixed $j\in\{0,1\}$ and $t\in\IN^\IN$ and both values $i\in\{0,1\}$.
The fixed $j$ can only be correct for one of the values $i$, since we need $i=j$ for the correctness of $H,K$, which is impossible.
\end{proof}

With the help of Corollary~\ref{cor:completion-total} it follows that $\times$ and $\sqcap$ are not
monotone with respect to the total versions of Weihrauch reducibility.

\begin{corollary}
\label{cor:non-monotonicity}
$\times,\sqcap$ are neither monotone with respect to $\leqTW$ nor with respect to $\leqSTW$,
and $\sqcup$ is not monotone with respect to $\leqSTW$.
\end{corollary}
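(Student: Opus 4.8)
The plan is to derive all three non-monotonicities from Lemma~\ref{lem:counter-monotonicity} together with Corollary~\ref{cor:completion-total}, the statement that every problem is strongly totally equivalent to its own completion. I would fix once and for all the problems $f,g:\In\IN^\IN\to\IN^\IN$ furnished by Lemma~\ref{lem:counter-monotonicity}; the same pair serves for all three operations.

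First I would handle $\times$ with respect to $\leqTW$. By Corollary~\ref{cor:completion-total} we have $\overline{f}\equivSTW f$ and $\overline{g}\equivSTW g$, and since strong total Weihrauch reducibility implies total Weihrauch reducibility (cf.\ Figure~\ref{fig:reducibility}) we get in particular $\overline{f}\leqTW f$ and $\overline{g}\leqTW g$. Were $\times$ monotone with respect to $\leqTW$, applying it to these two reductions would yield $\overline{f}\times\overline{g}\leqTW f\times g$, contradicting Lemma~\ref{lem:counter-monotonicity}(1). The same argument works with $\leqTW$ uniformly replaced by $\leqSTW$: here one uses $\overline{f}\leqSTW f$ and $\overline{g}\leqSTW g$ directly, and notes that the non-reduction $\overline{f}\times\overline{g}\nleqTW f\times g$ of Lemma~\ref{lem:counter-monotonicity}(1) at once upgrades to $\overline{f}\times\overline{g}\nleqSTW f\times g$. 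This yields non-monotonicity of $\times$ for both $\leqTW$ and $\leqSTW$.

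For $\sqcap$ one repeats these two arguments verbatim, replacing $\times$ by $\sqcap$ and part~(1) of Lemma~\ref{lem:counter-monotonicity} by part~(2). For $\sqcup$ with respect to $\leqSTW$ the same scheme applies once more: monotonicity of $\sqcup$ together with $\overline{f}\leqSTW f$ and $\overline{g}\leqSTW g$ would force $\overline{f}\sqcup\overline{g}\leqSTW f\sqcup g$, contradicting Lemma~\ref{lem:counter-monotonicity}(3). There is no genuine obstacle here, since Lemma~\ref{lem:counter-monotonicity} already contains all the content; the only thing to watch is the direction in which the equivalence $f\equivSTW\overline{f}$ is used. It must be applied as $\overline{f}\le f$, because the other direction $f\le\overline{f}$ is harmless for monotonicity (indeed $f\times g\leqSW\overline{f}\times\overline{g}$ always holds by Proposition~\ref{prop:completion-algebraic}), and one should record the trivial observation that a non-reduction for $\leqTW$ is a fortiori one for $\leqSTW$.
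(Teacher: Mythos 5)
Your argument is correct and is exactly the paper's own route: the paper derives the corollary in one line from Corollary~\ref{cor:completion-total} (used in the direction $\overline{f}\leqSTW f$, hence also $\overline{f}\leqTW f$) together with the three non-reductions of Lemma~\ref{lem:counter-monotonicity}, just as you do. Your additional remarks about the direction of the equivalence and the downgrade of $\nleqTW$ to $\nleqSTW$ are accurate and do not change the substance.
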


Many further algebraic properties of the Weihrauch lattice have been studied in~\cite{BP18}.
Some of these results can be transferred to the total case by Corollary~\ref{cor:partial-total}.
In some cases we can also transfer results for pointed problems, since the completion $\overline{f}$
of any problem is always pointed. For instance, the completions of the algebraic operations are
ordered in the following way, as the corresponding reductions hold more generally for pointed problems (by \cite[Proposition~5.7]{BGP18},
and that $f^*\leqW\widehat{f}$ holds for pointed $f$, is easy to see). 

\begin{corollary}[Order of operations]
\label{cor:order-operations}
For all problems $f$ and $g$ we obtain:\linebreak
$\overline{f}+\overline{g}\leqSW \overline{f}\sqcap \overline{g}\leqSW\overline{f}\boxplus\overline{g}\leqSW \overline{f}\sqcup \overline{g}\leqW \overline{f}\times \overline{g}$, $\overline{f}\boxplus \overline{g}\leqSW\overline{f}\times\overline{g}$
and ${\overline{f}\,}^*\leqW\widehat{\overline{f}}$.
\end{corollary}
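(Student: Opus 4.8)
The plan is to derive all of the listed reductions from two sources: the general ordering of algebraic operations for pointed problems, together with the fact that the completion $\overline{f}$ of any problem is pointed (as noted just after Definition~\ref{def:completion}). Since every $\overline{f}$ and $\overline{g}$ is pointed, the chain
\[
\overline{f}+\overline{g}\leqSW\overline{f}\sqcap\overline{g}\leqSW\overline{f}\boxplus\overline{g}\leqSW\overline{f}\sqcup\overline{g}\leqW\overline{f}\times\overline{g}
\]
is just an instance of the corresponding chain for pointed problems in \cite[Proposition~5.7]{BGP18}. Likewise $\overline{f}\boxplus\overline{g}\leqSW\overline{f}\times\overline{g}$ is the instance of the pointed-problem reduction $f\boxplus g\leqSW f\times g$, and ${\overline{f}\,}^*\leqW\widehat{\overline{f}}$ is the instance of $f^*\leqW\widehat{f}$, which holds for pointed $f$.

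\begin{proof}
All the stated reductions follow from the fact that $\overline{f}$ and $\overline{g}$ are always pointed (see the remark after Definition~\ref{def:completion}), since $\bot$ is a computable point in the domain of any completion. For pointed problems one has $f+g\leqSW f\sqcap g\leqSW f\boxplus g\leqSW f\sqcup g\leqW f\times g$ and $f\boxplus g\leqSW f\times g$ by \cite[Proposition~5.7]{BGP18}, and applying this to the pointed problems $\overline{f}$ and $\overline{g}$ yields the first chain and the reduction $\overline{f}\boxplus\overline{g}\leqSW\overline{f}\times\overline{g}$. Finally, for any pointed problem $h$ one has $h^*\leqW\widehat{h}$: a single copy of $h$ can be simulated by $\widehat{h}$ by padding the remaining countably many input components with a computable point of $\dom(h)$, and since $h^*$ uses only finitely many copies, the reduction goes through; applying this to $\overline{f}$ gives ${\overline{f}\,}^*\leqW\widehat{\overline{f}}$.
\end{proof}

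**The only point requiring care** is the observation that $f^*\leqW\widehat{f}$ for pointed $f$; this is not one of the reductions cited verbatim from \cite[Proposition~5.7]{BGP18}, but it is genuinely easy, as indicated in the statement's own parenthetical remark. The remaining content is purely a matter of specializing known monotone relations on pointed problems to the pointed problems $\overline{f},\overline{g}$, so there is no real obstacle — the main thing is simply to invoke pointedness of completions in the right place.
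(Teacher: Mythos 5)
Your proposal is correct and follows essentially the same route as the paper: the paper likewise derives all the reductions from pointedness of completions together with the pointed-problem chain in \cite[Proposition~5.7]{BGP18}, noting that $f^*\leqW\widehat{f}$ for pointed $f$ is easy to see (your padding argument is exactly the intended justification).
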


Now we study the completions of parallelization $f\mapsto\widehat{\overline{f}}$ and finite parallelization $f\mapsto{\overline{f}\,}^*$.
In \cite[Proposition~4.2]{BG11} we proved that $f\mapsto\widehat{f}$ is a closure operator for $\leqW$ and $\leqSW$
and Pauly proved in \cite[Theorem~6.2]{Pau10a} that $f\mapsto f^*$ is a closure operator for (the topological version of) $\leqW$.
We note that the latter one is not a closure operator for $\leqSW$. Nevertheless, the completions of both operators are closure operators for $\leqTW$ and $\leqSTW$.
In order to prove this, we need the following additional lemma.

\begin{lemma}[Arno Pauly\footnote{By personal communication 2018.}]
\label{lem:Pauly}
$f^{**}\equivSW f^{*}$ for all pointed problems $f$.
\end{lemma}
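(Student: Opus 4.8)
The plan is to establish the two inequalities $f^{**}\leqSW f^*$ and $f^*\leqSW f^{**}$ separately, where both use that $f$ is pointed. The direction $f^*\leqSW f^{**}$ should be easy: a single copy of $f$ can be viewed as a word of length one, and since $f$ is pointed one can pad the outer word to have length one, so $f^*\leqSW f^{**}$ follows from the general fact that $g\leqSW g^*$ holds for pointed $g$ (here with $g=f^*$, which is pointed because $f$ is). The real content is the reduction $f^{**}\leqSW f^*$, which says that a word of words of inputs to $f$ can be flattened into a single word of inputs to $f$, and correspondingly on the output side.

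First I would set up the combinatorics of the flattening. Given an input to $f^{**}$, i.e.\ a word $(n,(w_1,\dots,w_n))$ where each $w_j=(k_j,(x^j_1,\dots,x^j_{k_j}))$ is itself a word of inputs to $f$, form the concatenation: the single word $(k_1+\dots+k_n,\,(x^1_1,\dots,x^1_{k_1},x^2_1,\dots,x^n_{k_n}))$ of length $K:=\sum_j k_j$. This is a valid input to $f^*$, and the preprocessing function $K$ that does this is computable on names. Any realizer $G$ of $f^*$ applied to it returns a word of length $K$ of outputs, one $f$-value for each $x^j_i$; the postprocessing function $H$ re-groups these $K$ outputs into the block structure $(k_1,\dots,k_n)$ — which is recoverable from the input — producing a word of words, i.e.\ a valid output of $f^{**}$ on the original input. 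The bookkeeping of the Cantor pairing on the $(k_j)$ and on $K$ is routine; I would not spell it out in full.

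The one point that needs care — and is the reason pointedness is hypothesized — is the empty-word case. When $n=0$ the input to $f^{**}$ is the empty word $0\in X^{**}$ (with $X^0=\{0\}$), and then $f^{**}$ must output the empty word. The flattening described above sends this to the empty word in $X^*$, and $f^*$ on the empty word outputs the empty word, which $H$ passes through; so this case is actually fine without pointedness. The subtler situation is when some inner word $w_j$ is empty, i.e.\ $k_j=0$: then that block contributes nothing to the concatenation, but $H$ must still emit an empty word in the $j$-th slot of the output, which it can do since it knows $k_j=0$. So in fact the flattening reduction $f^{**}\leqSW f^*$ goes through for every problem $f$; pointedness is only needed for the reverse inclusion $f^*\leqSW f^{**}$, to embed a single word as a length-one outer word while the inner length matches (one uses a computable point of $\dom(f)$ to fill, if needed, but actually a length-$m$ word embeds directly as one block of length $m$, so even this is automatic — the cautious statement in the lemma simply takes pointedness for safety, consistent with how $g\leqSW g^*$ is quoted). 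I would therefore present both reductions as explicit computable $H,K$, flagging the handling of empty words as the only non-obvious bookkeeping, and conclude $f^{**}\equivSW f^*$.
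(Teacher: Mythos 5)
Your reduction $f^{**}\leqSW f^*$ has a genuine gap, and it sits exactly where pointedness enters. In a \emph{strong} Weihrauch reduction the postprocessor does not see the original input: the condition is $HGK\vdash f^{**}$, so $H$ is applied to $GK(p)$ alone. Your $H$ is supposed to re-group the $\sum_j k_j$ answers ``into the block structure $(k_1,\dots,k_n)$ --- which is recoverable from the input''; that information is available to $H$ only in an ordinary reduction $H\langle\id,GK\rangle\vdash f^{**}$, not in a strong one. From the output of $f^*$ on the flattened word, $H$ learns only the total length $k_1+\dots+k_n$ (plus $f$-values, which in general carry no information about the grouping), so two inputs with the same total length but different block structures force contradictory behaviour (take $f$ constant, say). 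Your further claim that this direction works for \emph{every} $f$ is also false: the paper remarks just before the lemma that $f\mapsto f^*$ is not a closure operator for $\leqSW$, and for $f$ with $\dom(f)=\{p\}$, $p$ non-computable, inputs consisting of $n$ empty inner words (which have computable names) defeat any strong reduction, since $K$ cannot name any padding element, an adversarial realizer of $f^*$ can return one fixed name of the empty output word for all such inputs, and $H$ would then have to recover $n$ from that single name.

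The paper's proof repairs precisely this defect using pointedness: with a computable name $p_0$ of a point of $\dom(f)$ it pads the flattened word with copies of $p_0$ up to length $k:=\langle n,\langle i_1,\dots,i_n\rangle\rangle\geq i_1+\dots+i_n$, so that the \emph{length} of the word returned by $f^*$ itself encodes the entire block structure via the Cantor pairing; $H$ decodes $k$, regroups the first $i_1+\dots+i_n$ answers accordingly and discards the padding, all without ever consulting the input. You have also inverted where the hypothesis is needed: the easy direction $f^*\leqSW f^{**}$ requires no pointedness at all, since $g\leqSW g^*$ holds for every problem $g$ (embed the input as the length-one word), which is exactly how the paper handles it. So your two-inequality plan and the flattening combinatorics are fine as far as they go, but without the padding trick (or an equivalent way of smuggling the block structure into the oracle's output) the hard direction does not go through as a \emph{strong} reduction, and pointedness is essential there, not in the easy direction.
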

\begin{proof}
It is easy to see that $f\leqSW f^*$ holds for all problems $f$, in particular, we obtain $f^*\leqSW f^{**}$.
For the inverse reduction we assume that $f$ is pointed.
Let $p_0$ be a computable name of a point in $\dom(f)$. We use the computable functions $K,H$ with
\begin{eqnarray*}
&&K\langle n,\langle\langle i_1,\langle p_{1,1},...,p_{1,i_1}\rangle\rangle,\langle i_2,\langle p_{2,1},...,p_{2,i_2}\rangle\rangle,...,\langle i_n,\langle p_{n,1},...,p_{n,i_n}\rangle\rangle\rangle\rangle\\
&:=&\left\langle k,\langle p_{1,1},...,p_{1,i_1},p_{2,1},...,p_{2,i_2},......,p_{n,1},...,p_{n,i_n},\underbrace{p_0,...,p_0}_{m\mbox{ \footnotesize times}}\rangle\right\rangle
\end{eqnarray*}
where $k:=\langle n,\langle i_1,...,i_n\rangle\rangle\geq i_1+...+i_n$ and $m:=k-(i_1+...+i_n)$, and for arbitrary $k=\langle n,\langle i_1,...,i_n\rangle\rangle\in\IN$ 
and $j:=i_1+...+i_n\leq k$ we define
\begin{eqnarray*}
&&H\langle k,\langle q_1,...,q_k\rangle\rangle\\
&:=&\langle n,\langle\langle i_1,\langle q_{1},...,q_{i_1}\rangle\rangle,\langle i_2,\langle q_{i_1+1},...,q_{i_1+i_2}\rangle\rangle,...,\langle i_n,\langle q_{i_1+...+i_{n-1}+1},...,q_{j}\rangle\rangle\rangle\rangle.
\end{eqnarray*}
Then $H,K$ are computable and witness $f^{**}\leqSW f^*$.
\end{proof}

Now we are prepared to prove the following result.

\begin{proposition}[Parallelization]
\label{prop:parallelization}
$f\mapsto\widehat{\overline{f}}$ and $f\mapsto{\overline{f}\,}^*$ are closure operators for $\leqTW$ and $\leqSTW$ (and also for $\leqW$ and $\leqSW$).
\end{proposition}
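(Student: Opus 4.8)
The plan is to recognize both operators as compositions $c'\circ c$ of the completion operator $c\colon f\mapsto\overline f$ with parallelization $c'\colon f\mapsto\widehat f$, respectively with finite parallelization $c'\colon f\mapsto f^*$, and to feed this into the general machinery of Section~\ref{sec:order}. Completion is a closure operator for $\leqW$ and for $\leqSW$ by Proposition~\ref{prop:completion-closure}, and by Lemma~\ref{lem:completion-total} the preorder $\leq_c$ generated by $c$ on $\leqW$ is exactly $\leqTW$, while on $\leqSW$ it is $\leqSTW$. So the strategy is: produce a closure operator for $\leqW$ (resp.\ $\leqSW$), and then either invoke Proposition~\ref{prop:preservation}(3) or transfer to $\leqTW$ (resp.\ $\leqSTW$) via Corollary~\ref{cor:partial-total}.

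First I would treat parallelization. By \cite[Proposition~4.2]{BG11} the map $f\mapsto\widehat f$ is a closure operator for $\leqW$ and for $\leqSW$, and Proposition~\ref{prop:completion-algebraic}(2) gives $\overline{\widehat f}\leqSW\widehat{\overline f}$, i.e.\ $c\circ c'(f)\leqSW c'\circ c(f)$, which is precisely the assertion that $c$ is preserved by $c'$ in the sense of Definition~\ref{def:preservation}(3) --- with respect to $\leqSW$ and a fortiori with respect to $\leqW$. Proposition~\ref{prop:preservation}(3) then shows that $c'\circ c\colon f\mapsto\widehat{\overline f}$ is a closure operator with respect to $\leq$ and $\leq_c$ for both $\leq\,=\,\leqW$ and $\leq\,=\,\leqSW$, which is exactly the claim for $\leqW$, $\leqTW$ and for $\leqSW$, $\leqSTW$.

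For finite parallelization the identical argument disposes of $\leqW$ and $\leqTW$: the map $f\mapsto f^*$ is a closure operator for $\leqW$, since inflationarity $f\leqW f^*$ and monotonicity are immediate, while idempotency $f^{**}\leqW f^*$ holds for \emph{every} problem (in ordinary Weihrauch reducibility the outer reduction witness also sees the original input, so it can recover how to regroup the finitely many runs of $f$); and $c$ is preserved by $f\mapsto f^*$ with respect to $\leqW$ by Proposition~\ref{prop:completion-algebraic}(3). Thus Proposition~\ref{prop:preservation}(3) again yields that $f\mapsto{\overline{f}\,}^*$ is a closure operator for $\leqW$ and $\leqTW$.

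The only genuinely delicate case is finite parallelization for $\leqSW$ and $\leqSTW$, and this is where I expect the main obstacle to lie. Here $f\mapsto f^*$ is \emph{not} a closure operator for $\leqSW$ --- idempotency fails --- so Proposition~\ref{prop:preservation}(3) does not apply, and I would verify the three closure-operator axioms for $T(f):={\overline{f}\,}^*$ directly. Inflationarity $f\leqSW T(f)$ follows from $f\leqSW\overline f\leqSW{\overline{f}\,}^*$ using Proposition~\ref{prop:completion-closure} and $g\leqSW g^*$; monotonicity of $T$ follows from Corollary~\ref{cor:monotonicity-completion} together with the routine componentwise monotonicity of $f\mapsto f^*$ for $\leqSW$. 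The decisive point is idempotency ${\overline{{\overline{f}\,}^*}\,}^*\leqSW{\overline{f}\,}^*$, and the reason it goes through is that $\overline f$ is always pointed: Lemma~\ref{lem:Pauly} therefore applies to $\overline f$ and gives ${\overline{f}\,}^{**}\equivSW{\overline{f}\,}^*$, while Proposition~\ref{prop:completion-algebraic}(3) gives $\overline{{\overline{f}\,}^*}\equivSW{\overline{f}\,}^*$; applying $f\mapsto f^*$ to the latter equivalence (legitimate by monotonicity) then yields ${\overline{{\overline{f}\,}^*}\,}^*\equivSW{\overline{f}\,}^{**}\equivSW{\overline{f}\,}^*$. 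Hence $T$ is a closure operator for $\leqSW$. To pass to $\leqSTW$: the inflationary and idempotency laws for $T$ are immediate from Corollary~\ref{cor:partial-total}, and if $f\leqSTW g$ then $\overline f\leqSW\overline g$ by Lemma~\ref{lem:completion-total}, so $T(f)={\overline{f}\,}^*\leqSW{\overline{g}\,}^*=T(g)$ by monotonicity of $f\mapsto f^*$, hence $T(f)\leqSTW T(g)$ again by Corollary~\ref{cor:partial-total}.
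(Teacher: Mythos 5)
Your proposal is correct and takes essentially the same route as the paper: both operators are treated as completion followed by (finite) parallelization, the cases $\leqW,\leqTW$ (and $\leqSW,\leqSTW$ for $\widehat{\overline{f}}$) are handled via Propositions~\ref{prop:closure-operators}, \ref{prop:preservation} and \ref{prop:completion-algebraic}, and the strong case of $f\mapsto{\overline{f}\,}^*$ is settled through Lemma~\ref{lem:Pauly} together with the pointedness of $\overline{f}$. Your explicit verification of $f^{**}\leqW f^*$ for arbitrary problems and of the $\leqSW$/$\leqSTW$ closure axioms simply fills in details that the paper delegates to a citation of Pauly and to a brief sketch.
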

\begin{proof}
Since parallelization $f\mapsto\widehat{f}$ and completion $f\mapsto\overline{f}$ are both closure operators for $\leqSW$ and $\leqW$ 
by \cite[Proposition~4.2]{BG11} and Proposition~\ref{prop:completion-closure}, and parallelization preserves completion by 
Proposition~\ref{prop:completion-algebraic}, the claim follows from Propositions~\ref{prop:closure-operators} and \ref{prop:preservation}.
The claim for $f\mapsto{\overline{f}\,}^*$ with respect to $\leqTW$ follows analogously. 
In order to prove the claim for $\leqSTW$, we note that $f\mapsto f^*$ is a closure operator with respect to $\leqSW$
restricted to pointed problems. This follows from Corollary~\ref{cor:monotonicity}, Lemma~\ref{lem:Pauly} and since $f\leqSW f^*$ obviously holds true.
Hence, we also obtain that $f\mapsto{\overline{f}\,}^*$ is a closure operator with respect to $\leqSTW$, since all problems of the form $\overline{f}$ are pointed.
\end{proof}

With the following counterexamples we show that (finite) parallelization does not co-preserve completion.
Some of the statements can be seen as a strengthening of the first statement in Lemma~\ref{lem:counter-monotonicity}.

\begin{lemma}
\label{lem:counter-monotonicity-2}
There is a problem $f$ with $\overline{f}\times\overline{f}\nleqW\overline{\widehat{f}}$ and $\overline{f}\times\overline{f}\nleqW\overline{f^*}$. This implies
\begin{enumerate}
\item $\overline{f}\times\overline{f}\nleqW\overline{f\times f}$, and hence $\overline{f}\times\overline{f}\nleqTW f\times f$,
\item ${\overline{f}\,}^*\nleqW\overline{f^*}$, and hence ${\overline{f}\,}^*\nleqTW f^*$,
\item $\widehat{\overline{f}}\nleqW\overline{\widehat{f}}$, and hence $\widehat{\overline{f}}\nleqTW\widehat{f}$.
\end{enumerate}
\end{lemma}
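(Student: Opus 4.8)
The plan is to establish the two ``basic'' non-reductions $\overline f\times\overline f\nleqW\overline{f^*}$ and $\overline f\times\overline f\nleqW\overline{\widehat{f}}$ by a direct continuity argument, and then to deduce (1)--(3) formally. For the construction, fix sequences $p_0,p_1,q_0,q_1\in\IN^\IN$ with $p_0\not=p_1$ that are mutually Turing incomparable and such that none of them is computable from the join of the other three; such a quadruple clearly exists (e.g.\ four mutually $1$--generic sequences, or suitably separated columns of a sufficiently generic sequence). Let $f:\In\IN^\IN\mto\IN^\IN$ be defined by $\dom(f):=\{p_0,p_1\}$ and $f(p_i):=\{q_i\}$. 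The point of a two--point domain is that $f$ is not pointed, so $\widehat{f}$ has domain $\{p_0,p_1\}^\IN$ and $f^*$ has domain $\{p_0,p_1\}^*$, whereas $\overline f$ is total on $\overline{\IN^\IN}$. Throughout we use the representation $\delta_{\overline{\IN^\IN}}(p)=p-1$, so that $p_i+1$ is a name of $p_i$ and every computable sequence has a computable name.

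For $\overline f\times\overline f\nleqW\overline{f^*}$ I would assume computable $H,K$ witness the reduction and use two Turing--degree facts. (a) For computable $x\in\IN^\IN$ (so $x\notin\{p_0,p_1\}$, as $p_0,p_1$ are non--computable) we have $(\overline f\times\overline f)(p_0,x)=\{q_0\}\times\overline{\IN^\IN}$, so $H$ must deliver a name of $q_0$ in the first component; since $K$ applied to a name of $(p_0,x)$ produces a sequence $\leqT p_0$ and $q_0\nleqT p_0$, the element $K$ names there must lie in $\dom(f^*)$ --- it cannot be $\bot$ nor a word with an entry outside $\{p_0,p_1\}$, for then $\overline{f^*}$ of it is the whole completed word space and a realizer returning a computable value defeats $H$ --- and since that word is $\leqT p_0$ while $p_1\nleqT p_0$, \emph{all its entries equal $p_0$}. (b) For the input $(p_0,p_1)$ we have $(\overline f\times\overline f)(p_0,p_1)=\{(q_0,q_1)\}$, and the same reasoning, using $q_0\nleqT p_0\oplus p_1\oplus q_1$ and $q_1\nleqT p_0\oplus p_1\oplus q_0$, forces the word $w$ that $K$ names on a name of $(p_0,p_1)$ to lie in $\dom(f^*)$ and to have \emph{some entry equal to $p_0$ and some entry equal to $p_1$}; fix $j$ with the $j$-th entry of $w$ equal to $p_1$.

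Now continuity of $K$ finishes it. The length of the word coded by a $\delta_{\overline{X^*}}$--name is determined by a finite prefix of that name, so $K$ commits to a length after reading a finite prefix of its input; likewise every finite prefix of the $j$-th entry of the coded word is produced from a finite prefix of the input. Let $x_M$ agree with $p_1$ on its first $M$ values and be $0$ afterwards; $x_M$ is computable and $\not=p_0,p_1$. For $M$ large, a name of $(p_0,x_M)$ shares arbitrarily long prefixes with a name of $(p_0,p_1)$, hence $K$ codes a word of the \emph{same} length whose $j$-th entry shares arbitrarily long prefixes with $p_1$ as $M\to\infty$; but by (a) that $j$-th entry equals $p_0$, so $p_0=p_1$, a contradiction. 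The non--reduction $\overline f\times\overline f\nleqW\overline{\widehat{f}}$ is obtained by literally the same argument with $\overline{\widehat{f}}$ in place of $\overline{f^*}$: now the inputs are sequences in $(\IN^\IN)^\IN$ or $\bot$, $\dom(\widehat{f})=\{p_0,p_1\}^\IN$, no length need be committed to, and ``$j$-th entry of the coded word'' becomes ``$j$-th component of the coded sequence''.

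Finally, (1)--(3) follow formally. For any problem $h$ one easily has $h\times h\leqSW h^*$, and ${\overline{h}\,}^*\leqW\widehat{\overline{h}}$ by Corollary~\ref{cor:order-operations} (since $\overline h$ is pointed), while trivially $\overline h\times\overline h\leqSW{\overline{h}\,}^*$. Applying completion to $f\times f\leqSW f^*$ with Corollary~\ref{cor:monotonicity-completion} gives $\overline{f\times f}\leqSW\overline{f^*}$. Hence $\overline f\times\overline f\leqW\overline{f\times f}$ would give $\overline f\times\overline f\leqW\overline{f^*}$; ${\overline{f}\,}^*\leqW\overline{f^*}$ would give $\overline f\times\overline f\leqW{\overline{f}\,}^*\leqW\overline{f^*}$; and $\widehat{\overline{f}}\leqW\overline{\widehat{f}}$ would give $\overline f\times\overline f\leqW{\overline{f}\,}^*\leqW\widehat{\overline{f}}\leqW\overline{\widehat{f}}$ --- each contradicting one of the two non--reductions above. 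In each item the ``and hence'' assertion about $\nleqTW$ is merely a reformulation, since $a\leqTW b\iff a\leqW\overline b$ by Lemma~\ref{lem:completion-total}. I expect the continuity--and--length--commitment bookkeeping of the third paragraph to be the only genuinely delicate point; everything else is a routine Turing--degree and reduction chase.
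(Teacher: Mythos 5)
Your proof is correct and follows essentially the same route as the paper's: the same two-point problem $f$ built from a Turing-independent quadruple, the same exploitation of realizers of the completion that may return computable values on names of points outside the original domain, a continuity argument on $K$, and then the formal deduction of (1)--(3) from the two basic non-reductions via $\overline{f}\times\overline{f}\leqW{\overline{f}\,}^*\leqW\widehat{\overline{f}}$, monotonicity of completion, and Lemma~\ref{lem:completion-total} for the $\leqTW$ parts. The only (immaterial) difference is the endgame of the continuity argument: you perturb the second input to computable approximations $x_M$ of $p_1$ and derive $p_0=p_1$, whereas the paper replaces it by a name of $\bot$ and concludes that $K$'s output must leave $\dom(\widehat{f})$, contradicting the correctness of $H$.
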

\begin{proof}
We consider the function $f:\In\IN^\IN\to\IN^\IN$ with $f(p)=q$, $f(r)=s$, $\dom(f)=\{p,r\}$ and pairwise Turing incomparable $p,q,r,s\in\IN^\IN$
such that none of these is computable from the supremum of the others (this is possible, see for instance~\cite[Exercise~2.2 in Chapter~VII]{Soa87}).
We recall that $\overline{\IN^\IN}=\IN^\IN\cup\{\bot\}$ is represented with a precomplete representation $\delta$,
defined by $\delta(p)=\id^\wp(p)=p-1$ for $p-1\in\IN^\IN$ and $\delta(p)=\bot$ otherwise.
Let us assume that the reduction $\overline{f}\times\overline{f}\leqW\overline{\widehat{f}}$ holds, witnessed by computable $H,K$. 
The names $p+1,r+1$ of $p,q$ are mapped by $K$ to a name $K\langle p+1,r+1\rangle$ of a point $(q_n)_{n\in\IN}$ in $\dom(\widehat{f})$,
since a realizer of $\overline{\widehat{f}}$ can choose a computable output outside of $\dom(\widehat{f})$ and 
the result $(q,s)$ cannot be computed from $p,r$ alone. For the same reason $q_n=r$ for at least one $n\in\IN$ and hence $\pi_n K\langle p+1,r+1\rangle$
is a name $r_0$ of $r$. Due to continuity of $K$ there are prefixes $w\prefix p+1$ and $v\prefix r+1$ that are sufficient for $K$ to produce a prefix
$u\prefix r_0$ that is long enough so that it cannot be extended to a name of $p$. We can now replace $r+1$ by $t=v\widehat{0}$, which is a name
of $\bot\in\dom(\overline{f})$. Now $\pi_n K\langle p+1,t\rangle$ cannot be a name of $r$, since $r$ cannot be computed from $p$ and $t$
and it cannot be a name of $p$ either, since $u\prefix\pi_n K\langle p+1,t\rangle$. Hence $K\langle p+1,t\rangle$ is a name for a point
outside of $\dom(\widehat{f})$ and a realizer of $\overline{\widehat{f}}$ can choose a computable result $c$ on this name.
But $H\langle \langle p+1,t\rangle, c\rangle$ cannot compute $q$, which is required by the assumption. 
This proves $\overline{f}\times\overline{f}\nleqW\overline{\widehat{f}}$. 
The second statement can be proved analogously, one has to choose $w,v$ such that also the natural number component of the name of an output in $(\IN^\IN)^*$ is fixed.

All other statements that involve $\leqW$ are consequences since $\overline{f}\times\overline{f}\leqW{\overline{f}\,}^*\leqW\widehat{\overline{f}}$, $\overline{f\times f}\leqW\overline{f^*}$
and $\overline{f\times f}\leqW\overline{\widehat{f}}$. 
These reductions follow since obviously $g\times g\leqW g^*$ and $g\times g\leqW\widehat{g}$ for any problem $g$, completion is a closure operator by Proposition~\ref{prop:completion-closure},
and by Corollary~\ref{cor:order-operations},
since $\overline{f}$ is pointed.
The statements that involve $\leqTW$ follow from Lemma~\ref{lem:completion-total}.
\end{proof}

As an immediate consequence of these counterexamples we can conclude that parallelization and finite parallelization are not monotone
operations for the total variants of Weihrauch reducibility. Since $\overline{f}\leqSTW f$ holds by Corollary~\ref{cor:completion-total}, we obtain the following
conclusion using Lemma~\ref{lem:counter-monotonicity-2}.
 
\begin{corollary}
\label{cor:non-monotonicity-parallelization}
$f\mapsto\widehat{f}$ and $f\mapsto f^*$ are neither monotone with respect to $\leqTW$ nor with respect to $\leqSTW$.
\end{corollary}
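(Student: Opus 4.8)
The plan is to read this off immediately from the counterexamples of Lemma~\ref{lem:counter-monotonicity-2}, exploiting the fact that completion is trivial up to strong total equivalence. Indeed, Corollary~\ref{cor:completion-total} gives $f\equivSTW\overline{f}$ for every problem $f$, so in particular $\overline{f}\leqSTW f$, and hence also $\overline{f}\leqTW f$ since strong total reducibility trivially implies total reducibility (compare Figure~\ref{fig:reducibility}). Thus completion never strictly increases a problem in either total ordering, whereas Lemma~\ref{lem:counter-monotonicity-2} exhibits a problem $f$ for which completion \emph{does} strictly increase the parallelization and the finite parallelization.

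First I would fix the problem $f$ provided by Lemma~\ref{lem:counter-monotonicity-2}. Suppose for contradiction that $g\mapsto\widehat{g}$ were monotone with respect to $\leqTW$. Applying monotonicity to $\overline{f}\leqTW f$ would yield $\widehat{\overline{f}}\leqTW\widehat{f}$, contradicting Lemma~\ref{lem:counter-monotonicity-2}(3). Suppose instead that $g\mapsto\widehat{g}$ were monotone with respect to $\leqSTW$; then $\overline{f}\leqSTW f$ gives $\widehat{\overline{f}}\leqSTW\widehat{f}$, and since $\leqSTW$ implies $\leqTW$ this again contradicts Lemma~\ref{lem:counter-monotonicity-2}(3). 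The argument for finite parallelization $g\mapsto g^*$ is identical, using part~(2) of Lemma~\ref{lem:counter-monotonicity-2}, namely ${\overline{f}\,}^*\nleqTW f^*$, in place of part~(3).

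There is no real obstacle here: all the substance lives in Lemma~\ref{lem:counter-monotonicity-2}, and this corollary is a one-line deduction. The only things to watch are the direction of the completion inequality — one needs $\overline{f}\leqTW f$, which is exactly what Corollary~\ref{cor:completion-total} supplies, not the reverse — and the observation that $\leqSTW$ refines $\leqTW$, so that the hypothetical monotonicity for the strong total reducibility already produces the desired contradiction at the level of $\leqTW$.
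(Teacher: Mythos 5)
Your proof is correct and follows essentially the same route as the paper: the paper likewise deduces the corollary from $\overline{f}\leqSTW f$ (Corollary~\ref{cor:completion-total}) combined with the non-reductions $\widehat{\overline{f}}\nleqTW\widehat{f}$ and ${\overline{f}\,}^*\nleqTW f^*$ of Lemma~\ref{lem:counter-monotonicity-2}. Your write-up merely makes explicit the small bookkeeping steps (that $\leqSTW$ implies $\leqTW$, and the direction of the completion inequality) that the paper leaves implicit.
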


Another consequence of Lemma~\ref{lem:counter-monotonicity-2} is that completion does neither
preserve idempotency nor parallelizability. We recall that a problem $f$ is called {\em idempotent}, if $f\equivW f\times f$
and it is called {\em parallelizable}, if $\widehat{f}\equivW f$. If we consider the problem $f$ from Lemma~\ref{lem:counter-monotonicity-2},
then we can take $f^*$ and $\widehat{f}$ as examples to obtain the following result.

\begin{corollary}[Idempotency and parallelizability]\
\begin{enumerate}
\item There is an idempotent problem $f$ such that $\overline{f}$ is not idempotent.
\item There is a parallelizable problem $f$ such that $\overline{f}$ is not parallelizable.
\end{enumerate}
\end{corollary}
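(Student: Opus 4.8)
\emph{Proof plan.} The plan is to use the single problem $f$ provided by Lemma~\ref{lem:counter-monotonicity-2} and to take $f^*$ as the witness for~(1) and $\widehat f$ as the witness for~(2). First I would record that $f^*$ is idempotent and $\widehat f$ is parallelizable, each for a general reason that does not depend on pointedness of $f$ itself. For idempotency, $f^*\leqSW f^*\times f^*$ holds because $f^*$ is pointed (the empty word in $X^*$ has a computable name), and $f^*\times f^*\leqW f^*$ holds because a weak reduction may reshape the output using the original input: on a pair of words $K$ outputs their concatenation as a single word over $X$, a realizer returns a word over $Y$ of the same length, and $H$ re-chops it into two blocks whose lengths it reads off from the original input. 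Hence $f^*\equivW f^*\times f^*$. (Alternatively one may invoke $f^{**}\equivSW f^*$ via Lemma~\ref{lem:Pauly}, since $f^*$ is pointed.) For parallelizability, $\widehat{\widehat f}\equivSW\widehat f$ holds for every problem through the canonical identification of a sequence of sequences with a sequence, so $\widehat f$ is parallelizable.

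Next I would show that $\overline{f^*}$ is not idempotent. Assume, towards a contradiction, that $\overline{f^*}\times\overline{f^*}\leqW\overline{f^*}$. Since $f\leqSW f^*$, monotonicity of completion (Corollary~\ref{cor:monotonicity-completion}) gives $\overline f\leqSW\overline{f^*}$, and then monotonicity of $\times$ with respect to $\leqW$ \cite[Proposition~3.6]{BGP18} yields $\overline f\times\overline f\leqW\overline{f^*}\times\overline{f^*}\leqW\overline{f^*}$. This contradicts $\overline f\times\overline f\nleqW\overline{f^*}$ from Lemma~\ref{lem:counter-monotonicity-2}. Hence $\overline{f^*}\times\overline{f^*}\nleqW\overline{f^*}$, so $\overline{f^*}\not\equivW\overline{f^*}\times\overline{f^*}$, i.e. $\overline{f^*}$ is not idempotent, which proves~(1) with the idempotent problem $f^*$.

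The argument for~(2) is parallel. Assume $\widehat{\overline{\widehat f}}\leqW\overline{\widehat f}$. From $f\leqSW\widehat f$ and monotonicity of completion we get $\overline f\leqSW\overline{\widehat f}$, and monotonicity of parallelization with respect to $\leqW$ \cite[Proposition~3.6]{BGP18} yields $\widehat{\overline f}\leqW\widehat{\overline{\widehat f}}\leqW\overline{\widehat f}$, contradicting $\widehat{\overline f}\nleqW\overline{\widehat f}$ from Lemma~\ref{lem:counter-monotonicity-2}(3). Hence $\widehat{\overline{\widehat f}}\nleqW\overline{\widehat f}$, so $\overline{\widehat f}$ is not parallelizable, proving~(2) with the parallelizable problem $\widehat f$.

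I do not expect a serious obstacle here: the whole proof consists of lining up the monotonicity of completion, of $\times$, and of parallelization for $\leqW$ so as to fall back on the non-reductions already established in Lemma~\ref{lem:counter-monotonicity-2}. The only point requiring a little care is the routine verification that $f^*$ is idempotent and $\widehat f$ is parallelizable even though $f$ itself is not pointed; for this one uses that passing to $f^*$ introduces a computable point in the domain and that a weak (rather than strong) reduction suffices for re-chopping, together with the fact that parallelization is strongly idempotent on the nose.
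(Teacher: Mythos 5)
Your proposal is correct and takes essentially the same route as the paper: the paper also takes the problem $f$ of Lemma~\ref{lem:counter-monotonicity-2} and uses $f^*$ and $\widehat{f}$ as the idempotent and parallelizable witnesses, the failure of idempotency/parallelizability of $\overline{f^*}$ and $\overline{\widehat{f}}$ following from the non-reductions $\overline{f}\times\overline{f}\nleqW\overline{f^*}$ and $\widehat{\overline{f}}\nleqW\overline{\widehat{f}}$ exactly by the monotonicity argument you give. Only your parenthetical alternative is slightly off, since Lemma~\ref{lem:Pauly} requires $f$ itself (not $f^*$) to be pointed, which fails here; but your direct concatenation argument for the idempotency of $f^*$ makes this irrelevant.
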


In the next step we want to clarify the relation between $\leqTW$ and $\leqSTW$ and for this purpose
we need to study cylinders.
We recall that a problem $f$ is called {\em cylinder}
if $\id\times f\leqSW f$ holds, and $\id\times f$ is called the {\em cylindrification} of $f$~\cite{BG11}. 
It follows from \cite[Proposition~4.16]{BG19} that ``total cylinders'' are exactly the usual cylinders.

\begin{corollary}[Total cylinders]
\label{cor:total-cylinder}
$\id\times f\leqSW f\iff\id\times f\leqSTW f$ holds for all problems $f$.
\end{corollary}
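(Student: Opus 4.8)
The forward implication is free: $\id\times f\leqSW f$ gives $\id\times f\leqSTW f$ by Corollary~\ref{cor:partial-total}. So the plan is to prove the backward implication $\id\times f\leqSTW f\TO\id\times f\leqSW f$ directly at the level of realizers; this is the cylinder instance of \cite[Proposition~4.16]{BG19}.

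First I would dispose of the bookkeeping. By Corollary~\ref{cor:invariance} and Proposition~\ref{prop:precompleteness} we may fix, once and for all, precomplete representations $\delta_X^\wp$ of $X$ and $\delta_Y^\wp$ of $Y$ (and a precomplete representation for the Baire-space factors), and it then suffices to establish $\id\times f\leqSW f$ with respect to these. Assuming $\id\times f\leqSTW f$, Lemma~\ref{lem:total}(4) hands us \emph{total} computable $H,K$ with $HGK\vdash\id\times f$ for every \emph{total} realizer $G\vdash_\t f$. I would then claim that the same $H,K$ already witness $\id\times f\leqSW f$. The mechanism is that any realizer $G'\vdash f$ extends to a total one: since $\dom(f\delta_X^\wp)\In\dom(G')$, any total function agreeing with $G'$ on $\dom(G')$ is still a realizer of $f$; in particular, for each $c\in\IN^\IN$ the extension $G'_c$ that is constantly $c$ outside $\dom(G')$ is a total realizer of $f$, so $HG'_cK\vdash\id\times f$. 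Hence, for any name $\langle p,q\rangle$ with $\delta_X^\wp(q)\in\dom(f)$, the value $HG'K\langle p,q\rangle$ coincides with the correct value $HG'_0K\langle p,q\rangle$ as soon as $K\langle p,q\rangle\in\dom(G')$, and reading that output back as a realization of $\id\times f$ is immediate.

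So everything will hinge on showing that $K$ does not carry a name of a point of $\dom(\id\times f)$ to a sequence outside $\dom(f\delta_X^\wp)$. Suppose it did, say $s:=K\langle p,q\rangle\notin\dom(f\delta_X^\wp)$ with $\delta_X^\wp(q)\in\dom(f)$; then $s$ names no point of $\dom(f)$, so we may take $G'$ (a selector of $f\delta_X^\wp$) undefined at $s$, and varying the extension value gives $HG'_cK\langle p,q\rangle=H(c)$ for every $c$, whence $H(c)$ must name a point of $\{p\}\times f(\delta_X^\wp(q))$ for \emph{every} $c\in\IN^\IN$. This rigidity is the key: since $H(c)$ does not depend on the input, comparing two such ``escaping'' inputs forces their first components to represent one and the same Baire sequence and their second components to lie in the intersection of the associated value sets. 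The hard part --- exactly where I expect the genuine work to sit, and where one would lean on \cite[Proposition~4.16]{BG19} --- will be to turn this into an outright contradiction: one has to use that $\dom(f)$ carries names representing mutually incomparable information, together with the continuity of $K$, to rule out that such a single ``bad column'' can be isolated at all. This is precisely the structural advantage of the identity factor that already makes $\id\times(\cdot)$ better behaved than a general product (compare the counterexamples in Lemma~\ref{lem:counter-monotonicity-2}). Everything else --- the passage to precomplete representations, the extension of partial realizers, and the re-reading of the output of $HG'K$ --- is routine.
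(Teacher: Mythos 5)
Your skeleton is sound and matches the natural proof: the forward direction is Corollary~\ref{cor:partial-total}, and for the backward direction you correctly pass to precomplete representations, take \emph{total} computable $H,K$ from Lemma~\ref{lem:total}(4), reduce everything to the claim that $K$ maps every name of a point of $\dom(\id\times f)$ into $\dom(f\delta_X^\wp)$, and observe that once this holds the same $H,K$ work for arbitrary partial realizers. But the crux --- ruling out an ``escaping'' input --- is exactly what you leave unproven, and the route you sketch for it would not work: you propose to use ``that $\dom(f)$ carries names representing mutually incomparable information, together with the continuity of $K$''. The corollary holds for \emph{every} problem $f$, including one whose domain is a single computable point, so no incomparability inside $\dom(f)$ is available; the diversity that matters sits entirely in the identity factor, and no continuity argument about $K$ is needed. (For the record, the paper itself does not argue at the realizer level at all: it simply quotes \cite[Proposition~4.16]{BG19}, where $\id\times f$ is ``diverse''; if you intend to lean on that citation you should do so cleanly rather than as a patch inside a half-finished direct proof.)

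The missing step is in fact short, and you already have all its ingredients. Suppose $K(r)=s\notin\dom(f\delta_X^\wp)$ for some name $r$ of $(x_p,x_q)\in\dom(\id\times f)$ (if $\dom(f)=\emptyset$ the whole statement is trivial). As you note, for every $c\in\IN^\IN$ there is a total realizer $G_c$ of $f$ with $G_c(s)=c$, so $H(c)=HG_cK(r)$ must name a point of $\{x_p\}\times f(x_q)$; in particular $\pi_1H(c)$ names the \emph{fixed} point $x_p$ for \emph{every} $c\in\IN^\IN$. Now do not compare two escaping inputs, but compare this with an arbitrary other input: pick any name $r'$ of some $(x_{p'},x_{q'})\in\dom(\id\times f)$ with $x_{p'}\neq x_p$ (possible since the first factor is all of $\IN^\IN$), and any total realizer $G$. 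Correctness of $HGK$ at $r'$ forces $H(GK(r'))$ to name a point with first component $x_{p'}$; but $GK(r')$ is just some value $c$, so by the forced behaviour above $H(c)$ names a point with first component $x_p$ --- a contradiction, since under the product representation a single name determines a unique first component. Hence no escape occurs, and the remainder of your argument (extending a partial realizer $G'$ to a total one and noting $HG''K(r)=HG'K(r)$ because $K(r)\in\dom(f\delta_X^\wp)\In\dom(G')$) finishes the proof. As written, however, your proposal stops short of this and points at the wrong mechanism, so it does not yet constitute a proof.
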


It is known that $g$ is a cylinder if and only if
$f\leqW g\iff f\leqSW g$ holds for all problems $f$~\cite[Proposition~3.5, Corollary~3.6]{BG11}.
We provide a similar result for the total variant of Weihrauch reducibility.

\begin{proposition}[Cylinder]
\label{prop:cylinder}
A problem $g$ is a cylinder if and only if for every problem $f$ one has
$f\leqTW g\iff f\leqSTW g$.
\end{proposition}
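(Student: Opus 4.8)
The plan is to run the proof of the analogous statement for partial Weihrauch reducibility, \cite[Proposition~3.5, Corollary~3.6]{BG11}, keeping all witnessing functions total; the two new ingredients are Lemma~\ref{lem:total}, which lets us phrase all four reducibilities with total $H,K$, and Corollary~\ref{cor:total-cylinder}, which says that the cylinder property does not change when one passes from $\leqSW$ to $\leqSTW$.

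For the direction ``$\Leftarrow$'' I would test the hypothesis on $f:=\id\times g$. Since $\id\times g\leqW g$ holds for every problem $g$ (take $K=\pi_2$ and let $H$ restore the part of the input that was carried along in the $\id$-component of $\langle\id,GK\rangle$), Corollary~\ref{cor:partial-total} gives $\id\times g\leqTW g$, hence $\id\times g\leqSTW g$ by assumption, and then $\id\times g\leqSW g$ by Corollary~\ref{cor:total-cylinder}; that is, $g$ is a cylinder.

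For ``$\Rightarrow$'' assume $g$ is a cylinder and fix a problem $f$. The implication $f\leqSTW g\TO f\leqTW g$ is immediate from Lemma~\ref{lem:total}, so only $f\leqTW g\TO f\leqSTW g$ has to be shown. By Corollary~\ref{cor:realizer-version} we may take $f,g$ to be problems on Baire space equipped with a fixed precomplete representation. Since $g$ is a cylinder, Corollary~\ref{cor:total-cylinder} gives $\id\times g\leqSTW g$, so by Lemma~\ref{lem:total}(4) there are total computable $H_0,K_0$ with $H_0GK_0\vdash_\t\id\times g$ for every $G\vdash_\t g$. Exactly as in the partial case one then turns a witness $(H,K)$ of $f\leqTW g$ (total, by Lemma~\ref{lem:total}(3)) into a witness $(H',K')$ of $f\leqSTW g$ by setting $K'(r):=K_0\langle r,K(r)\rangle$ and letting $H'$ first apply $H_0$ --- which produces a pair whose first component is a name of the point named by $r$ and whose second component is a name of a point in $g$ applied to the point named by $K(r)$ --- and then apply $H$ to that pair. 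Because $H_0$ and $K_0$ are total, $H'$ and $K'$ are total and $H'GK'\vdash_\t f$ for every $G\vdash_\t g$, which by Lemma~\ref{lem:total}(4) gives $f\leqSTW g$.

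The step that needs care --- and where I expect the real work to lie --- is the combination $(H,K)\mapsto(H',K')$: one must ensure that what $H_0$ hands to $H$ in its first component is the \emph{name} $r$ that the witness $(H,K)$ of $f\leqTW g$ was designed for, not merely some other name of the same point. Since a realizer of $g$ can only transport points, this forces a normal form on the cylinder witnesses $H_0,K_0$ --- the usual ``store a verbatim copy of the $\IN^\IN$-component of the input and return it unchanged'' mechanism of a cylindrification --- and verifying that such a normal form is available in the total setting, using $\id\times g\leqSTW g$ together with the fact that products of precomplete representations are precomplete (Proposition~\ref{prop:products-precompleteness}), is the crux. Once this is in place, the totality bookkeeping and the invariance results (Lemma~\ref{lem:total}, Corollary~\ref{cor:total-cylinder}) make the rest routine.
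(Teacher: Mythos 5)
Your ``$\Longleftarrow$'' direction and your reduction of the ``$\Longrightarrow$'' direction to the single implication $f\leqTW g\TO f\leqSTW g$ coincide with the paper's argument. The gap lies in the key step, and you have in fact put your finger on it yourself: the construction $K'(r):=K_0\langle r,K(r)\rangle$ with $H'$ applying $H_0$ and then $H$ does not work as written, and the repair you gesture at is left unproved. Concretely, $H_0GK_0$ is merely \emph{some} realizer of $\id\times g$, so the first component of $H_0GK_0\langle r,K(r)\rangle$ is only guaranteed to be some name $r'$ of the point named by $r$ (under the precomplete representations in force it will in general differ from $r$; indeed $r$ itself need not even be a name), while the second component $t$ is a $g$-answer for the input named by $K(r)$. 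The guarantee coming from $f\leqTW g$ is only that $H\langle s,t\rangle$ is correct when $t$ answers $K(s)$ for the \emph{same} name $s$; since $K(r')$ need not name the same point as $K(r)$, correctness of $H\langle r',t\rangle$ does not follow. Your proposed fix --- that the cylinder witnesses $H_0,K_0$ may be assumed to return the first input component verbatim --- is exactly the missing content: it is not a property one can read off from $\id\times g\leqSTW g$, because a realizer of $\id\times g$ transports points, not names, and you explicitly defer its verification (``where I expect the real work to lie'').

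The paper avoids this issue by never analysing the cylinder witnesses at all: it chains $f\leqSTW\id\times f\leqSTW\id\times g\leqSTW g$, where the last reduction is the cylinder property transferred by Corollary~\ref{cor:partial-total}, and the middle reduction is obtained from $f\leqTW g$ by the first half of the construction in \cite[Proposition~3.5]{BG11}. In that construction the original input name is routed through the $\id$-component \emph{as a point}: one feeds a canonical name of the sequence $r$ itself into the $\id$-slot, and since the point can be computed back from any of its names (e.g.\ $a\mapsto a-1$ for the precompletion), the literal $r$ is recovered on the output side before $H$ is applied --- which dissolves exactly the worry you raise. The only thing then left to check, and the only thing the paper checks, is that the functions produced by that construction are total when $H,K$ are, which Lemma~\ref{lem:total} supplies. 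Your direct composition can be repaired in the same spirit (replace $K'(r):=K_0\langle r,K(r)\rangle$ by $K'(r):=K_0\langle\iota(r),K(r)\rangle$ with $\iota(r)$ a name of the point $r$, and let $H'$ recover $r$ from the first component of $H_0$'s output), but as it stands the crucial step of your proof is missing.
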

\begin{proof}
Let us assume that $f\leqTW g\iff f\leqSTW g$ holds for every problem $f$.
It is clear that $\id\times g\equivW g$ and hence $\id\times g\equivTW g$ by 
Corollary~\ref{cor:partial-total}. By the assumption this implies
$\id\times g\leqSTW g$ and hence $\id\times g\leqSW g$ by Corollary~\ref{cor:total-cylinder}.
This shows that $g$ is a cylinder.

For the other direction, let us now assume that $g$ is a cylinder, i.e., 
$\id\times g\leqSW g$ and hence $\id\times g\leqSTW g$ by Corollary~\ref{cor:partial-total}.
We only need to prove that $f\leqTW g$ implies $f\leqSTW g$.
Let us assume that $f\leqTW g$ holds.
Since $f\leqSW\id\times f$, we obtain $f\leqSTW\id\times f$ by Lemma~\ref{lem:completion-total}.
Now it suffices to show
$\id\times f\leqSTW\id\times g$. But this can be done by using the construction of the proof 
of~\cite[Proposition~3.5]{BG11}. By Lemma~\ref{lem:total} it suffices to note
that if $H,K$ from the proof of~\cite[Proposition~3.5]{BG11} are total,
then also the $H',K'$ constructed in the first half of that proof are total.
\end{proof}

Hence, the relations between strong and weak versions of the reducibility
can be expressed in the same way in the partial and the total case, respectively. 

We can also say something on the interaction between cylindrification and completion.
While the completion of a cylinder $f$ is only a cylinder in the trivial case that the original
problem $f$ is already strongly complete, the cylindrification of a complete problem is always complete.

\begin{proposition}[Completion and cylindrification]
\label{prop:completion-cylindrification}
Let $f$ be a problem. Then
\begin{enumerate}
\item $\overline{f}$ is a cylinder $\iff f$ is strongly complete and a cylinder,
\item $\id\times f$ is complete $\iff f$ is complete.
\end{enumerate}
The implication ``$\Longleftarrow$'' in (2) also holds for strongly complete instead of complete.
\end{proposition}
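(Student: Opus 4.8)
The plan is to prove the four implications separately, leaning on the facts that completion generates $\leqTW$ on $\leqW$ and $\leqSTW$ on $\leqSW$ (Lemma~\ref{lem:completion-total}), that $\id$ is strongly complete (Proposition~\ref{prop:complete-problems}), that completion is preserved by products (Proposition~\ref{prop:completion-algebraic}), that completion is monotone (Corollary~\ref{cor:monotonicity-completion}), and that $\times$ is monotone with respect to $\leqW$ and $\leqSW$ (\cite[Proposition~3.6]{BGP18}).

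Statement (2) is the easiest. The decisive observation is the cylindrification triviality $\id\times f\equivW f$, which holds for \emph{every} problem because the $\id$-component can be carried along on the side in a weak reduction (this is exactly the phenomenon separating $\leqW$ from $\leqSW$ and is already used in the proof of Proposition~\ref{prop:cylinder}). By monotonicity of completion this upgrades to $\overline{\id\times f}\equivW\overline f$, and then
\[\id\times f\text{ complete}\iff\overline{\id\times f}\leqW\id\times f\iff\overline f\leqW\id\times f\iff\overline f\leqW f\iff f\text{ complete}.\]
For the strong version of ``$\Longleftarrow$'', if $f$ is strongly complete then $\overline{\id\times f}\leqSW\overline{\id}\times\overline f\equivSW\id\times\overline f\leqSW\id\times f$ by Proposition~\ref{prop:completion-algebraic}, strong completeness of $\id$, and monotonicity of $\times$; hence $\id\times f$ is strongly complete.

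For (1)~``$\Longleftarrow$'' I would first note that being a cylinder is invariant under $\equivSW$: if $g\equivSW h$ and $\id\times g\leqSW g$, then $\id\times h\leqSW\id\times g\leqSW g\leqSW h$. Since strong completeness of $f$ means precisely $\overline f\leqSW f$ on top of the always‑valid $f\leqSW\overline f$ (Corollary~\ref{cor:completion-total}), we have $f\equivSW\overline f$, and the cylinder property transfers from $f$ to $\overline f$. For (1)~``$\Longrightarrow$'' the same invariance shows it is enough to prove that $\overline f$ being a cylinder implies $f$ is strongly complete, i.e.\ $\overline f\leqSW f$; the cylinder property of $f$ then follows from $f\equivSW\overline f$. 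So fix computable $H_0,K_0$ with $H_0GK_0\vdash\id\times\overline f$ for every $G\vdash\overline f$, and recall that the realizers of $\overline f$ are exactly the total realizers of $f$ with respect to the precomplete representations $\delta_X^\wp,\delta_Y^\wp$. The key claim is that $K_0\langle u,p\rangle$ is always a $\delta_{\overline X}$‑name of a point of $\dom(f)$. Indeed, if $\delta_{\overline X}(K_0\langle u_0,p_0\rangle)\notin\dom(f)$ for some $u_0,p_0$, then $\overline f$ places no constraint on its realizers at that argument, so for each $z\in\IN^\IN$ there is $G\vdash\overline f$ with $G(K_0\langle u_0,p_0\rangle)=z$; since the first component of a realizer of $\id\times\overline f$ must reproduce the input exactly, this forces $\pi_1H_0(z)=u_0$ for all $z$; but then, choosing $u_1\neq u_0$, the reduction at input $\langle u_1,p_0\rangle$ would require $\pi_1H_0\big(G(K_0\langle u_1,p_0\rangle)\big)=u_1$, a contradiction. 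Granting the claim, any $G\vdash f$ is defined on all values of $K_0$, so $p\mapsto\pi_2H_0GK_0\langle\widehat 0,p\rangle$ is total; extending $G$ to any total realizer $\tilde G$ of $f$ (hence a realizer of $\overline f$) agreeing with $G$ on those names, this map equals $p\mapsto\pi_2H_0\tilde GK_0\langle\widehat 0,p\rangle$ and so realizes $\overline f$ by the cylinder property of $\overline f$. Thus $H:=\pi_2\circ H_0$ and $K:=\big(p\mapsto K_0\langle\widehat 0,p\rangle\big)$ witness $\overline f\leqSW f$.

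The main obstacle is this realizer‑level argument for (1)~``$\Longrightarrow$''. Two points need care: one must keep the precomplete representations straight (so that $\delta_{\overline X}$ and $\delta_X^\wp$ agree where used and realizers of $\overline f$ really are the total realizers of $f$), and the proof that $K_0$ lands in $\dom(f)$ rests on freely varying a realizer of $\overline f$ at its unconstrained arguments, which is legitimate precisely because $\overline f$ is total with nonempty values. The remaining parts — (2) and (1)~``$\Longleftarrow$'' — are short chases through the monotonicity and preservation results already in hand.
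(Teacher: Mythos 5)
Your proposal is correct, and on the one step where the paper's proof is not self-contained you take a genuinely different route. For (1) ``$\Longrightarrow$'' the paper passes from $\id\times\overline{f}\leqSW\overline{f}$ to $\id\times\overline{f}\leqSTW f$ via Lemma~\ref{lem:completion-total} and then invokes \cite[Proposition~4.16]{BG19} (the same external result behind Corollary~\ref{cor:total-cylinder}), which says that for \emph{diverse} problems such as $\id\times\overline{f}$ the strong total reduction already yields $\id\times\overline{f}\leqSW f$; strong completeness and the cylinder property of $f$ then follow by composing with $\overline{f}\leqSW\id\times\overline{f}$ and $\id\times f\leqSW\id\times\overline{f}$. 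You instead prove the needed implication directly on the level of realizers: assuming $H_0GK_0\vdash\id\times\overline{f}$ for every $G\vdash\overline{f}$, you show that $K_0$ can only produce names of points of $\dom(f)$ --- since $\overline{f}$ is total with nonempty values, its realizers can be modified freely at any unconstrained name, and an outlier would force $\pi_1H_0$ to be constant, contradicting the $\id$-component at a different first input --- and then $\pi_2H_0$ together with $p\mapsto K_0\langle\widehat{0},p\rangle$ witnesses $\overline{f}\leqSW f$, after extending a given realizer of $f$ to a total one, i.e.\ to a realizer of $\overline{f}$. This is in effect a self-contained proof, in the special case needed here, of the content of the cited result from \cite{BG19}: it buys independence from the companion paper and makes explicit where totality and nonemptiness of $\overline{f}$ enter, at the cost of a longer argument, whereas the paper's version is shorter but rests on the notion of diverseness developed elsewhere. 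Your handling of (1) ``$\Longleftarrow$'', of (2) (via $\id\times f\equivW f$ and monotonicity of completion rather than the paper's $\overline{\id\times f}\leqW\overline{\id}\times\overline{f}$, a cosmetic difference), and of the strong form of ``$\Longleftarrow$'' in (2) essentially coincides with the paper's; your insistence on keeping the precomplete representations straight is justified by Corollary~\ref{cor:invariance}, which lets you work with $\delta_X^\wp,\delta_Y^\wp$ on the $f$-side.
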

\begin{proof}
(1) If $f\equivSW\overline{f}$ and $f$ is a cylinder, then clearly $\id\times\overline{f}\leqSW\id\times f\leqSW f\leqSW\overline{f}$ and hence $\overline{f}$ is a cylinder.
If, on the other hand, $\overline{f}$ is a cylinder, then $\id\times\overline{f}\leqSW\overline{f}$.
Hence $\id\times\overline{f}\leqSTW f$ and since $\id\times\overline{f}$ is diverse, we obtain by \cite[Proposition~4.16]{BG19} that
$\id\times\overline{f}\leqSW f$.
This implies $\id\times f\leqSW\id\times\overline{f}\leqSW f$, which means that $f$ is a cylinder and
$\overline{f}\leqSW\id\times\overline{f}\leqSW f$, which means that $f$ is strongly complete.\\
(2) If $f$ is complete, then $\overline{\id\times f}\leqW\overline{\id}\times\overline{f}\leqW\id\times f$ by Propositions~\ref{prop:completion-algebraic} and \ref{prop:complete-problems},
which means that $\id\times f$ is complete. The proof in the strong case is analogous.
If, on the other hand, $\id\times f$ is complete, then $\overline{f}\leqW\overline{\id\times f}\leqW\id\times f\leqW f$,
where the first reduction holds since $f\leqW\id\times f$ and completion is a closure operator.
\end{proof}

\section{Co-Residual Operations}
\label{sec:residual}

In this section we will discuss certain algebraic operations that are co-residual operations.
In this context we have to deal with a top element in the Weihrauch lattice.
The Weihrauch lattice has no natural top element, but we can just attach a top element $\infty$ to it.
The algebraic operations are then naturally extended to the top element, so that the lattice structure 
and the order among the operations is preserved.
We are led to the following choice of values for all problems $f$ including $\infty$ (see also the discussion in \cite{BP18}):
\begin{enumerate}
\item $f\sqcap\infty=\infty\sqcap f=f$,
\item $f\sqcup\infty=\infty\sqcup f=\infty$,
\item $f\times\infty=\infty\times f=\infty$,
\item $f+\infty=\infty+f=f$,
\item $\overline{\infty}=\widehat{\infty}=\infty^*=\infty$.
\end{enumerate}
One arguable alternative could be to choose $0\times\infty=0$, given that $0\times f\equivW 0$ for all $f\not=\infty$. 
However this seems to be less natural for our purposes.
It is consistent with our usage of the term to say that a problem $f$ is {\em pointed}, if $1\leqW f$ holds. 
According to this definition $\infty$ is pointed too.

Using our universal function $U:\In\IN^\IN\to\IN^\IN$, we can define a representation $\Phi$ of certain continuous functions by
$\Phi_q(p):=U\langle q,p\rangle$ for all $p,q\in\IN^\IN$. Then any continuous $F:\In\IN^\IN\to\IN^\IN$ has an extension
of the form $\Phi_q:\In\IN^\IN\to\IN^\IN$ and for a computable $F$ we can choose a computable $q$ (see \cite{Wei00}). 
From this representation we can derive a G\"odel numbering $\varphi$ of the computable $F:\In\IN^\IN\to\IN^\IN$, i.e.,
for every computable $F$ there is some $n\in\IN$ such that $\varphi_n:\In\IN^\IN\to\IN^\IN$ extends $F$.
We also assume that $\varphi$ satisfies suitable utm- and smn-Theorems (see \cite{Wei00} for details).
We use $\Phi$ and $\varphi$ to define the compositional product and two implications.

The compositional product $f\star g$ was originally defined in \cite{BGM12} using the property (1) stated in Fact~\ref{fact:compositional} below.
It expresses a problem that can be obtained by first applying $g$ and then $f$ with some possible intermediate computation.
A corresponding compositional implication operation $g\to f$ was introduced and studied in \cite{BP18}.
It characterizes the minimal problem $h$ such that $f\leqW g\star h$ (see Fact~\ref{fact:compositional}).
Here we phrase these operations in a type free version on Baire space (as in \cite{BGP18}).
We also introduce a multiplicative implication $g\twoheadrightarrow f$, which is supposed to capture
a problem simpler than every $h$ such that $f\leqW g\times h$ (see Proposition~\ref{prop:multiplicative}).

\begin{definition}[Compositional product and implications]
Let $f,g$ be problems. We define problems $f\star g$, $(g\to f)$, $(g\twoheadrightarrow f):\In\IN^\IN\mto\IN^\IN$ by
\begin{enumerate}
\item $(f\star g)\langle q,p\rangle:=\langle\id\times f^\r\rangle\circ\Phi_q\circ g^\r(p)$, 
\item $(g\to f)(p):=\{\langle t,q\rangle:\emptyset\not=\Phi_t\circ g^\r(q)\In f^\r(p)\}$,
\item $(g\twoheadrightarrow f)(p):=\{\langle n,k,q\rangle:\emptyset\not=\varphi_n\langle q,g^\r\circ\varphi_k(p)\rangle\In f^\r(p)\}$, 
\end{enumerate}
where we assume for (2) and (3) that $\dom(g)\not=\emptyset$ or $\dom(f)=\emptyset$. 
In the case of special constants we define:
\begin{enumerate}
\item $f\star\infty:=\infty\star f:=\infty$,
\item $(g\to 0):=(g\twoheadrightarrow  0):=0$, $(0\to f):=(0\twoheadrightarrow f):=\infty$ for $f\nequivW 0$, 
\item $(\infty\to f):=(\infty\twoheadrightarrow f):=0$, $(g\to\infty):=(g\twoheadrightarrow\infty):=\infty$ for $g\not=\infty$.
\end{enumerate}
We call $f\star g$ the {\em compositional product}, $(g\to f)$ the {\em compositional implication} and $(g\twoheadrightarrow f)$ the {\em multiplicative implication}.
\end{definition}

The definition of $(g\to 0):=(g\twoheadrightarrow  0):=0$ is consistent with what is defined in the first two items (2) and (3) above.
The domains in the first items (1)--(3) are always meant to be maximal. For instance $\dom(g\to f)=\dom(f^\r)$ if $g$ is somewhere defined.
The fact that we use G\"odel numbers $n,k\in\IN$ for $(g\twoheadrightarrow f)$ actually has some reason: the crucial properties of this implication
are computability theoretic ones (see Proposition~\ref{prop:multiplicative-deduction}) and do not relativize to a topological version in an obvious way.
However, the fact that we use G\"odel numbers makes the domain of $(g\twoheadrightarrow f)$ relatively complicated. If $g$ is somewhere defined, then
\[\dom(g\twoheadrightarrow f)=\{p\in\dom(f^\r):(\exists q\in\dom(g^\r))\;q\leqT p\}.\]
For pointed $g$ (that have a computable point in the domain) the domain is more natural and we obtain $\dom(g\twoheadrightarrow f)=\dom(f^\r)$.
The following facts were proved in \cite[Corollaries~18 and 25, Theorem~24, Proposition~31]{BP18}.

\begin{fact}[Compositional product and implication]
\label{fact:compositional}
For all problems $f$ and $g$ including $\infty$:
\begin{enumerate}
\item $f\star g\equivW\max_{\leqW}\{f_0\circ g_0:f_0\leqW f,g_0\leqW g\}$,
\item $(g\to f)\equivW\min_{\leqW}\{h:f\leqW g\star h\}$,
\item $(g\to f)\leqW h\iff f\leqW g\star h$,
\item $\star$ is monotone with respect to $\leqW$ in both components,
\item $\to$ is monotone with respect to $\leqW$ in the second component and antitone in the first component.
\end{enumerate}
\end{fact}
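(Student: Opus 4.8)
The plan is to reconstruct the argument of \cite{BP18}. I would work throughout with realizer versions (recall $f\equivW f^\r$), and I would dispatch the degenerate cases with $\mathbf{0}$ and $\infty$ separately against the stipulated constant values; so in the main argument one may assume $\dom(g)\neq\emptyset$ and $\dom(f)\neq\emptyset$, and in particular that $g\to f$ and $g\star h$ live on $\dom(f^\r)$.

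For (1) I would prove the maximum-characterization by an upper-bound construction and an attainment construction. For the upper bound, given $f_0\leqW f$ and $g_0\leqW g$, fix computable $H_1,K_1$ witnessing $g_0\leqW g$ and $H_2,K_2$ witnessing $f_0\leqW f$ (so that $H_1\langle p,u\rangle\in g_0^\r(p)$ for every $u\in g^\r(K_1(p))$, and analogously for $f_0,f$). To reduce $f_0\circ g_0$ to $f\star g$ on an input $p$, let the pre-computation pass $\langle q,K_1(p)\rangle$ to $f\star g$, where $q$ is obtained from $p$ by the smn-theorem so that $\Phi_q$ sends $y$ to $\langle\langle p,y\rangle,K_2(H_1\langle p,y\rangle)\rangle$; feeding $y\in g^\r(K_1(p))$ through this and then through $\langle\id\times f^\r\rangle$ produces $\langle\langle p,y\rangle,v\rangle$ with $v\in f^\r(K_2(H_1\langle p,y\rangle))$, from which the post-computation recovers $z:=H_1\langle p,y\rangle\in g_0^\r(p)$ and outputs $H_2\langle z,v\rangle\in f_0^\r(z)\In(f_0\circ g_0)^\r(p)$. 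For attainment I would exhibit the explicit witnesses $g_0\langle q,p\rangle:=\langle q,g^\r(p)\rangle$ and $f_0\langle q,y\rangle:=\langle\id\times f^\r\rangle\circ\Phi_q(y)$, check $g_0\leqW g$ and $f_0\leqW f$ by inspection (evaluating $\Phi_q$ via $U$), and observe that $f_0\circ g_0$ coincides with $f\star g$ verbatim on realizer versions, so that $f\star g$ lies in the family and is its maximum.

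For (2) I would again give two constructions, membership and minimality. For $f\leqW g\star(g\to f)$, on input $p$ I would invoke $g\star(g\to f)$ at $\langle q,p\rangle$ with $q$ naming the duplication $z\mapsto\langle z,\pi_2 z\rangle$: a realizer first selects $\langle t,q_0\rangle\in(g\to f)^\r(p)$, the duplication turns this into $\langle\langle t,q_0\rangle,q_0\rangle$, and $\langle\id\times g^\r\rangle$ turns that into $\langle\langle t,q_0\rangle,y\rangle$ with $y\in g^\r(q_0)$; since $\Phi_t\circ g^\r(q_0)\In f^\r(p)$ by definition of $g\to f$, the post-computation extracts $t$ and $y$ and returns $\Phi_t(y)=U\langle t,y\rangle\in f^\r(p)$ --- the duplication is there precisely so that the witnessing program $t$ survives into the output. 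For minimality, assume $f\leqW g\star h$ via computable $H_0,K_0$ and write $K_0(p)=\langle q_1,p_1\rangle$. On input $p$ I would run $h$ on $p_1$ to get $z\in h^\r(p_1)$, compute $\langle a,b\rangle:=\Phi_{q_1}(z)$, and output $\langle t,b\rangle$ where $t$ is built from $\langle a,p\rangle$ by the smn-theorem so that $\Phi_t(y)=H_0\langle p,\langle a,y\rangle\rangle$; since $\langle a,y\rangle$ is a legitimate realizer output of $g\star h$ at $K_0(p)$ for each $y\in g^\r(b)$, one gets $\Phi_t(y)\in f^\r(p)$, hence $\langle t,b\rangle\in(g\to f)^\r(p)$ and $(g\to f)\leqW h$.

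Items (3), (4), (5) should then follow formally. Monotonicity of $\star$ in (4) is immediate from the maximum-characterization of (1), since enlarging $f$ or $g$ enlarges the family $\{f_0\circ g_0\}$. Item (3) follows from (2): ``$\Leftarrow$'' is minimality, and ``$\Rightarrow$'' combines $f\leqW g\star(g\to f)$ with monotonicity of $\star$. Item (5) is then purely formal via (3) and (4): monotonicity of $\to$ in the second argument from $f\leqW f'\leqW g\star(g\to f')$, and antitonicity in the first from $f\leqW g\star(g\to f)\leqW g'\star(g\to f)$ when $g\leqW g'$. I expect the only real difficulty to be the bookkeeping --- threading the original input through every stage, keeping the witness $t$ alive in the membership direction of (2), and checking the definedness side-conditions (that the relevant multifunction compositions are defined, that $g^\r(b)\neq\emptyset$, and that names produced for $g\to f$ always land in $\dom(g^\r)$); the constant cases with $\infty$ and $\mathbf{0}$ (where the set $\{h:f\leqW g\star h\}$ may be empty and $g\to f$ is set to $\infty$) are routine but have to be dispatched by hand.
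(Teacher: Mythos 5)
This statement is a \emph{Fact} in the paper: it is not proved there but imported from \cite[Corollaries~18 and 25, Theorem~24, Proposition~31]{BP18}, so there is no in-paper argument to compare yours against; what can be checked is whether your reconstruction is sound against the paper's definitions of $\star$, $\to$ and the special constants, and it is. The upper-bound and attainment halves of (1) (composing the two reduction witnesses through a code $q$ produced by the smn-theorem, and the explicit factors $g_0\langle q,p\rangle=\langle q,g^\r(p)\rangle$, $f_0\langle q,y\rangle=\langle\id\times f^\r\rangle\circ\Phi_q(y)$, which reproduce $f\star g$ verbatim), the duplication trick that keeps the code $t$ alive in $f\leqW g\star(g\to f)$, and the smn-extraction of $\langle t,b\rangle$ from witnesses of $f\leqW g\star h$ for minimality are exactly the standard arguments behind the cited results, and your derivation of (3)--(5) from (1)--(2) is purely formal and correct. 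Two points you already flag but should make explicit in a write-up: the pointwise reformulation of $\leqW$ you use (``$H_1\langle p,u\rangle\in g_0^\r(p)$ for every $u\in g^\r(K_1(p))$'') is legitimate because realizers are arbitrary, not continuous, functions; and the condition $\emptyset\neq\Phi_t\circ g^\r(q)\In f^\r(p)$ must be read with the paper's composition convention, so that $\Phi_t$ is defined on all of $g^\r(q)$ --- this is precisely what makes your membership step $\Phi_t(y)\in f^\r(p)$ for every $y\in g^\r(q_0)$, and the nonemptiness of $\Phi_t\circ g^\r(b)$ in the minimality step, go through. The deferred $0$/$\infty$ cases are indeed routine; note the paper's remark after the Fact that (3) in the case $\dom(g)=\emptyset\neq\dom(f)$ relies exactly on the stipulations $(0\to f)=\infty$ and $f\star\infty=\infty\star f=\infty$, which your case analysis respects.
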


We note that for (3) to be correct in the case of $\dom(g)=\emptyset$ and $\dom(f)\not=\emptyset$, we actually use $(g\to f)=\infty$ and
$f\star\infty=\infty\star f=\infty$.

By $\WW$ we denote the class of Weihrauch degrees including $\infty$.  We extend all the algebraic operations to degrees in the
usual way without introducing a new notation. It is known that the underlying structure is a lattice~\cite{BP18}
and together with Fact~\ref{fact:compositional}~(3) we obtain the following conclusion.

\begin{corollary}[Weihrauch algebra]
\label{cor:Weihrauch-algebra}
$(\WW,\leqW,\sqcap,\sqcup,\star,\to,0,1,\infty)$ is a deductive Weihrauch algebra that is not commutative.
\end{corollary}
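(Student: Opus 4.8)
The plan is to verify, one clause at a time, that $(\WW,\leqW,\sqcap,\sqcup,\star,\to,0,1,\infty)$ satisfies the five axioms of Definition~\ref{def:Weihrauch-algebra} together with the deductive strengthening of axiom~(5) from Definition~\ref{def:algebras}, and then, separately, that $\star$ is not commutative. Almost every clause is already available. Axiom~(1), that $(\WW,\leqW,\sqcap,\sqcup)$ is a bounded lattice, follows from the known structure of the Weihrauch degrees: $\sqcap$ is the infimum and $\sqcup$ the supremum by \cite[Proposition~3.11]{BG11} and \cite[Theorem~4.5]{Pau10a} (see also \cite[Theorem~3.9]{BGP18}), $\botW=0$ is the bottom by \cite[Lemma~2.7]{BG11}, and $\infty$ was adjoined precisely to serve as the top; one only has to note that the stipulated values $f\sqcup\infty=\infty$ and $f\sqcap\infty=f$ are exactly what makes $\infty$ a top element while keeping $\sqcup$ a supremum and $\sqcap$ an infimum on all of $\WW$.

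For axiom~(2), the plan is to check that $(\WW,\star,1)$ is a monoid. Associativity of $\star$ up to $\equivW$ holds on ordinary problems — either by citation (\cite{BGM12},\cite{BP18}) or directly from Fact~\ref{fact:compositional}(1): two applications of that characterisation together with associativity of ordinary composition give
\[(f\star g)\star h\equivW\max_{\leqW}\{f_0\circ g_0\circ h_0:f_0\leqW f,\,g_0\leqW g,\,h_0\leqW h\}\equivW f\star(g\star h),\]
and this extends to triples containing $\infty$ since, by the convention $f\star\infty=\infty\star f=\infty$, the element $\infty$ is absorbing, so both sides collapse to $\infty$ as soon as one factor is $\infty$. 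That $1$ is a two-sided neutral element is immediate from $1\star f\equivW f\equivW f\star 1$, which holds for $f=\infty$ as well via $1\star\infty=\infty\star 1=\infty$. Axioms~(3) and~(4) — monotonicity of $\star$ in both arguments and monotonicity of $\to$ in the second argument, antitonicity in the first — are literally Fact~\ref{fact:compositional}(4) and~(5). Finally, the deductive form of axiom~(5), $x\leqW y\star z\iff(y\to x)\leqW z$, is exactly Fact~\ref{fact:compositional}(3); the degenerate instances with $0$ or $\infty$ in place of $x$, $y$ or $z$ are covered by the special-constant clauses of the definitions of $\star$ and $\to$, which were set up to remain consistent with Fact~\ref{fact:compositional}(3) (compare the remark following that fact), and each such instance can also be checked by hand.

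It remains to see that $\star$ is not commutative, which is the only step beyond bookkeeping. That $\star$ fails commutativity is known (\cite{BGM12},\cite{BP18}); a concrete witness runs as follows. Fix $p\in\IN^\IN$ with $p\equivT\emptyset'$ and let $\id|_{\{p\}}:\In\IN^\IN\to\IN^\IN$ be the identity restricted to $\{p\}$. By definition an input of $\J\star\id|_{\{p\}}$ is a pair whose second component is fed to $(\id|_{\{p\}})^\r$, hence must be a name of $p$, so every such input already encodes $p$; by contrast the intermediate-computation (the $\Phi_q$) clause in the definition of $\star$ makes $\id|_{\{p\}}\star\J$ succeed on inputs $\langle q,p_0\rangle$ for every $p_0$ with $p\leqT p_0'$, in particular for $p_0=\emptyset$. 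A reduction $\id|_{\{p\}}\star\J\leqW\J\star\id|_{\{p\}}$ would then require computing a name of $p=\emptyset'$ from a name of $\emptyset$, which is impossible; hence $\id|_{\{p\}}\star\J\nleqW\J\star\id|_{\{p\}}$ and $\star$ is not commutative. Altogether $\WW$ under these operations is a deductive, non-commutative Weihrauch algebra. The one obstacle worth flagging is the bookkeeping around the $0$- and $\infty$-conventions in axioms~(1)--(5); everything else reduces to Fact~\ref{fact:compositional} and the established lattice structure of the Weihrauch degrees.
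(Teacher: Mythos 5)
Your proposal is correct, and its verification part follows the same skeleton the paper compresses into one sentence: the lattice structure of $\WW$ and the monoid laws for $\star$ are imported from the literature, axioms (3)--(5) in their deductive form are read off Fact~\ref{fact:compositional}, and the $0$/$\infty$ conventions are checked by hand. The only genuine divergence is the non-commutativity witness. The paper simply records $\lim\star\WKL\equivW\lim\lW\WKL\star\lim$, leaning on known absorption and separation results for $\lim$ and $\WKL$, whereas you build an elementary self-contained witness: with $p\equivT\emptyset'$, every point of $\dom(\J\star\id|_{\{p\}})$ already contains a name of $p$, while $\id|_{\{p\}}\star\J$ has a computable point $\langle q,\widehat{0}\rangle$ in its domain (for suitable computable $q$) whose every solution contains a name of $p$; a reduction $\id|_{\{p\}}\star\J\leqW\J\star\id|_{\{p\}}$ would therefore make $p$ computable, a contradiction. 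This argument is valid and has the advantage of needing no facts about named problems, at the price of being ad hoc where the paper exhibits a separation between standard landmarks. One caveat: your ``direct'' derivation of associativity from Fact~\ref{fact:compositional}(1), by substituting the maximum characterisation twice, is not complete as stated, because composition is not monotone with respect to $\leqW$ (this is exactly why $\star$ is defined as a maximum), so replacing an arbitrary $e\leqW f\star g$ by some $f_0\circ g_0$ inside a further composition needs an extra argument; your fallback citation to \cite{BGM12} and \cite{BP18} covers this, which is also all the paper itself relies on.
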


For instance $\lim\star\WKL\equivW\lim\lW\WKL\star\lim$ and hence $\star$ is clearly not commutative.

We can interpret $(f\twoheadrightarrow\infty)=(f\to\infty)$ as negation operation in the Weihrauch lattice
and we formally define negation correspondingly.

\begin{definition}[Negation]
\label{def:negation}
For every problem $f$ we define its {\em negation} $\neg f$
by $\neg f:=\infty$ for $f\not=\infty$ and $\neg \infty:=0$ (the nowhere defined problem $0:\In\IN^\IN\to\IN^\IN$).
\end{definition}

It is then obvious that our negation behaves as in Jankov logic.

\begin{corollary}[Jankov rule]
\label{cor:Jankov-rule}
$\neg\neg f\sqcap\neg f\equivW 0$ is computable.
\end{corollary}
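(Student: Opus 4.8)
The plan is to prove this by simply unwinding Definition~\ref{def:negation} via a case distinction on whether $f=\infty$, and then to invoke the convention $g\sqcap\infty=\infty\sqcap g=g$ for the infimum with the formal top element. There is no genuine obstacle here; the only point requiring a little care is keeping track of the two cases $f=\infty$ and $f\neq\infty$ when the operation $\neg$ is iterated.

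First I would treat the case $f\neq\infty$. Then Definition~\ref{def:negation} gives $\neg f=\infty$, and hence $\neg\neg f=\neg\infty=0$. Therefore $\neg\neg f\sqcap\neg f=0\sqcap\infty$, and by the convention $g\sqcap\infty=\infty\sqcap g=g$ (applied with $g=0$) this equals $0$. Next I would treat the case $f=\infty$. Then $\neg f=\neg\infty=0$, and since $0\neq\infty$ we obtain $\neg\neg f=\neg 0=\infty$, so that $\neg\neg f\sqcap\neg f=\infty\sqcap 0$, which again equals $0$ by the same convention. In either case we conclude $\neg\neg f\sqcap\neg f\equivW 0$.

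Finally I would observe that $0$ is computable: as the nowhere defined problem $0:\In\IN^\IN\to\IN^\IN$ it has empty domain, so the condition defining a realizer is vacuous and every computable function (for instance $\id$) realizes it; equivalently $0\leqW\id$. This completes the argument, and the only thing to double-check along the way is that the list of conventions for $\sqcap$ with the special constant $\infty$ is being applied correctly in each of the two cases.
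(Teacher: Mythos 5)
Your proposal is correct and follows essentially the same reasoning the paper has in mind: the paper states this corollary as an immediate consequence of Definition~\ref{def:negation} and the conventions for $\sqcap$ with the adjoined top element $\infty$, which is exactly the two-case unwinding you carry out, plus the observation that the nowhere defined problem $0$ is (vacuously) computable.
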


We note that $\neg\overline{f}\equivW \neg f\leqW\overline{\neg f}$, but equivalence does not hold as we obtain $\neg\overline{\infty}=0\lW1\equivW\overline{\neg\infty}$.
Here we are in particular interested in how the compositional product and the implications interact with completion in general.
We show that $\star$ co-preserves completion with respect to $\leqSW$ and $\to$ preserves completion with respect to $\leqW$.

\begin{proposition}[Completion and compositional products and implication]
\label{prop:completion-compositional}
For all problems $f,g$ including $\infty$:
\begin{enumerate}
\item $\overline{f\star g}\leqSW\overline{f}\star\overline{g}\equivSW\overline{\overline{f}\star\overline{g}}$.
\item $(\overline{g}\to\overline{f})\leqW\overline{(\overline{g}\to\overline{f})}\leqW\overline{(g\to f)}$.
\end{enumerate}
In particular $f\star g$ is (strongly) complete, if $f$ and $g$ are so.
\end{proposition}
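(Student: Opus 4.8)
The plan is to prove (1) by an explicit realizer construction, and then to read off (2) and the ``in particular'' clause formally, from (1), the co-residuation law of Fact~\ref{fact:compositional}, and the closure-operator properties of completion.

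For (1), I would establish the reduction $\overline{f\star g}\leqSW\overline f\star\overline g$ in the spirit of Proposition~\ref{prop:completion-algebraic}, the new ingredient being an adjustment of the $\Phi$-code via the smn-Theorem to absorb the precompletion shifts. After disposing of the cases involving $\infty$ via the stipulated conventions, the point is this: a $\delta_{\overline{\IN^\IN}}$-name $\widetilde p$ of a point of $\dom(f\star g)$, say of $\langle q_0,p_0\rangle$, can be translated computably to a name $\langle q',p'\rangle$ of an input of $\overline f\star\overline g$, where $p'$ is obtained from the ``odd part'' of $\widetilde p$ (a $\delta^\wp_{\IN^\IN}$-name of $p_0$) and $q'$ is, by the smn-Theorem, an index such that $\Phi_{q'}$ first undoes the precompletion shift, then applies $\Phi_{q_0}$, then shifts the $f$-component of the result up by one; every realizer of $\overline f\star\overline g$ then returns, on such an input, a name of a point of $(f\star g)\langle q_0,p_0\rangle$ up to one further uniform shift, corrected by a fixed computable $H$. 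On names of points outside $\dom(f\star g)$ the completed problem $\overline{f\star g}$ imposes no constraint on the value, so the only thing still to be checked there is that $HGK$ remains total and that $K$ keeps landing inside $\dom(\overline f\star\overline g)$; both are secured by the usual precompletion padding together with the fact that $\overline f\star\overline g$ is pointed, with a uniformly available total $\Phi$-index (e.g. the identity). This totality bookkeeping in the $\bot$-case, alongside the shift bookkeeping, is the one delicate point; everything else is routine. With the reduction in hand, the equivalence $\overline f\star\overline g\equivSW\overline{\overline f\star\overline g}$ and the completeness of $f\star g$ whenever $f,g$ are complete, in both the weak and the strong sense, follow from Proposition~\ref{prop:preservation}(1): completion is a closure operator for $\leqW$ and $\leqSW$ by Proposition~\ref{prop:completion-closure}, $\star$ is monotone for $\leqW$ by Fact~\ref{fact:compositional}(4) and, by the same smn re-indexing argument, also for $\leqSW$, and the reduction just proved is exactly the statement that $\star$ preserves completion in the sense of Definition~\ref{def:preservation}.

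For (2), the first reduction $(\overline g\to\overline f)\leqW\overline{(\overline g\to\overline f)}$ is immediate because completion is a closure operator. For the second, I would first prove $(\overline g\to\overline f)\leqW\overline{(g\to f)}$ and then apply monotonicity and idempotence of completion to get $\overline{(\overline g\to\overline f)}\leqW\overline{\overline{(g\to f)}}\equivW\overline{(g\to f)}$. To obtain $(\overline g\to\overline f)\leqW\overline{(g\to f)}$, note that $\overline g$ is always pointed (its domain contains $\bot$), so by Fact~\ref{fact:compositional}(3) this reduction is equivalent to $\overline f\leqW\overline g\star\overline{(g\to f)}$; and the latter holds because $f\leqW g\star(g\to f)$ by Fact~\ref{fact:compositional}(3) and reflexivity, whence $\overline f\leqW\overline{g\star(g\to f)}$ by monotonicity of completion (Corollary~\ref{cor:monotonicity-completion}), and finally $\overline{g\star(g\to f)}\leqW\overline g\star\overline{(g\to f)}$ by part (1). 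The degenerate cases where one of $f,g$ is $0$ or $\infty$ are handled directly from the stipulated values of $\star$, $\to$ and completion; the main obstacle throughout remains the construction in (1).
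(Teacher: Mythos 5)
Your treatment of (2) and of the ``in particular'' clause coincides with the paper's: the paper also derives $(\overline{g}\to\overline{f})\leqW\overline{(g\to f)}$ from $f\leqW g\star(g\to f)$, monotonicity of completion and part (1), then finishes with the closure-operator properties, and it obtains the equivalence and the completeness statement exactly via Proposition~\ref{prop:preservation}. Where you genuinely diverge is part (1). The paper does not build realizers at all: it writes $f\star g=\langle\id\times f\rangle\circ U\circ\langle\id\times g\rangle$, observes that $\overline{U}$ is computable and $\id$ is complete, and then chains $\overline{f\star g}\leqW\overline{(\id\times f)}\star\overline{(\id\times g)}\leqW(\id\times\overline{f})\star(\id\times\overline{g})\leqW\overline{f}\star\overline{g}$ using Proposition~\ref{prop:completion-algebraic}, upgrading to $\leqSW$ because every compositional product is a cylinder (citing \cite[Lemma~17]{BP18}). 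That route is shorter and reuses the already-proved preservation of completion by $\times$, at the price of importing the cylinder fact and the computability of $\overline{U}$; your route is self-contained at the level of codes but must carry all the precompletion bookkeeping explicitly. On that bookkeeping, one remark: your fallback to ``a uniformly available total $\Phi$-index (e.g.\ the identity)'' for the $\bot$-case cannot be used as literally stated, since $K$ cannot detect whether its input names a point of $\dom(f\star g)$ and so cannot switch codes; the correct fix (which your ``precompletion padding'' phrase points at) is to use the smn-Theorem to let $q'$ depend on the whole input name, folding the limit-extraction of $q_0$ into the code, and to make $\Phi_{q'}$ total by emitting both components of its output in shifted, zero-padded ($\delta_{\overline{\IN^\IN}}$-style) form, which is harmless because the $f$-component is read by $\overline{f}^{\,\r}$ through $\delta_{\overline{\IN^\IN}}$ anyway and $H$ can be designed to decode the pass-through component. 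With that adjustment your construction goes through and even yields the strong reduction directly, without appealing to the cylinder property.
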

\begin{proof}
(1) It is routine to check the claim for the special cases where the problem $\infty$ is involved.
Otherwise, it suffices to consider $f,g:\In\IN^\IN\mto\IN^\IN$ and for such problems we have
$f\star g=\langle\id\times f\rangle\circ U\circ\langle\id\times g\rangle$.
Hence, $\overline{f\star g}=\overline{\langle\id\times f\rangle}\circ \overline{U}\circ\overline{\langle\id\times g\rangle}$. Since $\overline{U}$ is computable and $\id$ is complete, this implies by Proposition~\ref{prop:completion-algebraic} 
\[\overline{f\star g}\leqW\overline{(\id\times f)}\star\overline{(\id\times g)}\leqW(\id\times\overline{f})\star(\id\times\overline{g})\leqW\overline{f}\star\overline{g}.\]
Since every compositional product is a cylinder by \cite[Lemma~17]{BP18}, we even obtain the strong Weihrauch reduction.
The equivalence follows as in Proposition~\ref{prop:completion-algebraic}.\\
(2) Since $f\leqW g\star(g\to f)$ by Fact~\ref{fact:compositional} and completion is a closure operator, we obtain with (1)
\[\overline{f}\leqW\overline{g\star(g\to f)}\leqW\overline{g}\star\overline{(g\to f)}.\]
Hence Fact~\ref{fact:compositional} implies $(\overline{g}\to\overline{f})\leqW\overline{(g\to f)}$, which in turn implies the statement,
as completion is a closure operator. 
\end{proof}

We note that neither of the reductions in (2) are equivalences in general, as the following examples show:
\begin{enumerate}
\item $(\overline{\infty}\to\overline{\infty})\equivW0\lW1\equivW\overline{(\overline{\infty}\to\overline{\infty})}$,
\item $\overline{(\overline{0}\to\overline{1})}\equivW1\lW\infty\equivW\overline{(0\to 1)}$.
\end{enumerate}
In particular $(\overline{g}\to\overline{f})$ does not need to be complete, even though $\overline{g}$ and $\overline{f}$ are.

We now want to study the multiplicative implication $(g\twoheadrightarrow f)$ somewhat further.
We first study its monotonicity properties.

\begin{proposition}[Monotonicity of multiplicative implication]
\label{prop:monotonicity-multiplicative-implication}
Let $f_i,g_i$ be problems for $i\in\{0,1\}$ including $\infty$. If $f_0\leqW f_1$,  $g_0\leqW g_1$ and $g_0$ is pointed,
then $(g_1\twoheadrightarrow f_0)\leqW(g_0\twoheadrightarrow f_1)$.
\end{proposition}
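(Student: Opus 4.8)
The plan is to build computable $K$ and $H$ witnessing the reduction straight from the definition of $\twoheadrightarrow$, after disposing of the special constants. Since $\twoheadrightarrow$ only sees the realizer versions of its arguments and $(h^\r)^\r=h^\r$, we may replace $f_i,g_i$ by $f_i^\r,g_i^\r$ (which preserves all hypotheses) and so assume $f_0,f_1,g_0,g_1:\In\IN^\IN\mto\IN^\IN$ with $f_i=f_i^\r$, $g_i=g_i^\r$. The cases with $\infty$ are settled by inspecting the conventions: if $f_1=\infty$ the right-hand side is the top element $\infty$ (when $g_0\not=\infty$), or $0$ (when $g_0=\infty$, in which case $g_1=\infty$ and the left-hand side is $0$ too); if $g_1=\infty$ the left-hand side is $0$; and $f_0=\infty$ together with $f_0\leqW f_1$ forces $f_1=\infty$. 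So we may assume all four problems differ from $\infty$. Note that $g_0$ pointed and $g_0\leqW g_1$ give $1\leqW g_1$, so $g_1$ is pointed too; hence, by the domain computation recalled just before Proposition~\ref{prop:monotonicity-multiplicative-implication}, $\dom(g_1\twoheadrightarrow f_0)=\dom(f_0^\r)$ and $\dom(g_0\twoheadrightarrow f_1)=\dom(f_1^\r)$. This is exactly where pointedness of $g_0$ enters: it strips the Turing-degree side condition from both domains.

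Fix computable $A,B$ witnessing $f_0\leqW f_1$, so that $B(\dom(f_0^\r))\In\dom(f_1^\r)$ and $A\langle p,v\rangle\in f_0^\r(p)$ whenever $p\in\dom(f_0^\r)$ and $v\in f_1^\r(B(p))$, and computable $C,D$ witnessing $g_0\leqW g_1$, so that $D(\dom(g_0^\r))\In\dom(g_1^\r)$ and $C\langle s,z\rangle\in g_0^\r(s)$ whenever $s\in\dom(g_0^\r)$ and $z\in g_1^\r(D(s))$. Put $K:=B$. For $p\in\dom(f_0^\r)$ the value $p':=B(p)$ lies in $\dom(f_1^\r)=\dom(g_0\twoheadrightarrow f_1)$, so any realizer $G$ of $g_0\twoheadrightarrow f_1$ returns $G(p')=\langle n',k',q'\rangle$ with $\varphi_{k'}(p')\in\dom(g_0^\r)$ and $\varphi_{n'}\langle q',y\rangle\in f_1^\r(p')$ for all $y\in g_0^\r(\varphi_{k'}(p'))$. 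The function $H$, on input $\langle p,\langle n',k',q'\rangle\rangle$, is defined to output $\langle n,k,\langle p,q'\rangle\rangle$, where, using the smn-Theorem, $k$ is an index with $\varphi_k=\bigl(r\mapsto D(\varphi_{k'}(B(r)))\bigr)$ and $n$ is an index with $\varphi_n\langle\langle a,b\rangle,z\rangle=A\langle a,\varphi_{n'}\langle b,C\langle\varphi_{k'}(B(a)),z\rangle\rangle\rangle$. The finite data $n',k'$ get absorbed into the G\"odel numbers via smn, while the infinite data $p$ and $q'$ travel through the $q$-slot of the output triple; both $H$ and $K$ are computable.

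It then remains to verify $\langle n,k,\langle p,q'\rangle\rangle\in(g_1\twoheadrightarrow f_0)(p)$ for $p\in\dom(f_0^\r)$. First, $\varphi_k(p)=D(\varphi_{k'}(B(p)))=D(\varphi_{k'}(p'))\in\dom(g_1^\r)$, since $\varphi_{k'}(p')\in\dom(g_0^\r)$; so $g_1^\r(\varphi_k(p))\not=\emptyset$. Next, for any $z\in g_1^\r(\varphi_k(p))=g_1^\r(D(\varphi_{k'}(p')))$ we get $C\langle\varphi_{k'}(p'),z\rangle\in g_0^\r(\varphi_{k'}(p'))$ by the choice of $C,D$, hence $\varphi_{n'}\langle q',C\langle\varphi_{k'}(p'),z\rangle\rangle\in f_1^\r(p')=f_1^\r(B(p))$ by the property of $G$, and therefore $\varphi_n\langle\langle p,q'\rangle,z\rangle=A\langle p,\varphi_{n'}\langle q',C\langle\varphi_{k'}(B(p)),z\rangle\rangle\rangle\in f_0^\r(p)$ by the choice of $A,B$. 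Thus $\emptyset\not=\varphi_n\langle\langle p,q'\rangle,g_1^\r(\varphi_k(p))\rangle\In f_0^\r(p)$, which is precisely membership in $(g_1\twoheadrightarrow f_0)(p)$, so $H\langle\id,GK\rangle$ realizes $g_1\twoheadrightarrow f_0$ and the reduction follows. I expect the only genuinely delicate point to be that $K$ must be committed to before $G$ is queried, which forces all data returned by $G$ — the indices $n',k'$ and the parameter $q'$ — together with the outer input $p$ to be re-packaged inside $H$ into the rigid $\langle n,k,q\rangle$ format; routing the infinite parameters through $q$ and the finite ones through smn is what makes this possible, and checking that $\varphi_k(p)$ stays inside $\dom(g_1^\r)$ is where pointedness of $g_0$ (hence of $g_1$) is needed.
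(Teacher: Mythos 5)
Your proof is correct and follows essentially the same route as the paper: a direct smn-theorem construction from the definition of $\twoheadrightarrow$, with the finite indices absorbed into new G\"odel numbers, the infinite parameters routed through the $q$-slot, and pointedness of $g_0$ (hence $g_1$) used exactly to identify the domains with $\dom(f_i^\r)$. The only difference is that you handle both hypotheses $f_0\leqW f_1$ and $g_0\leqW g_1$ in one combined reduction, whereas the paper splits the argument into the two one-argument monotonicity steps (fixing $f$, then fixing $g$) and composes them; this is a presentational rather than a substantive difference.
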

\begin{proof}
It is routine to check that the claim holds in those cases where the implication takes the values $0$ or $\infty$.
This includes the cases where $\infty$ is among $f_i,g_i$.
We break the proof for the other cases into two manageable pieces, where we either fix $f=f_0=f_1$ or $g=g_0=g_1$. 
It suffices to consider problems $g_i,f_i,g,f:\In\IN^\IN\mto\IN^\IN$ for $i\in\{0,1\}$.\\
(1) Let $g_0\leqW g_1$ hold via computable $H,K$. We prove $(g_1\twoheadrightarrow f)\leqW(g_0\twoheadrightarrow f)$.
Let us assume that $g_0$ is pointed. This implies that $g_1$ is also pointed and we also obtain $\dom(g_0\twoheadrightarrow f)=\dom(g_1\twoheadrightarrow f)=\dom(f)$.
By the smn-Theorem there are computable functions $r,h:\IN\to\IN$ such that
\begin{itemize}
\item $\varphi_{h\langle n,k\rangle}\langle \langle p,q\rangle,t\rangle=\varphi_n\langle q,H\langle\pi_1\varphi_k(p),t\rangle\rangle$,
\item $\varphi_{r(k)}(p)=K\circ\pi_2\varphi_k(p)$
\end{itemize}
for all $n,k\in\IN$ and $p,q,t\in\IN^\IN$.
Let $p\in\dom(g_1\twoheadrightarrow f)=\dom(g_0\twoheadrightarrow f)=\dom(f)$. 
Let $\langle n,k,q\rangle\in (g_0\twoheadrightarrow f)(p)$. Then we obtain
\begin{eqnarray*}
\varphi_{h\langle n,k\rangle}\langle\langle p,q\rangle,g_1\circ\varphi_{r(k)}(p)\rangle
&=& \varphi_n\langle q,H\langle\pi_1\varphi_k(p),g_1\circ\varphi_{r(k)}(p)\rangle\rangle\\
&=& \varphi_n\langle q,H\langle\pi_1\varphi_k(p),g_1\circ K\circ\pi_2\varphi_k(p)\rangle\rangle\\
&=& \varphi_n\langle q,H\langle\id,g_1\circ K\rangle\circ\varphi_k(p)\rangle\\
&\In& \varphi_n\langle q,g_0\circ\varphi_k(p)\rangle\In f(p).
\end{eqnarray*}
This means $\langle h\langle n,k\rangle,r(k),\langle p,q\rangle\rangle\in(g_1\twoheadrightarrow f)(p)$.
Since the function $H'$ with $H'\langle p,\langle n, k, q\rangle\rangle:=\langle h\langle n,k\rangle,r(k),\langle p,q\rangle\rangle$ is computable,
we obtain the desired conclusion $(g_1\twoheadrightarrow f)\leqW(g_0\twoheadrightarrow f)$.\\
(2) 
Let now $f_0\leqW f_1$ hold via computable functions $H,K$. 
We prove that we obtain $(g\twoheadrightarrow f_0)\leqW(g\twoheadrightarrow f_1)$.
By the smn-Theorem there are computable functions $r,h:\IN\to\IN$ such that
\begin{itemize}
\item $\varphi_{h\langle n,k\rangle}\langle\langle p,q\rangle,t\rangle=H\langle p,\varphi_n\langle q,t\rangle\rangle$,
\item $\varphi_{r(k)}(p)=\varphi_k\circ K(p)$
\end{itemize}
for all $n,k\in\IN$ and $p,q,t\in\IN^\IN$.
Since $g$ is pointed, we have $\dom(g\twoheadrightarrow f_i)=\dom(f_i)$ for $i\in\{0,1\}$.
Let $p\in\dom(g\twoheadrightarrow f_0)$. Then $K(p)\in\dom(g\twoheadrightarrow f_1)$.
Let $\langle n,k,q\rangle\in(g\twoheadrightarrow f_1)K(p)$. 
This means that we have $\emptyset\not=\varphi_n\langle q,g\varphi_kK(p)\rangle\In f_1K(p)$.
Then we obtain
\[
\emptyset\not=
\varphi_{h\langle n,k\rangle}\langle\langle p,q\rangle,g\varphi_{r(k)}(p)\rangle
= H\langle p,\varphi_n\langle q,g\varphi_kK(p)\rangle\rangle
\In H\langle p,f_1K(p)\rangle\In f_0(p),
\]
i.e., $\langle h\langle n,k\rangle,r(k),\langle p,q\rangle\rangle\in(g\twoheadrightarrow f_0)(p)$.
This proves $(g\twoheadrightarrow f_0)\leqW(g\twoheadrightarrow f_1)$.
\end{proof}

The pointedness assumption is not necessary when we deal with total Weihrauch reducibility.
Hence, analogously to the proof of Proposition~\ref{prop:closure-operators}
we can obtain the following conclusion.

\begin{corollary}[Monotonicity of multiplicative implication]
\label{cor:monotonicity-multiplicative-implication}
$\overline{\twoheadrightarrow}$ is monotone in the second argument and antitone in the 
first argument with respect to $\leqTW$.
\end{corollary}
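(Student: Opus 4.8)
The plan is to reduce the statement directly to the monotonicity behaviour of $\twoheadrightarrow$ already established in Proposition~\ref{prop:monotonicity-multiplicative-implication}, exploiting two facts: completion generates $\leqTW$ on $\leqW$ (Lemma~\ref{lem:completion-total}), and the completion $\overline{g}$ of any problem $g$---and of $\infty$, since $\overline{\infty}=\infty$ and $\infty$ is pointed by convention---is always pointed (the remark right after Definition~\ref{def:completion}). The second fact is what neutralises the pointedness hypothesis in Proposition~\ref{prop:monotonicity-multiplicative-implication}: whenever the first argument of $\twoheadrightarrow$ has the form $\overline{g}$, it is automatically pointed, so the hypothesis costs us nothing. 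Here I read the completion of the binary operation as $g\,\overline{\twoheadrightarrow}\,f=\overline{g}\twoheadrightarrow\overline{f}$, consistently with $f\,\overline{\Box}\,g=\overline{f}\,\Box\,\overline{g}$, so that ``first argument'' and ``second argument'' in the corollary refer to $g$ and $f$ respectively.

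Concretely, I would argue as follows. Assume $f_0\leqTW f_1$ and $g_0\leqTW g_1$. By Lemma~\ref{lem:completion-total} this is equivalent to $\overline{f_0}\leqW\overline{f_1}$ and $\overline{g_0}\leqW\overline{g_1}$. Since $\overline{g_0}$ is pointed, Proposition~\ref{prop:monotonicity-multiplicative-implication} applies to the four problems $\overline{f_0},\overline{f_1},\overline{g_0},\overline{g_1}$ and yields
\[(\overline{g_1}\twoheadrightarrow\overline{f_0})\leqW(\overline{g_0}\twoheadrightarrow\overline{f_1}),\]
that is, $(g_1\,\overline{\twoheadrightarrow}\,f_0)\leqW(g_0\,\overline{\twoheadrightarrow}\,f_1)$. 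Finally, Corollary~\ref{cor:partial-total} upgrades this to $(g_1\,\overline{\twoheadrightarrow}\,f_0)\leqTW(g_0\,\overline{\twoheadrightarrow}\,f_1)$, which is precisely antitonicity in the first and monotonicity in the second argument with respect to $\leqTW$. Structurally this follows the same template as the proof of Proposition~\ref{prop:closure-operators}(3)(a) (closure operators transport monotonicity/antitonicity properties of $\Box$ to $\Box_c$ with respect to $\leq_c$), with the single extra ingredient that the auxiliary pointedness condition is supplied for free by completion.

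The argument is almost entirely formal, so the ``main obstacle'' is really just a bookkeeping point: making sure the constant $\infty$ is handled. Proposition~\ref{prop:monotonicity-multiplicative-implication} is stated ``including $\infty$'', and because $\overline{\infty}=\infty$ is pointed, the reduction above goes through verbatim in those cases too; I would note this explicitly rather than re-examine the sub-cases where the implication collapses to $0$ or $\infty$. Apart from this, one should double-check that the notational convention for $\overline{\twoheadrightarrow}$ is the intended one, after which there is no computability-theoretic content left to verify---everything has been pushed into Proposition~\ref{prop:monotonicity-multiplicative-implication}, Lemma~\ref{lem:completion-total}, and Corollary~\ref{cor:partial-total}.
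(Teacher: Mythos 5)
Your proposal is correct and follows essentially the same route the paper indicates: the paper derives the corollary "analogously to the proof of Proposition~\ref{prop:closure-operators}", with the pointedness hypothesis of Proposition~\ref{prop:monotonicity-multiplicative-implication} supplied for free because completions are always pointed, which is exactly the argument you spell out via Lemma~\ref{lem:completion-total} and Corollary~\ref{cor:partial-total}. Your explicit note on the $\infty$ cases and on the reading of $\overline{\twoheadrightarrow}$ is consistent with the paper's conventions.
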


Now we would like to have an analog of Fact~\ref{fact:compositional}~(3) for $\twoheadrightarrow$.
Unfortunately, this is not possible, but we can say at least the following.

\begin{proposition}[Multiplicative implication]
\label{prop:multiplicative}
For all problems $f,g$ including $\infty$:
\begin{enumerate}
\item $f\leqW g\times h\TO (g\twoheadrightarrow f)\leqW h$,
\item $(g\twoheadrightarrow f)\leqW h\TO f\leqW g\star h$, provided that $g$ is pointed,
\item $(g\to f)\leqW(g\twoheadrightarrow f)$, provided that $g$ is pointed.
\end{enumerate}
\end{proposition}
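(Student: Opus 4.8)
The plan is to clear away the cases involving the formal constants $0$ and $\infty$ by the routine bookkeeping that the case distinctions in the definition make available: when $\dom(f)=\emptyset$ both $(g\twoheadrightarrow f)\leqW h$ and $f\leqW g\times h$ are trivial and $(g\to f)\leqW(g\twoheadrightarrow f)$ reads $0\leqW 0$; when $g=\infty$ one has $(\infty\twoheadrightarrow f)=(\infty\to f)=0$ and $g\star h=\infty$; the remaining constant subcases are similar. For the substantive cases I would pass to realizer versions via Corollary~\ref{cor:realizer-version} and work with $f,g,h\colon\In\IN^\IN\mto\IN^\IN$. Items (1) and (3) I would prove directly, and (2) I would obtain for free: if $g$ is pointed then (3) gives $(g\to f)\leqW(g\twoheadrightarrow f)$, so together with the hypothesis $(g\twoheadrightarrow f)\leqW h$ we get $(g\to f)\leqW h$, and Fact~\ref{fact:compositional}(3) turns this into $f\leqW g\star h$.

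For (1): assume $f\leqW g\times h$ via computable $H,K$, and write $K(p)=\langle K_1(p),K_2(p)\rangle$. The first observation is that $K_1(p)\in\dom(g^\r)$ and $K_2(p)\in\dom(h^\r)$ for all $p\in\dom(f^\r)$, since otherwise a realizer of $g\times h$ could behave arbitrarily (or diverge) at $K(p)$ and $H\langle\id,(\cdot)K\rangle$ would fail to realize $f$ at $p$. Hence $g^\r K_1(p)\neq\emptyset$, and for every $y\in g^\r K_1(p)$ there is a (not necessarily computable) realizer $G\vdash g$ with $GK_1(p)=y$, so $H\langle p,\langle y,LK_2(p)\rangle\rangle\in f^\r(p)$ for every such $y$ and every $L\vdash h$. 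Now fix an index $k$ with $\varphi_k=K_1$ and, by the smn-theorem, an index $n$ with $\varphi_n\langle\langle p,t\rangle,u\rangle=H\langle p,\langle u,t\rangle\rangle$; set $K'(p):=K_2(p)$ and $H'\langle p,v\rangle:=\langle n,k,\langle p,v\rangle\rangle$. Then for $L\vdash h$ and $p$ in the common domain one computes $\varphi_n\langle\langle p,LK_2(p)\rangle,g^\r\varphi_k(p)\rangle=\{H\langle p,\langle y,LK_2(p)\rangle\rangle:y\in g^\r K_1(p)\}$, which is nonempty and contained in $f^\r(p)$; hence $H'\langle p,LK'(p)\rangle\in(g\twoheadrightarrow f)(p)$, i.e.\ $H'\langle\id,LK'\rangle\vdash(g\twoheadrightarrow f)$, proving $(g\twoheadrightarrow f)\leqW h$.

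For (3), with $g$ pointed, one has $\dom(g\to f)=\dom(g\twoheadrightarrow f)=\dom(f^\r)$, so it suffices to give a computable output translation (take $K=\id$). Given $p$ and a solution $\langle n,k,q\rangle\in(g\twoheadrightarrow f)(p)$, note that $\varphi_k(p)$ converges and lies in $\dom(g^\r)$, for otherwise $g^\r\varphi_k(p)=\emptyset$ and $\varphi_n\langle q,g^\r\varphi_k(p)\rangle=\emptyset$, contradicting the defining condition. Using the utm-theorem compute $q':=\varphi_k(p)$, and using the smn-theorem together with the relation between $\varphi$ and $\Phi$ compute, from $n$ and $q$, an index $t$ with $\Phi_t(y)=\varphi_n\langle q,y\rangle$. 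Then $\Phi_t\circ g^\r(q')=\varphi_n\langle q,g^\r\varphi_k(p)\rangle$ is nonempty and $\In f^\r(p)$, so $\langle t,q'\rangle\in(g\to f)(p)$; the map $\langle p,\langle n,k,q\rangle\rangle\mapsto\langle t,q'\rangle$ is computable, witnessing $(g\to f)\leqW(g\twoheadrightarrow f)$. Item (2) then follows as described above.

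The step I expect to be the main obstacle is the verification in (1): a $(g\twoheadrightarrow f)$-solution quantifies over the entire set $g^\r\varphi_k(p)$, so I must be sure that the reduction $f\leqW g\times h$ persists when the $g$-component of the realizer is replaced by an arbitrary element of the fibre $g^\r K_1(p)$ — that is, that $K_1(p)$ genuinely lands in $\dom(g^\r)$ and that every point of that fibre is the value at $K_1(p)$ of some realizer of $g$. Once this robustness is pinned down, everything else in (1) and (3) is smn/utm bookkeeping, and (2) is immediate from (3) and Fact~\ref{fact:compositional}(3).
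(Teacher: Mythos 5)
Your proposal is correct and follows essentially the same route as the paper: the same smn-indices $n,k$ with $K'=\pi_2K$ and $H'\langle p,r\rangle=\langle n,k,\langle p,r\rangle\rangle$ for (1), the same conversion $\langle n,k,q\rangle\mapsto\langle t,\varphi_k(p)\rangle$ via utm/smn for (3), and (2) deduced from (3) plus Fact~\ref{fact:compositional}(3). The only difference is that you spell out explicitly the robustness fact (that $K(p)$ lands in $\dom(g^\r\times h^\r)$ and every fibre value is attained by some realizer), which the paper uses implicitly when writing $H\langle p,\langle g\times h\rangle\circ K(p)\rangle\In f(p)$.
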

\begin{proof}
It is routine to check that the claim holds in those cases where the implication takes the values $0$ or $\infty$.
This includes the cases where $\infty$ is among $f,g,h$.
Otherwise, it suffices to consider problems $f,g,h:\In\IN^\IN\mto\IN^\IN$.\\
(1) Let $f\leqW g\times h$ be witnessed by computable functions $H$ and $K$.
Then there are $n,k\in\IN$ with $\varphi_n\langle\langle p,r\rangle,s\rangle=H\langle p,\langle s,r\rangle\rangle$ and $\varphi_k=\pi_1K$.
We need to prove $(g\twoheadrightarrow f)\leqW h$. 
We define $K',H':\In\IN^\IN\to\IN^\IN$ by $K':=\pi_2 K$ and $H'\langle p,r\rangle:=\langle n,k,\langle p,r\rangle\rangle$ for all $p,r\in\IN^\IN$ and $n,k\in\IN$.
Given an input $p\in\dom(g\twoheadrightarrow f)$ we claim that $H'\langle p,hK'(p)\rangle\In(g\twoheadrightarrow f)(p)$,
i.e., $H',K'$ witness $(g\twoheadrightarrow f)\leqW h$:
if $\langle n,k,\langle p,r\rangle\rangle\in H'\langle p,hK'(p)\rangle$, then $r\in h\pi_2 K(p)$ and hence 
\[\varphi_n\langle \langle p,r\rangle,g\varphi_k(p)\rangle\In H\langle p,\langle g\pi_1 K(p),h\pi_2 K(p)\rangle\rangle=H\langle p,\langle g\times h\rangle\circ K(p)\rangle\In f(p).\]
This means $\langle n,k,\langle p,r\rangle\rangle\in(g\twoheadrightarrow f)(p)$, which proves the claim.\\
(2) This follows from (3) together with Fact~\ref{fact:compositional}.\\
(3) Given a $p\in\dom(g\to f)$ we can use $(g\twoheadrightarrow f)$ in order to determine a $\langle n,k,q\rangle\in(g\twoheadrightarrow f)(p)$.
Here we use that $g$ is pointed and hence $\dom(f\twoheadrightarrow g)=\dom(f\to g)$.
We can then compute a $t\in\IN^\IN$ with $\Phi_t(r)=\varphi_n\langle q,r\rangle$ for all $\langle q,r\rangle\in\dom(\varphi_n)$.
We claim that $\langle t,\varphi_k(p)\rangle\in(g\to f)(p)$:
\[\Phi_t\circ g\circ\varphi_k(p)=\varphi_n\langle q,g\circ\varphi_k(p)\rangle\In f(p).\]
This proves the claim.
\end{proof}

Again the pointedness assumptions can be removed when we deal with total Weihrauch reducibility
and the corresponding completions of operations.
In this way Proposition~\ref{prop:multiplicative} shows that we have an instance of a commutative Weihrauch algebra.
We formulate this result together with the deductive Weihrauch algebra whose existence follows from Fact~\ref{fact:compositional}~(3).

\begin{corollary}[Weihrauch algebra of total Weihrauch degrees]
\label{cor:Weihrauch-algebra-total}
The total Weih\-rauch degrees give rise to the following Weihrauch algebras:
\begin{enumerate}
\item $(\WW_{\rm tW},\leqTW,\overline{\sqcap},\sqcup,\overline{\times},\overline{\twoheadrightarrow},1,1,\infty)$
        is a commutative Weihrauch algebra.
\item $(\WW_{\rm tW},\leqTW,\overline{\sqcap},\sqcup,\overline{\star},\overline{\to},1,1,\infty)$
        is a deductive Weihrauch algebra.
\end{enumerate}
\end{corollary}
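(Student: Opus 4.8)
The plan is, for each of the two clauses, to check the five conditions of Definition~\ref{def:Weihrauch-algebra} together with the relevant extra property (commutativity of $\overline{\times}$ for~(1), the ``$\iff$'' in condition~(5) for~(2)), treating the subcases that involve $\infty$ separately; essentially all ingredients are already in place, so the work is bookkeeping. The lattice condition~(1) is Theorem~\ref{thm:total-lattice} together with Proposition~\ref{prop:infima-suprema}: $(\WW_{\rm tW},\leqTW,\overline{\sqcap},\sqcup)$ is a distributive lattice, its bottom is $1$ by Corollary~\ref{cor:minimal}, and $\infty$ is a top since $f\leqW\infty$ for all $f$, hence $f\leqTW\infty$ by Corollary~\ref{cor:partial-total}. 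For the monoid condition~(2) with $\overline{\times}$ the key observation is that by Proposition~\ref{prop:completion-algebraic} re-completing a product changes nothing, i.e.\ $\overline{\overline{f}\times\overline{g}}\equivSW\overline{f}\times\overline{g}$; combined with the (plain) associativity of $\times$ in the partial Weihrauch lattice this yields $(f\overline{\times}g)\overline{\times}h\equivSTW f\overline{\times}(g\overline{\times}h)$, and $1\overline{\times}f=\overline{1}\times\overline{f}\equivW 1\times\overline{f}\equivW\overline{f}\equivTW f$ since $1$ is complete (being the degree of the strongly complete $\id$, cf.\ Proposition~\ref{prop:complete-problems}) and $\id\times g\equivW g$ for every $g$. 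For~(2) with $\overline{\star}$ one argues identically, using Proposition~\ref{prop:completion-compositional}(1) instead of Proposition~\ref{prop:completion-algebraic}, associativity of $\star$ from its maximum characterisation in Fact~\ref{fact:compositional}(1), and $\id$ as a two-sided unit for $\star$. Condition~(3), monotonicity of $\overline{\times}$ resp.\ $\overline{\star}$, is Corollary~\ref{cor:monotonicity} resp.\ follows from Fact~\ref{fact:compositional}(4) via Proposition~\ref{prop:closure-operators}(3); commutativity of $\overline{\times}$ is commutativity of $\times$ applied to completions; condition~(4) for $\overline{\twoheadrightarrow}$ is Corollary~\ref{cor:monotonicity-multiplicative-implication}, while for $\overline{\to}$ it follows from Fact~\ref{fact:compositional}(5) via Proposition~\ref{prop:closure-operators}(3)(a) and Lemma~\ref{lem:completion-total}.

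It remains to verify the implication condition~(5), which is where a little care is needed. For~(1) I would argue: if $f\leqTW g\overline{\times}h$, then $f\leqW\overline{g\overline{\times}h}$ by Lemma~\ref{lem:completion-total}, and since $\overline{g\overline{\times}h}=\overline{\overline{g}\times\overline{h}}\equivSW\overline{g}\times\overline{h}$ by Proposition~\ref{prop:completion-algebraic} we get $f\leqW\overline{g}\times\overline{h}$, whence, applying completion once more, $\overline{f}\leqW\overline{g}\times\overline{h}$. Now $\overline{g}$ is pointed, so Proposition~\ref{prop:multiplicative}(1) (with $\overline{f},\overline{g},\overline{h}$ in place of $f,g,h$) gives $(\overline{g}\twoheadrightarrow\overline{f})\leqW\overline{h}$, that is $(g\overline{\twoheadrightarrow}f)\leqTW h$ by Lemma~\ref{lem:completion-total}. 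For~(2) the forward implication is the same with Proposition~\ref{prop:completion-compositional}(1) and Fact~\ref{fact:compositional}(3) replacing the two results just used; and the backward implication is then immediate: $(g\overline{\to}f)\leqTW h$ says $(\overline{g}\to\overline{f})\leqW\overline{h}$, so $\overline{f}\leqW\overline{g}\star\overline{h}$ by Fact~\ref{fact:compositional}(3), hence $f\leqW\overline{g}\star\overline{h}\leqW\overline{\overline{g}\star\overline{h}}=\overline{g\overline{\star}h}$ since completion is a closure operator, i.e.\ $f\leqTW g\overline{\star}h$. In all clauses the subcases in which $\infty$ occurs reduce, via $\overline{\infty}=\infty$ and the stipulated values of $\star,\to,\twoheadrightarrow$ on $\infty$, to the corresponding partial statements, which are covered by Fact~\ref{fact:compositional} and the $\infty$-clauses of Proposition~\ref{prop:multiplicative}.

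I expect no real obstacle: the proof reduces to two recurring tricks, namely that completion is ``idempotent'' under $\times$ and $\star$ (Propositions~\ref{prop:completion-algebraic} and~\ref{prop:completion-compositional}(1)) and that every completed problem is pointed, which together let us discharge the pointedness hypotheses in Propositions~\ref{prop:monotonicity-multiplicative-implication} and~\ref{prop:multiplicative}. The only subtlety worth flagging is in condition~(5): one must first push the hypothesis down to $\overline{f}\leqW\overline{g}\times\overline{h}$ before invoking Proposition~\ref{prop:multiplicative}(1), because that proposition produces $(\overline{g}\twoheadrightarrow f)$ whereas the algebra operation is $g\overline{\twoheadrightarrow}f=(\overline{g}\twoheadrightarrow\overline{f})$; the two are reconciled exactly by replacing $f$ with $\overline{f}$ throughout. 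No new constructions are required.
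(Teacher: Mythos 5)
Your proposal is correct and takes essentially the same route as the paper: the paper derives clause (1) from Proposition~\ref{prop:multiplicative}~(1) with the pointedness hypotheses discharged because completions are always pointed, and clause (2) from Fact~\ref{fact:compositional}~(3), with the lattice, monotonicity and monoid ingredients supplied by Theorem~\ref{thm:total-lattice}, Proposition~\ref{prop:infima-suprema}, Corollaries~\ref{cor:monotonicity}, \ref{cor:minimal} and \ref{cor:monotonicity-multiplicative-implication}, and Propositions~\ref{prop:completion-algebraic} and \ref{prop:completion-compositional}. Your write-up merely spells out the bookkeeping (including the step of replacing $f$ by $\overline{f}$ before invoking the implication results), which is exactly what the paper leaves implicit.
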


It would be desirable to have an equivalence in Proposition~\ref{prop:multiplicative}~(1) instead of just an implication, which would mean
that $\twoheadrightarrow$ is a co-residual operation of $\times$ in the same way as $\to$ is a co-residual of $\star$.
However, in \cite[Proposition~37]{BP18} it was proved that there is no such co-residual operation to $\times$.
The following result shows that $\twoheadrightarrow$ has such a co-residual property at least restricted to special problems.

\begin{proposition}[Multiplicative deduction]
\label{prop:multiplicative-deduction}
$(g\twoheadrightarrow f)\leqW h\TO f\leqW\widehat{\overline{g}}\times h$ for all problems $f,g,h$ including $\infty$, such that $g$ is pointed.
\end{proposition}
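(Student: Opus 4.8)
The plan is to construct an explicit reduction, exploiting the completion of $g$ to make an unbounded batch of ``speculative'' queries to $g$ harmless. First I would dispose of the cases where $\infty$ occurs among $f,g,h$, which are routine: if $g=\infty$ or $h=\infty$ then $\widehat{\overline{g}}\times h=\infty$ and $f\leqW\infty$ is trivial, while if $f=\infty$ (and $g\not=\infty$) then $(g\twoheadrightarrow f)=\infty$, so $(g\twoheadrightarrow f)\leqW h$ forces $h=\infty$ and again $\widehat{\overline{g}}\times h=\infty$. As in the earlier proofs, it then suffices to treat problems $f,g,h:\In\IN^\IN\mto\IN^\IN$, and since $g$ is pointed we have $\dom(g\twoheadrightarrow f)=\dom(f)$. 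Fixing computable $H_0,K_0$ witnessing $(g\twoheadrightarrow f)\leqW h$, this gives the following: for every $p\in\dom(f)$ we have $K_0(p)\in\dom(h)$, and for every $q'\in h(K_0(p))$ the triple $\langle n,k,q\rangle:=H_0\langle p,q'\rangle$ lies in $(g\twoheadrightarrow f)(p)$, i.e.\ $\emptyset\not=\varphi_n\langle q,g\varphi_k(p)\rangle\In f(p)$; in particular $\varphi_k(p)$ is defined and lies in $\dom(g)$. The core idea is that, not knowing $k$ in advance, one runs $\varphi_k(p)$ for \emph{every} $k\in\IN$ and feeds the result to the $k$-th copy of $\overline{g}$ inside $\widehat{\overline{g}}$, simultaneously hands $K_0(p)$ to $h$, then reads $\langle n,k,q\rangle$ off the $h$-answer and extracts the answer $y\in g(\varphi_k(p))$ of the $k$-th copy, and outputs $\varphi_n\langle q,y\rangle$. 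Completion is exactly what legitimises the speculation: for the (in general many) indices $k$ with $\varphi_k(p)$ undefined or outside $\dom(g)$, the $k$-th query lands on the added value $\overline{\IN^\IN}$ of $\overline{g}$, where any output is admissible and will be discarded.

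Concretely, I would define the computable $K$ by $K(p):=\langle\langle r_0,r_1,r_2,\dots\rangle,K_0(p)\rangle$, where $r_k\in\IN^\IN$ is produced by simulating $\varphi_k(p)$ and appending the output symbols of $\varphi_k(p)$, each increased by one, as they appear, and the symbol $0$ at every step at which no new output symbol is available. This keeps $r_k$ a genuine element of $\IN^\IN$, with $\delta_{\overline{\IN^\IN}}(r_k)=r_k-1$ equal to $\varphi_k(p)$ whenever the latter is (totally) defined and equal to $\bot$ otherwise. Thus $K(p)$ names a point of $\dom(\widehat{\overline{g}}\times h)$, since $\widehat{\overline{g}}$ is total and $K_0(p)\in\dom(h)$. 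The computable $H$ is then defined on inputs $\langle p,\langle\langle s_0,s_1,\dots\rangle,q'\rangle\rangle$ by computing $\langle n,k,q\rangle:=H_0\langle p,q'\rangle$ and outputting $\varphi_n\langle q,s_k-1\rangle$.

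To verify $H\langle\id,\Gamma K\rangle\vdash f$ for an arbitrary $\Gamma\vdash\widehat{\overline{g}}\times h$, I would take $p\in\dom(f)$ and write $\Gamma(K(p))=\langle\langle s_0,s_1,\dots\rangle,q'\rangle$; then $q'\in h(K_0(p))$ and each $s_k$ is a name of a point of $\overline{g}(\delta_{\overline{\IN^\IN}}(r_k))$. With $\langle n,k,q\rangle:=H_0\langle p,q'\rangle\in(g\twoheadrightarrow f)(p)$ we have $\varphi_k(p)\in\dom(g)$, hence $\delta_{\overline{\IN^\IN}}(r_k)=\varphi_k(p)$ and $\overline{g}(\varphi_k(p))=g(\varphi_k(p))\In\IN^\IN$, so $s_k-1=\delta_{\overline{\IN^\IN}}(s_k)\in g(\varphi_k(p))$; since $\varphi_n\langle q,\cdot\rangle$ maps $g\varphi_k(p)$ into $f(p)$, the output $\varphi_n\langle q,s_k-1\rangle$ lies in $f(p)$, as required. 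I expect the only delicate points to be the zero-padding that keeps the speculative $g$-queries total — which is precisely why $\widehat{\overline{g}}$, and not $\widehat{g}$, appears, so that the failure of co-preservation in Lemma~\ref{lem:counter-monotonicity-2} is sidestepped — and the bookkeeping of the $p\mapsto p-1$ convention of the precomplete representation $\delta_{\overline{\IN^\IN}}$; everything else is routine.
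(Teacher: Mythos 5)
Your proposal is correct and follows essentially the same route as the paper: parallelize $\overline{g}$ over all G\"odel indices by speculatively running $\varphi_k(p)$ for every $k$ (your padded names $r_k$ are exactly the paper's total universal function $u\langle k,p\rangle$ with $\delta_{\overline{\IN^\IN}}u\langle k,p\rangle=\varphi_k(p)$), query $h$ on $K_0(p)$, and then use the returned $\langle n,k,q\rangle$ to select the $k$-th component and post-process with $\varphi_n\langle q,\cdot\rangle$, completion rendering the divergent or out-of-domain queries harmless. The verification matches the paper's, so no gaps to report.
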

\begin{proof}
It is routine to check the claim for the special cases where the problem $\infty$ is involved.
Otherwise, it suffices to consider problems $f,g,h:\In\IN^\IN\mto\IN^\IN$. 
Let $g$ be pointed and let $(g\twoheadrightarrow f)\leqW h$ hold via computable $H,K$.
Then given a point $p\in\dom(f)=\dom(g\twoheadrightarrow f)$ any $\langle n,k,q\rangle\in H\langle p,hK(p)\rangle$
satisfies $\emptyset\not=\varphi_n\langle q,g\circ\varphi_k(p)\rangle\In f(p)$.
Since $\overline{\IN^\IN}$ has a precomplete representation $\delta_{\overline{\IN^\IN}}$, 
it follows that there is a total computable universal function $u:\IN^\IN\to\IN^\IN$ with 
$\delta_{\overline{\IN^\IN}}\circ u\langle k,p\rangle=\delta_{\overline{\IN^\IN}}(\varphi_k(p)+1)=\varphi_k(p)$ for all  $k\in\IN$ and $p\in\dom(\varphi_k)$.
We define a total computable function $K'(p):=\langle\langle u\langle 0,p\rangle,u\langle 1,p\rangle,u\langle 2,p\rangle,...\rangle,K(p)\rangle$
and a computable function $H'\langle p,\langle\langle q_0,q_1,q_2,...\rangle,r\rangle\rangle:=\varphi_n\langle q,q_k-1\rangle$ where $\langle n,k,q\rangle=H\langle p,r\rangle$.
Whenever $G$ is a realizer of $\widehat{\overline{g}}$, with respect to $\delta_{\overline{\IN^\IN}^\IN}$, then we obtain
\begin{eqnarray*}
H'\langle p,\langle G\times h\rangle\circ K'(p)\rangle
&=& H'\langle p,\langle G\langle u\langle 0,p\rangle,u\langle 1,p\rangle,u\langle 2,p\rangle,...\rangle,hK(p)\rangle\rangle\\
&\In& \bigcup\left\{\varphi_n\langle q,g\circ \varphi_k(p)\rangle:\langle n,k,q\rangle\in H\langle p,hK(p)\rangle\right\}\\
&\In& f(p),
\end{eqnarray*}
i.e., $ f\leqW\widehat{\overline{g}}\times h$.
\end{proof}

The basic idea of the proof is that using the parallelization we can evaluate $\overline{g}$ on all possible inputs $\varphi_k(p)$ with G\"odel numbers
$k\in\IN$ and only after we learn the result of $h$ we know which of these values is actually needed. The completion guarantees that all these
values actually exist. 

A similar idea as in the proof of Proposition~\ref{prop:multiplicative-deduction} has been independently used by Neumann and Pauly~\cite[Proposition~31]{NP18}
to prove the following result, which we rephrase in terms of our terminology.\footnote{The notion of precompleteness used by Neumann and Pauly is not the usual one; what
is required is rather a uniform version of completeness, which is satisfied by our completion $\overline{g}$.} 

\begin{proposition}[Neumann and Pauly 2018]
$g\star h\leqW \widehat{\overline{g}}\times h$ for all problems $g$ and $h:\In X\mto\IN$.
\end{proposition}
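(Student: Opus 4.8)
The plan is to imitate the proof of Proposition~\ref{prop:multiplicative-deduction}. First I would dispose of the constant $\infty$: if $g=\infty$ both sides equal $\infty$, and if $g$ is computable then $g\star h$ is computable while $0\leqW\widehat{\overline{g}}\times h$, so the reduction is trivial. For the remaining $g$, Corollary~\ref{cor:realizer-version} together with the monotonicity of $\star$ (Fact~\ref{fact:compositional}(4)), of $f\mapsto\widehat{f}$ and $\times$ (\cite[Proposition~3.6]{BGP18}), and of completion (Corollary~\ref{cor:monotonicity-completion}) let me assume that $g$ and $h$ are problems on Baire space, so that $(g\star h)\langle q,p\rangle=\langle\id\times g\rangle\circ\Phi_q\circ h^\r(p)$. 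Under the representation $\delta_\IN(s)=s(0)$ a realizer value of $h$ on $p$ is any $s\in\IN^\IN$ with $s(0)\in h(p)$, so a realizer value of $g\star h$ on $\langle q,p\rangle$ has the form $\langle\pi_1\Phi_q(s),t\rangle$ with such an $s$ and $t\in g(\pi_2\Phi_q(s))$.

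The crucial point — and the only place where the hypothesis on the codomain of $h$ enters — is that $s$ is a name of a \emph{number}: one reads off $m:=s(0)$ and passes to the canonical name $\widehat{m}=mmm\ldots$, which is again a realizer value of $h$ on $p$. Hence the only sequences that can ever be fed to $g$ during an evaluation of $g\star h$ on $\langle q,p\rangle$ lie among the countably many $\pi_2\Phi_q(\widehat{m})$, $m\in\IN$; and unwinding the domain convention of the compositional product, if $\langle q,p\rangle\in\dom(g\star h)$ then $\Phi_q(\widehat{m})$ is defined and $\pi_2\Phi_q(\widehat{m})\in\dom(g)$ for every $m=s(0)$ arising from a realizer value $s$ of $h$ on $p$. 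This is where the completion enters: I may hand $\overline{g}$ the entire sequence of candidate inputs $\bigl(\pi_2\Phi_q(\widehat{m})\bigr)_{m\in\IN}$ at once, since $\overline{g}$ is total and therefore produces a harmless value on those indices $m$ for which $\pi_2\Phi_q(\widehat{m})$ is undefined.

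Concretely, using the precomplete representation $\delta_{\overline{\IN^\IN}}$ I would fix, by the padding trick of Proposition~\ref{prop:precompleteness} and exactly as the function $u$ built in the proof of Proposition~\ref{prop:multiplicative-deduction}, a total computable $u:\IN^\IN\to\IN^\IN$ with $\delta_{\overline{\IN^\IN}}\circ u\langle m,q\rangle=\pi_2\Phi_q(\widehat{m})$ whenever the right-hand side is defined. I then set
\[K\langle q,p\rangle:=\bigl\langle\,\langle u\langle 0,q\rangle,u\langle 1,q\rangle,u\langle 2,q\rangle,\ldots\rangle,\,p\,\bigr\rangle\]
(first component an input to $\widehat{\overline{g}}$, second component the input $p$ for $h$) and
\[H\bigl\langle\langle q,p\rangle,\langle\langle t_0,t_1,t_2,\ldots\rangle,s\rangle\bigr\rangle:=\bigl\langle\,\pi_1\Phi_q(\widehat{m}),\,t_m-1\,\bigr\rangle,\qquad m:=s(0),\]
both of which are computable. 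For any $G\vdash\widehat{\overline{g}}\times h$ and $\langle q,p\rangle\in\dom(g\star h)$, the second output component $s$ of $GK\langle q,p\rangle$ is a realizer value of $h$ on $p$; with $m:=s(0)$ the sequence $\widehat{m}$ is again such a value, $\Phi_q(\widehat{m})$ is defined with $\pi_2\Phi_q(\widehat{m})\in\dom(g)$, and the $m$-th component $t_m$ of the first output is a $\delta_{\overline{\IN^\IN}}$-name of a point of $\overline{g}\bigl(\pi_2\Phi_q(\widehat{m})\bigr)=g\bigl(\pi_2\Phi_q(\widehat{m})\bigr)$, so that $t_m-1\in g\bigl(\pi_2\Phi_q(\widehat{m})\bigr)$. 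Hence $H\langle\id,GK\rangle\langle q,p\rangle=\langle\pi_1\Phi_q(\widehat{m}),\,t_m-1\rangle\in\langle\id\times g\rangle\bigl(\Phi_q(\widehat{m})\bigr)\In(g\star h)\langle q,p\rangle$, which establishes $g\star h\leqW\widehat{\overline{g}}\times h$.

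The main obstacle is exactly the step that forces the restriction $h:\In X\mto\IN$: one must arrange that the input passed to $g$ depends only on the \emph{number} produced by $h$, not on the particular name of it output by an arbitrary realizer. Canonicalizing $s$ to $\widehat{s(0)}$ achieves this, and is legitimate only because $\widehat{s(0)}$ is still a realizer value of $h$ — an argument with no analogue for problems of non-discrete codomain. Everything else (the defining property of $u$, the totality of $\overline{g}$ absorbing the junk indices, and the bookkeeping around $\dom(g\star h)$ and the precomplete representations) is routine.
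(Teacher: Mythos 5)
The paper does not actually prove this proposition itself --- it quotes it from Neumann and Pauly and only remarks that the idea is the same as in Proposition~\ref{prop:multiplicative-deduction} --- and your argument is a correct implementation of exactly that idea: canonicalizing the output name of $h$ to $\widehat{s(0)}$ (legitimate precisely because the codomain is $\IN$), feeding the countable family $\pi_2\Phi_q(\widehat{m})$, $m\in\IN$, to $\widehat{\overline{g}}$ via the precompleteness construction of $u$, and selecting the component $m=s(0)$ only after $h$'s answer is known; the domain bookkeeping for $\star$ and the use of totality of $\overline{g}$ on the junk indices are handled correctly. The one blemish is the preliminary claim that for computable $g$ the problem $g\star h$ is computable, which is false (take $g=\id$ and a non-computable $h$, e.g.\ $\LPO$); but that case split is superfluous, since your general construction nowhere uses non-computability of $g$, so that sentence should simply be deleted.
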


This result yields a similar transition from $g\star h$ to $\widehat{\overline{g}}\times h$ as the one that happens from 
Proposition~\ref{prop:multiplicative} to \ref{prop:multiplicative-deduction}, except that we do not need problems $h$ with natural number output for the latter transition.
We obtain the following obvious corollary of Proposition~\ref{prop:multiplicative-deduction}.

\begin{corollary}[Multiplicative deduction]
\label{cor:multiplicative-deduction}
$(g\twoheadrightarrow f)\leqW h\iff f\leqW g\times h$ for all problems $f,g,h$ including $\infty$  and such that $g$ is parallelizable and complete.
\end{corollary}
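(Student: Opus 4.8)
The plan is to obtain the equivalence by combining the two one-directional statements already established, namely Proposition~\ref{prop:multiplicative}~(1) and Proposition~\ref{prop:multiplicative-deduction}, and then using the hypotheses on $g$ to identify $\widehat{\overline{g}}$ with $g$. Throughout, the special cases in which $\infty$ occurs among $f,g,h$ are routine and may be checked directly, noting that $\infty$ is itself complete, parallelizable and pointed, and that the auxiliary propositions already cover these instances.

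For the implication ``$f\leqW g\times h\TO(g\twoheadrightarrow f)\leqW h$'' there is nothing to do: this is exactly Proposition~\ref{prop:multiplicative}~(1), and it needs no assumption on $g$ at all.

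For the converse, first I would record that a complete problem is automatically pointed. Indeed, $\overline{g}$ is always pointed (as observed after Definition~\ref{def:completion}), and completeness of $g$ gives $g\equivW\overline{g}$, so $1\leqW\overline{g}\equivW g$ and hence $1\leqW g$. Now assume $(g\twoheadrightarrow f)\leqW h$. Since $g$ is pointed, Proposition~\ref{prop:multiplicative-deduction} applies and yields $f\leqW\widehat{\overline{g}}\times h$. It remains to simplify the left factor: completeness of $g$ gives $\overline{g}\equivW g$; monotonicity of parallelization with respect to $\leqW$ (Proposition~\ref{prop:parallelization}) then gives $\widehat{\overline{g}}\equivW\widehat{g}$; and parallelizability of $g$ gives $\widehat{g}\equivW g$. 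Combining these with monotonicity of $\times$ with respect to $\leqW$ we obtain $\widehat{\overline{g}}\times h\equivW g\times h$, whence $f\leqW g\times h$.

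I do not anticipate a genuine obstacle; the corollary is a short bookkeeping argument assembling Propositions~\ref{prop:multiplicative}~(1), \ref{prop:multiplicative-deduction} and \ref{prop:parallelization}. The one step that deserves a line of care is the observation ``complete $\Rightarrow$ pointed'', since this is precisely what makes Proposition~\ref{prop:multiplicative-deduction} applicable in the first place.
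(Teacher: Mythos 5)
Your proof is correct and follows exactly the route the paper intends, which it leaves implicit by calling the statement an obvious consequence of Proposition~\ref{prop:multiplicative-deduction}: the forward direction is Proposition~\ref{prop:multiplicative}~(1), and the backward direction is Proposition~\ref{prop:multiplicative-deduction} together with $\widehat{\overline{g}}\equivW g$, your observation that completeness implies pointedness being precisely the detail needed to apply that proposition. The only nitpick is the citation for monotonicity of plain parallelization under $\leqW$, which comes from \cite[Proposition~4.2]{BG11} rather than Proposition~\ref{prop:parallelization} (which concerns the composed operator $f\mapsto\widehat{\overline{f}}$), but the fact itself is available and the argument stands.
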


This is the key observation that is used in the next section in order to show that the parallelized total Weihrauch degrees form a Brouwer algebra.
We note that by \cite[Proposition~37]{BP18} it is known that there is no way to define $\twoheadrightarrow$ such that 
the statement in Corollary~\ref{cor:multiplicative-deduction} holds for all problems $g$.
This remains so, even if we replace Weihrauch reductions $\leqW$ by total Weihrauch reductions $\leqTW$ and the product $\times$ by its completion $\overline{\times}$,
as a refined version of the argument from \cite[Proposition~37]{BP18} shows.

\begin{proposition}
The operation $\overline{\times}$ is not co-residuated and $\overline{\star}$ is not left co-residuated with respect to $\leqTW$.
\end{proposition}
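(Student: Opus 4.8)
The plan is to reduce both claims to the non-existence of suitable least elements, refining the argument of \cite[Proposition~37]{BP18}, and then to exhibit counterexamples built from constant problems on Turing incomparable reals, in the spirit of Lemmas~\ref{lem:counter-monotonicity} and~\ref{lem:counter-monotonicity-2}. First I would record the abstract step. If $\overline{\times}$ were right co-residuated with respect to $\leqTW$ via some operation $\to$, then for all problems $f,g$ the problem $(g\to f)$ would be a $\leqTW$-least element of $\{h:f\leqTW g\overline{\times}h\}$; since $\overline{\times}$ is commutative on the total Weihrauch degrees, right co-residuation already yields co-residuation, so refuting right co-residuation suffices. If $m$ is such a least element and $h_0,h_1$ both lie in the set, then $m\leqTW h_0\overline{\sqcap}h_1$ because $\overline{\sqcap}$ is the infimum for $\leqTW$ (Proposition~\ref{prop:infima-suprema}), and monotonicity of $\overline{\times}$ (Corollary~\ref{cor:monotonicity}) together with $f\leqTW g\overline{\times}m$ gives $f\leqTW g\overline{\times}(h_0\overline{\sqcap}h_1)$. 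Hence it is enough to find $f,g,h_0,h_1$ with $f\leqTW g\overline{\times}h_0$ and $f\leqTW g\overline{\times}h_1$ but $f\nleqTW g\overline{\times}(h_0\overline{\sqcap}h_1)$. For the left co-residuation of $\overline{\star}$ the same reasoning applies with $g\overline{\times}h$ replaced by $h\overline{\star}g$, using that $\overline{\star}$ is monotone in its first argument by Corollary~\ref{cor:Weihrauch-algebra-total}.

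For the $\overline{\times}$ counterexample I would fix Turing incomparable, non-computable $a,b\in\IN^\IN$ and a computable $x\in\IN^\IN$, and use the constant problems $c_{p,q}$ from Lemma~\ref{lem:counter-monotonicity}. Take $f:=c_{x,\langle a,b\rangle}$, let $g$ be the problem with $\dom(g)=\{\widehat{0},\widehat{1}\}$, $g(\widehat{0}):=a$, $g(\widehat{1}):=b$, and put $h_0:=c_{\widehat{0},b}$ and $h_1:=c_{\widehat{0},a}$. The positive reductions are routine: routing the $g$-component to $\widehat{0}$ (so $\overline{g}$ is forced to output $a$) and the $h_0$-component to $\widehat{0}$ witnesses $f\leqW\overline{g}\times\overline{h_0}$, and since $\overline{g}\times\overline{h_0}=g\overline{\times}h_0$ is complete (Proposition~\ref{prop:completion-algebraic}), Lemma~\ref{lem:completion-total} upgrades this to $f\leqTW g\overline{\times}h_0$; the reduction $f\leqTW g\overline{\times}h_1$ is symmetric, routing the $g$-component to $\widehat{1}$. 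For the negative claim, $g\overline{\times}(h_0\overline{\sqcap}h_1)\equivSW\overline{g}\times(\overline{h_0}\sqcap\overline{h_1})$ is again complete, so by Lemma~\ref{lem:completion-total} it suffices to refute $f\leqW\overline{g}\times(\overline{h_0}\sqcap\overline{h_1})$. Assuming a reduction via computable $H,K$, apply $K$ to a name of $x$; since $x$ is computable, the resulting $g$-input $u$ and the two $\overline{h_0}\sqcap\overline{h_1}$-inputs $v,w$ are computable points or $\bot$. Now distinguish cases on $u$: if $u\notin\{\widehat{0},\widehat{1}\}$ or $u=\widehat{0}$, a total (typically non-computable) realizer that answers the meet in its second component through a total realizer of $\overline{h_1}$ leaves $H$ with data computable from $a$ only; if $u=\widehat{1}$, a realizer answering the meet in its first component through a total realizer of $\overline{h_0}$ leaves $H$ with data computable from $b$ only. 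In either case the output $\langle a,b\rangle$ cannot be produced, since $a\nleqT b$ and $b\nleqT a$, so $f\nleqTW g\overline{\times}(h_0\overline{\sqcap}h_1)$ and $\overline{\times}$ is not co-residuated.

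For $\overline{\star}$ the skeleton is identical, but the sequential form of $\star$ forces a small redesign: in $(h\star g)\langle q,p\rangle=\langle\id\times h^\r\rangle\circ\Phi_q\circ g^\r(p)$ only the \emph{input} of the outer problem is preserved, so I would replace $h_0,h_1$ by the problems with $\dom(h_0)=\{a\}$, $h_0(a):=b$ and $\dom(h_1)=\{b\}$, $h_1(b):=a$, keeping $f$ and $g$ as above. Then $f\leqW\overline{h_0}\star\overline{g}$ by routing the $g$-instance to $\widehat{0}$, applying $\overline{g}$ to obtain $a$, passing $a$ as the $\overline{h_0}$-instance through the intermediate computation, and recovering $a$ from the preserved $\overline{h_0}$-instance and $b$ from the $\overline{h_0}$-output; symmetrically $f\leqW\overline{h_1}\star\overline{g}$ via $\widehat{1}$, and these upgrade to $\leqTW$ since $\overline{h_i}\star\overline{g}=h_i\overline{\star}g$ is complete (Proposition~\ref{prop:completion-compositional}). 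For the negative direction it again suffices to refute $f\leqW(\overline{h_0}\sqcap\overline{h_1})\star\overline{g}$. After applying $K$ to a name of $x$ the $g$-input $u$ is computable or $\bot$, and since $x$ and the relevant $\Phi$-code are computable, the input presented to the meet is computable from whatever $\overline{g}$ outputs. The case analysis then runs: for $u\notin\{\widehat{0},\widehat{1}\}$ answer the meet in its first component (the meet-input is computable and hence cannot name $a$, so a computable answer is admissible); for $u=\widehat{0}$, where $\overline{g}$ outputs $a$, the meet-input is computable from $a$ and hence cannot name $b$ (as $b\nleqT a$), so answering in its second component is admissibly computable and keeps everything computable from $a$; for $u=\widehat{1}$, where $\overline{g}$ outputs $b$, the meet-input cannot name $a$, so answering in its first component keeps everything computable from $b$. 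In every case $\langle a,b\rangle$ cannot be computed, which shows that $\overline{\star}$ is not left co-residuated.

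The step I expect to be the main obstacle is the bookkeeping in the two negative directions: one must check that the adversarial realizers invoked are genuine \emph{total} realizers of the completed problems---they are typically non-computable, but $\leqTW$ quantifies over all total realizers, so this is legitimate---and that every routing allowed by the completions, in particular sending a component to $\bot$, is actually covered by the case analysis and leaves no escape for the reduction. A secondary point is the repeated passage ``$f\leqTW g\overline{\times}h\iff f\leqW\overline{g}\times\overline{h}$'' (and likewise for $\overline{\star}$), which relies on completeness of the completed operations via Propositions~\ref{prop:completion-algebraic} and~\ref{prop:completion-compositional} and on Lemma~\ref{lem:completion-total}.
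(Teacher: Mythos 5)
Your proposal is correct, and the overall skeleton matches the paper's: assume the co-residual exists, use that the hypothetical value $(g\to f)$ is a $\leqTW$-least element of $\{h: f\leqTW g\mathbin{\overline{\times}}h\}$ (resp.\ $\{h:f\leqTW h\mathbin{\overline{\star}}g\}$), pass to the infimum $h_0\overline{\sqcap}h_1$ via Proposition~\ref{prop:infima-suprema} and monotonicity, and contradict this with two upper bounds whose infimum is not an upper bound. Where you genuinely diverge is in the counterexamples. The paper uses a single example built from choice problems, $f=\C_{2^\IN}\times\overline{\C_\IN}$, $g=\C_{2^\IN}\sqcup\overline{\C_\IN}$, $h_1=\overline{\C_\IN}$, $h_2=\C_{2^\IN}$, and obtains the crucial non-reduction $f\nleqW\C_{2^\IN}\sqcup(\overline{\C_\IN}\star\overline{\C_\IN})$ from fractality/join-irreducibility of $f$ together with known separations ($\C_\IN\nleqW\C_{2^\IN}$, and a computable-inputs-without-computable-solutions argument); since the obstruction is stated against $(h_1\overline{\sqcap}h_2)\overline{\star}g$, which dominates $(h_1\overline{\sqcap}h_2)\overline{\times}g$, one example refutes both co-residuation claims at once. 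You instead build two bespoke examples from finite-domain constant-type problems over Turing incomparable reals $a,b$ and argue directly against a hypothetical reduction $H,K$ by choosing an adversarial total realizer that answers the meet on the side whose domain the (computable-from-one-real) inputs cannot hit, so that everything $H$ sees is computable from $a$ alone or from $b$ alone. Your route is more elementary and self-contained (no facts about $\C_{2^\IN}$, $\C_\IN$ or fractals are needed, only Lemma~\ref{lem:completion-total}, Propositions~\ref{prop:completion-algebraic} and \ref{prop:completion-compositional}, and Turing incomparability), at the price of two separate constructions and some realizer bookkeeping in the negative directions, which you rightly flag but which does go through (the adversary only needs to be correct at the single computable name produced by $K$, and may be extended arbitrarily to a total realizer since the completed problems are total); the paper's route is shorter given the cited machinery and has the added value of exhibiting the failure among natural, complete degrees rather than artificial ones.
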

\begin{proof}
We have 
\begin{enumerate}
\item $\C_{2^\IN}\times\overline{\C_\IN}\leqW \overline{\C_\IN}\times(\C_{2^\IN}\sqcup\overline{\C_\IN})$,
\item $\C_{2^\IN}\times\overline{\C_\IN}\leqW \C_{2^\IN}\times(\C_{2^\IN}\sqcup\overline{\C_\IN})$,
\item $(\C_{2^\IN}\sqcap\overline{\C_\IN})\star(\C_{2^\IN}\sqcup\overline{\C_\IN})\leqW\C_{2^\IN}\sqcup(\overline{\C_{\IN}}\star\overline{\C_{\IN}})$,
\item $\C_{2^\IN}\times\overline{\C_\IN}\nleqW\C_{2^\IN}\sqcup(\overline{\C_{\IN}}\star\overline{\C_{\IN}})$.
\end{enumerate}
While (1) and (2) are clear, it remains to justify (3) and (4). We obtain (3) since $\C_{2^\IN}\star\C_{2^\IN}\equivW\C_{2^\IN}$ and by distributivity properties of $\star$ \cite[Proposition~39]{BP18}
\begin{eqnarray*}
(\C_{2^\IN}\sqcap\overline{\C_\IN})\star(\C_{2^\IN}\sqcup\overline{\C_\IN})
&\equivW& ((\C_{2^\IN}\sqcap\overline{\C_\IN})\star\C_{2^\IN})\sqcup((\C_{2^\IN}\sqcap\overline{\C_\IN})\star\overline{\C_{\IN}})\\
&\leqW& (\C_{2^\IN}\sqcap(\overline{\C_\IN}\star\C_{2^\IN}))\sqcup((\C_{2^\IN}\star\overline{\C_\IN})\sqcap(\overline{\C_{\IN}}\star\overline{\C_{\IN}}))\\
&\leqW& \C_{2^\IN}\sqcup(\overline{\C_{\IN}}\star\overline{\C_{\IN}}).
\end{eqnarray*}
Now we need to justify why (4) holds. Since $\C_{2^\IN}$ is a fractal by \cite[Corollary~5.6]{BBP12}, \cite[Fact~3.2]{BGM12} 
and $\overline{\C_\IN}$ is a fractal as proved in \cite[Lemma~8.7]{BG19},
it follows that $\C_{2^\IN}\times\overline{\C_\IN}$ is a fractal and hence join irreducible by \cite[Proposition~2.6]{BGM12} .
This means that $\C_{2^\IN}\times\overline{\C_\IN}\leqW\C_{2^\IN}\sqcup(\overline{\C_{\IN}}\star\overline{\C_{\IN}})$
would imply that $\C_{2^\IN}\times\overline{\C_\IN}\leqW\C_{2^\IN}$ or $\C_{2^\IN}\times\overline{\C_\IN}\leqW\overline{\C_{\IN}}\star\overline{\C_{\IN}}$ holds.
The latter is impossible, as $\C_{2^\IN}$ has computable inputs without computable solutions, while $\overline{\C_\IN}\star\overline{\C_\IN}$ has
computable solutions for all inputs. The former is impossible as even $\C_\IN\nleqW\C_{2^\IN}$. 

By Propositions~\ref{prop:completion-algebraic} and \ref{prop:completion-compositional} all degrees that appear in (1)--(4) are complete,
as $\C_{2^\IN}$ is complete by Corollary~\ref{cor:complete-problems}.
Hence, all the statements (1)--(4) hold true if we replace $\leqW$ by $\leqTW$.
Suppose now a binary operation $\Box$ would exist such that 
$(g\Box f)\leqTW h\iff f\leqTW h\overline{\times} g$ holds for all problems $f,g,h$.
We consider $g:=(\C_{2^\IN}\sqcup\overline{\C_\IN})$ and $h_1:=\overline{\C_\IN}$, $h_2:=\C_{2^\IN}$
and $f:=\C_{2^\IN}\times\overline{\C_\IN}$.
Then by (1)--(4) $f\leqTW h_1\overline{\times} g$ and $f\leqTW h_2\overline{\times} g$, but $f\nleqTW (h_1\overline{\sqcap} h_2)\overline{\star} g$,
which also implies $f\nleqTW (h_1\overline{\sqcap} h_2)\overline{\times} g$
This simultaneously shows that $\Box$ does not exist and also a corresponding operation for $\overline{\star}$ does not exist.
\end{proof}

The Weihrauch algebra of total Weihrauch degrees fails in two different ways being a model of some
intuitionistic linear logic. The multiplicative and compositional versions of the algebra both fail to be Troelstra
algebras, the former is not deductive, the latter is not commutative.

\begin{corollary}
The Weihrauch algebras from Corollary~\ref{cor:Weihrauch-algebra-total} are not Troelstra algebras, i.e., 
\begin{enumerate}
\item $(\WW_{\rm tW},\leqTW,\overline{\sqcap},\sqcup,\overline{\times},\overline{\twoheadrightarrow},1,1,\infty)$
        is not deductive, 
\item $(\WW_{\rm tW},\leqTW,\overline{\sqcap},\sqcup,\overline{\star},\overline{\to},1,1,\infty)$
        is not commutative.
\end{enumerate}
\end{corollary}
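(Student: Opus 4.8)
The plan is to prove the two non-properties separately, each by producing a concrete witnessing triple of problems.

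For item (2), that $(\WW_{\rm tW},\leqTW,\overline{\sqcap},\sqcup,\overline{\star},\overline{\to},1,1,\infty)$ is not commutative, I would argue that the monoid operation $\overline{\star}$ fails $f\overline{\star}g\equivTW g\overline{\star}f$. The natural candidates are $\lim$ and $\WKL$: by Proposition~\ref{prop:complete-problems} and Corollary~\ref{cor:complete-problems} both are strongly complete, so $\lim\overline{\star}\WKL\equivSTW\lim\star\WKL$ and $\WKL\overline{\star}\lim\equivSTW\WKL\star\lim$ by Proposition~\ref{prop:completion-compositional}. Since $\lim\star\WKL\equivW\lim\lW\WKL\star\lim$ in the ordinary Weihrauch lattice (the remark after Corollary~\ref{cor:Weihrauch-algebra}), and all four degrees here are complete, Lemma~\ref{lem:completion-total} upgrades this to $\lim\overline{\star}\WKL\lTW\WKL\overline{\star}\lim$. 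In fact I only need one direction to fail; the separation $\WKL\star\lim\nleqW\lim\star\WKL$ together with completeness of $\WKL\star\lim$ (Proposition~\ref{prop:completion-compositional}, since both factors are complete) and of $\lim\star\WKL$ gives $\WKL\overline{\star}\lim\nleqTW\lim\overline{\star}\WKL$, hence non-commutativity.

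For item (1), that $(\WW_{\rm tW},\leqTW,\overline{\sqcap},\sqcup,\overline{\times},\overline{\twoheadrightarrow},1,1,\infty)$ is not deductive: being deductive would mean ``$\iff$'' in axiom (5) of Definition~\ref{def:Weihrauch-algebra}, i.e.\ $(g\overline{\twoheadrightarrow}f)\leqTW h\iff f\leqTW g\overline{\times}h$. The forward direction already holds by Proposition~\ref{prop:multiplicative}~(1) (with completions, the pointedness assumption being automatic). So I must refute the backward direction: exhibit $f,g,h$ with $f\leqTW g\overline{\times}h$ but $(g\overline{\twoheadrightarrow}f)\nleqTW h$. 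The cleanest route reuses the machinery of the immediately preceding proposition: that proof shows $\overline{\times}$ is not co-residuated, using $g:=\C_{2^\IN}\sqcup\overline{\C_\IN}$, $h_1:=\overline{\C_\IN}$, $h_2:=\C_{2^\IN}$, $f:=\C_{2^\IN}\times\overline{\C_\IN}$, with all relevant degrees complete. If $\overline{\twoheadrightarrow}$ were the co-residual, i.e.\ deductivity held, then from $f\leqTW h_1\overline{\times}g$ we would get $(g\overline{\twoheadrightarrow}f)\leqTW h_1$, and from $f\leqTW h_2\overline{\times}g$ we would get $(g\overline{\twoheadrightarrow}f)\leqTW h_2$; taking the infimum $\overline{\sqcap}$ (which is a genuine infimum for $\leqTW$ by Proposition~\ref{prop:infima-suprema}) yields $(g\overline{\twoheadrightarrow}f)\leqTW h_1\overline{\sqcap}h_2$, and then applying the forward direction of deductivity (Proposition~\ref{prop:multiplicative}~(1)) gives $f\leqTW (h_1\overline{\sqcap}h_2)\overline{\times}g$, which contradicts statement (4) in the preceding proof, $\C_{2^\IN}\times\overline{\C_\IN}\nleqW\C_{2^\IN}\sqcup(\overline{\C_\IN}\star\overline{\C_\IN})$, upgraded to $\leqTW$ via completeness. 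Hence deductivity fails.

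The main obstacle is bookkeeping rather than substance: I must make sure that in each step the problems involved are complete (so that $\leqW$-facts transfer to $\leqTW$-facts via Lemma~\ref{lem:completion-total} and the completions of operations agree with the operations up to $\equivTW$), and that the infimum and the multiplicative implication behave on total degrees exactly as stated. Since the preceding proposition has already assembled precisely the needed completeness facts about $\C_{2^\IN}$, $\overline{\C_\IN}$, their products, coproducts and compositional products, item (1) reduces to quoting that proof almost verbatim; the only genuinely new remark is that the implication ``$f\leqTW g\overline{\times}h\Rightarrow (g\overline{\twoheadrightarrow}f)\leqTW h$'' is the total-degree version of Proposition~\ref{prop:multiplicative}~(1), which holds because every $\overline{g}$ is pointed. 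For item (2) there is essentially nothing to check beyond citing the strong completeness of $\lim$ and $\WKL$ and the strict inequality $\lim\star\WKL\lW\WKL\star\lim$.
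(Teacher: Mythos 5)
Your proposal is correct and takes essentially the same route the paper intends: the paper leaves this corollary without an explicit proof because item (1) is immediate from the preceding proposition (whose counterexample $f:=\C_{2^\IN}\times\overline{\C_\IN}$, $g:=\C_{2^\IN}\sqcup\overline{\C_\IN}$, $h_1:=\overline{\C_\IN}$, $h_2:=\C_{2^\IN}$ you correctly specialize to $\overline{\twoheadrightarrow}$), and item (2) is exactly the $\lim\star\WKL\equivW\lim\lW\WKL\star\lim$ example transferred to $\leqTW$ via the completeness of $\lim$, $\WKL$ and their compositional products. One citation to fix: the step from $(g\overline{\twoheadrightarrow}f)\leqTW h_1\overline{\sqcap}h_2$ to $f\leqTW(h_1\overline{\sqcap}h_2)\overline{\times}g$ is not Proposition~\ref{prop:multiplicative}~(1) (which goes the other way) but precisely the converse implication that the assumed deductivity supplies, which is legitimate inside your proof by contradiction since the first implication $f\leqTW h_i\overline{\times}g\Rightarrow(g\overline{\twoheadrightarrow}f)\leqTW h_i$ holds unconditionally.
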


\section{The Brouwer Algebra of Parallelizable Total Degrees}
\label{sec:Brouwer}

In \cite{BG11} we have already studied parallelized Weihrauch reducibility $\leq_{\rm pW}$, which is the reducibility
that is generated by the closure operator of parallelization on $\leqW$.
Likewise we want to study parallelized total Weihrauch reducibility $\leq_{\rm ptW}$.

\begin{definition}[Parallelized Weihrauch reducibility]
For problems $f,g$ we write
\begin{enumerate}
\item $f\leq_{\rm pW} g:\iff f\leqW\widehat{g}$ \hfill (parallelized Weihrauch reducibility)
\item $f\leq_{\rm ptW} g:\iff f\leqW\widehat{\overline{g}}$ \hfill (parallelized total Weihrauch reducibility)
\end{enumerate}
Analogously, we write $\equiv_{\rm pW}$ and $\equiv_{\rm ptW}$ for the corresponding equivalences.
\end{definition}

It is clear that $\leq_{\rm pW}$ and $\leq_{\rm ptW}$ are actually preorders by Propositions~\ref{prop:closure-operators},
as completion and parallelized completion are closure operators (the latter by Proposition~\ref{prop:parallelization}).
We note that we also have $f\leq_{\rm ptW} g\iff f\leqTW\widehat{\overline{g}}$ by Proposition~\ref{prop:completion-algebraic}.
It is important to mention that the order in which we apply the closure operators matters. While $\widehat{\overline{g}}$ is always complete and parallelizable,
$\overline{\widehat{g}}$ is always complete, but not necessarily parallelizable (see Lemma~\ref{lem:counter-monotonicity-2}).

For each operation $\Box\in\{\times,\sqcup,\boxplus,\sqcap,+,\star,\to,\twoheadrightarrow\}$ 
we define its {\em parallelized completion} $\widehat{\overline{\Box}}$ by $f\widehat{\overline{\Box}} g:=\widehat{\overline{f}}\Box\widehat{\overline{g}}$.
Since parallelized completion is a closure operator for $\leqW$ by Proposition~\ref{prop:parallelization},
we straightforwardly obtain the following by Proposition~\ref{prop:closure-operators}.

\begin{corollary}[Monotonicity]
\label{cor:monotonicity-parallelized}
\
\begin{enumerate}
\item
$(f,g)\mapsto f\widehat{\overline{\Box}}g$ for $\Box\in\{\times,\sqcup,\boxplus,\sqcap,+,\star\}$
is monotone with respect to $\leq_{\rm ptW}$. 
\item $(f,g)\mapsto f\widehat{\overline{\Box}}g$ for $\Box\in\{\to,\twoheadrightarrow\}$ is
monotone with respect to $\leq_{\rm ptW}$ in the second argument and antitone in the first argument.
\end{enumerate}
\end{corollary}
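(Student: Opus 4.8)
The plan is to derive everything from Proposition~\ref{prop:closure-operators}(3) applied to the parallelized completion operator $c\colon f\mapsto\widehat{\overline{f}}$. By Proposition~\ref{prop:parallelization}, $c$ is a closure operator for $\leqW$, and by the very definition of $\leq_{\rm ptW}$ it generates $\leq_{\rm ptW}$ on $\leqW$, i.e., $f\leq_{\rm ptW}g\iff f\leqW c(g)$. Since $f\widehat{\overline{\Box}}g=c(f)\Box c(g)$ is precisely the operation $\Box_c$ of Proposition~\ref{prop:closure-operators}(3), it suffices to know the stated variance of each $\Box$ with respect to $\leqW$ and let Proposition~\ref{prop:closure-operators}(3)(a) transfer it to $\widehat{\overline{\Box}}$ with respect to $\leq_{\rm ptW}$.

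For $\Box\in\{\times,\sqcup,\boxplus,\sqcap,+\}$ monotonicity in both arguments with respect to $\leqW$ is \cite[Proposition~3.6]{BGP18}; for $\star$ it is Fact~\ref{fact:compositional}(4); for $\to$ monotonicity in the second and antitonicity in the first argument with respect to $\leqW$ is Fact~\ref{fact:compositional}(5). Each of these feeds directly into Proposition~\ref{prop:closure-operators}(3)(a) and gives part~(1) together with the $\to$-half of part~(2).

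The delicate point, and the one I expect to be the only real obstacle, is $\twoheadrightarrow$: by Proposition~\ref{prop:monotonicity-multiplicative-implication} this operation is monotone in the second and antitone in the first argument only when the first argument is pointed, so Proposition~\ref{prop:closure-operators}(3)(a) cannot be cited off the shelf. I would first observe that $\widehat{\overline{f}}$ is always pointed, since $\bot\in\dom(\overline{f})$ has a computable name and hence $1\leqW\overline{f}\leqW\widehat{\overline{f}}$. Then I would re-run the (two-line) argument of Proposition~\ref{prop:closure-operators}(3)(a): the only place the variance of $\Box$ is invoked there is in passing from $c(x_0)\leqW c(x_1)$ to a comparison of $c(x_1)\Box c(y)$ with $c(x_0)\Box c(y)$, and symmetrically in the second slot; for $\Box=\twoheadrightarrow$ both comparisons are exactly what Proposition~\ref{prop:monotonicity-multiplicative-implication} delivers, because the first slot is always occupied by some $\widehat{\overline{\cdot}}$, hence pointed, and $c(x_0)\leqW c(x_1)$ is equivalent to $x_0\leq_{\rm ptW}x_1$ by Proposition~\ref{prop:closure-operators}(1). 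Concretely, for antitonicity in the first argument, given $f_0\leq_{\rm ptW}f_1$ one applies Proposition~\ref{prop:monotonicity-multiplicative-implication} with its hypotheses met by $\widehat{\overline{f_0}}$ (pointed) $\leqW\widehat{\overline{f_1}}$ and $\widehat{\overline{g}}\leqW\widehat{\overline{g}}$, obtaining $\widehat{\overline{f_1}}\twoheadrightarrow\widehat{\overline{g}}\leqW\widehat{\overline{f_0}}\twoheadrightarrow\widehat{\overline{g}}$, and then upgrades $\leqW$ to $\leq_{\rm ptW}$ because $c$ is a closure operator; monotonicity in the second argument is the mirror image, now using that the fixed first argument $\widehat{\overline{f}}$ is pointed.

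Apart from this, I expect only bookkeeping: keeping track of which slot of $\twoheadrightarrow$ carries the pointedness requirement and matching the swapped indices in the conclusion $(g_1\twoheadrightarrow f_0)\leqW(g_0\twoheadrightarrow f_1)$ of Proposition~\ref{prop:monotonicity-multiplicative-implication} to the antitone/monotone phrasing of the corollary. Once that dictionary is written down, both parts are a routine unwinding of the closure-operator formalism.
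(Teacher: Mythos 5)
Your proposal is correct and takes essentially the same route as the paper: the known $\leqW$-monotonicity facts are transferred through the closure operator $f\mapsto\widehat{\overline{f}}$ via Proposition~\ref{prop:closure-operators}, with the pointedness caveat for $\twoheadrightarrow$ resolved by the observation that completed (hence parallelized completed) problems are pointed. The only cosmetic difference is that you inline the content of Corollary~\ref{cor:monotonicity-multiplicative-implication} by re-running Proposition~\ref{prop:monotonicity-multiplicative-implication} with pointed first arguments, where the paper simply cites that corollary.
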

\begin{proof}
The corresponding monotonicity properties with respect to $\leqW$ are known by \cite[Proposition~3.6]{BGP18}, except for $\twoheadrightarrow$:
$\overline{\twoheadrightarrow}$ is monotone with respect to $\leqTW$ by Corollary~\ref{cor:monotonicity-multiplicative-implication}.
Hence the claims follow from Proposition~\ref{prop:closure-operators}.
\end{proof}

An interesting property of parallelized (total) Weihrauch reducibility is that suprema and products are merged in a certain sense.
We summarize some facts regarding preservation and co-preservation of parallelization 
that were proved in \cite[Propositions~4.5, 4.8, 4.9]{BG11} and \cite[Propositions~41, 44]{BP18}.

\begin{fact}[Parallelization and algebraic operations]
\label{fact:parallelization}
For all problems $f,g$ including $\infty$:
\begin{enumerate}
\item $\widehat{f\times g}\equivSW\widehat{f}\times\widehat{g}\leqW\widehat{f\sqcup g}$,
\item $\widehat{f}\sqcup\widehat{g}\leqW\widehat{f\sqcup g}\leqW\widehat{\widehat{f}\sqcup\widehat{g}}$,
\item $\widehat{f\sqcap g}\leqSW\widehat{f}\sqcap\widehat{g}\equivSW\widehat{\widehat{f}\sqcap\widehat{g}}$,
\item $\widehat{f\star g}\leqSW\widehat{f}\star\widehat{g}\equivSW\widehat{\widehat{f}\star\widehat{g}}$,
\item $\widehat{\overline{f}\times \overline{g}}\equivSW\widehat{\overline{f}}\times\widehat{\overline{g}}\equivW\widehat{\overline{f}\sqcup \overline{g}}\equivW\widehat{\overline{f\sqcup g}}$.
\end{enumerate}
\end{fact}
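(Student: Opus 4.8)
The plan is as follows. Items (1)--(4) are exactly the statements of \cite[Propositions~4.5, 4.8, 4.9]{BG11} and \cite[Propositions~41, 44]{BP18} (with the $\infty$--cases disposed of, as always, by the conventions $\widehat\infty=\overline\infty=\infty$, $f\sqcup\infty=\infty$, $f\times\infty=\infty$, which one checks directly), so the only genuinely new content is item (5). I would obtain (5) by combining item (1) with the completion results already established, namely Corollary~\ref{cor:order-operations}, Proposition~\ref{prop:infima-suprema}(4), and monotonicity of parallelization with respect to $\leqW$.

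Concretely, I would prove (5) as a chain of three equivalences. The first, $\widehat{\overline f\times\overline g}\equivSW\widehat{\overline f}\times\widehat{\overline g}$, is nothing but item (1) applied with $\overline f$ and $\overline g$ in place of $f$ and $g$. For the second, $\widehat{\overline f}\times\widehat{\overline g}\equivW\widehat{\overline f\sqcup\overline g}$, the reduction $\leqW$ is the second half of item (1) applied to $\overline f,\overline g$ (namely $\widehat{\overline f}\times\widehat{\overline g}\leqW\widehat{\overline f\sqcup\overline g}$); for the converse I would use that $\overline f$ and $\overline g$ are pointed, so that $\overline f\sqcup\overline g\leqW\overline f\times\overline g$ by Corollary~\ref{cor:order-operations}, and then apply the $\leqW$--monotone parallelization operator and the first equivalence to get $\widehat{\overline f\sqcup\overline g}\leqW\widehat{\overline f\times\overline g}\equivSW\widehat{\overline f}\times\widehat{\overline g}$. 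For the third, $\widehat{\overline f\sqcup\overline g}\equivW\widehat{\overline{f\sqcup g}}$, I would invoke $\overline{f\sqcup g}\equivW\overline f\sqcup\overline g$ from Proposition~\ref{prop:infima-suprema}(4) and note that parallelization, being monotone in both directions, carries $\equivW$ to $\equivW$.

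I do not anticipate a real obstacle here, since (5) is essentially bookkeeping layered on top of item (1) and the earlier completion lemmas. The one point that requires some care is the appeal to Corollary~\ref{cor:order-operations} in the second step: the bound $\sqcup\leqW\times$ needs both operands pointed and fails for arbitrary problems, so it is precisely the passage to completions (which are always pointed, witnessed by the all-zero name of $\bot$) that makes the product and the coproduct of parallelized completions coincide.
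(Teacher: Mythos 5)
Your proposal is correct and is consistent with the paper's treatment: the paper offers no proof of its own, stating items (1)--(4) (and the pointed-problem ingredients behind (5)) as a Fact by appeal to \cite[Propositions~4.5, 4.8, 4.9]{BG11} and \cite[Propositions~41, 44]{BP18}. Your explicit derivation of item (5) --- item (1) applied to $\overline{f},\overline{g}$, the reduction $\overline{f}\sqcup\overline{g}\leqW\overline{f}\times\overline{g}$ from Corollary~\ref{cor:order-operations} (available precisely because completions are pointed), monotonicity of parallelization with respect to $\leqW$, and $\overline{f\sqcup g}\equivW\overline{f}\sqcup\overline{g}$ from Proposition~\ref{prop:infima-suprema}~(4) --- is sound and is the evident intended route.
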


Hence, $\overline{\times}$ and $\sqcup$ are equivalent operations under parallelized total Weihrauch reducibility.
This follows from Fact~\ref{fact:parallelization} and Proposition~\ref{prop:preservation-suprema-infima}.

\begin{corollary}[Products and coproducts]
\label{cor:products-coproducts-parallelized}
$\overline{f}\times\overline{g}\equiv_{\rm ptW}\widehat{\overline{f}}\times\widehat{\overline{g}}\equiv_{\rm ptW}\widehat{\overline{f}}\sqcup\widehat{\overline{g}}\equiv_{\rm ptW}f\sqcup g$
for all problems $f,g$.
\end{corollary}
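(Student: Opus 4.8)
The plan is to push the whole chain of equivalences down to ordinary Weihrauch reducibility. Since $c\colon h\mapsto\widehat{\overline{h}}$ is a closure operator for $\leqW$ by Proposition~\ref{prop:parallelization}, and the preorder it generates on the class of problems is by definition exactly $\leq_{\rm ptW}$, Proposition~\ref{prop:closure-operators}(1) yields $f\equiv_{\rm ptW} g\iff\widehat{\overline{f}}\equivW\widehat{\overline{g}}$. So it suffices to show that $\widehat{\overline{h}}\equivW\widehat{\overline{f\sqcup g}}$ holds for each of the four problems $h\in\{\overline{f}\times\overline{g},\,\widehat{\overline{f}}\times\widehat{\overline{g}},\,\widehat{\overline{f}}\sqcup\widehat{\overline{g}},\,f\sqcup g\}$. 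The last of these is trivial, and for the first three the recipe is always the same: use completeness to strip off the outer completion and then apply the preservation facts for parallelization collected in Fact~\ref{fact:parallelization}.

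Concretely, $\overline{f}\times\overline{g}$ is strongly complete by Proposition~\ref{prop:completion-algebraic}(1), so $\widehat{\overline{\overline{f}\times\overline{g}}}\equivSW\widehat{\overline{f}\times\overline{g}}\equivSW\widehat{\overline{f}}\times\widehat{\overline{g}}\equivW\widehat{\overline{f\sqcup g}}$, where the middle step is Fact~\ref{fact:parallelization}(1) (or just (5)) and the last step is Fact~\ref{fact:parallelization}(5). Next, $\widehat{\overline{f}}$ and $\widehat{\overline{g}}$ are strongly complete by Proposition~\ref{prop:completion-algebraic}(2), hence so are $\widehat{\overline{f}}\times\widehat{\overline{g}}$ and $\widehat{\overline{f}}\sqcup\widehat{\overline{g}}$ by Proposition~\ref{prop:completion-algebraic}(1); therefore the claim for these two problems reduces to $\widehat{\widehat{\overline{f}}\times\widehat{\overline{g}}}\equivW\widehat{\overline{f\sqcup g}}$ and $\widehat{\widehat{\overline{f}}\sqcup\widehat{\overline{g}}}\equivW\widehat{\overline{f\sqcup g}}$. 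The first follows from Fact~\ref{fact:parallelization}(1), idempotency of parallelization, and Fact~\ref{fact:parallelization}(5); for the second one sandwiches $\widehat{\widehat{\overline{f}}\sqcup\widehat{\overline{g}}}$ between $\widehat{\overline{f}\sqcup\overline{g}}$ and itself by parallelizing the left inequality of Fact~\ref{fact:parallelization}(2) and using the right one, obtaining $\widehat{\widehat{\overline{f}}\sqcup\widehat{\overline{g}}}\equivW\widehat{\overline{f}\sqcup\overline{g}}\equivW\widehat{\overline{f\sqcup g}}$ by Fact~\ref{fact:parallelization}(5).

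I expect no real obstacle: the substance is entirely contained in Fact~\ref{fact:parallelization} and Proposition~\ref{prop:completion-algebraic}, and the remaining work is bookkeeping. The one thing to be careful about is tracking which composite problems are (strongly) complete --- so that an outer $\overline{\,\cdot\,}$ may legitimately be discarded inside a parallelization --- and using the asymmetric inequalities of Fact~\ref{fact:parallelization}(2) in the right direction when collapsing $\widehat{\widehat{\overline{f}}\sqcup\widehat{\overline{g}}}$. A minor alternative phrasing would be to target $\widehat{\overline{f}}\times\widehat{\overline{g}}$ instead of $\widehat{\overline{f\sqcup g}}$ as the common value, which is equivalent by Fact~\ref{fact:parallelization}(5).
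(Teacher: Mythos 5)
Your proof is correct and follows essentially the same route as the paper: the substance in both cases is Fact~\ref{fact:parallelization} (especially item (5)) together with closure-operator bookkeeping for $h\mapsto\widehat{\overline{h}}$, using Propositions~\ref{prop:closure-operators}, \ref{prop:parallelization} and \ref{prop:completion-algebraic}. The only cosmetic difference is that the paper dispatches the identification $f\sqcup g\equiv_{\rm ptW}\widehat{\overline{f}}\sqcup\widehat{\overline{g}}$ abstractly via Proposition~\ref{prop:preservation-suprema-infima} (suprema are co-preserved by closure operators), whereas you obtain it by parallelizing the inequalities of Fact~\ref{fact:parallelization}(2) and using idempotency of parallelization, which is equivalent bookkeeping.
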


By $\WW_{\rm ptW}$ we denote the class of parallelized total Weihrauch degrees including $\infty$. 
We use the same notation $\leq_{\rm ptW}$ for the order on degrees and we consider the operations to be extended to these degrees.
In order to avoid too clumsy notation we use the abbreviation $\Rrightarrow$ for $\widehat{\overline{\twoheadrightarrow}}$ in the following.
We prove that the parallelized total Weihrauch degrees form a Brouwer algebra.

\begin{theorem}[Brouwer algebra]
\label{thm:Brouwer-algebra}
$(\WW_{\rm ptW},\leq_{\rm ptW},\widehat{\overline{\sqcap}},\sqcup,\Rrightarrow,1,\infty)$ is a Brouwer algebra.
\end{theorem}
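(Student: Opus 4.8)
The plan is to verify the defining axioms of a Brouwer algebra (Definition~\ref{def:algebras}) for the structure $(\WW_{\rm ptW},\leq_{\rm ptW},\widehat{\overline{\sqcap}},\sqcup,\Rrightarrow,1,\infty)$. Recall a Brouwer algebra is a Troelstra algebra with $\cdot=\vee$ and $1=\bot$; unfolding, this means we need: (i) a bounded distributive lattice $(\WW_{\rm ptW},\leq_{\rm ptW},\widehat{\overline{\sqcap}},\sqcup)$ with bottom $1$ and top $\infty$; (ii) the implication $\Rrightarrow$ is monotone in the second argument and antitone in the first; (iii) the co-residuation law $f\leq_{\rm ptW} g\sqcup h \iff (g\Rrightarrow f)\leq_{\rm ptW} h$. (Commutativity of $\cdot=\sqcup$ is automatic, and the monoid laws for $\sqcup$ are just the lattice laws.)

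First I would handle the lattice structure. By Corollary~\ref{cor:products-coproducts-parallelized} we have $\overline{f}\times\overline{g}\equiv_{\rm ptW} f\sqcup g$, so the join operation of the parallelized total lattice is $\sqcup$, and it agrees with $\widehat{\overline{\times}}$. The infimum is $\widehat{\overline{\sqcap}}$: by Proposition~\ref{prop:infima-suprema}(1) $\overline{f}\sqcap\overline{g}$ is an infimum for $\leqTW$, and then the additional closure operator of parallelization (applied in the order ``completion then parallelization'', which is a closure operator by Proposition~\ref{prop:parallelization}) preserves infima by Proposition~\ref{prop:preservation-suprema-infima}(2) together with Proposition~\ref{prop:closure-operators}(3)(b),(c). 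Boundedness: $1$ (the degree of $\id$, equivalently all computable problems) is the bottom by Corollary~\ref{cor:minimal} together with the fact that $\leq_{\rm ptW}$ refines $\leqTW$; $\infty$ is the top by construction since $\widehat{\overline{\infty}}=\infty$. Distributivity follows from Theorem~\ref{thm:total-lattice}: the total Weihrauch lattice is distributive, and applying the parallelization closure operator on top of it preserves distributivity because a closure operator that preserves both suprema and infima (Proposition~\ref{prop:preservation-suprema-infima}, Proposition~\ref{prop:closure-operators}(4)) carries a distributive lattice to a distributive quotient lattice; one must also use that $\widehat{\overline{\sqcap}}$ and $\sqcup$ are the correct induced operations, which is exactly what the previous parts established.

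Next, monotonicity of $\Rrightarrow=\widehat{\overline{\twoheadrightarrow}}$ in the required variances is Corollary~\ref{cor:monotonicity-parallelized}(2). The heart of the proof, and the step I expect to be the main obstacle, is the co-residuation law (iii). Here is where Corollary~\ref{cor:multiplicative-deduction} becomes the key tool: for a problem $g$ that is parallelizable and complete one has $(g\twoheadrightarrow f)\leqW h \iff f\leqW g\times h$. Now in $\WW_{\rm ptW}$ every degree is represented by a parallelizable and complete problem — indeed $\widehat{\overline{g}}$ is both parallelizable and complete (this is stated explicitly in the remark after the definition of $\leq_{\rm ptW}$, using Proposition~\ref{prop:parallelization} and Proposition~\ref{prop:completion-algebraic}). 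So I would argue: $(g\Rrightarrow f)\leq_{\rm ptW} h$ means $\widehat{\overline{(\widehat{\overline{g}}\twoheadrightarrow\widehat{\overline{f}})}}\leqW\widehat{\overline{h}}$, and by the closure-operator calculus (Proposition~\ref{prop:closure-operators}) together with $\leq_{\rm ptW}$ being generated by parallelized completion, this is equivalent to $(\widehat{\overline{g}}\twoheadrightarrow\widehat{\overline{f}})\leq_{\rm ptW} h$. Applying Corollary~\ref{cor:multiplicative-deduction} with the parallelizable complete problem $\widehat{\overline{g}}$ in the role of $g$ — after one more reduction to the parallelized-completed setting — yields $\widehat{\overline{f}}\leq_{\rm pW}\widehat{\overline{g}}\times h$, and by Corollary~\ref{cor:products-coproducts-parallelized} and Fact~\ref{fact:parallelization}(5) this is precisely $f\leq_{\rm ptW} g\sqcup h$, i.e.\ $f\leq_{\rm ptW}\widehat{\overline{g}}\widehat{\overline{\times}}\,\ldots$ — the merged product/coproduct. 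The delicate bookkeeping is making sure that all the intermediate closure operators land in the right place: one needs $(g\Rrightarrow f)$ to be simultaneously complete and parallelized-complete so that Corollary~\ref{cor:multiplicative-deduction} applies without a pointedness defect, and one needs Fact~\ref{fact:parallelization} to collapse $\widehat{\overline{g}}\times h$ with $g\sqcup h$ up to $\equiv_{\rm ptW}$. I would organize this as a chain of ``$\iff$'' steps, each justified by one of the cited results, with the single genuinely new content being the invocation of Corollary~\ref{cor:multiplicative-deduction} on representatives of the form $\widehat{\overline{g}}$.
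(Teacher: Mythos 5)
Your proposal is correct and follows essentially the same route as the paper: the lattice structure comes from the closure-operator calculus for parallelized completion, and the co-residuation law is exactly the paper's chain of equivalences obtained by applying Corollary~\ref{cor:multiplicative-deduction} to the parallelizable, complete representative $\widehat{\overline{g}}$ and then merging $\widehat{\overline{g}}\times\widehat{\overline{h}}$ with $\widehat{\overline{g\sqcup h}}$ via Fact~\ref{fact:parallelization}~(5). The only deviations are cosmetic: the distributivity verification is not needed (Definition~\ref{def:Weihrauch-algebra} does not require it), and the hypothesis of Corollary~\ref{cor:multiplicative-deduction} concerns only the antecedent $\widehat{\overline{g}}$, not $(g\Rrightarrow f)$, so the ``pointedness defect'' you worry about does not arise.
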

\begin{proof}
$(\WW_{\rm ptW},\leq_{\rm ptW},\widehat{\overline{\sqcap}},\sqcup)$ is a lattice by Proposition~\ref{prop:closure-operators} as
parallelized completion is a closure operator.
We obtain by Corollary~\ref{cor:multiplicative-deduction} and Fact~\ref{fact:parallelization}
\begin{eqnarray*}
(g\Rrightarrow f)\leq_{\rm ptW} h
&\iff& (\widehat{\overline{g}}\twoheadrightarrow\widehat{\overline{f}})\leqW\widehat{\overline{h}}\\
&\iff& \widehat{\overline{f}}\leqW\widehat{\overline{g}}\times\widehat{\overline{h}}\\
&\iff& \widehat{\overline{f}}\leqW\widehat{\overline{g\sqcup h}}\\
&\iff& f\leq_{\rm ptW}g\sqcup h.
\end{eqnarray*}
This proves the claim.
\end{proof}

In \cite{BG11} we have proved that the Medvedev lattice can be embedded into the parallelized Weihrauch lattice.
This embedding can actually be extended to a Brouwer algebra embedding into the parallelized total Weihrauch lattice.
We recall some basic definitions for the Medvedev lattice \cite{Sor96}. Let $A,B\In\IN^\IN$. Then $A$ is said to be 
{\em Medvedev reducible} to $B$, in symbols $A\leqM B$, if there is a computable function $F:\In\IN^\IN\to\IN^\IN$
such that $B\In\dom(F)$ and $F(B)\In A$. We recall the definition of the algebraic operations of the Medvedev lattice:
\begin{enumerate}
\item $A\otimes B:=0A\cup 1B=\langle A\sqcup B\rangle$,
\item $A\oplus B:=\langle A\times B\rangle$,
\item $B\to A:=\{\langle n,q\rangle\in\IN^\IN:(\forall p\in B)\;\varphi_n\langle q,p\rangle\in A\}$.
\end{enumerate}
By $\MM$ we denote the set of Medvedev degrees. We identify degrees with their members and use the same notation for the algebraic operations on degrees.
Medvedev~\cite{Med55} proved that $(\MM,\otimes,\oplus,\to,\IN^\IN,\emptyset)$ is a Brouwer algebra (see \cite[Theorem~9.1]{Sor96}).
In \cite{BG11} we have considered the constant problems
\[c_A:\IN^\IN\mto\IN^\IN,p\mapsto A\]
for every non-empty $A\In\IN^\IN$ and $c_\emptyset=\infty$.
The following facts were proved in \cite[Theorem~5.1]{BG11}.

\begin{fact}[Medvedev embedding]
\label{fact:Medvedev}
For all $A,B\In\IN^\IN$:
\begin{enumerate}
\item $A\leqM B\iff c_A\leqW c_B$,
\item $c_{A\otimes B}\equivSW c_A\sqcap c_B$,
\item $c_{A\oplus B}\equivSW c_A\times c_B\equiv c_A\star c_B$.
\end{enumerate}
\end{fact}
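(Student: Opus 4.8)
The plan is to reduce everything to an unwinding of the definitions: each of the three identities becomes transparent once one computes the realizer versions of the problems involved. Throughout I would work with the identity representation of $\Baire$ (legitimate since $\leqW$ is invariant under equivalent representations by Corollary~\ref{cor:invariance}), so that a realizer of $c_C$ is simply a total $G\colon\Baire\to\Baire$ with $G(\Baire)\In C$, and $c_C^\r=c_C$. The cases where $A$ or $B$ is empty I would dispose of first and separately, using $c_\emptyset=\infty$ together with the conventions for $\infty$ from Section~\ref{sec:residual} (e.g.\ $A\otimes\emptyset=0A$, $A\oplus\emptyset=\emptyset$, and $\emptyset\leqM B$ iff $B=\emptyset$ fails, matching $\infty\nleqW c_B$); so assume $A,B\neq\emptyset$.

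For part~(1): from a Medvedev reduction $F$ (computable, $B\In\dom F$, $F(B)\In A$) I would take $H:=F$ and $K:=\id$; since every realizer $G\vdash c_B$ is total with $G(\Baire)\In B$, the function $FGK$ is total with $FGK(\Baire)\In F(B)\In A$, so in fact $c_A\leqSW c_B$. Conversely, from computable $H,K$ witnessing $c_A\leqW c_B$ I would fix a computable $p_0\in\Baire$ and feed, for each $q\in B$, the constant function $G_q$ with value $q$, which is a realizer of $c_B$; then $p\mapsto H\langle p,G_q(K(p))\rangle=H\langle p,q\rangle$ realizes $c_A$, so $H\langle p_0,q\rangle$ is defined and lies in $A$, and hence $q\mapsto H\langle p_0,q\rangle$ is a computable function from $B$ into $A$, i.e.\ $A\leqM B$. (This incidentally shows $A\mapsto c_A$ is an order embedding.)

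For parts~(2) and (3): by Corollary~\ref{cor:realizer-version} it suffices to identify the realizer versions. Unwinding Definition~\ref{def:algebraic-operations}, $(c_A\sqcap c_B)^\r$ is total on $\Baire$ with constant value $\delta_{\Baire\sqcup\Baire}^{-1}(A\sqcup B)=0A\cup 1B=A\otimes B$, hence $c_A\sqcap c_B\equivSW(c_A\sqcap c_B)^\r=c_{A\otimes B}$; likewise $(c_A\times c_B)^\r$ has constant value $\delta_{\Baire\times\Baire}^{-1}(A\times B)=\{\langle p,q\rangle:p\in A,\ q\in B\}=A\oplus B$, giving $c_{A\oplus B}\equivSW c_A\times c_B$. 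For $c_A\times c_B\equivW c_A\star c_B$ I would unwind $(c_A\star c_B)\langle q,p\rangle=\langle\id\times c_A^\r\rangle\circ\Phi_q\circ c_B^\r(p)$: because $c_B^\r$ and $c_A^\r$ discard their inputs, this is defined exactly when $B\In\dom\Phi_q$, and then equals $\{\langle\pi_1\Phi_q(b),a\rangle:b\in B,\ a\in A\}$. One reduction: from a realizer of $c_A\times c_B$ one reads off some $\langle a,b\rangle$ with $a\in A$, $b\in B$ and outputs the computable value $\langle\pi_1\Phi_q(b),a\rangle$. The other: fixing an index $q_1$ of the computable map $x\mapsto\langle x,x\rangle$, one has $(c_A\star c_B)\langle q_1,p\rangle=\{\langle b,a\rangle:b\in B,\ a\in A\}$, from which a realizer of $c_A\times c_B$ is obtained by swapping the two components. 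Both reductions use computable maps, so they hold continuously as well.

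I do not expect a genuine obstacle here; the statement is essentially bookkeeping. The two points that need care are (i) pinning down the representation conventions on $\Baire$ so that the realizer-version computations are literal equalities of multi-valued functions, and (ii) noticing that $c_A\times c_B\equivW c_A\star c_B$ cannot be upgraded to $\equivSW$: the reduction $c_A\star c_B\leqW c_A\times c_B$ must read the index $q$ off its input in order to turn a value of $c_A\times c_B$ into a $\Phi_q$-name of a point of $B$, which is exactly why the statement records only $\equiv$, not $\equivSW$, for this last equivalence.
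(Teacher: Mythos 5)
Your proof is correct, but it is worth knowing that the paper does not actually prove parts (1), (2) and the first equivalence of (3) here: it simply cites \cite[Theorem~5.1]{BG11} for them, and only supplies an argument for the one claim not in the references, namely $c_A\times c_B\equivW c_A\star c_B$. For that claim your route and the paper's coincide in substance: the paper notes that $f\times g\leqW f\star g$ holds in general and that $c_A\star c_B=\langle\id\times c_A\rangle\circ U\circ\langle\id\times c_B\rangle\leqW c_A\times c_B$ ``as the output of $c_A$ does not depend on the input'', which is exactly the content of your two explicit reductions (your index $q_1$ for $x\mapsto\langle x,x\rangle$ witnesses the general fact in this special case, and your $H$ reading $q$ off the original input is the paper's observation made concrete). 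What your write-up adds is a self-contained verification of the cited parts via the realizer-version computations $(c_A\sqcap c_B)^\r=c_{A\otimes B}$ and $(c_A\times c_B)^\r=c_{A\oplus B}$, which is a perfectly good elementary substitute for the citation, and a correct explanation of why the last equivalence is only $\equivW$ and not $\equivSW$ (a strong reduction would have to produce $\pi_1\Phi_q(b)$ without access to $q$, which fails already for singletons $A,B$). Two cosmetic points: Corollary~\ref{cor:realizer-version} is stated for $\equivSTW$, while the fact you need, $f^\r\equivSW f$, is the one recorded in its proof via \cite[Lemma~3.8]{BGP18}, so cite that; and in the reduction $c_A\star c_B\leqW c_A\times c_B$ the output $\langle\pi_1\Phi_q(b),a\rangle$ is not a ``computable value'' but merely computable from the input $\langle q,p\rangle$ and the oracle answer, which is all that is needed.
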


The equivalence $c_A\times c_B\equivW c_A\star c_B$, was not proved in the references, but it is easy to see.
For one, $f\times g\leqW f\star g$ holds in general and on the other hand, $c_A\star c_B=\langle\id\times c_A\rangle\circ U\circ\langle\id\times c_B\rangle\leqW c_A\times c_B$,
as the output of $c_A$ does not depend on the input.
Here we add the observation that also the implication is preserved.
In fact, since the product and the compositional product for problems of the form $c_A$ coincide,
also the multiplicative and compositional implications coincide.

\begin{lemma}[Medvedev implication]
\label{lem:Medvedev-implication}
$c_{B\to A}\equivW(c_B\twoheadrightarrow c_A)\equivW(c_B\to c_A)$ for all $A,B\In\IN^\IN$.
\end{lemma}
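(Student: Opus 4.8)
The plan is to prove the two equivalences by closing a cycle of three Weihrauch reductions,
\[c_{B\to A}\leqW(c_B\to c_A)\leqW(c_B\twoheadrightarrow c_A)\leqW c_{B\to A},\]
after disposing of the degenerate cases in which $A$ or $B$ is empty (so that $c_A$ or $c_B$ equals $\infty$); these are routine from the conventions for the constants $0$, $1$ and $\infty$. So assume $A,B\neq\emptyset$. Then $c_B$ is pointed, since $\dom(c_B)=\IN^\IN$ contains a computable point, and hence $(c_B\to c_A)$ and $(c_B\twoheadrightarrow c_A)$ both have domain $\dom(c_A^\r)=\IN^\IN$, while $c_{B\to A}$ is a constant problem. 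The key preliminary step is to unwind the definitions, using the composition of multi-valued maps, to the following descriptions:
\[(c_B\to c_A)(p)=\{\langle t,q\rangle:B\In\dom(\Phi_t)\text{ and }(\forall r\in B)\;\Phi_t(r)\in A\}\]
and
\[(c_B\twoheadrightarrow c_A)(p)=\{\langle n,k,q\rangle:p\in\dom(\varphi_k)\text{ and }(\forall r\in B)\;\varphi_n\langle q,r\rangle\in A\},\]
where in the latter the condition ``$(\forall r\in B)\;\varphi_n\langle q,r\rangle\in A$'' is exactly ``$\langle n,q\rangle\in B\to A$''. Thus on constant problems the $k$-component of $\twoheadrightarrow$ merely records a G\"odel number of some function that has to be defined on the input $p$, which is an inessential bookkeeping parameter.

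The middle reduction $(c_B\to c_A)\leqW(c_B\twoheadrightarrow c_A)$ is immediate from Proposition~\ref{prop:multiplicative}~(3), since $c_B$ is pointed. For $c_{B\to A}\leqW(c_B\to c_A)$ I would fix a G\"odel number $m$ of $U$ (so $\varphi_m\supseteq U$) and use $K:=\id$ together with $H\langle p,\langle t,q\rangle\rangle:=\langle m,t\rangle$: if $\langle t,q\rangle$ is a solution of $c_B\to c_A$, then $B\In\dom(\Phi_t)$ and $\Phi_t(r)\in A$ for all $r\in B$, hence $\varphi_m\langle t,r\rangle=U\langle t,r\rangle=\Phi_t(r)\in A$ for all $r\in B$, so $\langle m,t\rangle\in B\to A$, which is a legitimate output of the constant problem $c_{B\to A}$. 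For $(c_B\twoheadrightarrow c_A)\leqW c_{B\to A}$ I would fix a G\"odel number $k_0$ of a total computable function (say of $\id$) and use $K:=\id$ together with $H\langle p,\langle n,q\rangle\rangle:=\langle n,k_0,q\rangle$: a solution $\langle n,q\rangle$ of $c_{B\to A}$ lies in $B\to A$, and since $p\in\dom(\varphi_{k_0})=\IN^\IN$ the triple $\langle n,k_0,q\rangle$ lies in $(c_B\twoheadrightarrow c_A)(p)$ by the description above. Composing these reductions gives $c_{B\to A}\equivW(c_B\to c_A)\equivW(c_B\twoheadrightarrow c_A)$.

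No serious obstacle is to be expected: the lemma is essentially a bookkeeping statement whose content is that, on constant problems of the form $c_A$, the multiplicative and compositional implications coincide --- which fits with the fact, already observed after Fact~\ref{fact:Medvedev}, that for such problems the product and the compositional product coincide --- and that both of them match the Medvedev implication $c_{B\to A}$. The only point requiring genuine care is the correct unwinding of the multi-valued composition in the definitions of $\to$ and $\twoheadrightarrow$, so as to recognise the ``maps $B$ into $A$'' condition; the remaining manipulations only use the existence of G\"odel numbers for the computable functions $U$ and $\id$.
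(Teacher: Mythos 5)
Your argument is correct, and it takes a partly different, more hands-on route than the paper's proof. You close a single cycle $c_{B\to A}\leqW(c_B\to c_A)\leqW(c_B\twoheadrightarrow c_A)\leqW c_{B\to A}$: the middle step is the same appeal to Proposition~\ref{prop:multiplicative}~(3) for pointed $c_B$ that the paper makes, but your two outer reductions are explicit constructions, using a G\"odel number of the universal function $U$ to turn a solution $\langle t,q\rangle$ of $(c_B\to c_A)$ into an element $\langle m,t\rangle$ of $B\to A$, and a G\"odel number $k_0$ of a total computable function to turn an element $\langle n,q\rangle$ of $B\to A$ into a solution $\langle n,k_0,q\rangle$ of $(c_B\twoheadrightarrow c_A)$; both rest on the same unwinding of the multi-valued compositions on constant problems. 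The paper instead proves the two stated equivalences separately, via four reductions: it obtains $(c_B\twoheadrightarrow c_A)\leqW c_{B\to A}$ from the Brouwer-algebra law $A\leqM B\oplus(B\to A)$ of the Medvedev lattice combined with Fact~\ref{fact:Medvedev} and Proposition~\ref{prop:multiplicative}~(1); it obtains $c_{B\to A}\leqW(c_B\twoheadrightarrow c_A)$ by essentially your unwinding; and it obtains $(c_B\twoheadrightarrow c_A)\leqW(c_B\to c_A)$ from the minimality characterization of $\to$ in Fact~\ref{fact:compositional}~(2) together with $c_B\star h\leqW c_B\times h$. So your proof avoids the Medvedev Brouwer-algebra fact and the minimality of $\to$ altogether, needing only the G\"odel numbering and the definitions, at the cost of proving three reductions explicitly and getting the remaining ones by transitivity; the paper's route is less elementary but makes visible the conceptual point that on constant problems $\times$ and $\star$, and hence the two implications, coincide. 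One shared caveat: like the paper, you dismiss the degenerate cases as routine, but for $B=\emptyset$ the conventions give $(c_B\to c_A)\equivW(c_B\twoheadrightarrow c_A)\equivW 0$ while $B\to A=\IN^\IN$ and hence $c_{B\to A}\equivW 1$, so the claimed $\equivW$ in that corner case deserves the same scrutiny in both proofs (it is harmless for the intended application, since $0$ and $1$ are identified under total reducibility).
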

\begin{proof}
It is routine to check the special cases of problems that involve $A,B\in\{\emptyset,\IN^\IN\}$.
Since the Medvedev lattice is a Brouwer algebra by \cite[Theorem~9.1]{Sor96}, we have $A\leqM B\oplus(B\to A)$.
With the help of Proposition~\ref{prop:multiplicative} and Fact~\ref{fact:Medvedev} we obtain
\begin{eqnarray*}
A\leqM B\oplus(B\to A)
&\TO& c_A\leqW c_{B\oplus(B\to A)}\equivW c_B\times c_{B\to A}\\
&\TO& (c_B\twoheadrightarrow c_A)\leqW c_{B\to A}.
\end{eqnarray*}
We can also prove $c_{B\to A}\leqW (c_B\twoheadrightarrow c_A)$.
Given a $p\in\IN^\IN$ we obtain
\begin{eqnarray*}
\langle n,k,q\rangle\in (c_B\twoheadrightarrow c_A)(p)
&\iff& \emptyset\not=\varphi_n\langle q,c_B\circ\varphi_k(p)\rangle\In c_A(p)\\
&\iff& \langle n,q\rangle\in c_{B\to A}(p).
\end{eqnarray*}
Hence, $c_{B\to A}\leqW (c_B\twoheadrightarrow c_A)$ follows.
We have $(c_B\to c_A)\leqW(c_B\twoheadrightarrow c_A)$ by Proposition~\ref{prop:multiplicative}. 
We also obtain $(c_B\twoheadrightarrow c_A)\leqW(c_B\to c_A)$. To this end, let $h$ be a problem
such that $c_A\leqW c_B\star h$. Like above we obtain $c_B\star h\leqW c_B\times h$, since the output of $c_B$ does not depend on its input.
That means $c_A\leqW c_B\times h$ and hence $(c_B\twoheadrightarrow c_A)\leqW h$ by Proposition~\ref{prop:multiplicative}. 
However, if $g$ is a problem such that $c_A\leqW c_B\star h$ implies $g\leqW h$ for every $h$, then $g\leqW(c_B\to c_A)$ follows.
Hence, $(c_B\twoheadrightarrow c_A)\leqW(c_B\to c_A)$.
\end{proof}

Hence the map $A\mapsto c_A$ is a lattice embedding from the Medvedev lattice into the Weihrauch
lattice that also preserves the corresponding implications (even though the Weihrauch lattice itself is not a Brouwer algebra).
It is easy to see that every Weihrauch degree of the form $c_A$ with $A\In\IN^\IN$ is parallelizable and complete,
i.e., $\widehat{\overline{c_A}}\equivW c_A$. Hence the above embedding is also an embedding into the 
parallelized total Weihrauch degrees. We note that $c_\emptyset=\infty$ and $c_{\IN^\IN}\equivW 1$.
Hence, we obtain a {\em Brouwer algebra embedding}, i.e., a lattice embedding that preserves the implication and the 
lower and upper bound.

\begin{theorem}[Embedding of the Medvedev lattice]
\label{thm:Medvedev}
$c:\MM\to\WW_{\rm ptW},A\mapsto c_A$
is a Brouwer algebra embedding.
\end{theorem}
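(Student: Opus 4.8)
The plan is to verify directly each of the defining properties of a Brouwer algebra embedding — injectivity, monotonicity in both directions, and preservation of $\widehat{\overline{\sqcap}}$, $\sqcup$, $\Rrightarrow$, and the two bounds — by transporting the facts about $c$ already collected into $\WW_{\rm ptW}$ via the single observation recorded just above the theorem: since every $c_A$ is parallelizable and complete, $\widehat{\overline{c_A}}\equivW c_A$ for all $A\In\IN^\IN$. The workhorse is the consequence that for all $A,B\In\IN^\IN$ one has $c_A\leq_{\rm ptW}c_B\iff c_A\leqW\widehat{\overline{c_B}}\iff c_A\leqW c_B$, so that on the range of $c$ the preorder $\leq_{\rm ptW}$ agrees with $\leqW$, and likewise $\equiv_{\rm ptW}$ agrees with $\equivW$ there; in particular any $\equivW$- or $\equivSW$-equivalence between problems of the form $c_A$ immediately upgrades to an $\equiv_{\rm ptW}$-equivalence.

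With this in hand, I would first treat order and injectivity: combining the observation above with Fact~\ref{fact:Medvedev}(1) gives $A\leqM B\iff c_A\leqW c_B\iff c_A\leq_{\rm ptW}c_B$, which is exactly monotonicity and its reflection, hence injectivity on degrees. For the bounds, $\IN^\IN$ and $\emptyset$ are the bottom and top of $\MM$, while $1$ and $\infty$ are the bottom (Corollary~\ref{cor:minimal}) and top of $\WW_{\rm ptW}$; since $c_{\IN^\IN}\equivW1$ and $c_\emptyset=\infty$, both are preserved. For the infimum, Fact~\ref{fact:Medvedev}(2) gives $c_{A\otimes B}\equivSW c_A\sqcap c_B$, and monotonicity of $\sqcap$ with respect to $\leqW$ together with $\widehat{\overline{c_A}}\equivW c_A$ and $\widehat{\overline{c_B}}\equivW c_B$ yields $c_A\sqcap c_B\equivW\widehat{\overline{c_A}}\sqcap\widehat{\overline{c_B}}=c_A\widehat{\overline{\sqcap}}c_B$; hence $c_{A\otimes B}\equiv_{\rm ptW}c_A\widehat{\overline{\sqcap}}c_B$, where $\otimes$ and $\widehat{\overline{\sqcap}}$ are the infimum operations of $\MM$ and $\WW_{\rm ptW}$. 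For the supremum, Fact~\ref{fact:Medvedev}(3) gives $c_{A\oplus B}\equivSW c_A\times c_B$, and, using strong completeness of $c_A,c_B$ so that $\overline{c_A}\equivSW c_A$ and $\overline{c_B}\equivSW c_B$, Corollary~\ref{cor:products-coproducts-parallelized} gives $c_A\times c_B\equiv_{\rm ptW}\overline{c_A}\times\overline{c_B}\equiv_{\rm ptW}c_A\sqcup c_B$; hence $c_{A\oplus B}\equiv_{\rm ptW}c_A\sqcup c_B$, with $\oplus$ and $\sqcup$ the supremum operations.

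The delicate clause, and the main obstacle, is preservation of implication, namely $c_{B\to A}\equiv_{\rm ptW}(c_B\Rrightarrow c_A)$, where $c_B\Rrightarrow c_A=\widehat{\overline{c_B}}\twoheadrightarrow\widehat{\overline{c_A}}$: here one cannot simply substitute $\equivW$-equivalent problems into $\twoheadrightarrow$, since $\twoheadrightarrow$ fails to be monotone in general. Instead I would start from Lemma~\ref{lem:Medvedev-implication}, which gives $c_{B\to A}\equivW(c_B\twoheadrightarrow c_A)$, and then apply Proposition~\ref{prop:monotonicity-multiplicative-implication} twice to get $c_B\twoheadrightarrow c_A\equivW\widehat{\overline{c_B}}\twoheadrightarrow\widehat{\overline{c_A}}$: once with $g_1=c_B$, $f_0=c_A$, $g_0=\widehat{\overline{c_B}}$, $f_1=\widehat{\overline{c_A}}$ (using $\widehat{\overline{c_B}}\leqW c_B$, $c_A\leqW\widehat{\overline{c_A}}$, and pointedness of $\widehat{\overline{c_B}}$, which holds since it is complete), and once with the roles of the two sides interchanged (using $c_B\leqW\widehat{\overline{c_B}}$, $\widehat{\overline{c_A}}\leqW c_A$, and pointedness of $c_B$). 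Chaining yields $c_{B\to A}\equivW(c_B\Rrightarrow c_A)$, hence the desired $\equiv_{\rm ptW}$-equivalence. Finally, throughout all five clauses the boundary cases in which $A$ or $B$ equals $\emptyset$ or $\IN^\IN$ — so that $c_A$ or $c_B$ is $\infty$ or $1$ — are routine to check directly from the conventions fixing the values of $\times$, $\sqcup$, $\sqcap$, $\to$, $\twoheadrightarrow$ on these constants, together with $\widehat{\overline{\infty}}=\infty$ and the corresponding identities in $\MM$, so they add only bookkeeping and no new difficulty.
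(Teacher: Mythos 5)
Your proposal is correct and takes essentially the same route as the paper, whose proof is the discussion preceding the theorem: Fact~\ref{fact:Medvedev}, Lemma~\ref{lem:Medvedev-implication}, the observation that every $c_A$ is parallelizable and complete (so $\widehat{\overline{c_A}}\equivW c_A$), and $c_\emptyset=\infty$, $c_{\IN^\IN}\equivW 1$. Your explicit transfer of the implication to the level of parallelized completions via the pointedness-restricted monotonicity of $\twoheadrightarrow$ (Proposition~\ref{prop:monotonicity-multiplicative-implication}) is precisely the step the paper leaves implicit, and it is carried out correctly.
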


The fact that the parallelized total Weihrauch lattice is a Brouwer algebra implies that it is a model for some intermediate logic
(i.e., some propositional logic intermediate between intuitionistic logic and classical logic). 
The existence of an embedding from the Medvedev lattice into the parallelized total Weihrauch lattice allows us to conclude 
that the logic of the parallelized total Weihrauch lattice is {\em Jankov logic},
i.e., the deductive closure of intuitionistic logic together with the {\em weak principle of excluded middle} $\neg\neg A\vee\neg A$.
We follow Sorbi~\cite{Sor91a,Sor96} for a formal definition of the theory of a Brouwer algebra.
Let $\Form$ denote the set of well formed propositional formulas. Then we call a map
$v:\Form\to\WW_{\rm ptW}$  {\em valuation} if it satisfies the following for all $A,B\in\Form$:
\begin{enumerate}
\item $v(A\vee B)=v(A) \widehat{\overline{\sqcap}} v(B)$,
\item $v(A\wedge B)=v(A)\sqcup v(B)$,
\item $v(A\to B)=(v(A)\Rrightarrow v(B))$,
\item $v(\neg A)=(v(A)\Rrightarrow\infty)$.
\end{enumerate}

We write $\WW_{\rm ptW}\vDash A$ if $v(A)=1$ for all valuations $v$. 
Then the set of formulas ${\rm Th}(\WW_{\rm ptW}):=\{A\in\Form:\WW_{\rm ptW}\vDash A\}$ is called 
the {\em theory} of $\WW_{\rm ptW}$.
It was proved by Medvedev~\cite{Med62} (see \cite[Corollary~6.4]{Sor96}) that the theory of the Brouwer algebra $\MM$ is Jankov logic.
We obtain the same result for our Brouwer algebra $\WW_{\rm ptW}$.
For one, it contains Jankov logic by Corollary~\ref{cor:Jankov-rule}. 
On the other hand, it cannot validate any additional propositional formulas as the Medvedev Brouwer algebra
is embeddable by Theorem~\ref{thm:Medvedev}.

\begin{corollary}[Theory of the parallelized complete Weihrauch degrees]
The theory of the Brouwer algebra $\WW_{\rm ptW}$ is Jankov logic.
\end{corollary}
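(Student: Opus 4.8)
The plan is to trap ${\rm Th}(\WW_{\rm ptW})$ between Jankov logic from below and from above, as sketched before the statement. For the lower bound ``Jankov logic $\subseteq{\rm Th}(\WW_{\rm ptW})$'' I would argue in two parts. Since $(\WW_{\rm ptW},\leq_{\rm ptW},\widehat{\overline{\sqcap}},\sqcup,\Rrightarrow,1,\infty)$ is a Brouwer algebra by Theorem~\ref{thm:Brouwer-algebra}, it is a sound model of intuitionistic propositional logic, being order-dual to a Heyting algebra~\cite{GJKO07}, so every intuitionistic theorem lies in ${\rm Th}(\WW_{\rm ptW})$. For the extra axiom $\neg\neg A\vee\neg A$ I would unfold the valuation clauses: for any valuation $v$,
\[
v(\neg A)=(v(A)\Rrightarrow\infty)=\widehat{\overline{v(A)}}\twoheadrightarrow\widehat{\overline{\infty}}=\widehat{\overline{v(A)}}\twoheadrightarrow\infty,
\]
and since $\widehat{\overline{g}}\not=\infty$ for every problem $g\not=\infty$ while $\widehat{\overline{\infty}}=\infty$, the special-constant clauses for $\twoheadrightarrow$ give $v(\neg A)=\infty$ if $v(A)\not=\infty$ and $v(\neg A)=0$ if $v(A)=\infty$; i.e.\ $v(\neg A)$ agrees with the negation of Definition~\ref{def:negation} applied to $v(A)$, and likewise for $v(\neg\neg A)$. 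Thus in every case each of $v(\neg A),v(\neg\neg A)$ is $0$ or $\infty$ and exactly one of them is $0\equiv_{\rm ptW}1$, the bottom of the Brouwer algebra, so the meet $v(\neg\neg A\vee\neg A)=v(\neg\neg A)\mathbin{\widehat{\overline{\sqcap}}}v(\neg A)$ equals $1$ by the lattice identity $x\wedge\bot=\bot$. This is exactly Corollary~\ref{cor:Jankov-rule} transported to $\WW_{\rm ptW}$, so every axiom of Jankov logic is validated.

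For the converse ``${\rm Th}(\WW_{\rm ptW})\subseteq$ Jankov logic'' I would invoke Medvedev's theorem that ${\rm Th}(\MM)$ is exactly Jankov logic~\cite{Med62} (see also~\cite[Corollary~6.4]{Sor96}) together with Theorem~\ref{thm:Medvedev}, which supplies a Brouwer algebra embedding $c\colon\MM\to\WW_{\rm ptW}$. It then suffices to record the general fact that a Brouwer algebra embedding $e\colon\mathcal A\to\mathcal B$ forces ${\rm Th}(\mathcal B)\subseteq{\rm Th}(\mathcal A)$: given $\varphi\notin{\rm Th}(\mathcal A)$, choose a valuation $v\colon\Form\to\mathcal A$ with $v(\varphi)\not=\bot_{\mathcal A}$; since $e$ preserves suprema, infima, implication and the top element, $e\circ v$ is again a valuation (into $\mathcal B$), and since $e$ is injective with $e(\bot_{\mathcal A})=\bot_{\mathcal B}$ we get $(e\circ v)(\varphi)=e(v(\varphi))\not=\bot_{\mathcal B}$, so $\varphi\notin{\rm Th}(\mathcal B)$. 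Applying this with $e=c$ yields ${\rm Th}(\WW_{\rm ptW})\subseteq{\rm Th}(\MM)=$ Jankov logic, and combined with the lower bound the two coincide.

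The main obstacle is the first inclusion, and within it the weak-excluded-middle step: one has to check carefully that the valuation's negation $v(\neg A)=(v(A)\Rrightarrow\infty)$ really collapses to the two values $0,\infty$ on which Corollary~\ref{cor:Jankov-rule} can be read off, keeping track of the decorations $f\mapsto\widehat{\overline{f}}$ and of the conventions that single out the top element $\infty$ in the definition of $\twoheadrightarrow$, and that meeting with the bottom degree again gives the bottom degree. Everything else is either cited (Theorems~\ref{thm:Brouwer-algebra} and~\ref{thm:Medvedev}, and Medvedev's theorem) or the purely formal soundness/embedding bookkeeping indicated above.
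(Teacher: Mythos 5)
Your proposal is correct and follows exactly the paper's route: the lower bound comes from soundness of the Brouwer algebra (Theorem~\ref{thm:Brouwer-algebra}) together with the Jankov rule of Corollary~\ref{cor:Jankov-rule}, and the upper bound from Medvedev's theorem plus the Brouwer algebra embedding of Theorem~\ref{thm:Medvedev}, whose injectivity and preservation properties force ${\rm Th}(\WW_{\rm ptW})\subseteq{\rm Th}(\MM)$. You merely spell out details the paper leaves implicit (the unfolding of $v(\neg A)$ via the special-constant clauses for $\twoheadrightarrow$ and the theory-reversing property of embeddings), and these check out.
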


We note that Higuchi and Pauly proved \cite[Theorems~4.1, 4.2]{HP13} that neither the Weihrauch lattice
by itself nor the parallelized Weihrauch lattice (restricted to the pointed problems) is a Brouwer algebra.
Hence, the closure operator of completion seems to be essential in order to obtain a Brouwer algebra.

In view of Corollary~\ref{cor:Weihrauch-algebra-total} one could obtain a way to transform the total Weihrauch 
lattice into a Troelstra algebra by restricting it to a linear fragment. 
We call $\LL\In\WW_{\rm ptW}$ {\em linear} if $f\times g\equivTW f*g$ holds for all $f,g\in\LL$.
If there would be any linear sublattice of interest that also preserves the
monoid structure, then that would be a potential candidate for a Troelstra algebra. 
We note that the constant multi-valued problems $c_A$ used for the embedding of the Medvedev lattice
form a linear subset of the total Weihrauch degrees by Fact~\ref{fact:Medvedev}, however, this is not a sublattice
and leads directly to a Brouwer algebra, i.e., a trivial example of a Troelstra algebra.

\begin{figure}[htb]
\begin{center}
\begin{tikzpicture}[scale=0.8,every node/.style={fill=black!15}]

\node (WKL'') at (1.6,16.8) {$\WKL''\equiv_{\rm ptW}\RT{2}{k+2}\equiv_{\rm ptW}\LLPO''$};
\node (lim') at (1.6,15.6) {$\lim'\equiv_{\rm ptW}\LPO'$};
\node (WKL') at (1.6,14.4) {$\WKL'\equiv_{\rm ptW}\KL\equiv_{\rm ptW}\BWT_\IR\equiv_{\rm ptW}\RT{1}{k+2}\equiv_{\rm ptW}\LLPO'$};
\node (lim) at (1.6,13.2) {$\lim\equiv_{\rm ptW}\SORT\equiv_{\rm ptW}\C_\IR\equiv_{\rm ptW}\C_\IN\equiv_{\rm ptW}\LPO$};
\node (DNC3) at (-1.8,11) {$\DNC_3$};
\node (DNC4) at (-1.8,10) {$\DNC_{4}$};
\node (DNCN) at (-1.8,9) {$\DNC_\IN$};
\node (PA) at (0.6,9) {$\PA$};
\node (MLR) at (1.6,10) {$\MLR$};
\node (COH) at (5.8,12) {$\COH$};
\node (1-GEN) at (8.8,12) {$1\dash\GEN$};
\node (NON) at (1.6,8.2) {$\NON$};
\node (WKL) at (-1.8,12) {$\WKL\equiv_{\rm ptW}\C_{2^\IN}\equiv_{\rm ptW}\WWKL\equiv_{\rm ptW}\IVT\equiv_{\rm ptW}\K_\IN\equiv_{\rm ptW}\LLPO$};

\draw [->] (WKL'') edge (lim');
\draw [->] (lim') edge (WKL');
\draw [->] (WKL') edge (lim);
\draw [->] (lim) edge (WKL);
\draw [->] (lim) edge (COH);
\draw [->] (lim) edge (1-GEN);
\draw [->] (WKL) edge (DNC3);
\draw [->] (DNC3) edge (DNC4);
\draw [->] (DNC4) edge (DNCN);
\draw [->] (DNC4) edge (PA);
\draw [->] (WKL) edge (MLR);
\draw [->] (DNCN) edge (NON);
\draw [->] (PA) edge (NON);
\draw [->] (MLR) edge (NON);
\draw [->] (COH) edge (NON);
\draw [->] (1-GEN) edge (NON);

\end{tikzpicture}
\end{center}
\ \\[-0.3cm]
\caption{Problems in the parallelized total Weihrauch lattice $\WW_{\rm ptW}$}
\label{fig:WptW}
\end{figure}
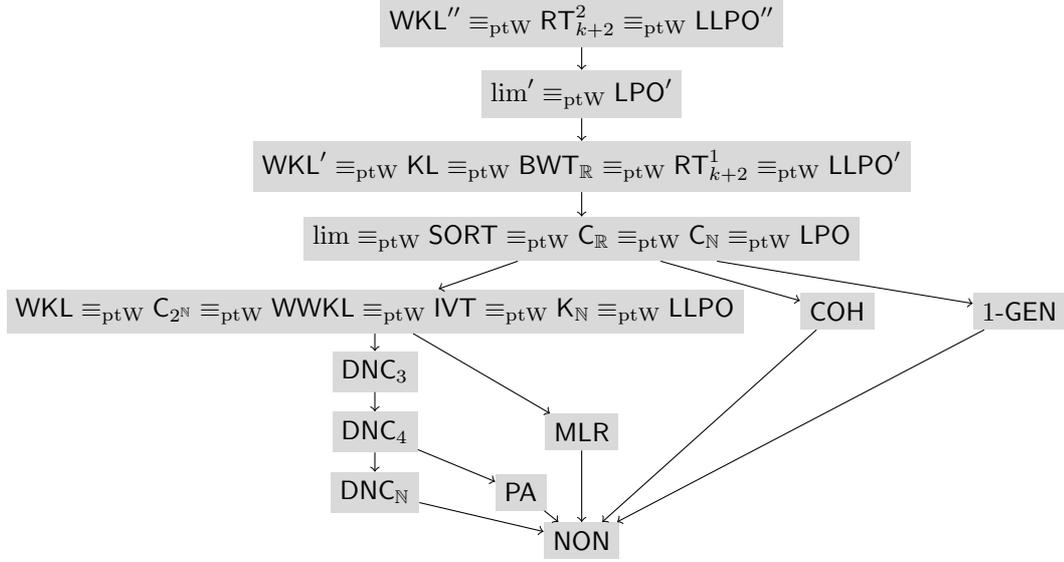

\section{Conclusion}
\label{sec:conclusion}

We have proved that the Weihrauch lattice can be transformed into a Brouwer algebra
by completion followed by parallelization. It would be desirable to understand
the structure of this Brouwer algebra somewhat better. Is it isomorphic to the
Medvedev Brouwer algebra? Presumably not, as the Medvedev algebra considers
only problems that are independent of the input. However, we need more structural
information on the lattices and algebras in order to prove such properties. 
The Medvedev lattice has, for instance, a second smallest degree, called $0'$,
which consists of all non-computable $p\in\IN^\IN$. Is there such a 
second smallest degree in the parallelized total Weihrauch lattice?
Or is the structure dense? We do not even know the answer to this question for the ordinary
Weihrauch lattice or its total variant.
What we can say, though, is that the parallelized total Weihrauch lattice 
is still inhabited by a variety of interesting problems.
The diagram in Figure~\ref{fig:WptW} shows a number of problems (that are taken without
further explanation from \cite{BHK17a} and \cite{BR17}), and that inhabit
$\WW_{\rm ptW}$. Even though a lot of problems that are normally separated
in the Weihrauch lattice are identified in $\WW_{\rm ptW}$, the structure is still rich 
and non-linear.

\section*{Acknowledgments}

We would like to thank Paulo Oliva for discussions of models of intuitionistic linear logic  at the Logic Colloquium 2018 in Udine
that have helped us to identify the relevance of Troelstra and Weihrauch algebras.

\bibliographystyle{plain}
\bibliography{C:/Users/Vasco/Dropbox/Bibliography/lit}

\end{document}